
 \documentclass[SecEq,CM,GP]{degruyter-crelle} 

\firstpage{}\volume{}\volumeyear{}\copyrightyear{}\doiyear{}\doi{}\received{XXX}\revised{XXX}

\title[$p$-adic adelic metrics and Quadratic Chabauty~I]{$p$-adic adelic metrics and Quadratic Chabauty~I}

\author{Amnon}{Besser}{}{Be'er-Sheva}
\author{J.\thinspace{}Steffen}{M\"uller}{}{Groningen}
\author{Padmavathi}{Srinivasan}{}{Boston}

\contact{Amnon Besser, Department of Mathematics, Ben-Gurion University of the Negev, P.O.B. 653, Be'er-Sheva 84105, Israel}{bessera@math.bgu.ac.il}

\contact{J.\thinspace{}Steffen M\"uller,
  Bernoulli Institute, 
  Rijksuniversiteit Groningen,
  Nijenborgh 9,
  9747 AG Groningen,
  The Netherlands
}{steffen.muller@rug.nl}

\contact{Padmavathi Srinivasan,
Boston University,
Center for Computing and Data Sciences,
665 Commonwealth Avenue, 
Boston, MA 02215. USA.}{padma\_sk@bu.edu}

\researchsupported{The first-named author was supported by grant no 912/18 from the Israel Science Foundation. 
The second-named author was supported by DFG grant MU 4110/1-1 and by an NWO Vidi grant.
The third-named author was supported by Simons Foundation grant 546235 for the collaboration ``Arithmetic Geometry, Number Theory, and Computation'' and by NSF DMS 2401547. 
We would like to thank the NWO DIAMANT cluster for supporting a visit of the first- and
third-named author to the
University of Groningen.
}

\usepackage{url}
\usepackage[usenames,dvipsnames]{xcolor}

\usepackage{url,latexsym, amscd, amsfonts, eucal, mathrsfs, amsmath, amssymb, amsthm, rotating, graphicx}

\usepackage{tikz,caption,stmaryrd,amsfonts,amssymb,systeme,comment,graphicx,colonequals,faktor,xfrac}
\usepackage{xypic, braket}
\usepackage{colonequals} 

\usepackage{tikz-cd}




\newtheorem{thm}{Theorem}[section]

\newtheorem{prop}[thm]{Proposition}

\newtheorem{lemma}[thm]{Lemma}
\newtheorem{cor}[thm]{Corollary}

\theoremstyle{definition}

\newtheorem{defn}[thm]{Definition}

\theoremstyle{remark}

\newtheorem{rk}[thm]{Remark}
\newtheorem{ex}[thm]{Example}

\newcommand\into{\hookrightarrow}
\newcommand\Q{\mathbb{Q}}

\newcommand\Qp{\mathbb{Q}_p}

\newcommand\Qpb{\overline{\mathbb{Q}}_p}
\newcommand\Qqb{\overline{\mathbb{Q}}_q}

\newcommand\ba{\underline{\alpha}}
\newcommand\C{\mathbb{C}}

\newcommand\G{\mathbb{G}}

\newcommand\A{\mathbb{A}}
\newcommand\Z{\mathbb{Z}}
\newcommand\R{\mathbb{R}}

\newcommand\NS{\mathop{\rm NS}\nolimits}

\renewcommand{\L}{L}
\newcommand{\M}{M}
\renewcommand{\P}{\mathcal{P}}
\renewcommand{\O}{\mathcal{O}}

\newcommand\Spec{\mathop{\rm Spec}\nolimits}

\renewcommand\O{\mathcal{O}}

\newcommand\ord{\mathop{\rm ord}\nolimits}
\newcommand\supp{\mathop{\rm supp}\nolimits}

\newcommand\res{\mathop{\rm Res}\nolimits}

\newcommand{\End}{\operatorname{End}}
\newcommand{\Hom}{\operatorname{Hom}}
\newcommand\isom{\cong}

\newcommand\frka{\mathfrak{a}}

\newcommand{\Div}{\operatorname{Div}}
\newcommand{\rank}{\operatorname{rk}}
\newcommand{\Pic}{\operatorname{Pic}}

\newcommand{\im}{\operatorname{im}}
\newcommand{\id}{\operatorname{id}}

\renewcommand{\div}{\operatorname{div}}
\newcommand{\hdr}{H_{\textup{dR}}}
\newcommand{\hhdr}{H^{\textup{dR}}}
\newcommand{\Symm}{\operatorname{Sym}}

\newcommand{\p}{\mathfrak{p}}
\newcommand{\q}{\mathfrak{q}}

\newcommand{\dd}{d}

\newcommand{\ocol}{\Omega_V}
\newcommand{\ocola}{\Omega_{V,1}}
\newcommand{\ocols}{\Omega_{\textup{st}}}
\newcommand{\ocolas}{\Omega_{\textup{st},1}}
\newcommand{\oloc}{\Omega_{\textup{loc}}}
\newcommand{\acol}{\O_V}

\newcommand{\inject}{\hookrightarrow}
\newcommand{\kst}{K_{\textup{st}}}
\newcommand{\delbr}{\bar{\partial}}
\newcommand{\Curve}{\operatorname{Curve}}
\newcommand{\Span}{\operatorname{Span}}
\newcommand{\Gm}{\mathbb{G}_\textup{m}}
\newcommand{\selmervariety}{H^1_f(G_\p,U_Z)}
\newcommand{\globalselmer}{H^1_f(G_\Q,U_Z)}

\DeclareMathOperator{\ch}{ch}                                       
\DeclareMathOperator{\Res}{Res}
\DeclareMathOperator{\Mor}{Mor}
\DeclareMathOperator{\picm}{Pic^{rn}}

\setlength{\parindent}{0mm}
\setlength{\parskip}{1ex plus 0.5ex minus 0.5ex}
\addtolength{\hoffset}{-1cm}
\addtolength{\textwidth}{2cm}
\addtolength{\voffset}{-1cm}
\addtolength{\textheight}{1cm}

\begin{document}
\setcounter{secnumdepth}{3} 

\begin{abstract} \setlength{\parskip}{1ex} \setlength{\parindent}{0mm}

  We give a new construction of $p$-adic heights on varieties over number fields
  using $p$-adic Arakelov theory. 
  In analogy with Zhang's construction of real-valued
  heights in terms of adelic metrics, these heights are given in terms of 
  $p$-adic adelic metrics on line bundles. 
  In particular, we describe a construction of canonical $p$-adic 
  heights on abelian varieties and we show that we recover the canonical
  Mazur--Tate height and, for Jacobians, 
  the height constructed by Coleman and Gross.
  Our main application is a new and simplified approach to the Quadratic Chabauty method
  for the computation of rational points on certain curves over the rationals, by pulling back the canonical height on the Jacobian with respect to a carefully chosen line bundle. 
  We show that our construction allows us to reprove, without using $p$-adic Hodge
  theory or arithmetic fundamental groups, several results due to Balakrishnan and Dogra. Our method also extends to primes $p$ of bad reduction.
    One consequence of our work is that for any canonical height ($p$-adic or
  $\R$-valued) on an abelian variety (and hence on pull-backs to other
  varieties), the local contribution at a finite prime $q$ can be constructed using $q$-analytic methods. 
\end{abstract}



\tableofcontents

 \hfill

\section{Introduction}\label{sec:intro}
  The explicit solution of polynomial equations in rationals or integers is one of the oldest
  problems in mathematics. Even the simplest case, that of finding rational (or integral)
  points on a smooth projective algebraic curve $C$ over the rationals, does not have a satisfactory solution except in special
  cases. If the genus of $C$ is at least $2$, then one knows, thanks to the celebrated theorem of
  Faltings~\cite{Fal83}, that the set $C(\Q)$ of rational points is finite. But the theorem does not
  give an effective way of bounding its size, let alone a computationally feasible way of
  finding it.

  Chabauty's theorem~\cite{Chab41}, made effective
  by Coleman~\cite{Col85a}, raised the hope of finding such a
  method using $p$-adic techniques. If the Mordell--Weil rank $r = \mathrm{rk}(J/\Q)$ of the Jacobian $J=J_C$ of $C$ is smaller than
  the genus $g$ of $C$, it shows that $C(\Q)\subset C(\Q_p)$ is contained in the zero set
  of 
  a locally analytic function that can often be made explicit. Chabauty proved that this zero set is finite.
  When $p$ is a prime of good reduction, 
  Coleman identified the relevant function as a $p$-adic line  integral
  in the sense of~\cite{Col85} and used this to  give
  an effective bound on the number of points. 
  The method of Chabauty--Coleman can often be used to compute $C(\Q)$ in practice;
  see~\cite{MCP12} for an exposition.
   All of the above and, at least in principle, all that follows, extends to the study of $K$-rational points on a
  curve $C/ K$, where $K$ is a number field; see for instance~\cite{Sik13}
  and~\cite{BBBM21}. 

 The revolutionary paper of Kim~\cite{Kim05} outlined a program to push
the method of Chabauty and Coleman to the case of curves which
  do not satisfy the Chabauty bound $r<g$.
  There it
  was demonstrated for the first time that one may use the arithmetic theory of the fundamental
  group to identify more general Coleman integrals, namely, iterated integrals, which would
  vanish on $C(\Q)$. Shortly afterwards, Kim conjectured~\cite{Kim09} that one could in fact recover
  $C(\Q)$ completely using these methods. Verifying these conjectures is the main open problem of
  the subject. Nevertheless, a more practical problem is to recover $C(\Q)$ using $p$-adic
  methods, and (being slightly more restrictive and precise), trying to find $C(\Q)$ inside the
zero set of some iterated Coleman integral. (For a different approach toward
making the theorem of 
Faltings effective that does not use Coleman integrals see~\cite{LV20};
another method that does achieve effectivity is discussed
in~\cite{Alp20}. Both approaches have not led to practical algorithms.) 

One method of this sort is Quadratic
Chabauty~\cite{BBM16, BBM17, BBBM21, BD18, BD21, DLF19, BDMTV19,BBBLMTV19,  BDMTV2,
AABCCKW, EL21, CLYX}. 
In its simplest form, it requires $r=g$, but also has additional
assumptions on the curve.
Stated broadly (but still not covering the method of geometric Quadratic
Chabauty~\cite{EL21, CLYX}), Quadratic Chabauty can be phrased as an equality of functions
on $J(\Q)$ of the form $Q=H$. Here, $Q$ is in fact a local function 
\begin{equation*}
  Q\colon J(\Q_p)\to \Q_p\,.
\end{equation*}
In contrast, $H$, which usually comes from some sort of a $p$-adic height, is global in
nature. Fixing an Abel--Jacobi embedding $\iota\colon C\to J$ over $\Q$, one has
a non-canonical decomposition 
\begin{equation*}
  H\circ \iota= \sum_{q<\infty} \lambda_q\;,
\end{equation*}
where $q$ runs through all finite places of $\Q$ and $\lambda_q$ is a function $C(\Q_q)\to
\Q_p$.
To obtain the equality $Q=H$ one first proves that $H$ is a quadratic function. One may
then use this to write $H$
in terms of a basis of such functions which can be extended to $J(\Q_p)$. 
For instance, when the closure of $J(\Q)$ has finite index in
$J(\Q_p)$, one can write $H$
as a quadratic polynomial in the linear functionals on $J(\Q)$ coming from abelian logarithms (which are Coleman integrals of
holomorphic forms) 
to obtain a local function $Q\colon J(\Q_p)\to \Q_p$ such that 
$Q$ restricts to $H$ on $J(\Q)$. (If the Coleman integrals of a basis of 1-forms are linearly dependent, then regular Chabauty applies and the problem is easier.) 
By pulling back along $\iota$, one obtains an equation,
satisfied for $x\in C(\Q)$, 
\begin{equation}\label{eq:qc}
  Q\circ \iota(x)-\lambda_p(x)= \sum_{q<\infty,\, q \ne p} \lambda_q(x)\;.
\end{equation}
For running the Quadratic Chabauty method, we need two finiteness
properties. (i) We need the left hand side to be a locally analytic function $C(\Q_p)\to \Q_p$ with finite
fibers. (ii) The function $H$ needs to be constructed in such a
way that the right hand side of the above equation takes values in a finite computable set $T$.
Typically, $\lambda_q$ is going to be identically $0$ for all primes of good reduction and for primes of
bad reduction $\lambda_q$ can take a finite computable set of values.

In the first instance of Quadratic Chabauty~\cite{BBM16, BBM17} the curve $C$ was
hyperelliptic, given by an equation $y^2=f(x)$ with $f\in \Z[x]$ of 
degree $2g+1$, satisfying $r=g$, the embedding $\iota$ was 
taken via the point at infinity and the function $H$ was simply the $p$-adic height.
However, while~\eqref{eq:qc} was satisfied for $x\in C(\Q)$, the method only worked
for integral points (with respect to the given model). 
The reason was that $\lambda_q(x)$ can be expressed as an intersection multiplicity on a regular model
of $C$, and it was possible to rewrite the value of $\lambda_q$ for
$q$-integral points  $x\in C(\Q_q)$ in terms of the intersection matrix of
the special fiber, and compute these values ahead of time. This procedure
to rewrite and precompute the possible values of $\lambda_q$ does not
extend to rational points that are not $q$-integral. In fact, $\lambda_q$
takes on an infinite set of values on $C(\Q_q)$.

The next substantial step was taken by Balakrishnan and Dogra
in~\cite{BD18}. On the one hand, this important paper related the
theory of~\cite{BBM16} with the arithmetic of the fundamental group of $C$, by constructing $H$ using an
extension of the Nekov\'a\v{r} height pairing~\cite{Nek93} via an appropriate nonabelian
unipotent quotient of the fundamental
group. On the other hand, it showed how additional geometric data such as a nontrivial correspondence on $C$ could allow the method to be extended to recover the rational points and
not just the integral points. This extension of Quadratic Chabauty has been used since to find
the rational points in a number of cases that resisted resolution using previous
methods,
most importantly in the case of the Cartan modular curve of
level~13~\cite{BDMTV19}, related to Serre's uniformity problem about
the image of residual Galois representations associated to elliptic curves. Other modular examples have
been computed using this method in~\cite{BBBLMTV19, AABCCKW, BDMTV2, AM,
ACKP}.

It is important to note that the surprising parts of the extension to Quadratic Chabauty in~\cite{BD18} and subsequent work such as~\cite{BD21} are the two finiteness properties stated below equation~\eqref{eq:qc}. Indeed, given any endomorphism of $J$ (or equivalently, a correspondence on $C$), one can simply consider the $p$-adic height pairing between a point of $J$ and its image under the endomorphism, but in general, there is no hope that this will satisfy the finiteness property when pulled back to $C$.


The paper~\cite{BD18} makes full use of both the theory of the fundamental group \`{a} la Kim and
Nekov\'a\v{r}'s cohomological height pairing. However, in the end, the resulting function $H$ is fairly
simple -- when pulled back to $C$, it can be expressed as the height
pairing applied to two explicit divisors
constructed out of a point $x\in C(\Q)$ and the correspondence.
A natural problem that arises is thus to explain the results of~\cite{BD18} in a more direct way
without the detour via the theory of the fundamental group. This problem was the starting
point of the present work.
We provide a solution based on $p$-adic Arakelov theory, developed by the
first-named author in~\cite{Bes05}.
One
important consequence is that the function $\lambda_p$, which one needs to compute and locally
expand as a power series in order the make the method work, is now (essentially) explicitly given as an
iterated Coleman integral. Another side effect is that the theory extends without any further
work to the case of primes $p$ of bad reduction, using the theory of Vologodsky
integration~\cite{Vol03}
instead of Coleman integration. 

To give our version of Quadratic Chabauty, we replace the use of the Nekov\'a\v{r}
construction of the 
height pairing by a new $p$-adic analogue of the theory of
adelic metrics and associated heights due to Zhang~\cite{Zha95} which we believe
could be of independent interest. Let us recall that an adelic metric on a line bundle $\L$ over a
variety $X$ over a number field $K$ assigns to each (finite or infinite) place $v$ of
$K$ a 
real-valued norm $\|\cdot\|_v$ on the completion $\L_v$, compatible with a set of absolute
values on $K$ satisfying the product formula and subject to some natural conditions. These
guarantee, among other things, that the associated height 
\begin{equation*}
  h_{\L}(x) = \sum_v \log \|u\|_{v} \in \R\,,
\end{equation*}
where $u$ is any nonzero vector in the fiber of $\L$ above $x$, is a finite sum, hence
well-defined.
This height decomposes non-canonically into local contributions by picking a nonzero section (which
could in principle be only set-theoretical) $s$ of $\L$, giving
\begin{equation}\label{ZhangDecomp}
  h_{\L}(x) = \sum_v \log \|s(x)\|_{v} \in \R\,.
\end{equation}

Following Tate's method for constructing canonical heights on abelian varieties and
similar methods for arithmetic dynamical systems, Zhang showed how to obtain canonical
adelic metrics for arithmetic dynamical systems. Suppose that $f\colon X\to X$ is an endomorphism of a
variety $X$ over a number field $K$, $\L$ is a line bundle on $X$ and $\beta\colon \L^{\otimes d
}\to f^\ast \L$ is an isomorphism with $d>1$; we will refer to this as a
{\em dynamical} situation. Then, for any place $v$ of $K$, starting with
any norm $\|\cdot\|_v$ on $\L_v$, repeatedly replacing $\|\cdot\|_v$ with $(\beta^{-1}f^\ast
\|\cdot\|_v)^{\frac{1}{d}}$ and taking a limit, we obtain a canonical norm on $\L_v$ for which
$\beta$ is an isometry, and these together give a canonical adelic metric. As an example, if
$X = (E,P_\infty)$ is an elliptic curve given by a Weierstrass equation with point at
infinity $P_\infty$, $f=[2]$ is the multiplication by $2$ map, $\L= \O(2P_\infty)$ and $d=4$,
one obtains an adelic metric on $E$ whose associated height is easily recognized as the
standard 
canonical height on $E$. In many important cases, for instance when $X$ is an abelian
variety and $f$ is multiplication by $2$, 
the canonical norm for a
non-archimedean place $v$ is given in the form $\|u\|_v= c^{v_L(u)}$
where $1>c \in \R$ and $v_L$ is a $\Q$-\textit{valuation} on 
$\L_v$ -- a $\Q$-valued function $v$ on $L^\times_v$, the total space of $L_v$ without the zero section. See
Definition~\ref{D:val}, and note that 
valuations have been studied by Betts in~\cite{Bet17}, where they are
called {\em log-metrics}.

To construct a $p$-adic analogue of the theory above, we begin by replacing the system of
absolute values satisfying the product formula (more precisely, the log of this system) by an
id\`{e}le class character 
\begin{equation*}
  \chi = \sum_v \chi_v\colon \mathbb{A}^\times_K/K^{\times}\to \Q_p\;.
\end{equation*}
The logs of the norms on a line bundle are replaced by {certain functions}
$\log_v\colon \L^\times_v \to \Q_p$. In analogy with logs of norms, these
satisfy the relation $\log_v(a w)= \chi_v(a)+ \log_v(w)$ for any $a\in
K_v$ and nontrivial $w\in \L_x$. There is an obvious notion of isometry in the
theory; furthermore, the functions $\log_v$ and the associated heights behave functorially.

As is often the case, it is easy to see that the limiting method used to construct canonical norms in the classical
theory will not work in the $p$-adic case and a different method 
has to be used. For finite places $v\nmid p$ one can use valuations, such as the ones
provided by the classical theory as before, to obtain $\log_v$. Indeed, since for
such a place the character $\chi_v$ factors via $\ord_v$, it is easy to see that
setting $\log_v= \chi_v(\pi_v) \cdot v_L$ gives a function with the desired properties.

Even this method fails for places above the prime $p$. One of the key observations of the
present work is that in this case one may use $p$-adic Arakelov
theory~\cite{Bes05} to obtain canonical metrics. More
precisely, we use the theory of $p$-adic log functions and their curvature forms. For
simplicity, let us restrict now to varieties $X$ over $\Q_p$. The theory isolates among all log functions a class of nice log functions which
are iterated integrals (in the sense of Vologodsky) of a particularly simple form. To those log functions on a line
bundle $\L$ on $X$ the theory associates a curvature form $\alpha\in
\hdr^1(X)\otimes \Omega^1(X)$ with the property that $\cup \alpha = \ch_1(\L)\in \hdr^2(X)$.
Furthermore, the condition about the cup product of $\alpha$ is necessary and sufficient for
the construction of a log function on $\L$ with curvature $\alpha$. Unlike the classical
archimedean
theory, this log function is now no longer uniquely determined by $\alpha$ 
up to a constant, but can be changed by adding
the integral of a form $\omega\in \Omega^1(X)$. This suggests a way of finding a canonical log
function for the dynamical situation with $ \beta \colon \L^{\otimes d
}\to f^\ast \L$ described above:
 First find a curvature form $\alpha$
cupping to $\ch_1(\L)$ and satisfying $f^\ast \alpha = d \cdot \alpha$ (this equality
will hold after applying the cup product, so when the kernel of the cup product
is not so big this is quite reasonable -- for instance, for abelian
varieties). 
This makes the isomorphism $\beta$ an isometry up to the integral of
a holomorphic form. Now adjust the log function by such an integral to make
$\beta$ an isometry.

We consider this procedure in the case that $X=A$ is an abelian variety and $f$ is the
multiplication by $2$ map. We prove that it gives  log functions at places above $p$ that
produce heights which are quadratic for symmetric line bundles and linear for antisymmetric
line bundles. We call such a log function \textit{good}. However, unlike the classical real-valued theory, the resulting log functions are not unique. For a symmetric line bundle
$\L$ there is a unique good log function for each curvature form cupping to
$\ch_1(\L)$ (see Theorem~\ref{T:symgood}),  whereas
for an antisymmetric line bundle, every log function with trivial curvature
is good by Theorem~\ref{T:antigood}. To get
canonical log functions, and consequently canonical heights on abelian
varieties over number fields, we pick an appropriate log
function on the Poincar\'e line bundle on $A\times \hat{A}$. This depends on a choice, well known
in the theory of $p$-adic heights, of a complementary subspace to $\Omega^1(A_v)$ inside
$\hdr^1(A_v)$ for every place $v$ above $p$, see Proposition~\ref{P:cangood}. From this we obtain canonical adelic metrics and
canonical heights for antisymmetric line bundles by restricting to the relevant fiber above
$\hat{A}$ and to symmetric line bundles by pulling back via an appropriate map $A\to A\times
\hat{A}$, see Definition~\ref{D:arblog}.

It is a natural question how our $p$-adic height relates to other constructions in the
literature. We show in~\S\ref{subsec:MT} that on any abelian variety, our canonical $p$-adic height fits into
the general framework of 
$p$-adic heights pairings due to Mazur--Tate~\cite{MT83}.
For Jacobians, we show in Theorem~\ref{T:CGcomp} that we recover the pairing constructed by Coleman and
Gross~\cite{CG89} and hence, via the comparison results of~\cite{Bes04, Bes17}, the
pairing of Nekov\'a\v{r}~\cite{Nek93} when the curve has semistable reduction at all
places above $p$. We will show in future work  that this remains true for general
abelian varieties.

Having the theory of $p$-adic heights in place, we have a simple description of Quadratic
Chabauty when $r=g$, which we now outline.
Let $\iota \colon C\to J$ be an embedding over $\Q$ of the curve into its
Jacobian. Suppose that there exists  a line
bundle $\L$ on $J$ such that $\iota^\ast \L$ is the trivial line bundle $\O_C$ and such that
$L$ itself is not algebraically equivalent to~0. The function $H$ mentioned
above
is going to be $H=h_\L$, the canonical height associated to the line bundle $\L$. Thus,
$H$ is a
quadratic function by our theory of heights. On the other hand, the local components of
the canonical height
pull back to functions $\log_v$
on $\O_C$, so we have a
decomposition on $C(\Q)$:
\begin{equation*}
  H\circ \iota = \sum_v \lambda_v\,,
\end{equation*}
where $\lambda_v(x) = \log_v \circ 1(x)$ and $1$ is a nowhere vanishing
section of $\O_C$.
Let   $T = \{\sum_{q\ne p}l_q\cdot \chi_q(q)\}$, where $l_q$ runs through the values that the
  function 
  $\lambda_{q}$ takes on $C(\Q_q)$.
Our main result is the following:
\begin{thm}\label{T:QCA}
  Suppose that $\rank J(\Q)=g$ and that the closure of $J(\Q)$
  has finite index in $J(\Q_p)$. Then the function 
  $$F\colonequals h_{\L} \circ \iota - \log_p\circ 1\colon C(\Q_p)\to \Q_p$$
  is locally analytic and 
takes values on $C(\Q)$ in $T$. Moreover, $T$ is finite and
  for every $t\in T$, there are only finitely many $x\in C(\Q_p)$ with $F(x)=t$.
\end{thm}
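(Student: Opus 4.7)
The plan is to combine the extension of the canonical height $h_\L$ from $J(\Q)$ to a locally analytic function on $J(\Q_p)$, via the quadratic structure together with the density hypothesis, with the explicit Arakelov description of $\log_v\circ 1$ as an iterated Vologodsky integral on $C(\Q_p)$. In order, I would carry out the following steps: (i) extend $h_\L\circ\iota$ to a locally analytic function on $C(\Q_p)$, so that $F$ itself is locally analytic; (ii) use the adelic decomposition of the global height to express $F$ on $C(\Q)$ as $\sum_{q\ne p,\,q<\infty}\lambda_q$; (iii) analyze the local terms $\lambda_q$ to conclude that $T$ is finite and $F(C(\Q))\subseteq T$; (iv) rule out that $F-t$ vanishes identically on any residue disk, obtaining finite fibers.

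For step (i), the theory of canonical $p$-adic heights established earlier in the paper gives that $h_\L$ is a quadratic function on $J(\Q)$, and the abelian logarithms $\log_{AJ,i}\colon J(\Q_p)\to\Q_p$ (single Coleman/Vologodsky integrals of a basis of holomorphic $1$-forms on $J$) span the $\Q_p$-dual of $J(\Q_p)$. Combined with the assumption that the closure of $J(\Q)$ has finite index in $J(\Q_p)$, this lets us write $h_\L=Q|_{J(\Q)}$ with $Q$ a quadratic polynomial in the $\log_{AJ,i}$, which is locally analytic on $J(\Q_p)$. Hence $Q\circ\iota$ is locally analytic on $C(\Q_p)$. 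The local term $\log_v\circ 1$ is itself locally analytic, being essentially a $p$-adic iterated integral by construction. So $F$ is locally analytic.

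For steps (ii) and (iii), for $x\in C(\Q)$ the adelic decomposition of $h_\L$ gives $h_\L(\iota(x))=\sum_{v<\infty}\lambda_v(x)$ with $\lambda_v=\log_v\circ 1$, so $F(x)=\sum_{q\ne p,\,q<\infty}\lambda_q(x)$. For finite $q\ne p$, $\chi_q$ factors through $\ord_q$, so by the discussion preceding the theorem, $\log_q=\chi_q(q)\cdot v_{\iota^\ast\L}$ for a $\Q$-valuation, and hence $\lambda_q(x)=\chi_q(q)\cdot l_q(x)$ with $l_q(x)\in\Q$ determined by the reduction of $x$ in a regular model of $C_{\Q_q}$ together with the chosen trivialization of $\O_C$. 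Therefore $l_q$ takes only finitely many values on $C(\Q_q)$, and $\lambda_q\equiv 0$ for all but finitely many $q$ (namely those where the trivialization of $\O_C$ extends to the integral model). This yields $T$ finite and $F(C(\Q))\subset T$.

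For step (iv), the space $C(\Q_p)$ is compact and decomposes into finitely many residue disks; on each, $F-t$ is a convergent power series in a uniformizer, so it has finitely many zeros unless it vanishes identically. Ruling out this identical vanishing is the main obstacle. The structure of $F$ is crucial here: $Q\circ\iota$ is a polynomial of degree at most $2$ in single Coleman integrals pulled back to $C$, whereas $\log_v\circ 1$ is a genuine iterated integral whose form is constrained by the Arakelov curvature condition $\cup\alpha=\ch_1(\L)$. Since $\L$ is not algebraically equivalent to zero, $\ch_1(\L)\ne 0$ in $\hdr^2(J)$, which prevents the iterated integral from being expressible as a polynomial in single integrals. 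Hence $F-t$ cannot vanish on any open disk, giving the required finiteness of fibers.
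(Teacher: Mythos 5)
Your steps (i)–(iii) line up with the paper's proof: extend $h_\L$ to $J(\Q_p)$ as a quadratic polynomial in abelian logarithms, use the adelic decomposition on $C(\Q)$, and invoke Lemma~\ref{L:fin_many} for finiteness of $T$. The genuine issue is in step~(iv), which is where all the content lives.

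First, the passage from ``$F-t$ could vanish on a residue disk'' to a contradiction requires more than local analyticity. The paper uses that $F$ is a \emph{Vologodsky} function and invokes the identity principle \cite[Lemma~2.5]{Bes05}: a Vologodsky function vanishing on an open disk vanishes globally. You should make explicit that you are reducing to showing $F$ is not \emph{globally} constant, and that this reduction uses the Vologodsky rigidity, not just compactness of $C(\Q_p)$ and analyticity on disks.

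Second, and more seriously, your nonconstancy argument is missing the right invariant. You argue that $\ch_1(\L)\ne 0$ ``prevents the iterated integral from being expressible as a polynomial in single integrals.'' But the precise mechanism is the curvature form and the $\delbr d$ operator: $\delbr d(Q\circ\iota)$ lands in $\Omega^1(C)\otimes\Omega^1(C)$ (products of single integrals), while $\delbr d(\log_p\circ 1)=\iota^\ast\Curve(\log_\L)$. So the needed statement is $\Curve(\log_\L)\notin \Omega^1(J)\otimes\Omega^1(J)$, i.e.\ $\ch_1(\L)\notin F^2\hdr^2(J)$ --- a strictly stronger statement than $\ch_1(\L)\ne 0$, since $\cup\colon\Omega^1\otimes\hdr^1\to\hdr^2$ maps $\Omega^1\otimes\Omega^1$ into $F^2$ but is not injective. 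Your stated hypothesis $\ch_1(\L)\ne 0$ does not by itself exclude membership in $F^2$; you need the Hodge-theoretic input that the Chern class of an actual line bundle is of type $(1,1)$ (the paper argues over $\C$ via $F^2\cap\overline{F^2}=0$ and reality of Chern classes). Without this, the implication to which you append ``which prevents\ldots'' is not established. Once you supply $\ch_1(\L)\notin F^2$, the rest of your sketch goes through along exactly the paper's lines.
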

See Theorem~\ref{T:QC} for a more precise statement.
If we can make all quantities in Theorem~\ref{T:QCA} explicit, then we obtain a finite
subset of $C(\Q_p)$ containing $C(\Q)$. We note that the two finiteness properties stated below ~\eqref{eq:qc} follow from the fact that the height function with respect to the line bundle $\L$ on $J$ pulls back to a height function on the trivial bundle on the curve. In \S\ref{S:nonconstant}, we show that our method in fact extends to the more
general setting $$\rank J(\Q) < g+\rank \NS(J)-1\,,$$ as in~\cite{BD18}. In order to prove that
the relevant locally analytic functions are nonconstant (so they have
finitely many zeroes), we generalize the $\delbr d$ differential operators
introduced in \cite{Bes05} to certain higher order differential operators
$D_k$ in \S\ref{S:higherdelbar}.

One advantage of the approach to Quadratic Chabauty presented in this paper is that
the function $\lambda_p$ is just $\log_p(1)$, where $\log_p$ is a log function on  
the trivial bundle with easily computed curvature form. 
As explained before, this
determines it up to the integral of an unknown holomorphic differential. An important
observation is that knowledge of this differential is not needed in order to make the method
work. We explain how this function can
be explicitly written as a Coleman or Vologodsky iterated integral.
In future work, we plan to implement this approach in practice, using algorithms
for single~\cite{BKK10, Bal15, BT20, KK21, Kay} and double integrals~\cite{Bal13}.

For $q\ne p$, we show that the function $\lambda_q$ factors through the reduction graph of
$C\otimes \Q_q$, by relating it to a N\'eron function on $J\otimes \Q_q$ with respect
to $L\otimes \Q_q$ (see Proposition~\ref{P:factor}). In
particular, $\lambda_q$ vanishes when $C$ has potentially good reduction at $q$. 
In general, it is a difficult problem to compute 
the possible values of
$\lambda_q$. This is analogous to the situation in~\cite{BD18, BDMTV19}; an approach
to this problem via Chabauty--Kim theory and harmonic analysis on graphs
was developed by Betts and Dogra~\cite{BD}, see
also~\cite[Theorem~3.2]{BDMTV2}. However, these results do not give a
direct algorithm to
compute the functions $\lambda_q$ and their images for $q\ne p$ in the setting
of~\cite{BD18}. Consequently, most examples for which the rational points were
computed using Quadratic Chabauty had the convenient property that there is
a global semistable regular model with irreducible special fibers, which
directly implies that $\lambda_q$ vanishes identically for all $q$ (but see
\cite[Theorem~1.4, \S8.3.1]{BD18},~\cite{Bia20} and~\cite{BP24} for
formulas for $\lambda_q$ and its image in the
case of bielliptic genus~2 curves, and \cite[\S4.4]{BDMTV2}. See also the
    recent paper \cite{BDRHS}, where Betts, Duque-Rosero, Hashimoto and
    Spelier give an algorithm for the computation of local heights for
    hyperelliptic curves using the work of Coleman--Iovita~\cite{CI99} and
    the recent machinery of cluster pictures introduced in~\cite{M2D2}). 

Motivated by this, we give a new construction of 
canonical valuations on abelian varieties using Vologodsky integration. This gives
an essentially uniform way to construct canonical metrics at all places. We
believe that this could be of independent interest, also for canonical
metrics at non-archimedean places in classical or non-archimedean Arakelov
theory. 
In future work, we will give algorithms to compute the (possible values of the) functions $\lambda_q$ in practice using this approach.

Our approach is somewhat similar to work of Colmez in~\cite{Col98}. Colmez
constructs $p$-adic integrals and $p$-adic  Green functions 
on abelian varieties
and, by pullback, on more general varieties.
By~\cite[Appendix~B]{Bes05} this theory is essentially equivalent to the
theory of log functions. For the special case of the theta divisor on
a Jacobian, he defines a certain kind of Green function, so-called
symmetric Green functions, and he uses these to give a new analytic
construction of local height pairings on curves for all finite places, 
recovering local Coleman--Gross height pairings and local N\'eron symbols.
In Theorem~\ref{T:loggreen}, we show that Colmez's symmetric Green functions correspond  to canonical log
functions, with the same auxiliary choices. Hence we recover Colmez's
construction of local height pairings.

We also discuss the relation between our Quadratic Chabauty approach and
the one due to Balakrishnan--Dogra~\cite{BD18}. 
Via the above-mentioned comparison between our new $p$-adic height and the one of
Nekov\'a\v{r}, we show in Proposition~\ref{P:CGcomp} that the global function $H$ is in
fact the same in both constructions up to a
constant factor, and in Theorem~\ref{T:localcomparison} we show that the (non-canonical) local decompositions also agree up to a constant factor if we make compatible auxiliary choices for the two sides. 
As a consequence, we get a new proof,
not using Chabauty--Kim theory, that the local contributions away from $p$
in~\cite{BD18} take only finitely many values and vanish for potentially
good primes.

It is natural to compare our construction to the geometric approach to
Quadratic Chabauty due to Edixhoven--Lido (see~\cite{EL21}), which we now very briefly summarize.
Edixhoven--Lido also use line bundles on the Jacobian $J$ that are trivial when pulled
back to the curve $C$. Hence their requirements for Quadratic Chabauty are the same as ours. They use such line bundles to construct a map from $C$ to a $\mathbb{G}_m$-torsor $T$ over
$J$, spread out over $\Z$. The torsor $T$ is pulled back from the Poincar\'e torsor over $J$.
Edixhoven--Lido then carefully parametrize points lying in the closure
of the integral points $T(\mathbb{Z})$ inside the $\mathbb{Z}_p$-points of
$T$ using $p$-adic formal and analytic geometry. They then pull functions
vanishing on the curve in $T$ along this parametrization back to $C$ to
bound the number of rational points on each residue disc. In \cite[\S9.3]{EL21},
they explain how their approach for writing down equations that cut out
$C(\mathbb{Q})$ inside $C(\mathbb{Q}_p)$ using $T$ is related to the theory
of $p$-adic heights. Our approach circumvents writing down formal analytic
coordinates on higher dimensional varieties such as $T$, by explicitly writing down iterated integrals on the curve $C$ using curvature forms for the line bundle on $J$, once again purely expressed in terms of differential forms on the curve. We have not attempted a direct comparison between their
construction and ours, but note that in~\cite{DRHS}, Duque-Rosero, Hashimoto and Spelier
have recently shown that the subset of $C(\Q_p)$ cut out
using~\cite{EL21} is always a subset of the points cut out
using~\cite{BD18}. By our Theorem~\ref{T:localcomparison}, the same is true
for the points cut out by our construction.

The outline of the paper is as follows. In Section~\ref{sec:vals} we reformulate the theory
of (logarithms of) metrics over non-archimedean local fields, especially of
canonical metrics on abelian varieties, in terms of the notion of valuations.
Section~\ref{sec:background} summarizes the necessary $p$-adic Arakelov theory developed
in~\cite{Bes05}, with a focus on the case of curves. We then use this theory to construct
canonical log functions on abelian varieties over $\Q_p$ in Section~\ref{sec:log}, the
central section of the paper. The global theory of adelic $p$-adic metrized line bundles
and their associated heights is presented in Section~\ref{sec:global}. We then give
comparison results between our heights and those of Mazur--Tate and (in the case of
Jacobians) Coleman--Gross in Section~\ref{sec:comparison}. Section~\ref{sec:qc} contains
our approach to Quadratic Chabauty; in particular, we prove Theorem~\ref{T:QCA}.
We relate the global and local functions (coming from $p$-adic heights) used in our
approach to Quadratic Chabauty to those due to Balakrishnan and Dogra factor in
Section~\ref{sec:BDcomparison}. Our construction of canonical valuations
using Vologodsky integration is contained in Section~\ref{sec:VolVal}.

\acknowl{
We would like to thank Alexander Betts, Francesca Bianchi, Netan Dogra, Bas Edixhoven, Robin de Jong,
Eric Katz and Klaus K\"unnemann for helpful discussions, and Pierre Colmez for
comments on a first version of this paper.
We thank the anonymous referees for particularly careful and useful
reviews.}

\subsection{Notation}\label{S:notation}
We call a variety $X$ over a field $K$ {\em nice} if it is smooth, projective and
geometrically integral. We write $x\in X$ as a
shorthand for $x \in X(\overline{K})$, where $\overline{K}$ is an algebraic closure of $K$. 
For a line bundle $\L/X$, we let $\L^\times$
denote the complement of the zero section in the total space of $\L$. A
\textit{rigidification} of $\L$ is a choice of an element $r \in
L_{x_0}(K)$ for some ${x_0} \in
X(K)$. An \textit{isomorphism of rigidified line bundles} on $X$ is an isomorphism of the
underlying bundles that maps the rigidification on one side to the rigidification on the
other under the induced map on total spaces. \textit{Tensor products and pullbacks of
rigidified line bundles} can analogously be defined. (See \cite[9.5.6]{BG06}.) 

For every prime $p$, we fix an embedding of $\Qp$ into a fixed algebraic closure $\Qpb$. Let $\ord_p$ be the extension of the discrete valuation of $\Q_p$ to $\Qpb$. 

\section{Valuations and canonical local heights away from $p$}\label{sec:vals}
In this section, $X/\Qqb$ denotes a smooth proper variety and $\L$ a line bundle on
$X$. 

\begin{defn}\label{D:val}
  A {\em valuation on $\L$} is a function $v_{\L} \colon \L^\times(\Qqb) \to \Qqb$ such
  that for every fiber $\L_x$
    we have $$v_{\L}(\lambda u) = \ord_q(\lambda )+ v_{\L}(u)\,$$
  for every nonzero $u \in \L_x$, and
  every $\lambda \in \Qqb$. 
    A {\em $\Q$-valuation on $\L$} is a valuation on $\L$ with values in $\Q$.
\end{defn}

\begin{rk}
 When $\L$ and $X$ are defined over a local field $K$ with $\Q_q \subset K \subset \Qqb$, the function $v_{\L}$ is invariant under the
  action of the absolute Galois group $G_K$ of $K$, as we now explain. Pick $x$  in $X(K)$ and a non-vanishing
  section $u$ in $L_x(K)$. Any point in $L_x(F)$ for some extension field
  $F$ of $K$ contained in $\Qqb$ is of
  the form $\lambda u$ for some $\lambda \in F$,  and $\sigma(\lambda u) =
  \sigma(\lambda ) u$ for every $\sigma\in G_K$.
  Now $$v_{\L}(\sigma(\lambda u)) = \ord_q(\sigma(\lambda ))+ v_{\L}(u) = \ord_q(\lambda )+
  v_{\L}(u) = v_{\L}(\lambda u)\,.$$
\end{rk}
\begin{ex}\label{E:alg_val} Let $X$ be a smooth proper variety defined over a local field $K$, and let $\L$ be a line bundle on $X$. Let $R$ be the ring of integers of $K$.  Let $\mathcal{L}$ be a line bundle
  over a proper, flat reduced scheme $\mathcal{X}$ over $R$. Let $X$ be the generic fiber
  of $\mathcal{X}$, and $\L \colonequals \mathcal{L}_K$ the corresponding line bundle over
  $X$. We now define a valuation $v_{\mathcal{L}}$ on $\L$ called the {\em model
  valuation} associated to $\mathcal{L}$.

Let $F$ be a finite extension of $K$, with ring of integers $R_F$ and $x \in X(F)$.  
   Let $s$ be a nonzero (algebraic) meromorphic section of $\L$ such that $x \notin \div(s)$. By
   flatness, there is a unique extension $s_{\mathcal{L}}$ of $s$ to $\mathcal{L}$, and a
   unique 
   section $\bar{x} \colon \Spec R_F \rightarrow \mathcal{X}_{R_F}$ extending $x$. Note that
   $\mathrm{H}^0(\Spec R_F, \bar{x}^*\mathcal{L})$ is a free rank $1$ module over $\Spec R_F$,
   with a rational section $\bar{x}^*s_{\mathcal{L}} \in
   \mathrm{H}^0(\Spec R_F, \bar{x}^*\mathcal{L}) \otimes_{R_F} F \cong F$ ,
   so it makes sense to talk about the valuation of the element $\bar{x}^*s_{\mathcal{L}}$ measured with respect to the rank $1$ free module $\bar{x}^*\mathcal{L}$. For $u = s(x)$,
  define 
  $$
    v_{\mathcal{L}}(u) \colonequals \textup{ valuation of the rational section } \bar{x}^*s_{\mathcal{L}} \textup { of  } \bar{x}^*\mathcal{L}\,.
  $$
  Then $v_{\mathcal{L}}$ is a
  $\Q$-valuation on $\L$. 
  See~\cite[Example~2.7.20]{BG06} for more details. 
\end{ex}
More generally, we can obtain $\Q$-valuations as follows.
\begin{ex}\label{E:real_val}
Let  $\|\cdot\|$ be a locally bounded and continuous real-valued metric on
  $\L$ (see~\cite[\S2.7]{BG06}). Let $\log_{\R} \colon \R_{>0} \rightarrow \R$ denote the usual logarithm on the positive real numbers $\R_{>0}$.
Then $\log_\R \|\cdot\|$ has the scaling property that we want from a valuation. So if 
\begin{equation}\label{E:real_Qval}
  v_{\|\cdot\|}(u) \colonequals -\log_\R \|u\|\cdot {\log_\R(p)}^{-1}
\end{equation} 
  is $\Q$-valued, then it defines a $\Q$-valuation on $\L$.
\end{ex}

\begin{defn}
  Let $f\colon X'\to X$ be a morphism of smooth proper varieties over $\Qqb$. Let $v_{\L}$ be
  a valuation on $\L$. Then we define the {\em pullback valuation} $f^*v_{\L}$ on
  $\L'\colonequals f^*\L$ as follows:  For every $x' \in X'$, there is a canonical identification of fibers $\L'_{x'} \cong \L_{f(x')}$, which glue to give the map $\tilde{f} \colon \L'^\times\to \L^\times$ in the commutative diagram below.
   \begin{equation}\label{}
  \xymatrix{
    \L'^\times\ar[d] \ar[r]^{\tilde{f}}& \L^\times\ar[d] \\
      X' \ar[r]^f & X\\
  }
    \end{equation}
  Then
$$
  f^*v_{\L} \colonequals v_{\L}\circ \tilde{f}\,.
$$ 
\end{defn}

\begin{defn}
  Let $v_{\L}$ be a valuation on $\L$ and let $\M$ be another line bundle on $X$ with a
  valuation $v_{\M}$.
  \begin{enumerate}
    \item We define the {\em sum valuation} on $\L\otimes\M$ by
      $$(v_{\L}+v_{\M})(u\otimes w) \colonequals v_{\L}(u) + v_{\M}(w)\,, $$ 
      where $u\in L_x$ and $w\in M_x$ are nonzero, and $x\in X$. 
    \item We call an isomorphism $g\colon \L \to \M$ an {\em isometry} (with respect to $v_{\L}$
      and $v_{\M}$) if $$v_{\M}\circ g = v_{\L}\,.$$
  \end{enumerate}
\end{defn}
\begin{rk}\label{R:Qval}
The class of $\Q$-valuations is closed under taking sums and pullbacks. 
\end{rk}

Now let $K/\Q_q$ be a finite extension and let $A/K$ be an abelian variety. 
\begin{rk}\label{R:rigidify}
Recall the notion of a rigidification of a line bundle from \S\ref{S:notation}. Suppose that $\L_1$ and $\L_2$ are isomorphic line bundles on $A$ with respective rigidifications $r_1$ and $r_2$. Then an isomorphism of the rigidified line bundles $(\L_1, r_1)$ and
$(\L_2,r_2)$ is an isomorphism $\varphi\colon \L_1\to \L_2$ such that $\varphi(r_1)=r_2$.
This isomorphism exists and is unique. Henceforth, we rigidify every line bundle over $A$
  by fixing a $K$-point in the fiber above~0, following~\cite[\S9.5]{BG06}.
\end{rk}

In particular, 
if $(\L,r)$ is a rigidified symmetric (respectively antisymmetric) line bundle on $A$, there is a unique  isomorphism $[2]^* \L \cong \L^{\otimes 4}$ (see ~\eqref{m-iso1}) (respectively $[2]^* \L \cong \L^{\otimes 2}$ (see ~\eqref{m-iso3})) of rigidified line bundles (where $[2]^*\L,\L^{\otimes 2}, \L^{\otimes 4}$ have the rigidifications induced from $\L$ via pullbacks and tensor powers), and by~\cite[Theorem~9.5.4]{BG06} there is a unique valuation such that this isomorphism is an isometry. This valuation also has the following nice properties.

\begin{prop}\label{P:good}(\cite[Theorem~9.5.7]{BG06},~\cite[Lemma~3.1]{Bet17})
  For every rigidified line bundle $(L,r)$  on $A$, there is a unique valuation
  $v_L$ on $L$ with the following properties:
  \begin{enumerate}[\upshape (a)]
    \item $v_L$ only depends on $(L,r)$ up to isomorphism of rigidified line bundles.
    \item\label{P:goodadd} $v_{L\otimes M} = v_L + v_M$.
    \item\label{P:TrivVal} $v_{\O_A}(x, a) = \ord_q(a)$ if we choose the rigidification~1 of $\O_A$.
    \item\label{P:valfunc} $v_{\varphi^*L} = \varphi^*v_L$ for homomorphisms $\varphi\colon A'\to A$ of
      abelian varieties over $K$.
    \item $v_L$ is locally constant on $L^\times(K)$. 
    \item $v_L$ is $\Q$-valued and has bounded denominator on $L^\times(K)$. 
    \item $v_L(r) = 0$.
  \end{enumerate}
\end{prop}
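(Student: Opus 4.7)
The plan is to adapt Tate's telescoping argument, classically used to construct canonical real-valued heights on abelian varieties, to the $\Q$-valued setting of valuations. The construction rests on the theorem of the cube: for every rigidified line bundle $(L,r)$ on $A$ there is a canonical rigidified isomorphism $[2]^*L \xrightarrow{\sim} L^{\otimes 3}\otimes [-1]^*L$. In particular, for symmetric $L$ (so $[-1]^*L\cong L$ as rigidified bundles) this specialises to $\beta_+\colon [2]^*L \xrightarrow{\sim} L^{\otimes 4}$, and for antisymmetric $L$ to $\beta_-\colon [2]^*L \xrightarrow{\sim} L^{\otimes 2}$. The strategy is to use these canonical isomorphisms as isometry constraints that pin down $v_L$ uniquely on symmetric and antisymmetric line bundles, and then to extend by additivity.

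First I would treat the symmetric case. Pick a starting valuation $v^{(0)}$ on $L$ — for instance the model valuation from Example~\ref{E:alg_val} attached to some integral model. Writing $v_n$ for the sum valuation of $n$ copies of $v$ on $L^{\otimes n}$, a valuation $v$ on $L$ makes $\beta_+$ an isometry precisely when $[2]^*v = \beta_+^*(v_4)$. Any two valuations on $L$ differ by a function $\delta\colon A(\overline K)\to \Q$, and writing $v = v^{(0)}+\delta$ reduces the isometry condition to the functional equation
$$\delta\circ [2] - 4\delta = \phi,$$
where $\phi\colon A(\overline K)\to \Q$ measures the defect of $\beta_+$ as an isometry for $v^{(0)}$ and is bounded by properness of $A$. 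The unique bounded solution is then
$$\delta(x)\colonequals -\sum_{k\geq 0} 4^{-k-1}\,\phi([2^k]x),$$
uniqueness following because any bounded $\delta_0$ with $\delta_0\circ [2]=4\delta_0$ vanishes identically. This produces the unique $v_L\colonequals v^{(0)}+\delta$ making $\beta_+$ an isometry. The antisymmetric case is identical with $4$ replaced by $2$.

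For a general rigidified $(L,r)$ I would use the decomposition
$$L^{\otimes 2}\cong (L\otimes [-1]^*L)\otimes (L\otimes [-1]^*L^{-1}),$$
whose first factor is canonically symmetric and second canonically antisymmetric. Using property~(b) as a defining relation, set $v_L \colonequals \tfrac{1}{2}\bigl(v_{L\otimes [-1]^*L}+v_{L\otimes [-1]^*L^{-1}}\bigr)$ pulled back along the diagonal $u\mapsto u\otimes u$. Properties~(a) and~(g) are built into the construction; (c) holds because $(x,a)\mapsto \ord_p(a)$ on $\O_A$ with rigidification~$1$ is invariant under $[2]^*$ and the canonical identification $\beta_+=\mathrm{id}$, hence is itself the canonical fixed point; (e) is inherited from $v^{(0)}$ since $\delta$ is a uniform limit of locally constant functions; and (b), (d) follow from the uniqueness clause by verifying that both sides of the proposed identities satisfy the same functional equation with matching rigidifications.

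The main obstacle I expect is property~(f): the explicit series only yields $v_L\in \Q_p$ a priori, not $\Q$ with bounded denominators, since the weights $4^{-k-1}$ introduce unbounded powers of $2$. To obtain boundedness of denominators I would compare $v_L$ on $L^\times(K)$ with the classical N\'eron local symbol associated to $(L,r)$ at the place above $p$. The N\'eron symbol factors through the finite component group of the N\'eron model of~$A$ and is therefore $\tfrac{1}{N}\Z$-valued for some integer~$N$ depending only on $(L,r)$. It also satisfies the same functional equation with the same rigidification, so the uniqueness clause forces it to equal $v_L$. This identification — which is the non-formal core of~\cite[Theorem~9.5.7]{BG06} and~\cite[Lemma~3.1]{Bet17} — delivers property~(f) simultaneously with $\Q$-valuedness.
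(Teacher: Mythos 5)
Your proof follows essentially the same dynamical (Tate telescoping) construction that the paper defers to via its citation of \cite[Theorem~9.5.7]{BG06} and \cite[Lemma~3.1]{Bet17}, so the \emph{approach} matches. Two corrections to the execution are worth flagging. First, the series $\delta(x)=-\sum_{k\ge 0}4^{-k-1}\phi([2^k]x)$ converges in $\R$, not in $\Q_p$: the weights $4^{-k-1}$ tend to zero archimedeanly (and do \emph{not} tend to zero $p$-adically for any $p$), so what the telescoping argument produces a~priori is a real-valued metric in the sense of Example~\ref{E:real_val}, not a $\overline{\Q}_p$-valued valuation; the $\Q$-valuedness of (f) must be established before the object even qualifies as a $\Q$-valuation under Definition~\ref{D:val}, and writing ``\,$v_L\in\Q_p$ a~priori'' is precisely the wrong container. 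Second, a uniform limit of locally constant functions need not be locally constant, so your argument for~(e) does not stand as stated; local constancy only falls out \emph{after} (f), since a continuous $\tfrac1N\Z$-valued function on $L^\times(K)$ is automatically locally constant. You do correctly identify the non-formal core of the result --- the $\Q$-valuedness with bounded denominators --- and correctly locate the proof in passage to semistable reduction and the finiteness of the component group of the N\'eron model, which is exactly how the cited references close the argument.
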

  We call $v_L$ the {\em canonical valuation}
  associated to $(L,r)$. 
Betts calls the canonical valuation the {\em N\'eron log-metric} in~\cite{Bet17}. We have chosen different terminology to avoid confusion with the log functions discussed in the next few sections.
We call the valuation on $\O_A$ in \eqref{P:TrivVal} the \textit{trivial
valuation}.

\begin{rk}\label{R:change_rig}
It is easy to see that changing the rigidification $r$ to a rigidification $r'$ changes
  the canonical valuation by the constant $\ord_q(\lambda)$, where $r'=\lambda r$.
  See~\cite[Remark~9.5.9]{BG06}.
\end{rk}

\begin{rk}\label{R:can_real}
  In~\cite[\S9.5]{BG06}, canonical valuations are constructed using a dynamical approach.
See~\cite[Example~8.15]{GK17} for a construction of canonical valuations based on tropical
geometry.
\end{rk}

\begin{rk}\label{R:good_good}
  If $A$ has good reduction, then canonical valuations are model valuations on the N\'eron
  model by~\cite[Example~9.5.22]{BG06}.  In general, canonical valuations are not model
  valuations; they are not even induced by a formal model, see~\cite{Gub03}.
\end{rk}

\section{$p$-adic Arakelov theory}\label{sec:background}
In this section we recall the parts of Vologodsky (and Coleman) integration theory and $p$-adic Arakelov theory that will be used in later sections.  The main result is Proposition~\ref{P:excurve}, which associates a certain $p$-adic analytic function called a log function to a line bundle equipped with a curvature form. Log functions  (Definition~\ref{D:logfunction}) are the $p$-adic analytic analogue of the valuations in Definition~\ref{D:val}, and will be used in Section~\ref{sec:global} to define the local contribution at $p$ in the decomposition of the global $p$-adic height function. Essentially everything we need can be found in Sections 2 and 4 of \cite{Bes05}.

\subsection{Vologodsky functions}
Coleman's integration theory \cite{Col82,Col85}, originally developed as a theory of iterated integration on
certain overconvergent spaces with good reduction over closed subfields of $\C_p$, was
recast as a theory of canonical paths in fundamental groupoids of the same spaces in
\cite{Bes02}, and a theory of Coleman
functions is derived from the theory of paths. Shortly afterwards~\cite{Vol03}, canonical paths for the
fundamental groupoid of arbitrary smooth varieties over finite extensions of $\Q_p$ were
constructed by Vologodsky, and the associated theory of Vologodsky functions, which we now
recall, was derived in \cite[Section 2]{Bes05}.
Note that Vologodsky functions below are called (Vologodsky) Coleman functions in \cite{Bes05}.

\begin{rk}\label{locandef}
 In what follows, when we refer to a locally analytic function, we mean a
  function $f$ such that every point $x$ has a neighborhood, in the
  $p$-adic topology, and a power series centered at $x$ converging on that
  neighborhood to $f$. For curves, a locally meromorphic function (or section of a line bundle) is a function (or section) such that every point $x$ has a punctured neighborhood in the $p$-adic topology and a finite to the left Laurent series that converges to $f$ on that neighborhood.
\end{rk}

Let $K$ be a finite extension of $\Q_p$, with a choice of embedding $K \rightarrow \overline{K}$ into an algebraic closure. Let $X$ be a smooth, geometrically connected
algebraic variety over $K$. We first summarize the results in \cite{Bes05}. For this we
fix a branch, denoted $\log$, of the $p$-adic logarithm. We insist that it takes $K$-values on
$K^\times $.

\begin{thm}\label{T:volsum}
Let $X$ and $\log$ be as above.
\begin{enumerate}[\upshape (a)]
\item \label{t1} For any locally free sheaf $\mathcal{F}$ on $X$ there is a $K$-vector space $\mathcal{F}_V(X)$ of
Vologodsky functions with values in $\mathcal{F}$, and in particular Vologodsky functions and
differential forms, 
  $\acol(X)=\ocol^0(X)$ and $\ocol^i(X)$, and differentials $\dd\colon  \ocol^i(X) \to \ocol^{i+1}(X)$,  such that the sequence  
\begin{equation}\label{dexact}
    0\to K \to \acol(X)\xrightarrow{\dd} \ocol^1(X)
  \xrightarrow{\dd} \ocol^2(X)
\end{equation}
    is exact.
  There are products $\ocol^i(X)\otimes \ocol^j(X)\to \ocol^{i+j}(X)$ compatible with the differentials.
  \item \label{t2} Let $\Omega^i(X)$ be the space of global holomorphic
    $i$-forms on $X$, and let $\oloc^i(X)$ (\cite[Definition~3]{Bes02}) be
    the space of locally analytic $\overline{K}$-valued $i$-forms on $X$,
    as in the penultimate paragraph of page 321 in \cite{Bes05}. There are embeddings 
  \begin{equation}\label{ocolinj}
    \Omega^i(X)\inject \ocol^i(X) \inject \oloc^i(X)
  \end{equation}
  compatible with
  differentials and products.
  \item \label{t3} Let $z$ be a coordinate on $\mathbb{A}^1$. 
    There exists a function $\log(z)\in \acol(\mathbb{A}^1-\{0\})$ with the property that
  $d\log(z) = \frac{dz}{z}$ in $ \Omega^1(X)\inject \ocol^1(X) $ and that as a locally analytic function it is our chosen branch of the $p$-adic
  logarithm (with values in $\overline{K}$).
  \item \label{t4}  Let  $f\colon X\to Y$ be a map of varieties. Then there are pullback maps $f^\ast$ on spaces
  of Vologodsky functions and differential forms which are compatible with the differentials,
  the product structure and with the embeddings~\eqref{ocolinj}.
  \item \label{t5} The induced presheaves $U\to \ocol^i(U)$ on the Zariski site of $X$ are sheaves.
  \item \label{t6} There is an $\acol(X)$-submodule $\ocola^1(X)\subset \ocol^1(X)$ of ``one time iterated forms" (i.e. forms that locally look like
      $\sum_i \omega_i \int \eta_i$ for forms $\omega_i,\eta_i \in \Omega^1(X)$) and a ``delbar"
  operator $\delbr\colon \ocola^1(X) \to \Omega^1(X) \otimes \hdr^1(X)$ sitting in a short exact
  sequence
    \begin{equation}\label{delbarexact}
    0\to \Omega^1(X)\to \ocola^1(X) \xrightarrow{\delbr} \Omega^1(X) \otimes \hdr^1(X)\;,
  \end{equation}
  such that when $X$ is affine, the sequence is exact on the right and $\delbr (\omega \int
  \eta) = \omega \otimes [\eta]$ for any two forms $\omega,\eta$ on $X$,
  with $[\eta]$ the cohomology class of $\eta$.
\end{enumerate}
\end{thm}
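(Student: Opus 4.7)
The plan is to treat Theorem~\ref{T:volsum} as a summary theorem and assemble its six parts by pulling the relevant constructions from Vologodsky~\cite{Vol03} and from Sections~2 and~4 of~\cite{Bes05}, adapting them to the notation fixed here. The central object is the de Rham fundamental groupoid of $X/K$ together with Vologodsky's canonical paths between any two ($K$- or tangential) base points. Once these paths are in hand, a Vologodsky function with values in a locally free sheaf $\mathcal{F}$ is defined as a locally analytic section of $\mathcal{F}$ that, in a neighborhood of any point, is expressible as a $K$-linear combination of iterated integrals of algebraic forms against holomorphic sections of $\mathcal{F}$, with transition between neighborhoods governed by parallel transport along canonical paths. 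I would take this as the definition of $\mathcal{F}_V(X)$, and define $\ocol^i(X) = (\Omega^i_{X/K})_V(X)$ after trivializing $\Omega^i$ Zariski-locally.

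With this setup, part~\eqref{t1} is largely formal. The differential $d$ is the restriction of the usual exterior derivative on locally analytic forms, and the sequence~\eqref{dexact} is exact because a Vologodsky function in the kernel of $d$ is locally constant, and the canonical paths (together with the hypothesis that $X$ is geometrically connected) force the local constants to agree globally; this gives the inclusion of $K$ at the left and exactness at $\acol(X)$. Exactness at $\ocol^1(X)$ is by construction: a closed Vologodsky $1$-form is, up to an exact piece, an iterated integral of one step, hence the derivative of its primitive. Products are the wedge products of the underlying locally analytic forms, and one checks that they preserve the Vologodsky property since iterated integrals are closed under shuffle products. Parts~\eqref{t2}, \eqref{t4}, \eqref{t5} are then structural: \eqref{t2} is the two tautological inclusions --- holomorphic forms are Vologodsky (as $0$-step iterated integrals) and Vologodsky forms are locally analytic by definition; \eqref{t4} follows by naturality of canonical paths under morphisms of varieties and pullback of forms; \eqref{t5} is the local nature of the definition combined with the fact that canonical paths are compatible with Zariski localization. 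Part~\eqref{t3} amounts to picking the primitive of $dz/z$ on $\Gm$: the choice of branch of $\log$ on $K^\times$ rigidifies the additive constant, and the resulting function is Vologodsky as a one-step iterated integral.

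The most substantive part is~\eqref{t6}. I would define $\ocola^1(X) \subset \ocol^1(X)$ as the submodule of $1$-forms of ``iteration depth $\leq 1$'', i.e.\ $K$-linear combinations of expressions $\omega \cdot \int \eta$ plus a holomorphic form on $X$. The delbar map is forced by the formula $\delbr(\omega \int \eta) = \omega \otimes [\eta]$: I would show this is well-defined by checking that if $\sum \omega_i \int \eta_i$ is itself holomorphic on $X$ (i.e.\ already in $\Omega^1(X)$), then $\sum \omega_i \otimes [\eta_i] = 0$ in $\Omega^1(X) \otimes \hdr^1(X)$; this reduces to a standard fact about iterated integrals being nontrivial exactly when their inner form classes are nontrivial modulo exact forms, and in the affine case surjectivity follows because every class in $\hdr^1(X)$ is represented by an algebraic $1$-form, which can then be iterated against any $\omega$ to lift any element of $\Omega^1(X)\otimes \hdr^1(X)$.

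The main obstacle --- and the reason this is genuinely a theorem rather than a definition --- is the construction of Vologodsky's canonical paths on a \emph{non-proper} smooth variety over a general finite extension of $\Q_p$, together with verifying that they are stable under products and pullbacks and that they specialize to Coleman's paths in the good-reduction case. I would not redo this here but cite~\cite{Vol03} and the treatment in~\cite[\S2]{Bes05}. Everything else in the statement is a consequence, obtained by unraveling the definitions and the functorial properties of canonical paths.
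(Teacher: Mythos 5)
Your overall strategy matches the paper's: Theorem~\ref{T:volsum} is a summary statement whose content lives in \cite[\S2]{Bes05} and \cite{Vol03}, and the proof is a pointer to the relevant results there (Theorem~2.3, Propositions~2.6 and~2.7 of \cite{Bes05} for parts~\eqref{t1}, \eqref{t5}, \eqref{t6}; the surrounding discussion on pages~321--322 for~\eqref{t2} and~\eqref{t4}; and \cite[Theorem~1.16(5)]{Vol03} for~\eqref{t3}). You reach the same conclusion and explicitly say you would cite rather than rederive.

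That said, there is a mild but noteworthy overclaim in your sketch. You assert that apart from the existence of canonical paths, everything in the theorem is ``a consequence, obtained by unraveling the definitions and the functorial properties of canonical paths.'' This is not the case for exactness at $\ocol^1(X)$ in~\eqref{dexact}, nor for the delbar sequence in~\eqref{t6}. Exactness at $\ocol^1(X)$ is the statement that every closed Vologodsky $1$-form on $X$ admits a Vologodsky primitive; your justification --- that ``a closed Vologodsky $1$-form is, up to an exact piece, an iterated integral of one step'' --- is not a correct description of the general closed $1$-form (which can have arbitrary iteration depth) and is not a proof of exactness. This statement is exactly \cite[Theorem~2.3]{Bes05}, and it requires genuine work with the abstract presentation as triples $(M,v,s)$ and a cohomology computation, not a formal unraveling. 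Similarly, building the globally defined operator $\delbr$ on $\ocola^1(X)$ and proving exactness (and right-exactness on affines) of~\eqref{delbarexact} requires a sheaf-theoretic gluing argument; the pointwise formula $\delbr(\omega\!\int\!\eta)=\omega\otimes[\eta]$ only characterizes $\delbr$ on affine pieces and does not by itself yield well-definedness globally. That is \cite[Proposition~2.7]{Bes05}. If you cite these two results explicitly, your proof closes; as written, the passage treating them as consequences of the definitions has a gap.
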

\begin{proof}
This follows from Section 2 of~\cite{Bes05}. We point to the relevant references. 
The product structure and the differential in \eqref{t1} and the interpretation as locally
analytic functions in~\eqref{t2} are all discussed at the
bottom of page 321.
The exact diagram in~\eqref{t1} is Theorem 2.3.
Functoriality as in~\eqref{t4} is discussed at the bottom of page 322.

  In Vologodsky's theory with a variable branch of the logarithm (see \S~\ref{subsec:VarVol} for more details), the integral of $\frac{dz}{z}$ is the universal logarithm $\log_{\mathrm{univ}}$.

\begin{equation}\label{E:loguniv}
  \log=\log_{\mathrm{univ}}: \overline{K}^\times\to  \overline{K} \oplus
  \overline{K}\log(p),\; \log_{\mathrm{univ}}(p)=
  \log(p)
\end{equation}
where $\log(p)$ is a formal variable. This is
proved in~\cite[Theorem~1.16(5)]{Vol03}. We get \eqref{t3} by specializing
  to our chosen branch.
Returning to~\cite{Bes05}, the sheaf property~\eqref{t5}  is Proposition~2.6 there, and the $\delbr$
operator is recalled immediately following the proof of this proposition, with~\eqref{t6} being
Proposition~2.7.
\end{proof}

\begin{rk}\hfill
  \begin{enumerate}[\upshape (i)]
    \item The space $\O_V(X)$ consists of locally analytic functions that locally look
      like iterated Vologodsky 
            integrals. We will discuss these integrals in more detail in the special case
            of curves below.
    \item 
Compared to~\cite{Bes05}, we have chosen to reverse the order of the terms $\Omega^1(X)
\otimes \hdr^1(X)$ in (f).
\item 
  When $X$ has good reduction, Vologodsky functions essentially 
      coincide with Coleman functions and the
      resulting theories of iterated integrals are the same in this case. (See
      \cite[Remark~2.13]{Bes05}.) 
    \item The space $\O_V(\Spec(K))$ is identified with $K$ by taking the value at the
      underlying physical point. In particular, by functoriality, the values of Vologodsky
      functions on $K$-rational points of $X$ are always in $K$.\label{KtoK}
      \item\label{R:AbtVol} We stress that Vologodsky integration is a theory for algebraic varieties and not rigid analytic varieties.
  \end{enumerate}
  \label{Volthmcom}
\end{rk}

\subsection{Log functions for line bundles with curvature forms}\label{subs32}
Let $\L$ be a line bundle over $X$ defined over $\Qpb$. 

\begin{defn}\label{D:logfunction}\cite[Definition~4.1]{Bes05}
 A \emph{log function} on $\L$ is a function $\log_{\L} \in \acol(\L^{\times})$ 
 that satisfies the following two conditions. 
\begin{itemize}
 \item For any $x\in X(\Qpb)$, any nonzero $u$ in the fiber
$\L_x$ and a nonzero constant $\lambda \in \Qpb$ one has $\log_\L(\lambda u)= \log(\lambda)+\log_\L(u)$.
 \item $d \log_\L\in \ocola^1(\L^\times)$.
\end{itemize}
The pair $(\L,\log_{\L})$ will be called a \emph{metrized line bundle} on $X$, and
  $\log_{\L}$ a metric on $\L$. 
\end{defn}

The second condition in the definition of log functions allows one to associate a certain
curvature form to log functions, analogous to the construction of metrized line bundles
over $\R$. The key result about $p$-adic log functions is \cite[Proposition~4.4]{Bes05},
which shows that log functions are determined (up to the addition of the integral of a holomorphic form) by their corresponding curvature forms:
   \begin{prop}\label{P:excurve}                                          
  Suppose that $X$ is proper. Let $\pi\colon \L^\times \to X$ be the projection.
  \begin{enumerate} [\upshape (a) ]
  \item
    Suppose that $\overline{\L} = (\L, \log_{\L})$ is a metrized line bundle on $X$ such
      that 
$$\ch_1(\L)\in \im\left(\cup\colon  \Omega^1(X)\otimes \hdr^1(X )\to         
    \hdr^2(X)\right).$$
      Then there exists a unique element,
      $\Curve(\overline{\L})\in \Omega^1(X)\otimes \hdr^1(X )$,                               
      such that $\pi^\ast \Curve(\overline{\L})=\delbr d \log_\L$. The element
      $\Curve(\overline{\L})$ is called the \emph{curvature form} of the metrized
      line bundle $\overline{L}$ and it satisfies the relation $\cup            
      \Curve(\overline{\L})=\ch_1(\L)$.     
  \item Conversely, suppose that $\L $ is a line bundle on
    $X$ and  that $\alpha\in \Omega^1(X)\otimes \hdr^1(X )$ satisfies $\cup                    
    \alpha = \ch_1(\L)$. Then there exists a log function $\log_\L$ on $\L$ such that the
      curvature of $(\L, \log_\L)$ is $\alpha$.
  \end{enumerate}          
\end{prop}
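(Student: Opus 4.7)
The plan is to treat the two parts in parallel, with the local description of log functions on trivializations as the common thread.

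For part (a), the goal is to show that $\delbr d\log_\L$, a priori a form on $\L^\times$, is in fact pulled back from $X$. First I would choose a cover of $X$ by affine opens $U_i$ over each of which $\L$ is trivialized by a section $s_i$, giving $\L^\times|_{U_i}\cong U_i\times\Gm$ with fiber coordinate $z_i$. The scaling property of a log function forces
\[
\log_\L|_{\L^\times|_{U_i}} = \log(z_i) + \pi^* f_i, \qquad f_i := \log_\L\circ s_i \in \acol(U_i).
\]
Differentiating and applying $\delbr$, and noting that $d\log(z_i) = dz_i/z_i$ is holomorphic hence in the kernel of $\delbr$ by the exact sequence \eqref{delbarexact}, I obtain $\delbr d\log_\L|_{\L^\times|_{U_i}} = \pi^*(\delbr df_i)$; the condition $d\log_\L \in \ocola^1(\L^\times)$ built into the definition of a log function ensures $\delbr$ is defined on $d\log_\L$. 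On overlaps $U_{ij}$ the transition functions $g_{ij}=s_i/s_j$ satisfy $f_i - f_j = \log(g_{ij})$, and $\delbr d\log(g_{ij}) = 0$ since $d\log(g_{ij})$ is holomorphic, so the local forms $\delbr df_i$ agree on overlaps. Using the sheaf property in Theorem~\ref{T:volsum} together with the hypothesis that $\ch_1(\L)$ lies in the image of $\cup$ (so the local cohomology classes lift from $\hdr^1(U_i)$ to $\hdr^1(X)$), the $\delbr df_i$ glue to a unique $\Curve(\overline\L)\in\Omega^1(X)\otimes\hdr^1(X)$ with $\pi^*\Curve(\overline\L)=\delbr d\log_\L$; uniqueness is immediate since $\pi^*$ is injective upon pulling back by any local section of $\pi$.

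For the Chern class identity $\cup\Curve(\overline\L)=\ch_1(\L)$, I would work in \v{C}ech-de Rham cohomology for $\{U_i\}$. The standard representative of $\ch_1(\L)$ is the 1-cocycle $\{d\log(g_{ij})\}$ with values in holomorphic 1-forms. For the left-hand side, I would use the identity $\cup\delbr\omega = \pm d\omega$ for $\omega=\eta\int\omega'\in\ocola^1$, which is immediate from $d(\eta\int\omega') = \pm\eta\wedge\omega'= \pm\cup(\eta\otimes[\omega'])$. Applied locally this shows $\cup\Curve(\overline\L)|_{U_i} = \pm d(df_i) = 0$, so the global class in $\hdr^2(X)$ is captured by the \v{C}ech coboundary of the 0-cochain $\{f_i\}$ of local Vologodsky primitives, which on overlaps equals $\{\log(g_{ij})\}$; its de Rham differential is the cocycle $\{d\log(g_{ij})\}$, yielding $\ch_1(\L)$.

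For part (b), the strategy is to reverse the construction above. Given $\alpha$ with $\cup\alpha=\ch_1(\L)$, cover $X$ by affine opens $U_i$ trivializing $\L$, so $\ch_1(\L)|_{U_i}=0$ and hence $\cup\alpha|_{U_i}=d\mu_i$ for some $\mu_i\in\Omega^1(U_i)$. Surjectivity of $\delbr$ on affines (Theorem~\ref{T:volsum}) gives $\eta_i\in\ocola^1(U_i)$ with $\delbr\eta_i=\alpha|_{U_i}$, and the identity from part (a) shows $d\eta_i=\pm\cup\delbr\eta_i=\pm\cup\alpha|_{U_i}=\pm d\mu_i$, so $\eta_i\mp\mu_i$ is closed. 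Exactness of \eqref{dexact} on affines yields $f_i\in\acol(U_i)$ with $df_i=\eta_i\mp\mu_i$, giving $\delbr df_i=\alpha|_{U_i}$. Setting $\ell_i := \log(z_i) + \pi^*f_i$ produces a local log function on $\L^\times|_{U_i}$ with curvature $\alpha|_{U_i}$. The differences $\ell_i - \ell_j$ descend along $\pi$ to a \v{C}ech 1-cocycle $c_{ij} := f_i - f_j - \log(g_{ij})\in\acol(U_{ij})$ whose exterior derivatives $\{dc_{ij}\}$ form a cocycle in holomorphic 1-forms representing, by the computation in (a), the class $\cup\alpha-\ch_1(\L)=0$ in $\hdr^2(X)$. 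A \v{C}ech-de Rham diagram chase, exploiting affineness of the cover, then produces correction functions $h_i\in\acol(U_i)$ with $dh_i$ holomorphic and $h_i-h_j=c_{ij}$; the modified $\ell_i - \pi^*h_i$ agree on overlaps and glue, via the sheaf property, to the desired global $\log_\L$, whose curvature is unchanged because the correction has holomorphic derivative.

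The main obstacle, and the technical heart of the argument, lies in part (b): one must produce correction functions $h_i$ with \emph{holomorphic} derivatives (so that subtracting $\pi^*h_i$ preserves the curvature equation), rather than merely arbitrary Vologodsky functions. The key point is that the hypothesis $\cup\alpha=\ch_1(\L)$ is precisely what kills the resulting obstruction class in $\hdr^2(X)$ via the \v{C}ech-de Rham double complex, making such $h_i$ available. Establishing the identity $\cup\delbr\omega=\pm d\omega$ on $\ocola^1$ in a form robust enough to feed into the double-complex bookkeeping is a prerequisite delicate point, but once it is in hand both existence and the Chern class compatibility proceed uniformly.
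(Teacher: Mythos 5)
Your proposal follows essentially the same route as the proof of \cite[Proposition~4.4]{Bes05} that the paper cites: trivialize $\L$ on an affine cover, write $\log_\L$ locally as $\log(z_i)+\pi^*f_i$, observe that $\delbr d\log_\L$ restricts to $\pi^*(\delbr df_i)$, and use \v{C}ech cocycles to glue and to identify $\cup\Curve(\overline\L)$ with $\ch_1(\L)$. This matches what the paper itself flags in \S3.3.3 about log functions in arbitrary dimension being constructed in \cite{Bes05} ``using a careful argument using \v{C}ech cocycles.''

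Two places in your sketch deserve sharpening. First, in part (a) the hypothesis $\ch_1(\L)\in\im(\cup)$ plays no constructive role and is not what makes the local curvatures glue; indeed it is a \emph{consequence} of the existence of the log function (it follows from the very identity $\cup\Curve(\overline\L)=\ch_1(\L)$ you prove at the end) and is listed in (a) only so that (a) and (b) are exact converses. Your parenthetical ``so the local cohomology classes lift from $\hdr^1(U_i)$ to $\hdr^1(X)$'' also points the restriction map the wrong way. The correct statement is that $\delbr d\log_\L$ is a global element of $\Omega^1(\L^\times)\otimes\hdr^1(\L^\times)$ whose first tensor factor visibly lies in $\pi^*\Omega^1(X)$ (a genuine sheaf-theoretic check), and whose second factor is then forced into $\pi^*\hdr^1(X)$ by restriction to the $\L^\times|_{U_i}\cong U_i\times\Gm$ and K\"unneth; the hypothesis is never used. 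Second, the asserted identity $\cup\delbr\omega=\pm d\omega$ is not literally correct: for $\omega=\eta\int\omega'$ one has $d\omega=\pm\,\eta\wedge\omega'+\int\omega'\cdot d\eta$, so the extra $\int\omega'\cdot d\eta$ term must be acknowledged. It vanishes in your applications precisely because the relevant $\eta$'s are restrictions of elements of $\Omega^1(X)$ with $X$ proper, hence closed; without saying this, both the \v{C}ech--de Rham computation closing (a) and the claim ``$\eta_i\mp\mu_i$ is closed'' in (b) appear to conflate a Vologodsky $2$-form with a classical one. With these two points repaired the argument goes through as you sketch.
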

If $\overline{\L} = (\L, \log_\L)$, then we sometimes write $\Curve(\log_\L)$
for $\Curve(\overline{L})$, and we call
$\Curve(\log_\L)$ the curvature form of $\log_L$.
\begin{rk}\label{R:logfunctionuniqueness}
If $\log_\L$ and $\log'_\L$ are two different log functions for the same curvature form, then 
\[ d(\log'_\L-\log_\L) \in \pi^*\Omega^1(X) \subset \Omega^1(\L^\times) \subset \ocola^1(\L^\times), \]
since $\ker(\delbr) = \Omega^1(\L^\times)$ by Theorem~\ref{T:volsum}~(\ref{t1}), and since
  the difference of any two log functions is constant along fibers of $\pi \colon
  \L^\times \to X$ by the first defining property of a log function.
  In the case of curves, this can also be seen from the explicit
  construction of log functions we give in~\S\ref{subsec:logcurves}, where we solve for an (algebraic) meromorphic form with
  prescribed residues. The space of such meromorphic forms, if non-empty, is a torsor for
  $\Omega^1(X)$.  
\end{rk}
\begin{rk}
When $X$ is an abelian variety, then $\ch_1(\L)\in \im\left(\cup\colon  \Omega^1(X)\otimes \hdr^1(X )\to         
    \hdr^2(X)\right)$ for every line bundle $\L$. One can explicitly write down
    curvature forms for the Poincar\'e bundle (see
    Proposition~\ref{P:curvspace}), and this induces curvature forms,
    and hence log functions on every line bundle on $X$. (See Definition~\ref{D:arblog}.)
    Moreover, the curvature form determines the log function up
    to a linear form.
\end{rk}

\begin{ex}\label{E:trivmetric}[Trivial log function on the trivial bundle]
 For any variety $X$ as above, we have the trivial log function $\log^{\mathrm{triv}}_{\O_X}$ on $\O_X$ with curvature $0$ defined by
 \[ \log^{\mathrm{triv}}_{\O_X}(x,u) \colonequals \log(u), \]
 where we have used the isomorphism $\O_X^{\times} \cong X \times \mathbb{G}_m$ and the fixed branch of the $p$-adic logarithm. 
\end{ex}

\begin{defn}\label{D:relatedlogfunctions}\hfill
\begin{enumerate}
 \item (Tensor products,  \cite[Definition~4.3]{Bes05}) If $(\L,\log_{\L})$ and $(\M,\log_{\M})$ are two metrized line bundles with corresponding curvature forms $\alpha$ and $\beta$, then $\log_{\L} \otimes \mathrm{id}_M + \mathrm{id}_L \otimes \log_{\M}$ is a log function for $\L \otimes \M$ with curvature $\alpha+\beta$.
 \item\label{D:dividinglogfunctions}  (Roots of curvatures and log functions) 
 Let $\L$ be a line bundle and let $\M \colonequals \L^{\otimes m}$ for some nonzero integer $m$. If $\log_{\M}$ is a log function for $\M$ with curvature form $\alpha$, then we have an associated log function $\log_{\L} \colonequals \frac{1}{m} \log_{\M}$ for $\L$ with curvature form $\alpha/m$ defined as follows. Let $s \in \L^\times$. Then
\[ \log_{\L}(s) = \left( \frac{1}{m} \log_{\M} \right) (s) \colonequals  \frac{1}{m} \left( \log_{\M}(s^{\otimes m}) \right). \]
 \item (Pullbacks, \cite[Proposition~4.6]{Bes05}) If $(\L,\log_{\L})$ is a metrized line
   bundle on a smooth, geometrically connected variety $Y/K$ with curvature $\alpha$ and $f \colon X \rightarrow Y$ is a morphism, then $(f^*\L,f^*(\log_{\L}))$ is a metrized line bundle on $X$ with curvature $f^*\alpha$.
 \item (Isometries) Let $(\L,\log_{\L}), (\M,\log_{\M})$ be metrized line bundles on $X$, with an isomorphism of line bundles $\L \cong \M$. Let $\tilde{f} \colon {\L}^\times \rightarrow \M^\times$ be the induced morphism. We say that the isomorphism $L \cong M$ is an {\textit{isometry}} if $\tilde{f}^*(\log_{\M}) = \log_{\L}$. 
\end{enumerate} 
\end{defn}

\subsection{The case of curves}
We now provide more details on iterated integrals and log functions in the
one-dimensional case.
This is all we need for our application to Quadratic Chabauty in Section~\ref{sec:qc}.
In particular, we sketch how to explicitly construct a log function starting with a given
curvature form. 

\subsubsection{Iterated integrals as Vologodsky functions}\label{331}

Suppose that $X=C$ is one-dimensional. As there are no locally analytic $2$-forms on $C$ we
have $\oloc^2(C)=0$ and therefore the differential $d\colon \acol(C)\to \ocol^1(C)$ is
surjective. If $\omega_1,\ldots,
\omega_k \in \Omega^1(C)$, then we can iteratively 
define \begin{equation}\label{iterated-general}
  \int_x^z \omega_1 \circ \cdots \circ \omega_k = \int_x^z \left( \omega_1 \int_x^z \omega_2 \circ
  \cdots \circ \omega_k \right)\,,
\end{equation}
where $\int_x^z$ means the unique preimage under $d$ which vanishes at $x$. We can view this
iterated integral as the $v_0$-component of a solution of the differential equation 
\begin{equation*}
  d v_k=0,\; dv_{k-1}= \omega_k v_k,\; \ldots, dv_0 = \omega_1 v_1 
\end{equation*}
with $v_k=1$, or, in a different language, as the $v_0$-component of the horizontal section for
the connection 
\begin{equation}\label{basicconnection}
 \nabla(v) = dv- A\cdot v,\text{ with }  A= 
     \begin{pmatrix}
     0 & \omega_1 & 0 & \cdots & 0\\
     0 & 0 & \omega_2 & \cdots & 0\\
     \vdots&\vdots &  & \ddots & \\
     0 & 0 & 0 & \cdots & \omega_k\\
     0 & 0 & 0 & \cdots & 0
     \end{pmatrix}.  
\end{equation}

\subsubsection{Single and double integrals on curves}
Let $C/K$ be a nice curve. 
Let $x \in C$ and let $z$ be a coordinate in a Zariski neighborhood of
$x$. By abuse of notation, we also let $\log(z)$ denote the Vologodsky function defined on
a punctured neighborhood of $x$ obtained by pulling back the function in
Theorem~\ref{T:volsum}~\eqref{t3} by the coordinate $z$ and using
Theorem~\ref{T:volsum}~\eqref{t4}. We also suppress the choice of base point used to normalize iterated integrals below, and assume that both sides are normalized correctly so that equality holds.
\begin{lemma}\label{L:merointexist}\label{P:intmero}
 Let $\eta$ be an (algebraic) meromorphic form on $C$ and let $\omega \in \Omega^1(C)$. Let $c_x \colonequals \Res_x(\eta)$. 
Then there is an open neighborhood $V_x$ of $x$ in the $p$-adic topology, and a Laurent
  series $g$ centered at $x$ such that, with respect to a local parameter $z$ at $x$ on $V_x$,
  we have on $V_x \setminus \{x\}$ the equality $\int^z \eta =
  g(z) + c_x \log(z)$. Furthermore, $\res_x (g \omega)$ is independent
  of the choice of $g$, and if $c_x = 0$, then $\int^z (\omega \int^z \eta)
  = h(z) + \res_x (g \omega) \log(z)$ on $V_x \setminus \{x\}$ for some
  Laurent series $h$.
\end{lemma}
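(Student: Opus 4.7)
The plan is to reduce the entire statement to an explicit local Laurent series computation on a punctured $p$-adic disk around $x$, using the sheaf property of $\ocol$ (Theorem~\ref{T:volsum}\eqref{t5}) to pass from the curve down to a small analytic neighborhood $V_x$, and pulling back the universal logarithm along the coordinate $z$ via Theorem~\ref{T:volsum}\eqref{t3}--\eqref{t4} to handle the one non-integrable polar term.

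First I would shrink $V_x$ so that $z$ is a uniformizer on $V_x$ and $\eta$ admits a convergent Laurent expansion $\eta = \sum_{k \geq -N} a_k z^k\, dz$ with $a_{-1} = c_x$. Each term with $k \ne -1$ has the genuine Laurent primitive $\tfrac{a_k}{k+1} z^{k+1}$ (which lies in $\ocol(V_x \setminus \{x\})$ via the chain of embeddings $\Omega^0 \hookrightarrow \ocol^0 \hookrightarrow \oloc^0$), while the residual term $c_x\, dz/z$ integrates to $c_x \log(z)$ by Theorem~\ref{T:volsum}\eqref{t3}. Summing produces a Laurent series $g(z)$ convergent on $V_x \setminus \{x\}$ with $\int^z \eta = g(z) + c_x \log(z)$; indeed both sides are Vologodsky primitives of $\eta$ on $V_x \setminus \{x\}$, so by the exactness in Theorem~\ref{T:volsum}\eqref{t1} they differ by an additive constant which can be absorbed into $g$ after matching normalizations.

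For independence of $\res_x(g\omega)$, any two valid choices $g, g'$ differ by a constant by the previous paragraph, so $(g - g')\omega$ is a constant multiple of the holomorphic form $\omega$ and contributes no residue at $x$. For the second part, with $c_x = 0$ we have $\int^z \eta = g(z)$ locally as a genuine Laurent series; multiplying against the Taylor expansion $\omega = \sum_{j \geq 0} b_j z^j\, dz$ produces a meromorphic one-form $g(z)\omega$ on $V_x$ whose residue at $x$ is by definition $\res_x(g\omega)$. Applying the first half of the lemma to $g(z)\omega$ in place of $\eta$ then yields a Laurent series $h(z)$ and the equality $\int^z(\omega \int^z \eta) = h(z) + \res_x(g\omega)\log(z)$ on $V_x \setminus \{x\}$, as required.

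The main subtlety will be that the Laurent expansion of $\eta$ (and of $g\omega$) need not converge on any Zariski neighborhood of $x$, so genuine shrinking to a sufficiently small $p$-adic disk $V_x$ is essential for term-by-term integration to produce an honest Laurent series rather than a formal one; this is precisely why the statement is only asserted $p$-adically. Once this is arranged, everything reduces to the identity $d \log(z) = dz/z$ and the bookkeeping of Laurent coefficients.
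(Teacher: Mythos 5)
Your strategy is close in spirit to the paper's, and the displayed decomposition is the right one, but the way you justify that $g(z)+c_x\log(z)$ equals $\int^z\eta$ has a genuine gap. You integrate the Laurent expansion \emph{term by term} and then treat the resulting infinite sum $g(z)$ as a Vologodsky function on $V_x\setminus\{x\}$, so that the uniqueness-up-to-constant from the exact sequence~\eqref{dexact} can be invoked. But~\eqref{dexact} lives on algebraic varieties -- $\ocol$ is a sheaf on the Zariski site (Theorem~\ref{T:volsum}~\eqref{t5}) -- and the only things the embeddings in Theorem~\ref{T:volsum}~\eqref{t2} put into $\acol$ are \emph{algebraic} functions. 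Each individual monomial primitive $\tfrac{a_k}{k+1}z^{k+1}$ is algebraic and hence Vologodsky, but their infinite sum (with infinitely many positive powers $k$) is merely locally analytic; nothing in the cited results says it lies in $\acol$, and without that you cannot conclude ``both sides are Vologodsky primitives, so they differ by a constant.'' On a punctured $p$-adic disk two locally analytic functions with the same derivative need only differ by a locally constant function, so comparing derivatives alone cannot close the argument.

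The paper avoids this by doing the decomposition \emph{algebraically} on a Zariski open $U_x\setminus\{x\}$: the polar part $\eta' = \sum_{i<-1}a_i z^i\,dz$ is a \emph{finite} sum, so its primitive $f = \sum_{i<-1}\tfrac{a_i}{i+1}z^{i+1}$ is an honest rational function on $U_x\setminus\{x\}$ and therefore lies in $\acol(U_x\setminus\{x\})$; the residue part $c_x\,dz/z$ integrates to $c_x\log(z)$; and the remaining $\eta'' = \eta-\eta'-c_x\,dz/z$ is \emph{holomorphic on all of $U_x$}, so $\int\eta''$ exists directly as an element of $\acol(U_x)$ and only \emph{afterwards} is expanded as a power series on a small disk. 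Exactness applies on the Zariski open $U_x\setminus\{x\}$, where all three integrands and all three primitives live inside the Vologodsky theory, and identifies $\int^z\eta$ with their sum up to a constant. The power-series expansion on $V_x$ is then a statement about an already-constructed Vologodsky function, not a construction of one.

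There is also a smaller slip in the second half: you ``apply the first half of the lemma to $g(z)\omega$ in place of $\eta$,'' but $g(z)\omega$ is only a locally analytic form on the disk $V_x$ (its Laurent coefficients involve the power series of $\int\eta''$, which is not algebraic), not a meromorphic form on $C$ as the hypothesis of the lemma requires. The paper instead writes down the primitive of $g\omega-\res_x(g\omega)\,dz/z$ directly as a Laurent series and then uses the identification $\int^z(\omega\int^z\eta)=\int^z g\omega$ on $V_x\setminus\{x\}$ obtained from the $c_x=0$ case of the first part. With the first-part gap repaired in the Zariski fashion above, this second step follows the paper's lines and is fine.
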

\begin{proof}
  For two parameters $z$ and $z'$ as above the function $\log(\frac{z}{z'})$ is analytic near
  $x$, which shows that the validity of the lemma is independent of the choice of $z$. Thus we
  may assume that $z$ is an algebraic uniformizer, defined on a Zariski open
  neighborhood $U_x$ of $x$ with no poles of $\eta$ and no poles or zeros of $z$ except possibly at $x$. Let $\eta
\colonequals \sum_{i\ge \ord_x(\eta)} a_i z^i dz$ be the Laurent expansion of $\eta$ on $V_x$
centered at $x$, so that $a_{-1} = c_x$. We note that such a Laurent series
  (and associated integrals below) are locally analytic sections in the
  punctured neighborhood $V_x \setminus x$, and as explained in the
  penultimate paragraph of page 321 of \cite{Bes05}, they converge on a sufficiently small punctured neighborhood of $x$ in the p-adic topology. Let $\eta' \colonequals \sum_{i<-1} a_i z^i dz$ and
$\eta'' \colonequals \eta - \eta' - c_x \frac{dz}{z}$. Observe that $\eta' = d f$ with $f(z)
\colonequals \sum_{i<-1} \frac{a_i z^{i+1}}{i+1} \in \acol(U_x \setminus \{x\})$, and so we
have $\int \eta' =f$ (up to adjusting $f$ by a constant) on $V_x \setminus \{x\}$. Also, note
that $\eta'' \in \ocol^1(U_x)$, so we have $\int \eta'' \in \acol(U_x)$ and this has a power
series expansion in $z$ on some analytic neighborhood $V_x$ of $x$ by part~\eqref{t2} of Theorem~\ref{T:volsum} and Remark~\ref{locandef}. 
  Setting $g \colonequals
\int \eta''+f$ , we see that on $V_x \setminus \{x\}$, the function $g$ has a Laurent series
expansion $g(z)$ and
 \[ \int^z \eta = \int^z \eta'' + \int^z \eta' + \int^z c_x \frac{dz}{z} = g(z) + c_x \log(z). \]

Since $dg = \eta-c_x \frac{dz}{z}$, and any two choices of $g$ satisfying this differ by a constant,
  $\Res_x(g \omega)$ is independent of the choice of $g$. When $c_x = 0$, we have $g \omega - \res_x (g \omega) \frac{dz}{z} \in \oloc^1(V_x \setminus \{x\})$, and by writing down an explicit Laurent series expansion with no residue and arguing as before, we can find a Laurent series $h$ centered at $x$ such that on $V_x \setminus \{x\}$ we have $dh = g \omega - \res_x (g \omega) \frac{dz}{z}$, and hence
\[ \int^z (\omega \int^z \eta) = \int^z g \omega = \int^z dh + \int^z \res_x (g \omega) \frac{dz}{z} = h(z) + \res_x (g \omega) \log(z).\qedhere  \]
\end{proof}

\begin{rk}
 Locally around any point $x \in C$, iterated Coleman integrals are also defined
 and are polynomials in $\log(z)$ with coefficients which are Laurent
 series~\cite[Section~5, p.~40, \textup{3rd paragraph}]{Bes02}. One can alternatively prove the lemma by instead using the local
  comparison between Vologodsky iterated integrals and Coleman iterated integrals. For the
  first case of the lemma~\cite{Bes-Zer13} suffices, whereas for the second part one also
  needs~\cite{Kat-Lit21}
\end{rk}

\begin{ex}\label{E:meroint}
Let $C/K$ be a nice curve.
\begin{enumerate}[\upshape (a)]
 \item\label{P:logf} For any $f \in K(C)$, invertible on an open subset $U$, using Theorem~\ref{T:volsum}~(\ref{t1},\ref{t3},\ref{t4}), it follows that there exists a function $\log(f) \in \acol(U)$, unique up to an additive constant, such that
$d\log(f) = df/f$ and such that it is equal to $\log \circ f$ as a locally analytic function. 
 \item\label{P:itint} Let $\omega \in \Omega^1(C)$ and let $\eta$ be a form of second kind on $C$, holomorphic on an open subset $U$ of $C$. 
 From Lemma~\ref{L:merointexist}, it follows that there are well-defined functions $f,g
    \in \acol(U)$ such that $dg = \eta, df = g\omega$, such that $g$ admits a Laurent
    series expansion around points in $C \setminus U$, and such that $f$ has an expansion
    as the sum of a Laurent series and a constant multiple of $\log(z)$ around points in
    $C \setminus U$. 
    Furthermore, in this case, Theorem~\ref{T:volsum}~(\ref{t6}) implies $\delbr d f
    = \omega \otimes [\eta] \in \Omega^1(U) \otimes \hdr^1(U)$.
\end{enumerate}
\end{ex}
\subsubsection{Log functions for curves}\label{subsec:logcurves}

Granting the existence of curvature forms for log functions, we first prove the following lemma that will be useful in the sketch of construction of log functions on curves that follows.
\begin{lemma}\label{L:iflogexists}
 Assume that $(\L,\log_{\L})$ is a metrized line bundle on a nice curve $C/K$. 
 Let $s$ be a section of $\L$, invertible on a Zariski open subset $U$. Then $\log_\L(s) \in \acol(U)$ and the form $d \log_\L(s) \in \ocol^1(U)$
 admits a locally meromorphic extension to all points $x \in C$, with at worst simple poles and such that $\res_x(d\log_{\L}(s)) = \ord_x(s)$ for every $x \in C$. 
\end{lemma}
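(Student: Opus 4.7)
The plan is to reduce the statement to the analogue of the classical identity $d\log(f) = df/f$ for a meromorphic function $f$, using a local algebraic trivialization of $\L$.

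For the first assertion, $s\colon U \to \L^\times$ is an algebraic morphism, so $\log_\L(s) \colonequals s^\ast \log_\L \in \acol(U)$ by the pullback functoriality of Vologodsky functions in Theorem~\ref{T:volsum}~\eqref{t4}.

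For the local meromorphic extension near a point $x \in C$, I would fix a Zariski open neighborhood $V$ of $x$ that is small enough so that $\L|_V$ admits a nowhere-vanishing algebraic section $t \colon V \to \L^\times$. On $V \cap U$ one may then write $s = f \cdot t$ for a unique $f \in K(C)^\times$ invertible on $V \cap U$, and by construction $\ord_x(f) = \ord_x(s)$. Combining the scaling property from Definition~\ref{D:logfunction} with the fact that Vologodsky functions embed into locally analytic functions by Theorem~\ref{T:volsum}~\eqref{t2}, one obtains the identity
\[
\log_\L(s) = \log(f) + \log_\L(t)
\]
in $\acol(V \cap U)$, where $\log(f)$ is the Vologodsky function produced in Example~\ref{E:meroint}~\eqref{P:logf}. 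Taking the differential yields $d\log_\L(s) = df/f + d\log_\L(t)$ on $V \cap U$. Since $t$ is a section of $\L^\times$ on all of $V$, the form $d\log_\L(t) = t^\ast d\log_\L$ lies in $\ocol^1(V)$ and is in particular regular at $x$, while $df/f$ is an algebraic $1$-form on $V$ with at worst a simple pole at $x$ of residue $\ord_x(f) = \ord_x(s)$. This delivers the meromorphic extension across $x$ and the residue formula simultaneously.

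The only mildly subtle point is promoting the pointwise scaling relation in Definition~\ref{D:logfunction} to an identity of Vologodsky functions. This can be done by applying pullback along the multiplication morphism $\mu \colon \mathbb{G}_m \times \L^\times \to \L^\times$: the identity $\mu^\ast \log_\L = \mathrm{pr}_1^\ast \log + \mathrm{pr}_2^\ast \log_\L$ holds on every $\Qpb$-point by the scaling axiom, and hence in $\acol(\mathbb{G}_m \times \L^\times)$ by injectivity of the embedding into locally analytic functions in Theorem~\ref{T:volsum}~\eqref{t2}; pulling back via $(f,t)\colon V \cap U \to \mathbb{G}_m \times \L^\times$ and applying Theorem~\ref{T:volsum}~\eqref{t4} recovers the displayed identity. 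Beyond this bookkeeping, the argument is essentially the classical computation of the residue of a logarithmic derivative.
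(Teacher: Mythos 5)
Your proof is correct and follows essentially the same path as the paper's: both arguments reduce to a local trivialization of $\L$ near $x$, write $s$ as the product of a rational function (whose $d\log$ produces the simple pole with residue $\ord_x(s)$) and a nowhere-vanishing section (whose $\log_\L$ is regular at $x$), and then invoke the scaling axiom of log functions. The only difference is cosmetic: the paper uses the specific trivializing section $z^{-n}s$ (so the rational function is $z^n$ and one appeals directly to $d\log(z)=dz/z$), whereas you factor $s=ft$ for an arbitrary trivialization $t$; your final paragraph on upgrading the pointwise scaling relation via pullback along $\mu\colon \mathbb{G}_m\times\L^\times\to\L^\times$ fills in a detail the paper leaves implicit.
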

\begin{proof}
  We have $\log_\L(s) \in \acol(U)$ as the pullback of $\log_\L$ via $s\colon U\to L^{\times}$. In
  particular, it is locally analytic on $U$ as claimed. 
 Let $x \in C \setminus U$, and let $z$ be a coordinate in a neighborhood of $x$. Let $n =
 \ord_x(s)$. Then $z^{-n}s$ is invertible in a Zariski open neighborhood $V_x$ of $x$, so in turn $\log_{\L}(z^{-n}s) \in \acol(V_x)$ and $\gamma_x \colonequals d\log_{\L}(z^{-n}s) \in \ocol^1(V_x)$. 
 Now by the second property of the log function, on $V_x \setminus \{x\}$, we have 
 \begin{equation}\label{zimn}
  \log_{\L}(s) = n \log(z) + \log_{\L} (z^{-n}s),
 \end{equation}
 and hence by definition of $\log(z)$,
 \[ d \log_{\L}(s) = n \frac{dz}{z} + \gamma_x. \]
 Since the $\gamma_x$ are locally analytic on $V_x$, the expression on the right hand side
 gives a locally meromorphic extension of $d \log_{\L}(s)$ to points $x \in C \setminus U$, as a form with a simple pole of residue $\ord_x(s)$.
\end{proof}

\begin{proof}[Sketch of construction of log functions for curves]
We briefly sketch a construction of a log function with a given curvature $\alpha$ when
  $C$ is a curve, assuming the existence of log functions, since this is what we will need
  for the application to Quadratic Chabauty. In the case of curves, the problem reduces to
  solving for an (algebraic) meromorphic differential $\gamma$ with prescribed polar parts, as we
  explain below. We refer the reader to the proof of \cite[Proposition~4.4]{Bes05} which
  shows that log functions exist in any dimension using a careful argument using \v{C}{e}ch cocycles.

  Let $\alpha = \sum_i \omega_i \otimes[ \eta_i] \in  \Omega^1(C) \otimes \hdr^1(C)$ 
 for some holomorphic forms $\omega_i \in \Omega^1(C)$ and forms of second kind $\eta_i$ with corresponding cohomology classes $[\eta_i] \in \hdr^1(C)$, 
 such that $\cup \alpha =\ch_1(\L)$. We know by Proposition~\ref{P:excurve} that a log function on $\L$ with curvature
 $\alpha$ exists and is unique up to the integral of a holomorphic form on $C$.
Pick a nonzero (algebraic) meromorphic section $s$ of $\L$, invertible on a Zariski open subset $U$ of
$C$ as in Lemma~\ref{L:iflogexists}. As we will see later, a log function on $\L$ is completely
  determined by $\log_{\L}(s)$, so to construct one log function with curvature $\alpha$
  (hence all), 
 it suffices to determine $\log_\L(s)$ up to the integral of a holomorphic form.
We now restrict further to some $U'\subset U$ where all the $\eta_i$ are holomorphic.
Since $\delbr d \log_{\L}(s) = \alpha|_{U'}$, and we know $\ker(\delbr)$ from
  Theorem~\ref{T:volsum}~\eqref{t6}.
  Example~\ref{E:meroint}~\eqref{P:itint} shows that on $U'$, we have
 \begin{equation}\label{E:logfromitint} d \log_{\L}(s) = \sum \omega_i \int \eta_i + \gamma, \end{equation}
 for some meromorphic form $\gamma$ on $C$ that is holomorphic on $U'$. Since $\log_\L(s)$ satisfies
 Lemma~\ref{L:iflogexists}, 
the polar parts of $\gamma$ at
  points in $U$ are exactly the negative of those of $\sum \omega_i \int \eta_i$ and the same
  is true at $x\in C\setminus U$ --  except that there is a difference in the logarithmic part, which is
  determined by the condition (that automatically holds also at $x\in U$), 
  $$\res_x(\gamma) = \ord_x(s)-\res_x\left(  \sum \omega_i \int \eta_i \right) \;.$$
  Because $\gamma$ is a meromorphic form and therefore satisfies the Residue Theorem, a
  necessary condition of the existence of $\gamma$ is that $\sum_{x \in C} \Res_x(\sum \omega_i \int \eta_i) =
  \sum_{x \in C} \ord_x(s)$, and by Riemann--Roch this is also a
  sufficient condition. It is easy to see, independently of the general theory of log
  functions, that this condition is indeed satisfied because 
  $$\sum_{x \in C} \Res_x(\sum \omega_i \int \eta_i) = \cup \alpha = \ch_1(\L)$$ by
  assumption and we know that $\sum_{x \in C} \ord_x(s) = \ch_1(\L)$ (see for example, \cite[Corollary~6.10, Proposition~6.11]{Hartshorne}). As the conditions above
  completely determine the polar parts of $\gamma$ at every point, the degree of freedom of
  $\gamma$ is exactly $\Omega^1(C)$, and therefore determining $\gamma$ with the right polar
  conditions is equivalent to
  determining $\log_\L$. 

It remains to show how $\log_{\L}$ is determined by $\log_\L(s) \in \acol(U)$.
By the very definition of log functions (Definition~\ref{D:logfunction}) it clearly determines $\log_\L$ above $U$ so it suffices to extend
$\log_\L$ to $x\in C\setminus U$.  For such an $x$ let $z$ be a local coordinate as in the proof of
Lemma~\ref{L:iflogexists}. Then \eqref{zimn} gives
$  \log_{\L} (z^{-n}s)= \log_{\L}(s) - n \log(z) $.
As $\log_\L$ satisfies the conditions of Lemma~\ref{L:iflogexists}, the function $ \log_{\L}(s) - n \log(z) $ is
analytic in an analytic neighborhood of $x$. Hence it extends to $x$ and we can set $\log_\L(z^{-n}
s)(x)=(\log_\L(s)- n \log(z))(x)$. It is easy to see
that this extension is independent of the choice of $z$. 
\end{proof}

\begin{ex}\label{E:logtangent}[Log functions for tangent bundles on hyperelliptic curves, \cite{BBM16}]
  Let $C \colon y^2=f(x)$ be an odd degree hyperelliptic curve 
  In \cite{BBM16}, an explicit
  log function for the tangent bundle of $C$ is constructed, extending the elliptic case
  treated in~\cite{BB14}. This is then applied to express a suitably normalized local
  Coleman--Gross height pairing in terms of double integrals, which is one of the main
  ingredients for the Quadratic Chabauty methods for integral points on $C$ when $f\in
  \Z[x]$  is monic and 
  $\mathrm{rk}(\text{Jac}(C)/\Q)=g(C)$.
  
 We recall the main features; for details see
  the proof of \cite[Theorem~2.2]{BBM16}. Let $\{ \omega_i \colonequals x^i dx/2y
  \}_{i=0}^{2g-1}$ be the standard basis for $\hdr^1(C)$. Let $\{ \overline{\omega_i}
  \}_{i=0}^{g-1}\subset \textup{Span}(\omega_0,\ldots,\omega_{2g-1})$ be
  forms whose cohomology classes $[\overline{\omega_i}]$ form a basis for a complementary subspace $W$ to $\Omega^1(C) \subset
  \hdr^1(C)$ dual to the standard basis $\{ \omega_i \}_{i=0}^{g-1}$ of $\Omega^1(C)$
  under the cup product pairing. We choose the curvature form $\alpha \colonequals -2 \sum_{i=0}^{g-1} \omega_i \otimes [\overline{\omega_i}] $ for the tangent bundle   $\mathcal{T}$. Fix the
  section $\theta$ of the tangent bundle dual to the holomorphic form $\omega_0$; this
  section has a pole of order $2g-2$ at the unique point at $\infty$ and no other zeroes
  or poles. Since both $d\log_{\mathcal{T}}(\theta)$ and the form $-2 \sum_{i=0}^{g-1}
  \omega_i \int \overline{\omega_i}$ have at worst simple poles at the unique point at
  $\infty$ and have the same residue at $\infty$, the meromorphic
  differential $\gamma$ we need to solve for is in fact holomorphic on $C$. It follows that for any $\omega \in \Omega^1(C)$,
  \[ \log_{\mathcal{T}}(\theta) =  -2 \sum_{i=0}^{g-1}  \omega_i \int \overline{\omega_i} + \int \omega, \]
  is a log function for $\mathcal{T}$ with curvature $\alpha$, and that every log function
  for $\mathcal{T}$ with curvature $\alpha$ is of this form for some $\omega \in \Omega^1(C)$.
\end{ex}

\subsection{Higher $\delbr$-operators on Vologodsky functions}\label{S:higherdelbar}
In this section, we develop the theory of higher order differential operators $D_k$ which
are analogues of the $\delbr d$ operators on once iterated Vologodsky functions. The
results of this section will only be used in \S\ref{S:nonconstant} to justify that our
Quadratic Chabauty method works in the more general setting of $\rank J(\Q) < g+\rank \NS(J)-1$. Readers only interested in understanding the theory in the more restrictive setting of $\rank J(\Q) = g$ and $\rank \NS(J) > 1$ may skip this section.

All the results are summarized in the following theorem, so the reader may
also choose to read its statement and skip the rest of the section. We will continue to use the notation introduced in Theorem~\ref{T:volsum}. Let $X$ be as in Theorem~\ref{T:volsum}.
\begin{thm}\label{hdelbth}
 There exists an increasing filtration $K=I^0\subset I^1\subset \cdots $ of $K$-vector spaces
  on the algebra $\acol(X)$ of all $\O_X$-valued Vologodsky functions on $X$ such that the   following hold: 
  \begin{enumerate}[\upshape (a)]
    \item \label{hd1} The filtration is compatible with multiplication, i.e., the multiplication map on $\acol(X)$ restricts to a map $I^k \otimes I^m \xrightarrow{\times} I^{k+m}$. 
    \item \label{hd2}There exist linear maps $D_k: I^k \to \hdr^1(X)^{\otimes k}$ such that
  the diagram 
  \begin{equation*}
   \xymatrix{
     I^k \otimes I^m \ar[r]^{\times} \ar[d]^{D_k \otimes D_m} & I^{k+m} \ar[d]^{D_{k+m}}\\
     {\hdr^1(X)}^{\otimes k}\otimes \hdr^1(X)^{\otimes m} \ar[r]^{\qquad \ast} & \hdr^1(X)^{\otimes k+m}
   }
  \end{equation*}
  commutes; here $\ast$ is the shuffle product defined as follows: 
        \begin{equation}\label{shuffle}
        (v_1 \otimes \cdots \otimes v_k)\ast  (v_{k+1} \otimes \cdots
         \otimes v_{k+m}) = \sum_\sigma v_{\sigma^{-1}(1)} \otimes \cdots
         \otimes v_{\sigma^{-1}(k+m)}\,,
      \end{equation}
      where the sum is over all $(k,m)$ shuffles $\sigma$, that is, permutations of $k+m$ elements that keep the internal order between each of the two groups of indices $\{1,2,\ldots,k\}$ and $\{k+1,\ldots,k+m\}$.
    \item\label{hd3} For each $k$, the kernel of $D_k$ contains $I^{k-1}$.
    \item\label{hd4}
      The space $I^2$ is the space of integrals of one time iterated forms (\eqref{t6} of Theorem~\ref{T:volsum}) and for $F\in I^2$ one has $\delbr
      dF \in \Omega^1(X)^{d=0}\otimes \hdr^1(X)$ and $D_2 F = [\delbr
      dF]$, where $\delbr$ is the delbar operator (same reference) and $[~]:
  \Omega^1(X)^{d=0}\otimes \hdr^1(X)\to \hdr^1(X)^{\otimes 2}$ is the map that
  takes the first coordinate to its cohomology class. 
  \end{enumerate}
The elements of $I^k$ are called
  \emph{$k$-iterated integrals}. 
\end{thm}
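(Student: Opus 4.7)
The plan is to build the filtration $\{I^k\}$ and the operators $D_k$ by simultaneous induction on $k$, using the differential graded structure on Vologodsky functions provided by Theorem~\ref{T:volsum}. I set $I^0 = K$ with $D_0 = \id$, and $I^1 \colonequals d^{-1}\bigl(\Omega^1(X)\bigr) \subset \acol(X)$ with $D_1(F) \colonequals [dF] \in \hdr^1(X)$ (the image of $d$ on $I^1$ lies in closed forms since $d^2 = 0$). For $k \geq 2$, I declare $F \in I^k$ if $dF$ lies in the image of the multiplication map
\[ \mu_{k-1} \colon \Omega^1(X)^{d = 0} \otimes_K I^{k-1} \longrightarrow \ocol^1(X), \]
and set $D_k F \colonequals \bigl([\cdot] \otimes D_{k-1}\bigr)(\xi) \in \hdr^1(X)^{\otimes k}$ for any lift $\xi$ of $dF$ under $\mu_{k-1}$. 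The inclusion $I^{k-1} \subset I^k$ is automatic since $F \in I^{k-1}$ gives $dF \in \Omega^1(X)^{d=0} \otimes I^{k-2} \subset \Omega^1(X)^{d=0} \otimes I^{k-1}$. Clause (\ref{hd4}) follows by direct inspection against Theorem~\ref{T:volsum}~(\ref{t6}): for $F \in I^2$, $dF$ is a once-iterated form, and writing $dG_i = \eta_i$, one gets $\delbr dF = \sum \omega_i \otimes [\eta_i]$, so $D_2 F = \sum [\omega_i] \otimes [\eta_i] = [\delbr dF]$.

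The central technical point is that $D_k$ does not depend on the chosen lift $\xi$, i.e., $[\cdot] \otimes D_{k-1}$ vanishes on $\ker(\mu_{k-1}) \subset \Omega^1(X)^{d=0} \otimes I^{k-1}$. For $k = 2$ this follows directly from Theorem~\ref{T:volsum}~(\ref{t6}): any relation $\sum \omega_i G_i = 0$ in $\ocol^1(X)$ with $G_i \in I^1$ forces $\sum \omega_i \otimes [dG_i] = \delbr\bigl(\sum \omega_i G_i\bigr) = 0$ in $\Omega^1(X)^{d=0} \otimes \hdr^1(X)$, so applying $[\cdot] \otimes \id$ kills it. For higher $k$, the strategy is strong induction: a relation in $\ker(\mu_{k-1})$ can be unfolded one level by applying $d$ to each $G_i$ and using exactness of~\eqref{dexact}, yielding a relation among elements of $\Omega^1(X)^{d=0} \otimes I^{k-2}$ to which the inductive hypothesis applies.

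Properties (\ref{hd1}), (\ref{hd2}), (\ref{hd3}) then follow formally. For $F \in I^k$ and $G \in I^m$, write $dF = \sum_i \omega_i F_i$ and $dG = \sum_j \eta_j G_j$ with $F_i \in I^{k-1}$ and $G_j \in I^{m-1}$; Leibniz gives
\[ d(FG) = \sum_i \omega_i (F_i G) + \sum_j \eta_j (F G_j), \]
and by the inductive version of (\ref{hd1}), $F_i G, F G_j \in I^{k+m-1}$, whence $FG \in I^{k+m}$. Applying $[\cdot] \otimes D_{k+m-1}$ and invoking the inductive shuffle identity yields the full $(k,m)$-shuffle formula for $D_{k+m}(FG) = D_k(F) \ast D_m(G)$, since the two summands in the Leibniz expression contribute exactly those shuffles in which the leftmost tensor factor originates from the $F$-part or from the $G$-part. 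Property (\ref{hd3}) is immediate: if $F \in I^{k-1}$, then $dF \in \Omega^1(X)^{d=0} \otimes I^{k-2}$, and $D_{k-1}$ vanishes on $I^{k-2}$ by the inductive hypothesis, so $D_k F = 0$.

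The principal obstacle is the well-definedness of $D_k$, which is a freeness statement for iterated Vologodsky integrals modulo lower depth. I expect the cleanest argument to proceed by working locally using the concrete realization of Vologodsky functions as components of horizontal sections of connections of the shape~\eqref{basicconnection} and then globalizing via the sheaf property Theorem~\ref{T:volsum}~(\ref{t5}); the base case $k=2$ encapsulates the essential cancellation via the $\delbr$-operator, while higher-order cases are obtained by iterating this mechanism.
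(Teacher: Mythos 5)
Your high-level architecture — an inductive filtration and the recursion $D_kF = ([\cdot]\otimes D_{k-1})(dF)$ — is exactly the content of the paper's Proposition~\ref{P:Dkdesc}, and clauses (a), (c), (d) do follow formally from it, as you observe. However, you have correctly identified where the burden lies (well-definedness of $D_k$ for $k>2$) and that is precisely where the proposal has a genuine gap. The base case $k=2$ works as you write it, because $\delbr$ applied to a vanishing once-iterated form gives the needed cancellation. But the inductive step does not go through: if $\sum_i\omega_i G_i = 0$ with $\omega_i\in\Omega^1(X)^{d=0}$, $G_i\in I^{k-1}$, then differentiating gives $\sum_{i,j} (\omega_i\wedge\eta_{ij}) H_{ij} = 0$, which is a relation with $2$-form coefficients, not elements of $\Omega^1(X)^{d=0}\otimes I^{k-2}$. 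Worse, on a curve every such wedge vanishes identically, so differentiation extracts no information at all — yet the well-definedness claim is nontrivial and must be established even there. Your fallback suggestion (work locally with horizontal sections of connections as in~\eqref{basicconnection} and globalize via Theorem~\ref{T:volsum}~(\ref{t5})) is a plausible direction, but you have not shown that a relation $\sum\omega_i G_i=0$ forces $\sum[\omega_i]\otimes D_{k-1}(G_i)=0$ locally, and the sheaf property does not help identify ambiguities; the hard part remains open.

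The paper sidesteps this by \emph{not} defining $D_k$ through a lift of $dF$ at all. Instead it uses the representation of Vologodsky functions by triples $(M,v,s)$ with $M$ a unipotent connection, and defines $D_k$ via the action of $\mathfrak{g}^{\otimes k}$ (where $\mathfrak{g}$ is the Lie algebra of the de Rham fundamental group) on the fiber $V=M_x$; see Definition~\ref{highdbdef} and Proposition~\ref{dkprop}. Well-definedness is then nearly automatic: the $\mathfrak{g}$-action commutes with morphisms of connections, and morphisms of abstract Vologodsky representatives are exactly such morphisms. The recursion $D_kF=[D_{k-1}dF]$ is deduced \emph{a posteriori} (Proposition~\ref{P:Dkdesc}), not used as a definition. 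This is a genuinely different and, for well-definedness, much more robust strategy. You should also note a possible definitional mismatch: your $I^k$ requires $dF$ to be a global finite sum $\sum\omega_i F_i$ with $\omega_i$ closed holomorphic and $F_i\in I^{k-1}$, whereas the paper's $I^k$ (Definition~\ref{D:kitint}) requires $dF$ to admit a $(k-1)$-extension representative, which is a local/sheafy condition; even for $k=2$ these need not coincide on non-affine $X$ unless an argument is supplied. As written, the proposal does not constitute a complete proof.
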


We proceed in two steps. In Definition~\ref{highdbdef}, we first define the operator $D_k$ on a slightly larger class of Vologodsky functions called $k$-iterated Vologodsky functions defined in Definition~\ref{D:miterated}, and establish some basic properties of the operator $D_k$ in Proposition~\ref{dkprop}. In Definition~\ref{D:kitint} we define the vector subspace $I^k$ of $k$-iterated integrals of the vector space of $k$-iterated functions, prove some structural results and combine these results to prove Theorem~\ref{hdelbth}.  

\subsubsection{$k$-iterated Vologodsky functions and the operator $D_k$}\label{subsec:Vologodsky_functions}
Let us begin by fixing some terminology. 

\begin{defn}\label{D:mextn}
A connection $(M,\nabla)$ is an \textit{$m$-extension} if there is a filtration $M=
M^0 \supset M^1 \supset \cdots \supset M^{m+1}=0$ in the category of vector
bundles with connection on $X$ such that the successive quotients
$M^i/M^{i+1}$ are all isomorphic to direct sums of the trivial connection $(\O_X,d)$.
\end{defn}

Recall the notion of an abstract Vologodsky function valued in a locally free sheaf as in
\cite[Definition~4.1]{Bes02}. 

\begin{defn}\label{D:miterated}
We will say that a
Vologodsky function with values in a sheaf $\mathcal{F}$ is \textit{$m$-iterated} if it is has a
representative $(M,v,s)$ where $M$ is an $m$-extension. In~\cite[Definition~5.4]{Bes02} this is
called a Coleman function of degree at most $m$. An \textit{$m$-iterated differential
form} is an
$m$-iterated Vologodsky function with values in the sheaf of differential forms (of some
degree). 
\end{defn}

We will define a map $D_k$ on $k$-iterated Vologodsky functions with values in a sheaf $\mathcal{F}$.
For this, let $\mathfrak{g}$ be the Lie
algebra of the de Rham fundamental group $\pi^{\mathop{dR}}_1(X)$ of
$X$. Recall that $\pi^{\mathop{dR}}_1(X)$ is the pro-algebraic group associated by Tannakian duality to the category of integrable unipotent connections on $X$ with the natural fiber functor $N \mapsto N_x$ at a $K$-rational point $x$. Recall from~\cite[\S1.5.2, Proposition~20]{Amn_Heidelberg} adapted to the de Rham situation, that we have an isomorphism $\mathfrak{g}/ \mathfrak{g}_1\isom \hhdr_1(X)$
where $\mathfrak{g}_1=[\mathfrak{g},\mathfrak{g}]$ and $\hhdr_1(X)=\Hom(\hdr^1(X),K)$ is the
first de Rham homology of $X$. By the proof of the above result in loc. cit., the isomorphism
is given by 
\begin{equation}\label{onttmat}
  g\mapsto l,\; g \text{ acts as } 
  \begin{pmatrix} 0 & l([N]) \\ 0 & 0 \end{pmatrix} 
  \text{ on } N_x, \text{ where } 0\to \O_X \to N \to \O_X\to 0
\end{equation}
and $[N]$ is the extension class of $N$ in the category of connections on $X$,  
\begin{equation}\label{hdr}
  \operatorname{Ext}^1((\O_X,d),(\O_X,d)) \isom \hdr^1(X)\;.
\end{equation}

Let $M$ be a $k$-extension. Thus, it has a filtration 
\begin{equation*}
  M= M^0 \supset M^1 \supset \cdots \supset M^{k+1}=0\;,
\end{equation*}
with graded pieces $M_i = M^i / M^{i+1}$ which are trivial. Let $V$ (respectively $V_i$,
respectively $V^i$) be the fiber at $x$ of $M$ (respectively $M_i$, respectively $M^i$).
Since the $M_i$ are trivial we have a natural isomorphism of connections $M_i \isom V_i \otimes
(\O_X,d)$. The cup product and the identification~\eqref{hdr}
induce an isomorphism 
\begin{equation*}
  \operatorname{Ext}^1(M_i,M_{i+1}) \isom \hdr^1(X) \otimes \Hom_{\nabla}(M_i,M_{i+1}) \isom \hdr^1(X) \otimes
  \Hom(V_i,V_{i+1})\;.
\end{equation*}
We recall that elements of $\mathfrak{g}$ act on the fiber at $x$ of every unipotent
connection, commuting with the maps induced on the fibers by morphisms of connections.
\begin{lemma}\label{lieaction}
  Let $g\in \mathfrak{g}$, acting on $V$. Then $g V^i \subset V^{i+1}$. The induced map $V_i
  \to V_{i+1}$ is obtained from the image $l$ of $g$ in $\hhdr_1(X)$ by applying $l$ to the
  class of the extension
  \begin{equation}\label{twostep}
    0\to M_{i+1}\to M^i / M^{i+2} \to M_{i}\to 0 
  \end{equation}
  in $\hdr^1(X)\otimes \Hom(V_i,V_{i+1})$.
\end{lemma}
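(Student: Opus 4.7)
The plan is to prove both parts by exploiting the Tannakian nature of the \(\mathfrak{g}\)-action: the fiber functor at \(x\) is a tensor functor on the category of unipotent integrable connections on \(X\), so morphisms of connections induce \(\mathfrak{g}\)-equivariant maps on fibers, and \(\mathfrak{g}\) acts as zero on the fiber of any trivial connection.

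For part (a), the inclusion \(M^i \hookrightarrow M\) is a morphism of connections, so \(gV^i \subset V^i\), and the projection \(M^i \twoheadrightarrow M_i\) onto the trivial quotient kills the \(\mathfrak{g}\)-action on the target. Hence \(gV^i\) lies in \(\ker(V^i \twoheadrightarrow V_i) = V^{i+1}\).

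For part (b), by (a) the map \(g\) descends to a \(K\)-linear \(\bar g\colon V_i \to V_{i+1}\), and by functoriality applied to the subquotient \(N \colonequals M^i/M^{i+2}\), it depends only on the class \([N] \in \operatorname{Ext}^1(M_i, M_{i+1})\). Since the internal Hom \(\operatorname{Hom}(M_i, M_{i+1}) \cong \Hom(V_i, V_{i+1}) \otimes (\O_X, d)\) is trivial, combining with the identification in \eqref{hdr} gives
\[ \operatorname{Ext}^1(M_i, M_{i+1}) \cong \hdr^1(X) \otimes \Hom(V_i, V_{i+1}); \]
call the image of \([N]\) in the right-hand side \(c\).

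To identify \(\bar g\) with \((l \otimes \operatorname{id})(c)\), I would reduce to the rank-one setting of \eqref{onttmat} by a pull-back/push-out argument. For arbitrary \(v \in V_i\) and \(\lambda \in V_{i+1}^{\ast}\), the flat section \(v\) determines a connection morphism \((\O_X, d) \to M_i\), and \(\lambda\) determines \(M_{i+1} \to (\O_X, d)\); pulling \(N\) back along the first and pushing it out along the second produces a 2-step extension \(N_{v,\lambda}\) of \(\O_X\) by \(\O_X\). By naturality of the Ext-identification, its class in \(\hdr^1(X)\) equals \((\operatorname{id} \otimes \operatorname{ev}_{v,\lambda})(c)\), where \(\operatorname{ev}_{v,\lambda}\colon \Hom(V_i, V_{i+1}) \to K\) sends \(f\) to \(\lambda(f(v))\). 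By functoriality of the fiber functor, the action of \(g\) on \((N_{v,\lambda})_x\) is the pull-push of \(\bar g\), with off-diagonal matrix entry \(\lambda(\bar g(v))\); on the other hand, by \eqref{onttmat} the same entry equals \(l\) applied to the class of \(N_{v,\lambda}\). Equating the two yields
\[ \lambda(\bar g(v)) \;=\; l\bigl((\operatorname{id} \otimes \operatorname{ev}_{v,\lambda})(c)\bigr) \;=\; \lambda\bigl((l \otimes \operatorname{id})(c)(v)\bigr) \]
for all \(v\) and \(\lambda\), forcing \(\bar g = (l \otimes \operatorname{id})(c)\) as claimed.

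The main obstacle is not conceptual but notational: keeping straight the various dualities in the pullback-pushout construction and checking that the fiber functor commutes with pullback, pushout, and tensor product, so that the diagrammatic identification of the class of \(N_{v,\lambda}\) with the contraction of \(c\) is correct. Once that bookkeeping is in place, part (a) is immediate from the triviality of the action on trivial connections, and part (b) is a direct rank-one reduction to the already-established formula \eqref{onttmat}.
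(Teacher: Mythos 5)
Your argument is correct and follows the same route as the paper: part (a) from the triviality of the $\mathfrak{g}$-action on trivial connections, and part (b) by reading off the action of $g$ on the fiber of the two-step extension \eqref{twostep} and invoking \eqref{onttmat}. The one thing you spell out that the paper leaves implicit is the reduction to the rank-one case of \eqref{onttmat} via pulling back along $v\colon(\O_X,d)\to M_i$ and pushing out along $\lambda\colon M_{i+1}\to(\O_X,d)$; that bookkeeping is sound (both operations are morphisms of connections, so the fiber functor tracks them), and it is a genuine clarification of the paper's terse ``can be read off from the action on the fiber.''
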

\begin{proof}
  By assumption $g$ respects the filtration on $V$. As $g$ acts trivially on the fiber of $(\O_X,d)$, it follows that $g$ acts trivially on $V_i$, hence maps $V^i$ to $V^{i+1}$. The induced map $V_i \to V_{i+1}$ can
  be read off from the action on the fiber of~\eqref{twostep} and the result follows from the
  description~\eqref{onttmat} of the map
  $\mathfrak{g}\to \hhdr_1(X)$.
\end{proof}
\begin{cor}\label{vkcor}
  Let $M$ be a $k$-extension as above and let $g_1,\ldots,g_k \in \mathfrak{g}$. Then the
  composition $g_k\cdot g_{k-1} \cdots g_1$ is the same as the composition
  \begin{equation*}
    V \to V_0 \to V_1 \to V_2 \to \ldots \to V_k \to V\;,
  \end{equation*}
  where the first and last map are the canonical projection and injection while for $i\in
  \{0,\ldots,k-1\}$ the map
  $V_{i}\to V_{i+1}$ is induced by $g_{i+1}$ as in Lemma~\ref{lieaction} and in particular
  depends only on the image of $g_{i+1}$ in $\hhdr_1(X)$. Therefore the
  composition $g_k\cdot g_{k-1} \cdots g_1$ also only depends on the images $l_i$ of each $g_i$ in $\hhdr_1(X)$. 
\end{cor}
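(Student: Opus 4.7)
The plan is to iterate Lemma~\ref{lieaction}. Since each $g_i \in \mathfrak{g}$ satisfies $g_i V^j \subset V^{j+1}$, the composition $g_k \cdots g_1$ maps $V = V^0$ into $V^k$, and because $V^{k+1} = 0$ we have a canonical identification $V^k = V_k$. Thus the target of the composition matches the target of the claimed factored map, and the first and last arrows $V \twoheadrightarrow V_0$ and $V_k \hookrightarrow V$ in the statement are just the projection modulo $V^1$ and the inclusion $V^k \subset V$, respectively.

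I would then identify the two compositions by induction on $k$. For $k = 1$ the claim is precisely Lemma~\ref{lieaction}: the action of $g_1$ on $V = V^0$ factors as $V \twoheadrightarrow V_0 \xrightarrow{\bar g_1} V_1 \hookrightarrow V$, where $\bar g_1$ is the graded map induced by $g_1$ and depends only on $l_1$. For the inductive step, observe that $M^1$ with its induced filtration $M^1 \supset M^2 \supset \cdots \supset M^{k+1} = 0$ is a $(k-1)$-extension whose graded pieces are $M_1, \ldots, M_k$. Applying the inductive hypothesis to $M^1$ with the elements $g_2, \ldots, g_k$, we get that for any $w \in V^1$,
\[
g_k \cdots g_2(w) = \bar g_k \cdots \bar g_2(\bar w) \in V_k,
\]
where $\bar w$ denotes the class of $w$ in $V_1$. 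Setting $w = g_1(v)$ and applying Lemma~\ref{lieaction} once more to identify the class of $g_1(v)$ in $V_1$ as $\bar g_1(\bar v_0)$, with $\bar v_0 \in V_0$ the class of $v$, we conclude that $g_k \cdots g_1(v) = \bar g_k \cdots \bar g_1(\bar v_0)$, as elements of $V_k \subset V$.

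The final statement, that the composition depends only on the images $l_i$ of the $g_i$ in $\hhdr_1(X)$, then follows because each factor $\bar g_i$ depends only on $l_i$ by the second part of Lemma~\ref{lieaction}. I do not anticipate a serious obstacle: the argument is essentially filtration bookkeeping that uses $V^{k+1} = 0$ to kill the ``error terms'' in $V^{i+2}$ that a priori contaminate each intermediate application, and all the substantive content has already been absorbed into Lemma~\ref{lieaction}.
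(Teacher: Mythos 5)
Your proof is correct and is the natural spelling-out of what the paper evidently intends by calling this a corollary of Lemma~\ref{lieaction}: the paper gives no explicit argument, and your induction, peeling off $g_1$ and applying the inductive hypothesis to the $(k-1)$-extension $M^1$, is exactly the intended iteration of the lemma. The filtration bookkeeping (using $V^{k+1}=0$ so that $V^k=V_k$, and functoriality of the $\mathfrak{g}$-action under $M^1\hookrightarrow M$) is handled correctly.
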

\begin{defn}\label{highdbdef}
  Let $F$ be a $k$-iterated Vologodsky function with values in $\mathcal{F}$ defined by the
  triple $(M,v,s)$, where $M$ is a
  $k$-extension. We define $D_k(F)\in \mathcal{F}(X)\otimes\hdr^1(X)^{\otimes k}$ as the
  functional on $\hhdr_1(X)^{\otimes k}$ with values in $\mathcal{F}(X)$, defined as follows:
  \begin{equation*}
    l_k \otimes \cdots\otimes l_1 \mapsto s(g_k \cdots g_1(v)) \in \mathcal{F}(X)\;.
  \end{equation*}
  Here, $g_i$ is any lift of $l_i$ to $\mathfrak{g}$ for $1\le i \le k$. We have 
  $g_k \cdots g_1(v) \in V_k$ by Corollary~\ref{vkcor}. Since $M_k$ is trivial we have an embedding
  $V_k \subset M_k(X)\subset M(X)$ and thus $ s(g_k \cdots g_1(v)) \in \mathcal{F}(X)$.
\end{defn}
\begin{prop}\label{dkprop}\hfill
  \begin{enumerate}[\upshape (1)]
    \item The value of $D_k F$ depends only on $F$ and not on the underlying representation of $F$ as a $k$-iterated Vologodsky function.
    \item \label{P:dkisolinear} The map $D_k$ commutes with $\O_X$-morphisms of the coefficient sheaves. In
      particular, it is $\O(X)$-linear.
    \item\label{P:prodtoshuffle} If  $F$ is $k$-iterated with values in $\mathcal{F}$ and $G$ is $m$-iterated with
      values in $\mathcal{G}$ then $F\cdot G$ is $k+m$-iterated with values in
      $\mathcal{F}\otimes \mathcal{G}$ and $D_{k+m}(F\cdot G) = (D_k F)\ast
      (D_m G)$ where $\ast$ is induced by the shuffle product~\eqref{shuffle} ${\hdr^1(X)}^{\otimes k}\otimes \hdr^k(X)^{\otimes m} \xrightarrow{\ast} \hdr^1(X)^{\otimes k+m}$ on the tensor algebra
      $T(\hdr^1(X))$ and the product $\mathcal{F}(X) \otimes \mathcal{G}(X) \to
      \mathcal{F}\otimes \mathcal{G}(X)$.
    \item\label{P:dD1} For $k=1$ and $\mathcal{F} = \Omega^1$, the map $D_1$ coincides with the $p$-adic delbar operator of
      Theorem~\ref{T:volsum}.
 \end{enumerate}
\end{prop}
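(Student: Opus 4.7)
The proof plan tackles the four assertions in order, exploiting the $\mathfrak{g}$-equivariance encoded in Lemma~\ref{lieaction} and Corollary~\ref{vkcor}. For part (1), the strategy is to check invariance under the generating equivalence relation on triples $(M,v,s)$ representing a Vologodsky function: if $\phi\colon M\to M'$ is a morphism of connections carrying $v$ to $v'$ with $s=s'\circ \phi$, then $\phi$ is automatically $\mathfrak{g}$-equivariant on fibers by Tannakian functoriality, hence $\phi(g_k\cdots g_1 v)=g_k\cdots g_1 v'$, and applying $s'$ to both sides recovers $s(g_k\cdots g_1 v)$. For part (2), an $\O_X$-linear map $\psi\colon \mathcal{F}\to \mathcal{F}'$ produces a representative $(M,v,\psi\circ s)$ of $\psi\circ F$, and unwinding the definition of $D_k$ immediately gives $D_k(\psi\circ F)=(\psi\otimes\id)D_kF$; $\O(X)$-linearity is the special case of multiplication by a global function.

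For part (3), I observe that the tensor product of a $k$-extension $M$ and an $m$-extension $N$ is a $(k+m)$-extension under the convolution filtration $(M\otimes N)^p\colonequals \sum_{i+j=p}M^i\otimes N^j$, and that $(M\otimes N,v\otimes w,s\otimes t)$ represents $F\cdot G$. Since $\mathfrak{g}$ acts on tensor products of unipotent connections by derivations, each lift $g_i$ acts on $M\otimes N$ as $g_i\otimes 1+1\otimes g_i$. Expanding $g_{k+m}\cdots g_1(v\otimes w)$ yields a sum, indexed by assignments of each index $i\in\{1,\ldots,k+m\}$ to either tensor factor, of terms of the form (ordered product on $M$)$\otimes$(ordered product on $N$). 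Only assignments placing exactly $k$ indices on the $M$-factor contribute, since otherwise one of the factors remains strictly above the lowest piece of its filtration and is killed by $s$ or $t$. These surviving assignments biject with $(k,m)$-shuffles, and Corollary~\ref{vkcor} identifies each summand with the corresponding term in the definition~\eqref{shuffle} of the shuffle product of $D_kF$ and $D_mG$.

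For part (4), the plan is to fix a $1$-iterated $1$-form $\eta\in\ocola^1(X)$ with representation $(M,v,s)$, where $M$ is a $1$-extension and $s\colon M\to \Omega^1$ is a morphism, and then to unwind both $\delbr\eta$ and $D_1\eta$ on this data. On one side, $\delbr\eta$ is built, via the local model $\omega\int\eta'$ of Theorem~\ref{T:volsum}\,\eqref{t6}, from the extension class $[M]\in\hdr^1(X)$ together with the image of $v$ in $M_0$ followed by $s$; on the other, $D_1\eta(l)=s(g\cdot v)$ equals $s$ applied to the image of $v$ under the map $V_0\to V_1$ induced by $l$ acting on $[M]$ via Lemma~\ref{lieaction}. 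Matching these two descriptions yields $D_1=\delbr$. I expect the combinatorial bookkeeping in part (3) to be the main obstacle, since the order of composition in $\mathfrak{g}$ must be carefully reconciled with the tensor order in $\hdr^1(X)^{\otimes(k+m)}$ and with the shuffle convention of~\eqref{shuffle}; the remaining parts reduce to the $\mathfrak{g}$-equivariance of morphisms of connections and the derivation property of the coproduct on the universal enveloping algebra.
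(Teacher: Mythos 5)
Your proposal follows essentially the same route as the paper: parts (1) and (2) via $\mathfrak{g}$-equivariance of morphisms of connections, part (3) via the derivation action of $\mathfrak{g}$ on the tensor product (with the same filtration count eliminating all but the $(k,m)$-shuffles, and Corollary~\ref{vkcor} matching the surviving summands), and part (4) by reducing to a two-step extension and identifying both operators with the extension class. The paper is somewhat more explicit in part (4) about first replacing $M_0$ by the subconnection generated by the image of $v$ and then cutting $M_1$ down to a line via the coefficient-sheaf functoriality already established, but the underlying argument is the one you describe.
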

\begin{proof}
  To see the independence of representation and functoriality in the
  coefficient sheaf we define, more generally, for any abstract Vologodsky
  function $F$ and elements $g_1,\ldots,g_k\in \mathfrak{g}$, the Vologodsky function
  $D(g_1,\ldots,g_k,F)$ with values in $\mathcal{F}$, to simply be the Vologodsky
  function with representation $(M, g_k \cdots g_1(v),s)$.
  As the action of the $g_i$ on the fibers
  commutes with the action of any morphism of connections by definition, it
  is immediate that $D(g_1,\ldots,g_k,F)$ depends only on the Vologodsky
  function associated with $F$ and not on the particular representation, and moreover, 
  is functorial in the coefficient sheaf.
  Under the assumptions of Definition~\ref{highdbdef} the element
  $g_k\cdots g_1(v)$ sits in the fiber of the trivial subconnection $M_k$ 
  and the Vologodsky continuation is then just the embedding $V_k\subset
  M_k(X)$, so that $D(g_1,\ldots,g_k,F)$ is precisely
  $D_k(F)(l_k\otimes\cdots \otimes l_1)$, hence the result.
    
The behavior with respect to multiplication is seen as follows: Suppose $F,G$ have
  representations with connections $M,N$, respectively, with fibers $V,W$ at $x$ respectively.
  Let $l_1,\ldots, l_{k+m}\in \hhdr_1(X)$ and lift those to $g_1,\ldots,g_{k+m}\in
  \mathfrak{g}$. The $g_i$ act on $V\otimes W$ as derivations: $g_i (v\otimes w) = (g_i
  v)\otimes w+ v\otimes (g_i w)$. Iterating this, we get
  \begin{equation*}
    g_{k+m} \cdots g_1 (v\otimes w) = \sum_{A,B} g_A v \otimes g_B w
  \end{equation*}
  where the sum is over all decompositions of $\{1,\ldots,k+m\}$ into a disjoint union of two
  sets $A,B$ and 
  \begin{equation*}
    g_{\{i_1<i_2<\cdots < i_r\}} = g_{i_r} \cdot g_{i_{r-1}}\cdot \cdots \cdot g_{i_1}\,.
  \end{equation*}
  If $A$ (respectively $B$) has size larger than $k$ (respectively $m$) then $g_A v$ 
  (respectively $g_B w$) is $0$. Thus, the sum is only over splittings such that $|A|=k$ and
  $|B|=m$. By Corollary~\ref{vkcor}, since $g_A v$ and $g_B w$ depend only on the images $l_i$ of $g_i$ in $\hhdr_1(X)$, setting
\begin{equation*}
    l_{\{i_1<i_2<\cdots < i_r\}} = l_{i_r} \otimes l_{i_{r-1}}\cdot \cdots \otimes l_{i_1} 
\end{equation*}
  we easily obtain 
  \begin{equation*}
    D_{k+m} \left( F\cdot G \right) (l_1 \otimes \cdots \otimes l_{k+m}) = \sum_{A,B} (D_k F)(l_A) \otimes (D_m
    G)(l_B)
  \end{equation*}
  from which the result \ref{P:prodtoshuffle} follows immediately. Finally suppose that $F$ is represented by
  $(M,y,s)$, where $M$ sits in a short exact sequence 
  \begin{equation*}
    0  \to M_1\to M \to M_0 \to 0
  \end{equation*}
  where $M_0$ and $M_1$ are direct sums of the trivial connection $(\mathcal{O}_X,d)$. 
  We may replace $M_0$ by the one-dimensional subconnection generated by the image of $y$ in
  $M_0$ without changing the Vologodsky function associated
  to the abstract Vologodsky function, 
  hence we may assume that $M_0\isom (\O_X,d) $. Suppose that $M_1$ is $r$-dimensional and
  $s_1,\ldots,s_r$ are coordinate functions on $M_1$ corresponding to a decomposition into a
  direct sum of $(\O_X,d)$'s. We can write the restriction of $s$ to
  $M_1$ as $\sum f_i s_i$, where $f_i \in \mathcal{F}(X)$. Both $D_1$ and the delbar operator
  depend in fact only on the restriction of $s$ to $M_1$ and commute with maps of the
  coefficient sheaves, as can easily be verified using the description of
  the $\delbr$ operator in \cite[\S6]{Bes02}. Thus, we may further assume that also $M_1\isom
  (\O_X,d)$ and that $s$ restricts to the identity of $M_1$. But now it follows from the definitions that both $D_1(F)$ and $\delbr(F)$ are simply given by the extension class of $M$. 
\end{proof}

\subsubsection{$k$-iterated Vologodsky integrals}\label{subsec:Vologodsky_integrals}
For the remainder of this section, by a Vologodsky function $F$, we mean a Vologodsky function with values in $\O_X$. We get a slightly different, and possibly simpler perspective when we
consider these Vologodsky functions.
\begin{defn}\label{D:kitint}
  A \textit{$k$-iterated integral} is a Vologodsky function $F$ 
  such that $dF$ is a $k-1$ iterated
  differential form. We let $I^k$ be the collection of  $k$-iterated
  integrals, which is easily seen to be a subspace of the vector space of $k$-iterated Vologodsky functions.
\end{defn}
\begin{rk}\label{intisnotfun}
Every $k$-iterated integral is clearly a $k$-iterated function. The
converse is false. Suppose for
example that $X=C$ is a non-proper curve and that $0\ne \omega \in \Omega^1(C)$ and $f\in \O(C)$
is a non-constant function. Then $\int \omega$ and $f\int \omega$ are both $1$-iterated functions, but $\int \omega$ is a $1$-iterated integral whereas $f \int \omega$
is only a $2$-iterated integral because $d(f \int \omega)= f\omega+ \int \omega df$ is not a
  holomorphic form.  
\end{rk}
\begin{prop}\label{iterdf}
  A Vologodsky function $F$ 
  is a $k$-iterated integral if and only if it has a representative of the form
  $(M,v,s)$ where $M$ sits in a short exact sequence 
  \begin{equation}\label{itinseq}
    0\to \O_X \xrightarrow{i} M \xrightarrow{p} M' \to 0
  \end{equation}
  with $M'$ a $k-1$ extension and $s: M\to \O_X$ is a splitting of the underlying sequence of
  vector-bundles. Moreover, there is a representation of $M$ as a $k$-extension where the graded piece $M_k$ is isomorphic to the image of $i$, and the section $s$ restricted to $M_k$ is a horizontal isomorphism.
\end{prop}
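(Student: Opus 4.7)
The plan is as follows. This proposition is an equivalence between a structural statement about representatives of a Vologodsky function and the defining property of being a $k$-iterated integral; I would prove each implication separately.

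For the forward implication, suppose $F$ is represented by $(M,v,s)$ with $M$ fitting into the short exact sequence $0 \to \O_X \xrightarrow{i} M \xrightarrow{p} M' \to 0$, where $M'$ is a $(k-1)$-extension with filtration $M'=M'^0\supset M'^1\supset \cdots \supset M'^k=0$. I would first equip $M$ with the filtration $M^j\colonequals p^{-1}(M'^j)$ for $0\le j\le k-1$, $M^k\colonequals i(\O_X)$, $M^{k+1}\colonequals 0$. The graded pieces $M^j/M^{j+1}$ are isomorphic either to graded pieces of $M'$ (for $j\le k-2$), to $M'^{k-1}$ (for $j=k-1$), or to $\O_X$ (for $j=k$), all of which are direct sums of trivial connections, so $M$ is a $k$-extension and $F$ is $k$-iterated. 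Next, I would show $dF$ is a $(k-1)$-iterated $1$-form. The key observation is that $dF$ admits the representative $(M, v, \tilde{s})$, where $\tilde{s}\colon M\to \Omega^1$ is the $\O_X$-linear map defined by $\tilde{s}(m)\colonequals d(s(m))-(\operatorname{id}_{\Omega^1}\otimes s)(\nabla_M m)$; a direct calculation in local coordinates (expanding the horizontal section as $v^y=\sum a_i(y)e_i$ and using $da_i=-\sum_j a_j\omega_{ji}$) gives $dF(y)=\tilde{s}(v^y)$. Because $s\circ i=\operatorname{id}_{\O_X}$ and $i$ is a morphism of connections, one computes $\tilde{s}(i(1))=d(1)-0=0$, so $\tilde{s}$ vanishes on $i(\O_X)=M^k$ and factors through $M/M^k\cong M'$, yielding a representative of $dF$ as a $(k-1)$-iterated $1$-form $(M',p(v),t)$.

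For the converse, I would start from $F$ a $k$-iterated integral and choose a representation $dF=(N,w,t)$ where $N$ is a $(k-1)$-extension and $t\colon N\to \Omega^1$ is $\O_X$-linear. I would then build $M\colonequals \O_X\oplus N$ with connection $\nabla_M(a,n)\colonequals (da+t(n),\nabla_N n)$, giving the extension $0\to \O_X\to M\to N\to 0$ with $i$ and $p$ the evident maps; integrability of $\nabla_M$ reduces to a ``closedness'' condition on $t$ relative to $\nabla_N$, which will follow from $d(dF)=0$ via the exactness in Theorem~\ref{T:volsum}~(\ref{t1}). Taking $v\colonequals (F(x),w)\in M_x$ and $s\colonequals \operatorname{pr}_{\O_X}$, which is an $\O_X$-linear splitting of the short exact sequence, the same computation as in the forward direction shows that the Vologodsky function $F'$ represented by $(M,v,s)$ satisfies $dF'=dF$, so $F$ and $F'$ differ by a constant, which vanishes since $F'(x)=s(v)=F(x)$. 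The ``moreover'' part is supplied by the filtration on $M$ produced above: $M_k=i(\O_X)$, which is isomorphic to $\O_X$ via the horizontal map $i$, and $s|_{M_k}=s\circ i=\operatorname{id}_{\O_X}$ is trivially a horizontal isomorphism.

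The main obstacle I anticipate is the verification of integrability of $\nabla_M$ in the converse direction. The heart of the matter is to identify the precise closedness condition on a Vologodsky $1$-form representative $(N,w,t)$ that makes $\nabla_M$ define an integrable connection on $\O_X\oplus N$, and to show that $d(dF)=0$ translates exactly into this condition. This requires careful book-keeping between the differential of Vologodsky $1$-forms (from the exact sequence in Theorem~\ref{T:volsum}~(\ref{t1})) and the curvature of the induced connection on $\Omega^1\otimes N^\vee$, and is where one must genuinely use the integrable unipotent connection structure rather than just the underlying sheaf theory.
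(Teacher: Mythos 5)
Your proof follows essentially the same route as the paper's, with the two implications treated in the opposite order. Your computation of $dF$ via the dual-connection derivative $\tilde{s}=\nabla^{\ast}s$, and the observation that $\tilde{s}$ vanishes on $i(\O_X)$ because $s\circ i=\id$ and $i$ is horizontal, is precisely the paper's argument for that implication. Your construction of $M=\O_X\oplus N$ with an upper-triangular connection is also the paper's, which it imports wholesale from \cite[Theorem~4.15]{Bes02} as the $d$-preimage construction. Two remarks. First, there is a sign slip: with $\nabla_M(a,n)=(da+t(n),\nabla_N n)$ and $s$ the projection onto the $\O_X$-factor, one computes $\tilde{s}(a,n)=da-s\bigl(\nabla_M(a,n)\bigr)=-t(n)$, so this construction would recover $-dF$ rather than $dF$; you need $\nabla_M(a,n)=(da-t(n),\nabla_N n)$, matching the paper's convention. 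Second, the integrability of $\nabla_M$ that you single out as the main obstacle is one of the things established in \cite[Theorem~4.15]{Bes02}; the paper simply cites this, so no re-derivation is needed. Your plan to deduce it from $d(dF)=0$ and the exactness in Theorem~\ref{T:volsum}~(\ref{t1}) is sound, since $dF$ is by construction in the image of $d$, but the citation lets you avoid the book-keeping.
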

\begin{proof}
Suppose $dF$ is a Vologodsky form corresponding to the triple $(M',v',s')$ with $M'$ a
$k-1$-extension. Let us
recall the construction
in~\cite[Theorem~4.15]{Bes02} of a preimage under $d$ of $dF$. It is given by the vector bundle
$M= \O_X \oplus M'$ with connection given by $\nabla_M(f,m)= (df-s'(m),\nabla_{M'}m)$, together
with the horizontal map $i(f)= (f,0)$ and the section $s(f,m)=f$. Here, $v=(\alpha,v')$, where
$\alpha$ can be taken to be an arbitrary element of $K$ and causes an addition of a constant to $F$. Thus, as $dF$
determines $F$ up to an addition of a constant, $F$ itself is of this form, proving one
direction. Furthermore, in this case, we can define $M_k$ to be the image
of $i$ and since the section is the inverse of a horizontal isomorphism it
is horizontal as well.

  For the reverse direction,
suppose that $F$ comes from a short exact sequence~\eqref{itinseq}. Let $\nabla$ be the connection on $M$ and $\nabla^*$ the dual connection on the dual bundle $M^*$. As $s$ is a section of the flat $i$, it is easy to see that $\nabla^\ast s \circ i = 0$ so that $\nabla^\ast s$ factors via $M'$.
By definition of the differential~\cite[Definition~4.6]{Bes02}, it now follows that the form $dF$ is a $k-1$-iterated form given by $(M',v', \nabla^\ast s)$, where $v'$ is the image of $v$ under the map $M \rightarrow M'$.
\end{proof}

\begin{prop}\label{P:prod}
The product of a $k$-iterated integral with an $m$-iterated integral is an $m+k$-iterated integral
\end{prop}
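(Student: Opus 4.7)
The plan is to reduce the statement to the Leibniz rule together with a tensor-product claim about extensions of connections. Since the differential on Vologodsky functions is compatible with the product structure (Theorem~\ref{T:volsum}~(\ref{t1})), we have $d(FG)=F\cdot dG+G\cdot dF$. By Definition~\ref{D:kitint}, $F$ is a $k$-iterated function and $dG$ is an $(m-1)$-iterated differential form (and symmetrically for $G$ and $dF$), so it suffices to show that a product of a $k$-iterated function with an $(m-1)$-iterated form is a $(k+m-1)$-iterated form, and that sums of $(k+m-1)$-iterated forms remain of the same type.

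For the product, I would pick representatives $(M,v,s_M)$ of $F$ with $M$ a $k$-extension and $s_M$ valued in $\mathcal{O}_X$, and $(N,w,s_N)$ of $dG$ with $N$ an $(m-1)$-extension and $s_N$ valued in $\Omega^1_X$. Then $F\cdot dG$ is represented by the tensor triple $(M\otimes N,\,v\otimes w,\,s_M\otimes s_N)$, with $s_M\otimes s_N$ valued in $\mathcal{O}_X\otimes\Omega^1_X=\Omega^1_X$. The key claim, which is the main thing to verify, is that if $M$ is an $a$-extension and $N$ is a $b$-extension then $M\otimes N$ is an $(a+b)$-extension. Applied with $a=k$, $b=m-1$, this yields the desired conclusion for $F\cdot dG$, and symmetrically for $G\cdot dF$.

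The hard part is the tensor-product claim, which I would prove via the convolution filtration $(M\otimes N)^p\colonequals\sum_{i+j=p}M^i\otimes N^j$, satisfying $(M\otimes N)^{a+b+1}=0$. Each successive quotient $(M\otimes N)^p/(M\otimes N)^{p+1}$ is a subquotient of $\bigoplus_{i+j=p}(M^i/M^{i+1})\otimes(N^j/N^{j+1})$, i.e.\ a subquotient of a direct sum of tensor products of trivial connections. Since $(\mathcal{O}_X,d)\otimes(\mathcal{O}_X,d)\cong(\mathcal{O}_X,d)$ and any subquotient of a direct sum of trivial connections is again trivial, each graded piece is trivial, proving the claim.

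To finish, the sum of two $(k+m-1)$-iterated forms $(L,v,s)$ and $(L',v',s')$ is represented by $(L\oplus L',\,v+v',\,s+s')$, and the termwise direct sum of the filtrations on $L$ and $L'$ presents $L\oplus L'$ as a $(k+m-1)$-extension. Combining these observations, $d(FG)$ is a $(k+m-1)$-iterated differential form, so $FG\in I^{k+m}$ by Definition~\ref{D:kitint}, as required.
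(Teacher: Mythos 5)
Your proof is correct and follows essentially the same route as the paper's: apply the Leibniz rule to reduce to the claim that a $k$-iterated function times an $(m-1)$-iterated form is a $(k+m-1)$-iterated form. The paper dispatches this last step by citing Proposition~\ref{dkprop}, whereas you verify it directly via the convolution filtration $(M\otimes N)^p=\sum_{i+j=p}M^i\otimes N^j$, which is a correct and self-contained way to supply the detail the paper leaves implicit there.
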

\begin{proof}
This follows immediately from the definition upon noting that the product of a $k$-iterated form
with an $m$-iterated function is an $m+k$ iterated form, which is an easy special case of 
Proposition~\ref{dkprop}.
\end{proof}
\begin{prop}\label{P:Dkdesc}
  If $F$ is a $k$-iterated integral, then $D_k F \in \hdr^1(X)^{\otimes
  k}\subset \O(X) \otimes
  \hdr^1(X)^{\otimes k}$. Furthermore, we have 
  \begin{equation}\label{Dko}
    D_k F = [D_{k-1} dF]\;,
  \end{equation}
  where $[~]\colon
  \Omega^1(X)^{d=0} \otimes \hdr^1(X)^{\otimes k-1}\to \hdr^1(X)^{\otimes k}$ is the map that
  sends a form to its cohomology class on the first coordinate (in particular, $D_{k-1} dF$
  belongs to the source of this map).
\end{prop}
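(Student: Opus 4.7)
The plan is to use the structural description of $k$-iterated integrals provided by Proposition~\ref{iterdf}. Choose a representative $(M, v, s)$ of $F$ in which $M$ is a $k$-extension whose top graded piece $M_k$ equals the image of the horizontal inclusion $i\colon \O_X \hookrightarrow M$, and such that $s|_{M_k}$ is a horizontal isomorphism to $\O_X$. Then $dF$ is represented by $(M', v', s')$, where $M' = M/i(\O_X)$ is a $(k-1)$-extension, $v' = p(v)$, and $s' := \nabla^\ast s \colon M \to \Omega^1$ factors through $p\colon M \to M'$, as worked out in the proof of Proposition~\ref{iterdf}. So $D_{k-1}(dF)$ applied to $l_{k-1} \otimes \cdots \otimes l_1$ equals $s'(g_{k-1} \cdots g_1(v'))$, a priori in $\Omega^1(X)$.

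With this representative, the first assertion is immediate: by Corollary~\ref{vkcor} the element $g_k \cdots g_1(v)$ lies in $V_k$, viewed as a horizontal section of $M_k \subset M$. Since $s|_{M_k}$ is a horizontal isomorphism, $s(g_k \cdots g_1(v))$ is a horizontal section of $\O_X$, hence a constant in $K$. Therefore $D_k F$ takes values in $K \otimes \hdr^1(X)^{\otimes k} = \hdr^1(X)^{\otimes k}$.

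For the identity $D_k F = [D_{k-1} dF]$, set $w := g_{k-1} \cdots g_1(v) \in V^{k-1}$ (the fiber of $M^{k-1}$), so that its image $\bar w \in V_{k-1}$ coincides with $g_{k-1} \cdots g_1(v') \in V'_{k-1}$. By Lemma~\ref{lieaction}, the element $g_k(w)$, identified with its image in $V_k \cong K$ via $s$, is obtained by applying $l_k$ to the extension class $[M^{k-1}] \in \hdr^1(X) \otimes \Hom(V_{k-1}, V_k)$ of the two-step extension
\[ 0 \to M_k \to M^{k-1} \to M_{k-1} \to 0, \]
and evaluating on $\bar w$. The main obstacle is to identify this extension class with (the cohomology class of) $s'|_{M_{k-1}}$: in the splitting of the underlying vector bundle $M^{k-1} = M_k \oplus M_{k-1}$ provided by $s$, the connection takes the explicit form $(f, m') \mapsto (df - s'(m'), dm')$, so integrability of $\nabla$ on $M$ forces $s'|_{M_{k-1}}$ to take values in closed forms and identifies its cohomology class (up to sign) with $[M^{k-1}]$. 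Evaluating at $\bar w$ yields
\[ s(g_k \cdots g_1(v)) = l_k\bigl([s'(g_{k-1} \cdots g_1(v'))]\bigr), \]
which both shows that $D_{k-1}(dF)$ in fact lands in $\Omega^1(X)^{d=0} \otimes \hdr^1(X)^{\otimes(k-1)}$ (so that the map $[~]$ can be applied) and gives the desired identity $D_k F = [D_{k-1} dF]$.
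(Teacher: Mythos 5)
Your proof is correct and follows essentially the same route as the paper's: use the representative from Proposition~\ref{iterdf} for both $F$ and $dF$, observe that $\bar w$ coincides with $g_{k-1}\cdots g_1(v')$ under the identification $V'_i = V_i$ for $i<k$, invoke Lemma~\ref{lieaction} for the action of $g_k$, and reduce the identity to the fact that the class of the two-step extension $0\to M_k\to M^{k-1}\to M_{k-1}\to 0$ is the cohomology class of $s'|_{M_{k-1}}$. The one place you diverge is in how that last fact is obtained: the paper cites Chiarellotto--Le Stum (Prop.~1.1.3), while you derive it from the explicit block form $\nabla(f,m')=(df - s'(m'),\, dm')$ of the connection in the vector-bundle splitting supplied by $s$. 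Your version has the mild advantage of simultaneously verifying, rather than merely asserting, the parenthetical in the statement that $D_{k-1}dF$ lands in $\Omega^1(X)^{d=0}\otimes \hdr^1(X)^{\otimes(k-1)}$. One loose end: you hedge with ``(up to sign)'' but then write the final identity without a sign; you should pin down the sign convention implicit in~\eqref{hdr} and confirm it is consistent with your block computation, since the claimed equality is unsigned.
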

\begin{proof}
With the representation of $F$ obtained in Proposition~\ref{iterdf}, the section $s$ gives a horizontal isomorphism
$M_k \isom (\O_X,d)$, and therefore, in the notation of Definition~\ref{highdbdef}, we have that
     $s(g_k \cdots g_1(v))$ is a horizontal section of the trivial bundle, and hence can be be identified with a constant function. This proves the first assertion. 
     
     For the second assertion, we may assume we have the representation obtained from $dF$, as in the
     proof of Proposition~\ref{iterdf},  for $F$, so that $M_k\isom
     (\O_X,d)$ is split and $M_{k-1}\subset M'$.
We have the following: $(M')^{i}= M^i / M_k$ and $M_i^{\prime}= M_i$ for
$i<k$. It follows that we have a similar relation between the $V$'s.
     The extension $M^{k-1}$ is split as an extension of vector
     bundles. Therefore, by a well known argument (see for example~\cite[Proposition~1.1.3]{Chi-leS99} in the
     rigid context) the extension class
     \begin{equation*}
       [M^{k-1}]\in \operatorname{Ext}^1(M_{k-1},M_k) \isom \hdr^1(X) \otimes \Hom(V_{k-1},V_k)
     \end{equation*}
     is the cohomology class of the section $s'|_{M_{k-1}} \in
     \Hom(M_{k-1},\Omega_X^1) $. Let us compare the values of $D^k F$ and
     $[D^{k-1} dF]$ on 
     $g_1,\ldots,g_{k-1} \in \mathfrak{g}$. In the former we start with $v$ and
     first
     project onto $V_0$, while in the latter we start with $v'= v
     \pmod{V_k}$ and project onto $V_0^{\prime}=V_0$, clearly giving the same
     vector.  Then, in both cases we consecutively apply $g_1, g_2,\ldots,
     g_{k-1}$ to obtain a vector $w  \in V_{k-1}$. 
     When computing $D^{k-1} dF$, we now apply $s'$ to $w\in V_{k-1}\subset M_{k-1}(X)$. Then 
     we take the cohomology class so we finally obtain 
     \begin{equation*}
       [D^{k-1} dF (g_1,\ldots,g_{k-1})] = [s'(w)]
     \end{equation*}
     On the other hand, $D^{k} F(g_1,\ldots,g_{k-1})$ is the element of $\hdr^1(X)$ which, as a
     functional on $\mathfrak{g}$, send $g_k$ to $g_k(w)\in V_k\isom K$. But by
     Lemma~\ref{lieaction} this is just
     $[M^{k-1}](w)$ and the result follows.
\end{proof}
\begin{cor}\label{C:Dkonkitint}
  The iterated integral $F = \int \omega_1 \circ \cdots \circ
  \omega_k$ from~\eqref{iterated-general} is a
  $k$-iterated integral and $D_k F =  [\omega_1] \otimes \cdots \otimes [\omega_k]$ where
  brackets indicate cohomology classes. 
  \end{cor}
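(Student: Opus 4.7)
The plan is to prove both assertions by induction on $k$, using Proposition~\ref{P:Dkdesc} to relate $D_k F$ to $D_{k-1}(dF)$, together with the multiplicativity in Proposition~\ref{dkprop}~\eqref{P:prodtoshuffle}.

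For the base case $k=1$, the function $F = \int \omega_1$ has $dF = \omega_1 \in \Omega^1(X)$, a $0$-iterated form, so $F$ is a $1$-iterated integral by Definition~\ref{D:kitint}. I would then apply Proposition~\ref{P:Dkdesc} (or equivalently Proposition~\ref{dkprop}~\eqref{P:dD1} combined with Theorem~\ref{T:volsum}~\eqref{t6}) to conclude $D_1 F = [\omega_1]$.

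For the inductive step, write $F = \int(\omega_1 \cdot G)$ with $G = \int \omega_2 \circ \cdots \circ \omega_k$. By the inductive hypothesis, $G$ is a $(k-1)$-iterated integral with $D_{k-1} G = [\omega_2] \otimes \cdots \otimes [\omega_k]$. First I would check that $\omega_1 \cdot G$ is a $(k-1)$-iterated form: its representing connection is obtained by tensoring the trivial $1$-dimensional connection representing $\omega_1$ with the $(k-1)$-extension representing $G$, changing the coefficient sheaf to $\Omega^1$ while preserving the extension depth. Hence $dF$ is a $(k-1)$-iterated form and $F$ is a $k$-iterated integral. Next, by Proposition~\ref{dkprop}~\eqref{P:prodtoshuffle}, since $D_0(\omega_1) = \omega_1$ (a $0$-iterated form is recovered by its value) and the $(0,k-1)$-shuffle product is trivial,
\[ D_{k-1}(dF) \;=\; D_0(\omega_1) \ast D_{k-1}(G) \;=\; \omega_1 \otimes [\omega_2] \otimes \cdots \otimes [\omega_k]. \]
Invoking Proposition~\ref{P:Dkdesc} once more, I would conclude
\[ D_k F \;=\; [D_{k-1} dF] \;=\; [\omega_1] \otimes [\omega_2] \otimes \cdots \otimes [\omega_k], \]
as desired.

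The main point that requires a careful unwinding of definitions is the compatibility of Proposition~\ref{dkprop}~\eqref{P:prodtoshuffle} in this mixed (form times function) setting: namely, that $D_0$ applied to a $0$-iterated form $\omega$ really yields $\omega$ itself (sitting in $\Omega^1(X)\otimes\hdr^1(X)^{\otimes 0}$), and that the shuffle product degenerates to a plain tensor product when one of the factors has length zero. Both facts follow directly from the definitions of $D_k$ and of the shuffle product in~\eqref{shuffle}, so once these are granted, the induction is routine.
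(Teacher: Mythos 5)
Your proof is correct and mirrors the paper's: both induct on $k$ and use Proposition~\ref{P:Dkdesc} to reduce to computing $D_{k-1}$ of the form $dF=\omega_1\int\omega_2\circ\cdots\circ\omega_k$. The only point of divergence is how the identity $D_{k-1}(\omega_1 G)=\omega_1\otimes D_{k-1}(G)$ is justified: the paper derives it directly from the $\O(X)$-linearity in Proposition~\ref{dkprop}~\eqref{P:dkisolinear}, since multiplication by $\omega_1$ is an $\O_X$-morphism of coefficient sheaves $\O_X\to\Omega^1_X$, while you invoke the shuffle multiplicativity of Proposition~\ref{dkprop}~\eqref{P:prodtoshuffle} with $\omega_1$ regarded as a $0$-iterated form (which requires the small extra observations about $D_0$ and the degenerate shuffle that you flag at the end); both are valid, the paper's being marginally more economical.
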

  \begin{proof}
  We prove this by induction on $k$, with the case $k=1$ being obvious.
  Suppose we know this for $k-1$. 
  We have $dF = \omega_1  \int \omega_2 \circ \cdots \circ
  \omega_k$ by definition. It follows from \eqref{P:dkisolinear} of
  Proposition~\ref{dkprop} that 
  $ \omega_1  \int \omega_2 \circ \cdots \circ \omega_k$
  is a $k-1$ iterated form and
  $ D_{k-1}(\omega_1  \int \omega_2 \circ \cdots \circ \omega_k)= \omega_1
  \otimes D_{k-1}(\int \omega_2 \circ \cdots \circ \omega_k)$.
  Therefore $F$ is
  a $k$-iterated integral and by Proposition~\ref{P:Dkdesc} we have $D_kF = [\omega_1] \otimes  D_{k-1}(\int \omega_2 \circ \cdots \circ
  \omega_{k})$, giving the result by the induction hypothesis.      
  \end{proof}
  
  \begin{rk}
  Now, if $f\in \O(X)$,
  then $D_{k+1}(f\cdot \int \omega_1 \circ \cdots \circ
  \omega_k)=f\otimes  [\omega_1] \otimes \cdots \otimes [\omega_k]$.
  Consider the functions of Remark~\ref{intisnotfun}. The function $\int
  \omega$ is a $1$-iterated integral and $D_1 \int \omega= [\omega]$. On
  the other hand, $f \int \omega$ is only a $2$-iterated integral and $D_2
  (f \int \omega)=0$, which follows either from the formula for $d (f \int
  \omega) $ and~\eqref{Dko} or from Proposition~\ref{dkprop} upon noticing that $f$ is a $1$-iterated integral
  and $D_1 f=0$.
\end{rk}

\begin{proof}[Proof of Theorem~\ref{hdelbth}]
 Part~(a) follows from Proposition~\ref{P:prod}. Since $k$-iterated
  integrals are a vector subspace of $k$-iterated functions, Part~(b)
  follows from Proposition~\ref{dkprop}~(\ref{P:prodtoshuffle}). Part~(c)
  holds since it is clear from the definitions that $D_k$ of any $k-1$
iterated function is $0$.  
  Part~(d) follows from
  Proposition~\ref{P:Dkdesc} and
Proposition~\ref{dkprop}~(\ref{P:dD1}). 
\end{proof}

\section{Canonical log functions on line bundles on abelian varieties}\label{sec:log}
Let $A$ be an abelian variety over a $p$-adic field $K$. 
The goal of this section is a construction of canonical log functions on $A$, similar to the
canonical valuations in Proposition~\ref{P:good}. 
Instead of the limit constructions typically applied in the real-valued setting, we will
use the curvature forms of metrized line bundles introduced in
Proposition~\ref{P:excurve}.

This idea is not new; for instance, Moret-Bailly has used curvature forms to characterize canonical real-valued metrics on
abelian varieties over $\C$ in~\cite{MB85}.
However, in that setting the curvature form determines the metric up to a constant, and
the canonical metric is characterized by having a translation-invariant curvature form in $\Omega^{1,1}(A_{\mathbb{C}})$.
In our setting, curvature forms are valued in $\Omega^1(A) \otimes \hdr^1(A)$ and hence
automatically translation-invariant, for instance by~\cite{Bar57}. 
Furthermore, the curvature form does not uniquely determine the log function: 
If $L$ is a line bundle on $A$, and $\alpha \in \Omega^1(A)\otimes
 \hdr^1(A)$ is such that $\cup\alpha = \ch_1(\L)$, then there are several possible log
 function $\log_\L$ with $\Curve(\log_\L)=\alpha$, one for each holomorphic form
 on $A$. We will use this degree of freedom in Theorem~\ref{T:symgood} to show that when
 $\L$ is symmetric, there exists a unique ``good'' log function (see Definition~\ref{D:goodmetric}),  on $\L$ with curvature
 $\alpha$. This is not true in the antisymmetric case, which is more involved, see Theorem~\ref{T:antigood}.
 To define canonical good log functions, we show that there is a choice of  curvature form
 $\alpha$ for the Poincar\'e
bundle on $A$ so that the good log function with curvature $\alpha$ induces good log functions on all symmetric and
antisymmetric line bundles on $A$ by pullback and restriction. The so obtained log
functions will be our $p$-adic analytic analogue of the canonical valuation. It will also serve as
the component at $p$ of a canonical $p$-adic adelic metrized line bundle when $A$ is
defined over a number field, see  Section~\ref{sec:global}.

\subsection{Geometry of abelian varieties}\label{sec:app}
We recall the theory of line bundles on abelian varieties, the dual abelian variety and
the Poincar\'e
line bundle. In this subsection $A$ is
an abelian variety over an arbitrary field $K$ of characteristic~0.
We write $s, d,\pi_i$ for the addition and subtraction maps and the projections $A\times A\to A$.
Let $\L$ be a line bundle on $A$.
\begin{defn}
    We call $\L$ symmetric (respectively antisymmetric) if there exists an
    isomorphism $[-1]^\ast \L \isom \L$ (respectively $[-1]^\ast \L \isom
    \L^{-1}$).
\end{defn}
\begin{rk}\label{R:symmfunctorial}
Pullbacks of symmetric line bundles by morphisms of abelian varieties are also symmetric, and the tensor product of two symmetric line bundles on an abelian variety is also symmetric.
\end{rk}

The following results are well-known.
\begin{prop}\label{sumdiff}\hfill
  \begin{enumerate}[\upshape (a)]
    \item\label{sumdiffsym} 
    If $\L$ is symmetric, then there is an isomorphism of line bundles on $A\times A$
\begin{equation}\label{m-iso}
  s^\ast \L \otimes d^\ast \L \isom (\pi_1^\ast \L)^2 \otimes (\pi_2^\ast \L)^2\;.
\end{equation}
\item\label{sumdiffanti} The line bundle $\L$ is antisymmetric
  if and only if the cohomology class of $\L$ (in de Rham cohomology) is trivial.
 In this case we have an isomorphism of line
bundles on $A\times A$:
\begin{equation}\label{m-iso2}
  s^\ast \L \isom \pi_1^\ast \L \otimes \pi_2^\ast \L\;.
\end{equation}

  \end{enumerate}
\end{prop}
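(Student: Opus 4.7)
The plan is to derive both isomorphisms from the Theorem of the Cube, which for any three morphisms $f_1,f_2,f_3\colon X\to A$ from a variety $X$ and any line bundle $\L$ on $A$ produces a canonical isomorphism
\begin{equation*}
(f_1+f_2+f_3)^\ast \L \otimes f_1^\ast\L \otimes f_2^\ast\L \otimes f_3^\ast\L \isom (f_1+f_2)^\ast\L \otimes (f_1+f_3)^\ast\L \otimes (f_2+f_3)^\ast\L
\end{equation*}
once we fix a rigidification of $\L$ at the origin.

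For part~\eqref{sumdiffsym}, I would apply the cube identity with $X=A\times A$ and $(f_1,f_2,f_3)=(\pi_1,\pi_2,-\pi_2)$. Then $f_1+f_2+f_3=\pi_1$, $f_1+f_2=s$, $f_1+f_3=d$, and $f_2+f_3$ is the constant map at the origin, whose pullback of $\L$ is trivial via the rigidification. The symmetry of $\L$ supplies an isomorphism $f_3^\ast\L=\pi_2^\ast(-1)^\ast\L\isom\pi_2^\ast\L$, and substituting these ingredients into the cube identity directly yields~\eqref{m-iso}.

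For part~\eqref{sumdiffanti}, I would first establish the cohomological criterion. If $\L$ is antisymmetric, applying $\ch_1$ to the isomorphism $(-1)^\ast\L\isom\L^{-1}$ gives $(-1)^\ast\ch_1(\L)=-\ch_1(\L)$ in $\hdr^2(A)$; but $(-1)^\ast$ acts as $(-1)^k$ on $\hdr^k(A)$, hence as the identity on $\hdr^2(A)$, forcing $\ch_1(\L)=0$. Conversely, if $\ch_1(\L)=0$, I would invoke the standard fact that in characteristic zero the Chern class map embeds $\NS(A)$ into $\hdr^2(A)$, so that $\L\in\Pic^0(A)$; since $(-1)^\ast$ acts on $\Pic^0(A)=\hat{A}$ as the inversion $[-1]$, this gives $(-1)^\ast\L\isom\L^{-1}$.

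For the isomorphism~\eqref{m-iso2}, given $\L\in\Pic^0(A)$, I would appeal to the seesaw principle applied to $M\colonequals s^\ast\L\otimes \pi_1^\ast\L^{-1}\otimes \pi_2^\ast\L^{-1}$ on $A\times A$. Translation-invariance of $\L$ makes $M|_{\{a\}\times A}\isom t_a^\ast\L\otimes\L^{-1}$ trivial for every $a$, and symmetrically $M|_{A\times\{b\}}$ is trivial; the seesaw principle then forces $M$ itself to be trivial. The main delicate point throughout will be the bookkeeping of rigidifications so that every isomorphism is canonical, but since the statement asks only for abstract isomorphisms of line bundles this is a routine matter rather than a genuine obstacle.
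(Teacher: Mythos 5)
Your proof is correct, and it takes a genuinely different and more self-contained route than the paper's. For the two isomorphisms, the paper simply cites Lang's book (Propositions~5.2.3 and~5.2.4 of \emph{Abelian Varieties}), whereas you derive~\eqref{m-iso} directly from the Theorem of the Cube applied to $(\pi_1,\pi_2,-\pi_2)$, and~\eqref{m-iso2} from the seesaw principle once $\L$ is known to lie in $\Pic^0(A)$; both derivations are clean and correct, and the triple you chose for the cube identity is exactly the right one. For the cohomological characterization, the easy direction (antisymmetric $\Rightarrow$ trivial Chern class) is identical to the paper's. For the converse, the paper reduces via the Lefschetz principle to $\C$ and invokes the Appell--Humbert theorem to identify cohomologically trivial bundles with characters $\Hom(\mathrm{H}_1(A,\Z),\mathbb{S}^1)$, whereas you use injectivity of $\ch_1\colon \NS(A)\hookrightarrow \hdr^2(A)$ in characteristic zero to conclude $\L\in\Pic^0(A)=\hat{A}$, and then that $(-1)^\ast$ acts on $\hat{A}$ as $[-1]$, so $(-1)^\ast\L\isom\L^{-1}$. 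Your route stays entirely within algebraic geometry over the ground field and needs no complex-analytic input; the paper's route leans on the standard Appell--Humbert description, which is quick once Lefschetz is invoked. Either argument would be acceptable; yours has the advantage of being self-contained and uniform in the field, at the modest cost of quoting the injectivity of the Chern class on N\'eron--Severi as a black box.
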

\begin{proof}
  See~\cite[Proposition~5.2.4]{Lan83} for~\eqref{sumdiffsym} 
  and~\cite[Proposition~5.2.3]{Lan83} for Equation~\eqref{m-iso2}. It is  clear that if a
  line bundle is antisymmetric, then its cohomology class
  is 0 because  $-1$ acts as $1$ on $\hdr^2(A)$. Conversely, we may use the Lefschetz principle
  to reduce to the case of abelian varieties over $\C$ and replace de Rham cohomology with
  Betti cohomology. It then follows from the theorem of Appel--Humbert (\cite[Page 19]{Mum70}) that
  cohomologically trivial line bundles correspond to elements of $\Hom(\mathrm{H}_1(A,\Z),\mathbb{S}^1)$,
  were $\mathbb{S}^1$  is the unit circle inside $\C^\times$, and these are all clearly
  antisymmetric, which proves~\eqref{sumdiffanti}.
\end{proof}
\begin{cor}
If $\L$ is symmetric (respectively antisymmetric), then for any
  morphisms $f,g\colon X\to A$ of $K$-varieties, one
has
\begin{equation}\label{fg-iso}
  (f+g)^\ast \L \otimes (f-g)^\ast \L \isom (f^\ast \L)^2 \otimes (g^\ast \L)^2
\end{equation}
(respectively
\begin{equation}\label{fg-iso1}
  (f+g)^\ast \L \isom f^\ast \L \otimes g^\ast \L)\,.
\end{equation}
In particular, setting $f=g = \id$ or directly pulling back via $\Delta:A\to A\times A$, we get
isomorphisms
\begin{equation}\label{m-iso1}
  [2]^\ast \L \isom \L^{\otimes 4}
\end{equation}
when $\L$ is symmetric and
\begin{equation}\label{m-iso3}
  [2]^\ast \L \isom \L^{\otimes 2}
\end{equation}
when $\L$ is antisymmetric.
\end{cor}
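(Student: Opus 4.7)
The entire corollary is a pullback statement: the isomorphisms \eqref{m-iso} and \eqref{m-iso2} live on $A \times A$, and the claimed isomorphisms \eqref{fg-iso}, \eqref{fg-iso1}, \eqref{m-iso1}, \eqref{m-iso3} are obtained by pulling them back along a suitable morphism. This is genuinely a formal consequence of Proposition~\ref{sumdiff}, so the plan is to spell out the functoriality carefully and handle the small wrinkle that comes from rigidifications.

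First, I would consider the morphism $(f,g)\colon X\to A\times A$ and observe that $s\circ(f,g)=f+g$, $d\circ(f,g)=f-g$, $\pi_1\circ(f,g)=f$ and $\pi_2\circ(f,g)=g$. In the symmetric case, applying $(f,g)^\ast$ to \eqref{m-iso} and using the standard compatibilities of pullback with tensor products and compositions (i.e.\ $(h\circ k)^\ast\mathcal{M}\cong k^\ast h^\ast\mathcal{M}$, $h^\ast(\mathcal{M}\otimes\mathcal{N})\cong h^\ast\mathcal{M}\otimes h^\ast\mathcal{N}$) yields \eqref{fg-iso} at once. The antisymmetric case is identical, starting from \eqref{m-iso2} and yielding \eqref{fg-iso1}.

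For the ``in particular'' consequences, I would specialize either by taking $f=g=\id_A$ in the formulas just proved, or equivalently by pulling back \eqref{m-iso} and \eqref{m-iso2} along the diagonal $\Delta\colon A\to A\times A$, noting that $s\circ\Delta=[2]$, $d\circ\Delta=[0]$ (the constant map to $0\in A$), and $\pi_i\circ\Delta=\id_A$. This gives $[2]^\ast\L\otimes [0]^\ast\L\cong \L^{\otimes 4}$ in the symmetric case and $[2]^\ast\L\cong \L^{\otimes 2}\otimes [0]^\ast\L$ in the antisymmetric case.

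The only small point to address is that $[0]^\ast\L$ is not literally the trivial bundle $\O_A$ but the constant line bundle with fiber $L_0$. Under the rigidification convention of Remark~\ref{R:rigidify}, however, the canonical trivialization of $L_0$ determined by the rigidification produces a canonical isomorphism $[0]^\ast\L\cong \O_A$ of rigidified line bundles. Substituting this identification into the two displayed formulas yields \eqref{m-iso1} and \eqref{m-iso3}. There is no real obstacle here; the main thing to keep straight is that the ``in particular'' isomorphisms hold as isomorphisms of rigidified line bundles only after invoking this canonical trivialization of $[0]^\ast\L$.
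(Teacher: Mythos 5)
Your overall strategy is exactly right and is surely what the authors had in mind: the corollary is a formal pullback of Proposition~\ref{sumdiff} along $(f,g)\colon X\to A\times A$, using $s\circ(f,g)=f+g$, $d\circ(f,g)=f-g$, $\pi_i\circ(f,g)=f,g$, together with compatibility of pullback with tensor products. The paper states the corollary without proof, and your argument supplies the intended one. Your handling of the rigidification issue for $[0]^\ast\L$ is also the right observation: under the convention of Remark~\ref{R:rigidify}, $[0]^\ast\L$ is canonically trivialized as a rigidified line bundle.

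There is, however, one small factual slip in the ``in particular'' paragraph. Pulling \eqref{m-iso2} back along the diagonal gives $[2]^\ast\L\cong\L\otimes\L=\L^{\otimes 2}$ directly, because \eqref{m-iso2} involves only $s^\ast$, $\pi_1^\ast$, $\pi_2^\ast$ and never $d^\ast$. Your displayed intermediate claim $[2]^\ast\L\cong\L^{\otimes 2}\otimes[0]^\ast\L$ in the antisymmetric case therefore has a spurious $[0]^\ast\L$ factor; the map $d$ (and hence $d\circ\Delta=[0]$) does not enter that isomorphism at all. Since $[0]^\ast\L$ is trivial as a rigidified bundle this does not change the conclusion, but the intermediate identity as written is wrong and should be corrected: the $[0]^\ast\L$ factor appears only in the symmetric case, coming from $d^\ast\L$.
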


We now recall the theory of the dual abelian variety and the Poincar\'e line bundle. We refer the interested reader to \cite[Chapter~8]{BLR} for the construction of the relative Picard scheme in full generality. We restrict to the setting of abelian varieties since that is all we need. For an abelian variety $A$, the Picard variety of $A$ represents the relative Picard functor 
\begin{equation*}
  S\mapsto \Pic_{A / S}\colonequals \Pic(A\times S) / \Pic(S)
\end{equation*}
where $\Pic$ is the usual Picard functor of isomorphism classes of line bundles and $\Pic(S)$ maps to $\Pic(A\times S)$ by pullback. See \cite[\S8.2, Theorem~3, \S8.4, Theorem~5]{BLR} for more details. The connected component of the
Picard variety represents
the subfunctor mapping $S$ to the isomorphism classes of line bundles which are fiber by fiber
antisymmetric. We denote this connected component by
$\hat{A}$ -- the \textit{dual abelian variety}. Universality provides a line bundle $\P$ on
$A\times \hat{A}$ whose restriction to $A\times \{0\}$ is trivial. We normalize $\P$
up to isomorphism by insisting that  its restriction to $\{0\}\times \hat{A}$ is also
trivial. 

\begin{defn}
  We call $\P$ the {\em Poincar\'e line bundle} on $A\times \hat{A}$. 
\end{defn}

\begin{lemma}\label{L:PoinIsSymm}
The Poincar\'e line bundle is symmetric.
\end{lemma}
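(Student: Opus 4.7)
The plan is to apply the universal characterization of $\P$. Recall that $\P$ is the unique (up to isomorphism) line bundle on $A\times\hat{A}$ whose fibers along $A\times\{s\}$ realize the line bundle $L_s\in\Pic^0(A)$ corresponding to $s$, and whose restriction along $\{0\}\times\hat{A}$ is trivial (this is the normalization fixing $\P$ up to isomorphism). Thus to show that $[-1]^*\P\cong\P$, where $[-1]=(-1_A,-1_{\hat A})$ denotes the inverse morphism on the product abelian variety, it suffices to verify that $[-1]^*\P$ satisfies the same two conditions.

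First I would check the fibers along $A\times\{s\}$. Let $i_s\colon A\hookrightarrow A\times\hat{A}$ be the inclusion at level $s$. Then
\[
 i_s^*([-1]^*\P) \;=\; ([-1]\circ i_s)^*\P \;=\; (-1_A)^*\,j_{-s}^*\P \;=\; (-1_A)^*L_{-s},
\]
where $j_{-s}$ is the inclusion at level $-s$. Now each $L_t\in\Pic^0(A)$ is antisymmetric by the very definition of the Picard variety component $\hat{A}$, so by Proposition~\ref{sumdiff}\eqref{sumdiffanti} one has $(-1_A)^*L_{-s}\cong L_{-s}^{-1}$. Combining this with the fact that $t\mapsto L_t$ is a homomorphism $\hat{A}\to\Pic^0(A)$, hence $L_{-s}\cong L_s^{-1}$, we conclude $i_s^*([-1]^*\P)\cong L_s$, exactly the Poincaré fiber.

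Next I would verify the rigidification along $\{0\}\times\hat{A}$. Writing $i_{\hat A}\colon \hat A \hookrightarrow A\times\hat A$ for the inclusion at $0\in A$, the composition $[-1]\circ i_{\hat A}$ factors as $i_{\hat A}\circ(-1_{\hat A})$, so
\[
 i_{\hat A}^*([-1]^*\P) \;=\; (-1_{\hat A})^*\bigl(i_{\hat A}^*\P\bigr) \;\cong\; (-1_{\hat A})^*\O_{\hat A} \;\cong\; \O_{\hat A},
\]
using that $i_{\hat A}^*\P$ is trivial by construction. An identical argument (using the other normalization $\P|_{A\times\{0\}}\cong\O_A$) shows the restriction of $[-1]^*\P$ to $A\times\{0\}$ is trivial.

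Having matched both the fibers and the normalization, the universal property of $\P$ yields an isomorphism $[-1]^*\P\cong\P$, proving that $\P$ is symmetric. The only subtle point, which is really the content of the argument, is the cancellation of two sign changes: the Picard functor inverts line bundles under $s\mapsto -s$, while antisymmetry of line bundles in $\Pic^0(A)$ inverts them again under $(-1_A)^*$, so the composite effect on the $s$-fibers is trivial.
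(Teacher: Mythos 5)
Your proof is correct. The paper does not actually supply an argument here; it simply cites \cite[Theorem~8.8.4]{BG06}. Your proposal fills in the standard proof: identify $[-1]^*\P$ with $\P$ by comparing restrictions to the fibers $A\times\{s\}$ and to the normalization slice $\{0\}\times\hat{A}$, then invoke the seesaw/universal property. The computation $i_s^*([-1]^*\P)\cong(-1_A)^*L_{-s}\cong L_{-s}^{-1}\cong L_s$, exploiting both the antisymmetry of classes in $\Pic^0(A)$ (Proposition~\ref{sumdiff}\eqref{sumdiffanti}) and the homomorphism property of $s\mapsto L_s$, is exactly the two-sign cancellation that makes the lemma work, and the check on $\{0\}\times\hat A$ correctly pins down the pullback-from-$\hat A$ ambiguity that seesaw leaves. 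This is almost certainly the argument in the cited source; you have just written out what the paper outsources.
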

\begin{proof}
 See~\cite[Theorem~8.8.4]{BG06}.
\end{proof}

\begin{prop}\label{phiprop}
Let $\L/A$ be a line bundle.
  \begin{enumerate}[\upshape (a)]
    \item The line bundle 
      \begin{equation*}
  \phi_{\L}(a) \colonequals t_a^\ast \L \otimes \L^{-1}
\end{equation*}
is antisymmetric for any $a\in A$. 
\item The map $\phi_L$ is induced from a morphism $\phi_L\colon A\to \hat{A}$ of abelian
  varieties.
  \item If $\L$ is ample, then $\phi_{\L}$
    is an isogeny.
    \item\label{P:phipropanti} 
The bundle $L$ is 
antisymmetric if and only if $\phi_{\L}$ is trivial. 
  \item\label{phiprope}   If $\L$ is symmetric, then there
    exists an isomorphism $\L^{\otimes 2} \isom (\id,\phi_{\L})^\ast \P$, with $(\id
    , \phi_\L) \colon  A \to A\times \hat{A}$.
  \end{enumerate}
\end{prop}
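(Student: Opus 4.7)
My plan is to prove the five parts in order, using the theorem of the cube, the theorem of the square, and the universal property of $\hat{A}$ (which represents the functor of rigidified antisymmetric line bundles) as the main tools.

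For part (a), I would compute $\ch_1(\phi_\L(a)) = \ch_1(t_a^\ast \L) - \ch_1(\L) = 0$, since translation by $a$ acts trivially on de Rham cohomology of $A$ (as translations form a connected family of automorphisms). By Proposition~\ref{sumdiff}\eqref{sumdiffanti}, this forces $\phi_\L(a)$ to be antisymmetric. For part (b), the additivity identity $\phi_\L(a+b) \isom \phi_\L(a) \otimes \phi_\L(b)$ is a direct consequence of the theorem of the square, $t_{a+b}^\ast \L \otimes \L \isom t_a^\ast \L \otimes t_b^\ast \L$. To promote this set-theoretic assignment to a morphism of abelian varieties, I would consider
$$M \colonequals m^\ast \L \otimes \pi_1^\ast \L^{-1} \otimes \pi_2^\ast \L^{-1}$$
on $A \times A$, whose restriction to $\{a\} \times A$ is $t_a^\ast \L \otimes \L^{-1} = \phi_\L(a)$ and whose restriction to $\{0\} \times A$ (and, after symmetrizing, to $A \times \{0\}$) is trivial; by part (a) applied fiberwise, $M$ defines a family of antisymmetric line bundles on $A$ parametrized by $A$. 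The universal property of $\hat{A}$ then produces the desired morphism $\phi_\L \colon A \to \hat{A}$, together with a canonical isomorphism $M \isom (\id_A \times \phi_\L)^\ast \P$ of rigidified line bundles on $A \times A$.

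For part (c), I would argue the kernel $K(\L) = \ker \phi_\L$ is finite whenever $\L$ is ample: if its identity component $K(\L)^0$ had positive dimension, then $\L$ would be translation-invariant along $K(\L)^0$, hence its restriction to any orbit would be a translation-invariant line bundle on a positive-dimensional abelian subvariety, contradicting ampleness of $\L$ restricted to that subvariety. Since $\dim A = \dim \hat{A}$, finite kernel implies $\phi_\L$ is an isogeny. For part (d), one direction is immediate: if $\L$ is antisymmetric, Proposition~\ref{sumdiff}\eqref{sumdiffanti} gives $s^\ast \L \isom \pi_1^\ast \L \otimes \pi_2^\ast \L$, which upon restriction to $\{a\} \times A$ yields $t_a^\ast \L \isom \L$ (after accounting for the constant factor from the fiber at $a$), so $\phi_\L = 0$. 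Conversely, if $\phi_\L = 0$, then the line bundle $M$ above is fiberwise trivial over the first factor and trivial on $\{0\}\times A$, hence trivial by the seesaw principle; then restricting $M$ to the diagonal gives $[2]^\ast \L \otimes \L^{-2}$ is trivial, so $[2]^\ast \L \isom \L^{\otimes 2}$. Combined with the general fact that translation invariance $t_a^\ast \L \isom \L$ characterizes elements of $\Pic^0(A)$, this forces $\L$ to be antisymmetric.

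For part (e), I would simply pull back the isomorphism $M \isom (\id_A \times \phi_\L)^\ast \P$ from part (b) along the diagonal $\Delta \colon A \to A \times A$. The right-hand side becomes $(\id \times \phi_\L)^\ast \P$ as in the statement, while the left-hand side computes as $\Delta^\ast M = [2]^\ast \L \otimes \L^{-1} \otimes \L^{-1}$. Using the symmetry isomorphism \eqref{m-iso1}, namely $[2]^\ast \L \isom \L^{\otimes 4}$, this simplifies to $\L^{\otimes 2}$, giving (e). The main obstacle I anticipate is bookkeeping with rigidifications: the universal property of $\hat{A}$ produces $\phi_\L$ only up to compatibility of rigidifications on $M$ and $\P$, and the chain of isomorphisms in (e) must respect these base-point conditions established in Remark~\ref{R:rigidify}; once the rigidification conventions are fixed, the chain of identifications is forced.
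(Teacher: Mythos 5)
Your proposal is correct, and for part~(e) --- the only part the paper proves directly --- it follows essentially the same route: identify $(\id_A\times\phi_\L)^\ast\P$ with $M=m^\ast\L\otimes\pi_1^\ast\L^{-1}\otimes\pi_2^\ast\L^{-1}$ on $A\times A$ (the paper does this via seesaw plus the $\{0\}\times A$ restriction, you via the universal property and rigidifications, which amounts to the same), then restrict along the diagonal to get $[2]^\ast\L\otimes\L^{\otimes -2}$ and apply the symmetry isomorphism~\eqref{m-iso1} to land on $\L^{\otimes 2}$. For parts~(a)--(d) the paper simply cites~\cite[Theorem~8.5.1, Proposition~8.5.5, Theorem~8.8.3]{BG06}, while you supply direct proofs; these match the standard arguments in those references, and your use of Proposition~\ref{sumdiff}\eqref{sumdiffanti} plus the triviality of translations on $\hdr^2$ to get~(a) is consistent with how the paper itself has set up the antisymmetry criterion. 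One small inefficiency: in the converse direction of~(d), the detour through showing $M$ trivial by seesaw and deducing $[2]^\ast\L\cong\L^{\otimes 2}$ is superfluous --- the hypothesis $\phi_\L=0$ already says $t_a^\ast\L\cong\L$ for all $a$, and the characterization of $\Pic^0(A)$ as the translation-invariant line bundles finishes directly; the $[2]^\ast\L\cong\L^{\otimes 2}$ fact on its own does not force antisymmetry (every line bundle satisfies a version of the theorem of the cube identity), so the ``combined with'' phrasing buries the only step actually doing work.
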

\begin{proof}
  For the first two assertions, see~\cite[Theorem~8.5.1]{BG06}. The third one
  is~\cite[Proposition~8.5.5]{BG06} and for the fourth see~\cite[Theorem~8.8.3]{BG06}.
  We give a short proof of~\eqref{phiprope}.
Let $\L$  be symmetric and let $\id \times \phi_\L\colon A\times A \to A\times \hat{A}$. Then $(\id,\phi_\L) = \Delta \circ (\id \times \phi_\L)$. From the
definition of $\phi_\L$, it follows that $$(\id \times \phi_{\L})^\ast \P\isom s^*\L\otimes \pi_1^*\L^{-1}\otimes
\pi_2^*\L'$$ for some line bundle $\L'$ on $A$. By restricting to $ \{0\}\times A$ it is easy
to see that $\L' \cong \L^{-1}$.
  By symmetry of $\L$ the right hand side in~\eqref{phiprope} becomes
  $$\Delta^*(s^*\L\otimes \pi_1^*\L^{-1}\otimes
  \pi_2^*\L^{-1})\isom [2]^*\L\otimes \L^{\otimes -2} \isom \L^{\otimes 2}\;.\qedhere$$
\end{proof}
We recall some standard facts about quadratic functions and the theorem of
the cube. Our reference is~\cite[\S8.6]{BG06}. Let $q\colon M\to N$ be a
function  between two abelian groups. We say that $q$ is
{\em quadratic} if the associated function 
\begin{equation*}
  b(x,y)=q(x+y)-q(x)-q(y)
\end{equation*} 
is bilinear (8.6.5). A quadratic function is even (respectively odd) if $q$ satisfies 
\begin{equation*}
    q(-x)= (-1)^{\epsilon}q(x),\quad \text{for all } x\in M\;.
\end{equation*} 
where $\epsilon=0$ (respectively $\epsilon=1$).
\begin{lemma}\label{quadprop}
An even quadratic function \( q \) is a quadratic form and satisfies the
parallelogram law: 
\begin{equation*}
  q(x+y) + q(x-y)  = 2 q(x) + 2 q(y)\;.
\end{equation*} 
If \( q \) is an odd quadratic function then $2q$ is a homomorphism.
\end{lemma}
\begin{proof}
In the even case
 \begin{equation*}
  q(x+y) + q(x-y) = 2 q(x) + q(y) + q(-y) + b(x,y)+b(x,-y) = 2 q(x) + 2 q(y)\;.
\end{equation*} 
On the other hand, for an odd quadratic function \( q \) we have 
\begin{equation*}
  b(x,y) = b(-x,-y) = -b(x,y)
\end{equation*} 
so that $b(x,y)$ is always $2$-torsion and therefore $2q$ is a homomorphism.
\end{proof}
By the proof of Lemma 8.6.10 in loc. cit., $q$ is a quadratic function if and only if we
have for all $x=(x_1,x_2,x_3)\in M^3$, 
\begin{equation}\label{thmcube}
    \sum_{I\subset \{1,2,3\}} q(S_I(x)) = 0\;, 
\end{equation} 
where $S_I(x)= \sum_{i\in I}x_i$. The theorem of the cube for an abelian
variety $A/K$ is the statement that for any variety $X/K$ and any $c\in \Pic(A)$
the map $q_c\colon \Mor(X,A) \to \Pic(X)$ given by $q(\phi)= \phi^\ast c$ is a
quadratic function (see~\cite[Theorem~8.6.11]{BG06}). The usual statement of this
theorem~\cite[Cor 6.4]{Milne86b}
is equivalent to this statement by applying~\eqref{thmcube} to
the universal situation 
\begin{equation}\label{E:universalthmcube}
    X=A\times A\times A\;,\quad x=(\pi_1,\pi_2,\pi_3)
\end{equation}
and reads as follows: For any line bundle $\L$ on $A$ one has an isomorphism
\begin{equation}\label{ThmCube}
  \pi_{123}^\ast \L \otimes 
  \pi_{12}^\ast \L^{-1} \otimes 
  \pi_{23}^\ast \L^{-1} \otimes 
  \pi_{13}^\ast \L^{-1} \otimes 
  \pi_{1}^\ast \L \otimes 
  \pi_{2}^\ast \L \otimes 
  \pi_{3}^\ast \L \isom \O_{A^3}\;.
\end{equation} 
For the converse, see Corollary~6.5 there.
\begin{prop}\label{cubeisoms}
The maps~\eqref{fg-iso} for symmetric and~\eqref{fg-iso1} for antisymmetric line bundles on
$A$ are indeed isomorphisms as claimed.
\end{prop}
\begin{proof}
Combine the theorem of the cube with Lemma~\ref{quadprop} and apply to the elements \[
  x=\pi_1, y=\pi_2 \in \Mor(A\times A,A)
  \,.\]
If \( \L \) is symmetric this completes the proof. 
If \( \L \) is anti-symmetric we may, after possibly enlarging the ground field, find a line bundle
\( \L' \) with \( \L^{\prime \otimes 2}\isom \L \). Then, again by same results, we get an isomorphism as required over the larger ground field, but this descends to \( K \) by
Hilbert~90.
\end{proof}

\subsection{Good log functions on symmetric and antisymmetric line
bundles}\label{S:good}
For the remainder of this section we suppose that $K$ is a $p$-adic field and that $A/K$ is an
  abelian variety. 
We now study the $p$-adic analogues of canonical valuations.
  In the following, all line bundles on abelian varieties will be rigidified at~0. All
  (iso)morphisms of line bundles
  will be viewed as (iso)morphisms of rigidified line bundles, although we will often
not write this explicitly, to simplify notation. Likewise, tensor products of line bundles
  will be tensor products of rigidified line bundles.
 For a symmetric or antisymmetric line bundles, the rigidification fixes the
  choice of isomorphism~\eqref{m-iso},~\eqref{m-iso2},~\eqref{fg-iso},
  \eqref{fg-iso1}, \eqref{m-iso1} and~\eqref{m-iso3}.

\begin{defn}\label{D:normalized}
    We say that a log function $\log_L$ on a rigidified 
    line bundle $(\L,r)$ on $A/K$ is {\em normalized}
    if $\log_L(r)=0$. 
\end{defn}
\begin{defn}\label{D:goodmetric}
    We say that a log function on a symmetric 
    line bundle $\L/A$ is {\em good}
    if~\eqref{m-iso} is an isometry.
\end{defn}

\begin{rk}\label{R:goodsymmnorm}
A good log function on a symmetric line bundle is normalized.
\end{rk}
\begin{defn}\label{D:symmetriclog}
 We say that a log function on a symmetric line bundle $\L/A$ is {\em
  symmetric} if the symmetry isomorphism $[-1]^\ast \L \isom \L$ is an isometry.
\end{defn}

\begin{thm}\label{T:symgood}
  Let $\L$ be a symmetric line bundle on $A$
and let $\alpha \in \Omega^1(A)\otimes \hdr^1(A)$  such that $\cup\alpha = \ch_1(\L)$.
  Then there exists a unique good log function $\log_{\L}$ such that $\Curve(\log_L) =
  \alpha$. The good log function is also symmetric.
  \end{thm}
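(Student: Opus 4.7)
The plan is to combine Proposition~\ref{P:excurve}(b) with the modification procedure of Lemma~\ref{L:fixlemma}(a), using Lemma~\ref{C:twotest} as the working criterion for goodness (so we only need to arrange that \eqref{m-iso1} is an isometry, rather than dealing with \eqref{m-iso} directly).

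For existence, I would start with any log function $\log_\L^{(0)}$ on $\L$ with curvature $\alpha$; such a log function exists by Proposition~\ref{P:excurve}(b). By Lemma~\ref{L:fixlemma}(a), there is a holomorphic form $\omega \in \Omega^1(A)$ such that, as functions pulled back to $(\L^{\otimes 4})^{\times} \cong ([2]^{\ast}\L)^{\times}$ via~\eqref{m-iso1}, the induced log function on $\L^{\otimes 4}$ minus the pullback of the log function on $[2]^{\ast}\L$ equals $\int 2\omega$. Set $\log_{\L} \colonequals \log_\L^{(0)} - \int \omega$ (where I normalize the antiderivative so that $\int \omega$ vanishes at the point $0 \in A$ over which the rigidification lies). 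Since $[2]^{\ast}\omega = 2\omega$ on $\Omega^1(A)$ and $[2]$ fixes~$0$, the identity $[2]^{\ast}\int \omega = 2\int\omega$ holds without an extra constant. The induced log function on $\L^{\otimes 4}$ therefore changes by $-4\int\omega$, while the pullback of the log function on $[2]^{\ast}\L$ changes by $-[2]^{\ast}\int\omega = -2\int\omega$. The new difference is $2\int\omega - 4\int\omega + 2\int\omega = 0$, so~\eqref{m-iso1} is an isometry for $\log_\L$, and Lemma~\ref{C:twotest} implies that $\log_\L$ is good; its curvature is still $\alpha$ because we modified by the integral of a holomorphic form.

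For uniqueness, suppose $\log_\L$ and $\log_\L'$ are two good log functions with curvature~$\alpha$. By Remark~\ref{R:logfunctionuniqueness} their difference, viewed on $\L^{\times}$, is the pullback of $\int \omega' + c$ for a unique $\omega' \in \Omega^1(A)$ (normalized to vanish at~$0$) and a constant $c \in \overline{K}$. Both log functions are normalized by Remark~\ref{R:goodsymmnorm}, so evaluating at the rigidification gives $c = 0$. Writing out that~\eqref{m-iso1} is an isometry for each of them and subtracting shows $4 \int \omega' - [2]^{\ast}\int\omega' = 0$, i.e., $2\int\omega' = 0$, forcing $\omega' = 0$. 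Hence $\log_\L = \log_\L'$.

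Finally, to see that the good log function is also symmetric, I would invoke the last assertion of Lemma~\ref{C:twotest}: for a good log function, \eqref{fg-iso} is an isometry for every pair of morphisms $f,g\colon X \to A$. Taking $X = A$, $f = 0$, $g = \id_A$, and using that $0^{\ast}\L$ is trivialized by the rigidification, \eqref{fg-iso} collapses to an isometry $\L \otimes [-1]^{\ast}\L \cong \L^{\otimes 2}$; tensoring with $\L^{-1}$ yields that the symmetry isomorphism $[-1]^{\ast}\L \cong \L$ is an isometry, which is precisely Definition~\ref{D:symmetriclog}. The only real subtlety in the whole argument is bookkeeping the additive constants attached to $\int \omega$ and $[2]^{\ast}\int\omega$; this is handled uniformly by always normalizing integrals of holomorphic forms to vanish at the origin, which is compatible with the rigidification and with the fact that $[2]$ fixes~$0$.
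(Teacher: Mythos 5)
Your proof takes essentially the same route as the paper's: existence via Proposition~\ref{P:excurve}(b) followed by the correction $\log_\L \colonequals \log_\L^{(0)} - \int\omega$ supplied by Lemma~\ref{L:fixlemma}(a), goodness certified through Lemma~\ref{C:twotest}, and uniqueness by forcing $2\int\omega' = 0$ from the isometry of~\eqref{m-iso1}. The one place you add something is the symmetry assertion: the paper disposes of it implicitly (the proof of Lemma~\ref{C:twotest} verifies the $[-1]$-symmetry en route to goodness), whereas you extract it directly from the ``furthermore'' clause with $f=0$, $g=\id_A$, using Remark~\ref{R:goodsymmnorm} to handle the $0^\ast\L$ factor — a clean and correct alternative, slightly more self-contained than a bare appeal to Lemma~\ref{C:twotest}.
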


  Before we prove this theorem, we state the analogous definitions and our
  result in the case of antisymmetric bundles. 

\begin{defn}\label{D:goodmetricanti}
    We say that a log function on an antisymmetric
    line bundle $\L$ is {\em good}
    if~\eqref{m-iso2} is an isometry.
\end{defn}
As in the symmetric case, the notion of goodness depends on the choice of a rigidification of $L$, used to
fix the isomorphism~\eqref{m-iso2}.
\begin{rk}\label{R:goodantinorm}
A good log function on an antisymmetric line bundle is normalized.
\end{rk}

\begin{defn}\label{D:antisymmetriclog}
 We say that a log function on an antisymmetric line bundle $\L/A$ is {\em
  antisymmetric} if $[-1]^\ast \L \isom \L^{-1}$ is an isometry.
\end{defn}

\begin{defn}
  We call a metrized line bundle  {\em flat} if the corresponding curvature form is trivial.
\end{defn}

As the
cohomology class $\ch_1(\L)$ of any antisymmetric line bundle $\L/A$  is trivial, any $\alpha \in \Omega^1(A)\otimes
\hdr^1(A)$ cupping to $0$ is the curvature form for some log function on $\L$.

\begin{thm}\label{T:antigood}
  A log function $\log_L$ on an antisymmetric line bundle $\L$ is good if and only if 
  $(L,\log_L)$ is flat and normalized.
\end{thm}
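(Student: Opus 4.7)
The plan is to prove both directions by comparing the log functions on the two sides of the canonical isomorphism~\eqref{m-iso2} induced from $\log_\L$, exploiting the Künneth decomposition on $A\times A$ and the restrictions to the coordinate slices through the origin.

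The forward direction is comparatively direct. Suppose $\log_\L$ is good. Normalization is Remark~\ref{R:goodantinorm}. For flatness, the isometry in~\eqref{m-iso2} forces equality of curvature forms, i.e.\ $s^*\alpha = \pi_1^*\alpha + \pi_2^*\alpha$ where $\alpha = \Curve(\log_\L)\in \Omega^1(A)\otimes \hdr^1(A)$. Since both $\Omega^1(A)$ and $\hdr^1(A)$ consist of translation-invariant elements, $s^*=\pi_1^*+\pi_2^*$ on each factor, so expanding $s^*\alpha$ bilinearly leaves the two cross terms $\sum \pi_1^*\omega_j\otimes \pi_2^*[\eta_j]$ and $\sum \pi_2^*\omega_j\otimes \pi_1^*[\eta_j]$ to vanish. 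These lie in distinct Künneth summands of $\Omega^1(A\times A)\otimes \hdr^1(A\times A)$, so each vanishes separately, and reading off either summand gives $\alpha=0$.

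For the converse, assume $\log_\L$ is flat and normalized. Let $f\colon A\times A\to K$ be the function obtained by trivializing $s^*\L\otimes (\pi_1^*\L\otimes\pi_2^*\L)^{-1}\cong \O_{A\times A}$ through~\eqref{m-iso2} and taking the difference of the two induced log functions. Both have curvature zero, so by Remark~\ref{R:logfunctionuniqueness} we have $df\in\Omega^1(A\times A)$, and by the normalization $\log_\L(r)=0$ together with the fact that~\eqref{m-iso2} sends the rigidification $r$ of $s^*\L|_{(0,0)}$ to $r\otimes r$, we get $f(0,0)=0$. Now restrict $f$ along $\iota_1\colon A\hookrightarrow A\times\{0\}$ and $\iota_2\colon A\hookrightarrow \{0\}\times A$. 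Under $\iota_1$, $s^*\L$ restricts to $\L$, $\pi_1^*\L\otimes\pi_2^*\L$ restricts to $\L\otimes L_0\cong \L$ via the rigidification, and~\eqref{m-iso2} restricts to the identity (by uniqueness of a rigidified morphism after pinning down its value at $(0,0)$). A direct check using $\log_\L(r)=0$ shows both induced log functions restrict to $\log_\L$ itself, so $f\circ \iota_1=0$; by symmetry $f\circ \iota_2=0$. Writing $df = \pi_1^*\omega_1 + \pi_2^*\omega_2$ via the Künneth decomposition $\Omega^1(A\times A)=\pi_1^*\Omega^1(A)\oplus \pi_2^*\Omega^1(A)$ and pulling back along $\iota_1$ kills $\pi_2^*\omega_2$ and identifies $\pi_1^*\omega_1$ with $\omega_1$, yielding $\omega_1 = d(f\circ \iota_1)=0$; symmetrically $\omega_2=0$. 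Hence $df=0$, so by Theorem~\ref{T:volsum}~\eqref{t1}, $f$ is a constant, and $f(0,0)=0$ forces $f\equiv 0$. Thus~\eqref{m-iso2} is an isometry and $\log_\L$ is good.

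The main obstacle I foresee is the careful bookkeeping of rigidifications of $s^*\L$, $\pi_1^*\L$, $\pi_2^*\L$ and of the isomorphism~\eqref{m-iso2}, so that the induced log functions on the two sides really coincide (rather than agreeing only up to an uncontrolled constant) when restricted to the coordinate slices; once this is pinned down, the Künneth splitting of holomorphic $1$-forms on $A\times A$ does the rest.
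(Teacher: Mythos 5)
Your proof is correct, and while the forward direction is essentially a repackaging of the paper's argument, the converse direction takes a genuinely different and cleaner route.

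For the forward direction, you compare curvatures directly on $A\times A$ via the K\"unneth decomposition of $\Omega^1(A\times A)\otimes\hdr^1(A\times A)$: the isometry~\eqref{m-iso2} forces $s^*\alpha=\pi_1^*\alpha+\pi_2^*\alpha$, the cross terms $\sum\pi_1^*\omega_j\otimes\pi_2^*[\eta_j]$ and $\sum\pi_2^*\omega_j\otimes\pi_1^*[\eta_j]$ land in distinct K\"unneth summands and must separately vanish, whence $\alpha=0$. The paper instead restricts~\eqref{m-iso2} to~\eqref{m-iso3} along the diagonal and reads off $4\alpha=2\alpha$ from $[2]^*\alpha=\Delta^*s^*\alpha$. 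These are the same computation in two guises and either is fine.

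For the converse, you argue directly: form the discrepancy Vologodsky function $f$ between the two sides of~\eqref{m-iso2}, use flatness plus Remark~\ref{R:logfunctionuniqueness} to get $df\in\Omega^1(A\times A)$, use normalization and the rigidified identification of~\eqref{m-iso2} on the two coordinate slices to force $f\circ\iota_1=f\circ\iota_2=0$, split $df=\pi_1^*\omega_1+\pi_2^*\omega_2$ by K\"unneth, and conclude $\omega_1=\omega_2=0$ and $f\equiv 0$. This is self-contained. The paper's proof, by contrast, leans on the existence of at least one good log function (Proposition~\ref{P:anticangood}, which is actually stated and proved \emph{after} the theorem and uses the Poincar\'e bundle construction on $A\times\hat{A}$), and then notes that adding $\int\omega$ with $s^*\omega=\pi_1^*\omega+\pi_2^*\omega$ preserves goodness. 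Your argument removes the forward reference and avoids invoking the Poincar\'e bundle at all; it is in fact closer in spirit to Lemma~\ref{L:trivcondition}, which the paper proves later for a similar purpose. One bookkeeping point you correctly flag and resolve is that~\eqref{m-iso2}, as an isomorphism of rigidified line bundles, restricts along $\iota_1$ (resp.\ $\iota_2$) to the canonical identification $L\otimes L_0\cong L$ through $r$, which together with $\log_\L(r)=0$ is what makes both induced log functions restrict precisely to $\log_\L$ rather than to $\log_\L$ plus an uncontrolled constant.
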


We are indebted to an anonymous referee who suggested the following
simplified approach for proving these theorems we present here. The
main tool is Proposition~\ref{P:metriccube}, which is a version of the theorem of the cube for rigidified line
bundles with a normalized log function. We also
sketch our original approach in~\S\ref{subsec:oldproofs}.

  We define $\Pic(X,x_0)$ for a variety $X/K$ and a basepoint $x_0\in X(K)$ to be the group of isomorphism classes
of line bundles on $X$ rigidified at  $x_0$. These groups are naturally
functorial for basepoint preserving maps. The rigidified and non-rigidified
groups are isomorphic (recall that all our rigidifications are $K$-rational) and
statements translate easily between the two scenarios since any isomorphism
uniquely scales to an isomorphism preserving the rigidifications. In
particular, picking the basepoint $0$ for an abelian variety $A/K$, there is
an analogue of the theorem of the cube where a rigidified $c\in \Pic(A,0)$ gives a quadratic function
$q_c\colon \Mor((X,x_0),(A,0)) \to \Pic(X,x_0)$ on the abelian group of basepoint
preserving maps.

For a variety $X$ over $K$ with base point $x_0\in X(K)$, the isometry
classes of rigidified line bundles with normalized log functions form a group
$\picm(X,x_0)$ under tensor products.
\begin{prop}\label{P:metriccube}
    The theorem of the cube holds for $\picm$ : For any $c\in \picm(A,0)$
    the map
$q_c(\phi)= \phi^\ast c$ is a
quadratic function
$q_c\colon \Mor((X,x_0),(A,0)) \to \picm(X,x_0)$.
\end{prop}
\begin{proof}
It is sufficient to consider the universal
situation~\eqref{E:universalthmcube}, where the result
follows from~\cite[Lemma~B.2]{Bes05} by restricting the map $m_I$ there to
$0\times A^3$.
\end{proof}
\begin{prop}\label{symisgood}
The maps ~\eqref{fg-iso} for a symmetric line bundle with a symmetric
  log function and~\eqref{fg-iso1} for an antisymmetric line bundle with an
  antisymmetric log function on
$A$ are isometries. In other words, these log functions are good.
\end{prop}
\begin{proof}
  In the symmetric case the proof is identical to that of Proposition~\ref{cubeisoms} using $\picm$ and its associated theorem of the cube, Proposition~\ref{P:metriccube}. In the anti-symmetric case one only needs to note that the isometry is obtained after squaring, so also holds without it.
\end{proof}

To proceed, let us discuss in general what might be called $p$-adic log
functions
in ``dynamical'' situations. Suppose $X/K$ is a proper variety, 
$\L$ is a line bundle on $X$, $f\colon X\to X$ is an
endomorphism, $d$ is an integer and 
\begin{equation}\label{E:beta}
\beta\colon f^\ast \L \to \L^d
\end{equation}
is an 
isomorphism. In such a situation we may want
to find a log function on $\L$ making $\beta$ into an
isometry. Classically (see for instance~\cite{Zha95}
or~\cite[Section~9.5]{BG06}) this is done by an averaging technique which
unfortunately does not work over the $p$-adics. We may, however, use the
theory of curvature forms (see~\S\ref{subs32}) to remedy the situation. If a log function on $\L$ has
curvature form $\alpha$, then the log function on $f^\ast \L$ has curvature
form $f^\ast
\alpha$ while the log function on $\L^d$ has curvature form $d \alpha$. Therefore,
Proposition~\ref{P:excurve} and Remark~\ref{R:logfunctionuniqueness}
immediately give the following:
\begin{prop}
    Let $\log_L$ be a log function on $\L$ with curvature form $\alpha$. Then
    $\beta$ in~\eqref{E:beta} is an isometry up to the integral of a holomorphic 
    form with respect to $\log_L$ if and only if $f^\ast \alpha = d \alpha$.
\end{prop}
Suppose that $\log_L$ is already such that $\beta$ is an isometry
up to the integral of a holomorphic form. If we add to the log function $\int
\omega$, for $\omega\in \Omega^1(X)$, then this adds $f^\ast \int \omega$
to the induced log function on $f^\ast \L$, while it adds $d \int \omega$
to the induced log function on $\L^d$.
Thus, our ability to make $\beta$ an isometry up to a constant
depends on the action of $f^\ast -d$ on $\Omega^1(X)$. We record the two
extreme cases.
\begin{prop}\label{P:finaladjustment}
    Suppose $\L$ has a log function with curvature form $\alpha$ such that $f^\ast
    \alpha = d\alpha$, then
\begin{enumerate}[\upshape (a)]
\item If $f^\ast -d$ is invertible on $\Omega^1(X)$, there is a
  unique (up to constant) log function on $\L$ with curvature form $\alpha$ 
    such that $\beta$ is an isometry up to a constant. 
\item If $f^\ast -d =0$ on $\Omega^1(X)$, then for all
    log functions with curvature form $\alpha$, the isomorphism
    $\beta$ will remain an isometry only up to an integral
    of the same holomorphic form  $\eta$.
\end{enumerate}
\end{prop}

Suppose that $d\ne 1$, and that $\log_L$ is a log function such that
$\beta$ in~\eqref{E:beta} is an isometry up to an additive constant. 
Scaling the log function $\log_L$ by an additive constant $c$ adds
$c$ to the induced log function on $f^*L$, but adds $dc$ to the induced log
function on $L^d$.
Hence, by a
similar argument, it is always possible to modify
$\log_L$ such that $\beta$ is an isometry on the nose.
Alternatively, one may use rigidified line bundles and
normalized log functions to achieve this.

In our first and second applications of the above we let $X$ be an abelian
variety $A/K$ and $f=[-1]\colon A\to A$ the multiplication by $-1$ map. In our
first application $\L$ is a symmetric line bundle and $d=1$. Since $[-1]$
acts as $1$ on curvature forms and as $-1$ on $\Omega^1(A)$, we immediately
get from Proposition~\ref{P:finaladjustment} the following.
\begin{prop}\label{P:newsym}
     Let $\L$ be a symmetric line bundle on $A$
and let $\alpha \in \Omega^1(A)\otimes \hdr^1(A)$  such that $\cup\alpha = \ch_1(\L)$.
  Then there exists a unique symmetric log function $\log_{\L}$ such that $\Curve(\log_L) =
  \alpha$. 
\end{prop}	
In our second application $\L$ is antisymmetric and $d=-1$. Now we see that
the only curvature form $\alpha$ for which $[-1]^\ast \alpha = -\alpha$ is
$\alpha=0$.
Since antisymmetric line bundles have trivial first Chern classes,
$\alpha=0$ is a valid curvature form on $L$.
By Proposition~\ref{P:finaladjustment} either all log
functions
on $\L$ are antisymmetric or none of them are. But starting with any log
function on $\L$ with
zero curvature we obtain a log function on $\L \otimes [-1]^\ast \L^{-1}\isom \L^2$ which is clearly antisymmetric, and taking its square root
gives us an antisymmetric log function on $\L$. We therefore get
\begin{prop}\label{P:newantisym}
All log functions with zero
curvature on an antisymmetric line bundle $\L$ are antisymmetric.
\end{prop}
\begin{proof}[Proof of Theorem~\ref{T:symgood}~and~Theorem~\ref{T:antigood}]
  By Proposition~\ref{P:newsym} we have a symmetric log function on a symmetric
  line bundle and by Proposition~\ref{P:newantisym} we have an
  antisymmetric log function on an antisymmetric line bundle with the appropriate conditions. These are good by Proposition~\ref{symisgood}.
\end{proof}

 \begin{rk}
   Alternatively, one can construct a good log function on an anti-symmetric line bundle $\L$
   by writing $\L$ as $t_a^*\M \otimes \M^{-1}$ for some symmetric ample
  line bundle $\M$ and $a \in A$ and showing that, if we start with any good log function
   on $\M$, the induced log function on $t_a^*\M \otimes \M^{-1}$ is good.
 \end{rk}

\begin{rk}\label{R:loguniq}
 For symmetric line bundles, there is a \textit{unique} good log function for
  \textit{any} curvature form for $\L$, whereas for antisymmetric line bundles, good log
  functions exist \textit{only} if the curvature is $0$. Moreover, in this case,
  \textit{every} normalized log function with curvature $0$ is automatically good, so good log
  functions are far from unique for antisymmetric line bundles.
\end{rk}

\subsubsection{Sketches of alternative proofs of
Theorems~\ref{T:symgood}~and~\ref{T:antigood}}\label{subsec:oldproofs}
We now briefly sketch our original approach to proving
Theorems \ref{T:symgood}~and~\ref{T:antigood}. This avoids the use of
Proposition~\ref{P:metriccube} but
in the antisymmetric case has to rely on the theory of canonical log
functions, which uses the Poincar\'e line bundle.
We first let $\L$ be a symmetric line bundle on
$A$. Call a log function on $\L$ $2$-good if the isomorphism~\eqref{m-iso1}
is an isometry (we similarly call a log function on an antisymmetric line
bundle \( 2 \)-good if~\eqref{m-iso3} is an isometry). By the third application
of Proposition~\ref{P:finaladjustment},
this time with $f$ being the multiplication by $2$ map and $d=4$, we easily
see that, similar to Proposition~\ref{P:newsym} there is, for each
curvature form on $\L$, a unique $2$-good log function on $\L$ with
this curvature. Fix such a log function. Since multiplication by $2$ and by
$-1$ commute, it is easy to see that the pullback by $[-1]$ of the log
function is also $2$-good. By uniqueness of $2$-good
log functions we thus see that the same log function is also symmetric. To show that
this log function is also good we compare the log functions on both sides
of~\eqref{m-iso}. An easy computation shows that they have
the same curvature and they therefore differ by the integral of a
holomorphic form on $A\times A$. This is of the form $\pi_1^\ast \omega_1
+\pi_2^\ast \omega_2$ with $\omega_1,\omega_2\in \Omega^1(A)$. We now pull back by the automorphism of $A\times A$ switching both coordinates.
composing with this the sum map $s$ does not change it while the difference
map $d$ is transformed to $-d$. Since the log function on $\L$ is symmetric,
switching gives on both sides of~\eqref{m-iso} isometric log functions. It
follows that $\omega_1=\omega_2$. Pulling back via the diagonal we see that
the two log functions on both sides of~\eqref{m-iso1} differ by the
integral of $2\omega_1$, and since the log function is $2$-good we find $\omega_1=0$ proving
the goodness of the log function.

For an antisymmetric line bundle  $\L$ on $A$
with a log function with trivial curvature, a similar approach does not
quite work. Indeed, comparing the two sides
of~\eqref{m-iso2} they both have trivial log function and thus the isomorphism is an isometry up to the integral of a holomorphic form on $A\times A$.
The log function on both sides of~\eqref{m-iso2} will change the same if we
modify the log function and therefore, just as for antisymmetry, either all
log functions are good, or all of them are not good. One can use the theory
of the canonical log function of Subsection~\ref{S:antisymm} to show that one, hence, all, flat and
normalized log functions are good. For a brief account, see
Remark~\ref{canrelic}.

\subsection{Canonical log functions on antisymmetric line
bundles}\label{S:antisymm}

Recall from Remark~\ref{R:loguniq} that good log functions on antisymmetric line bundles are
not unique.
  We can metrize an antisymmetric line bundle $\L/A$ by
  restricting a log function on the Poincar\'e bundle $\P$  on $A\times \hat{A}$ to the
  fiber corresponding to $\L$. 
  Since $\P$ is symmetric, we can fix a curvature form for $\P$ and obtain, by
  Theorem~\ref{T:symgood},  a unique associated good log function $\log_\P$.
  We will show that for certain curvature forms on $\P$,
  this yields a good log function on $\L$. This construction 
  simultaneously attaches a unique good log function to every
  antisymmetric line bundle.

For notational convenience we denote $A$ by
$A_1$ and $\hat{A}$ by $A_2$  and we write $\pi_i$ for the projection $A_1\times
A_2\to A_i$. There are direct sum decompositions 
\begin{equation*}
  \Omega^1(A \times \hat{A}) = \pi_1^\ast \Omega^1(A)
  \oplus \pi_2^\ast \Omega^1(\hat{A})\;,
  \hdr^1(A \times \hat{A}) = \pi_1^\ast \hdr^1(A)\oplus \pi_2^\ast \hdr^1(\hat{A})
\end{equation*}
resulting in a direct sum decomposition  
\begin{equation*}
  \Omega^1(A\times \hat{A})\otimes \hdr^1(A\times \hat{A})= \oplus_{i,j=1,2}H_{ij}\;,
  H_{ij} = \pi_i^\ast \Omega^1(A_i) \otimes \pi_j^\ast \hdr^1(A_j)
    \isom \Omega^1(A_i) \otimes  \hdr^1(A_j).
\end{equation*}
For an element 
$ \alpha\in \Omega^1(A\times \hat{A})\otimes \hdr^1(A\times \hat{A}) $ we let
$\alpha_{ij}$ be its component in $H_{ij}$.

\begin{defn}\label{D:admcurv}
 An element $\alpha \in \Omega^1(A\times \hat{A})\otimes \hdr^1(A\times \hat{A})$ is called a
  {\em purely mixed curvature form} for the Poincar\'e bundle $\P$  if $\cup\alpha =
  \ch_1(\P)$ and  $\alpha_{11} = \alpha_{22} = 0$.
\end{defn}

We will show in Section~\ref{subsec:CG} that 
purely mixed curvature forms for the Poincar\'e bundle exist.
In Proposition~\ref{P:curvspace}, we prove that purely mixed curvature forms for $\P$ are in one-to-one correspondence with complementary subspaces for the inclusion $\Omega^1(A) \hookrightarrow \hdr^1(A)$. 
First we give an alternative characterization of the good log function on $\P$ with a
given purely mixed curvature form.
\begin{rk}\label{canrig} 
  From now on, we fix a rigidification $r_\P$ of $\P$ at $(0,0)$. This choice induces 
  trivializations $\P_{\{0_A\}\times \hat{A}}\cong \O_{\hat{A}}$ and  $\P_{A\times
  \{0_{\hat{A}}\}}\cong \O_A$ by requiring that $r_\P$ corresponds to $1(0_A)$
  (respectively $1(0_{\hat{A}})$), 
  where $1$ is the canonical section of $\O_A$ (respectively of $\O_{\hat{A}}$).  
  In particular, for $\hat{a}\in \hat{A}$, this choice
  induces a rigidification $r_{\hat{a}}$ on the antisymmetric line bundle $\P_{A\times \{\hat{a}\}}$ on
  $A$, corresponding to $1(\hat{a})$, and similarly for $a\in A$.
\end{rk}

\begin{prop}\label{P:cangood}
 Let $\alpha$ be a purely mixed curvature form for $\P$.
  \begin{enumerate}[\upshape (a)]
    \item\label{P:canalt} There is a unique log function $\log_\P$ on $\P$ with curvature
      $\alpha$ which restricts to the trivial log function on $A\times \{0\}$ and on
      $\{0\}\times \hat{A}$ with respect to the trivializations from
      Remark~\ref{canrig}.
    \item\label{P:canisgood} The log function $\log_\P$ from part~\eqref{P:canalt} is the
      good log function with curvature $\alpha$ with respect to the
      rigidification $r_{\P}$ from Remark~\ref{canrig}.
  \end{enumerate}
\end{prop}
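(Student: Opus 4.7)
For part (a), the key computation is that $\iota_i^{*}\alpha = 0$ for the two inclusions $\iota_1\colon A\hookrightarrow A\times\hat{A}$, $a\mapsto(a,0)$, and $\iota_2\colon \hat{A}\hookrightarrow A\times\hat{A}$, $\hat{a}\mapsto(0,\hat{a})$. This holds because each purely mixed summand $\alpha_{01},\alpha_{10}$ contains one tensor factor of the form $\pi_j^{*}(-)$, and $\iota_i^{*}\pi_j^{*}$ vanishes on positive-degree holomorphic forms and on $\hdr^1$ whenever $i\ne j$, since this map factors through a constant map to $0$. Starting from any log function $\log_\P'$ with curvature $\alpha$ (available by Proposition~\ref{P:excurve}(b)), each restriction $\iota_i^{*}\log_\P'$ is a log function on $\iota_i^{*}\P\cong \O_{A_i}$ of vanishing curvature, so by Remark~\ref{R:logfunctionuniqueness} it equals $\log^{\mathrm{triv}}_{\O_{A_i}} + \int\omega_i + c_i$ for some $\omega_i\in\Omega^1(A_i)$ and constant $c_i$. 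Evaluating at the common rigidification forces $c_1=c_2=\log_\P'(r_\P)=:c$, and the adjusted log function $\log_\P\colonequals \log_\P' - \int(\pi_1^{*}\omega_1 + \pi_2^{*}\omega_2) - c$ has the required restrictions; what makes this work is that $\iota_i^{*}\pi_j^{*}\omega_j = 0$ for $i\ne j$ and that the Vologodsky primitives are normalized to vanish at the basepoint, which $\iota_i$ maps to the origin of $A\times\hat{A}$.

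For uniqueness in (a), two candidates differ by a function that descends from $\P^{\times}$ to $A\times\hat{A}$ of the form $\int\omega + c$ with $\omega = \pi_1^{*}\omega_1 + \pi_2^{*}\omega_2$ under the direct sum decomposition of $\Omega^1(A\times\hat{A})$. Pulling back via $\iota_1$ yields $\int\omega_1 + c \equiv 0$ on $A$; taking $d$ forces $\omega_1 = 0$, which then forces $c=0$ by the normalization of the primitive. The analogous pullback via $\iota_2$ forces $\omega_2 = 0$.

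For part (b), the plan is to show that the unique good log function $\log_\P^{g}$ of curvature $\alpha$ furnished by Theorem~\ref{T:symgood} (applied to the symmetric bundle $\P$) itself satisfies the characterization of part (a); uniqueness from (a) then gives $\log_\P^{g}=\log_\P$. Since $\iota_1$ is a homomorphism of abelian varieties, Lemma~\ref{L:goodness}(a) shows that $\iota_1^{*}\log_\P^{g}$ is a good log function on $\iota_1^{*}\P\cong \O_A$, and its curvature is $\iota_1^{*}\alpha = 0$. The trivial log function $\log^{\mathrm{triv}}_{\O_A}$ is symmetric and normalized, hence good by Remark~\ref{R:symandgood}, with the same curvature $0$. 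Since $\O_A$ is symmetric, Theorem~\ref{T:symgood} ensures uniqueness of a good log function with prescribed curvature, so $\iota_1^{*}\log_\P^{g} = \log^{\mathrm{triv}}_{\O_A}$, and the analogous argument at $\iota_2$ completes the identification. The main obstacle is the bookkeeping of additive constants in (a): a priori, subtracting an integral on one factor could spoil the restriction on the other, and this is precisely ruled out by the purely mixed hypothesis on $\alpha$ together with the fact that our normalizations of Vologodsky primitives all vanish at the common point $(0,0)$ specified by the rigidification $r_\P$ of Remark~\ref{canrig}.
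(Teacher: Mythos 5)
Your proof is correct. For part (a), your argument is essentially the paper's: pull back along the two factor inclusions, observe that the purely mixed assumption kills the restricted curvature, and deploy the degree of freedom by a global holomorphic $1$-form $\pi_1^*\omega_1+\pi_2^*\omega_2$ to force both fiber restrictions to be trivial; you merely keep the additive constant $c$ explicit where the paper absorbs it.

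Part (b), however, takes a genuinely different route. The paper invokes Corollary~\ref{C:twotest} to reduce goodness to the isomorphism~\eqref{m-iso1} being an isometry, then shows that $\tfrac14[2]^*\log_\P$ has the same curvature and the same fiber restrictions as $\log_\P$ and concludes by the uniqueness of part (a). You instead characterize the good log function $\log_\P^{g}$ from Theorem~\ref{T:symgood} directly: since $\iota_1$ and $\iota_2$ are homomorphisms of abelian varieties, Lemma~\ref{L:goodness}(a) gives that $\iota_i^*\log_\P^{g}$ is good on $\iota_i^*\P\cong\O_{A_i}$ with vanishing curvature; since $\log^{\mathrm{triv}}_{\O_{A_i}}$ is symmetric and normalized, hence good with the same curvature (Remark~\ref{R:symandgood}, Example~\ref{E:trivmetric}), Theorem~\ref{T:symgood} forces equality of the restricted log functions, and then the uniqueness of (a) yields $\log_\P^{g}=\log_\P$. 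Your route avoids unwinding how $\tfrac14[2]^*$ interacts with the fiber restrictions (which in the paper's argument relies on compatibilities that are left implicit) and instead leans on the functoriality of goodness under homomorphisms together with the uniqueness statement for $\O_A$; the paper's route stays closer to the dynamical characterization and never needs to appeal to Theorem~\ref{T:symgood} for the trivial bundle. Both are valid.
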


\begin{proof}
\hfill

\begin{enumerate}[\upshape (a)] 
  \item   Since $\alpha$ is purely mixed, the restriction of $\alpha$ to any horizontal or vertical
fiber is trivial. 
    Thus, by Theorem~\ref{T:antigood}, any log function for $\alpha$ induces flat log
    functions on each antisymmetric line bundle on $A$ and by duality a flat log function on
    each antisymmetric line bundle on $\hat{A}$. In particular, the restriction of such a log
    function to $A\times \{0\}$ and $\{0\} \times \hat{A}$ is trivial up to an integral of a
    holomorphic form.  By Remark~\ref{R:logfunctionuniqueness},
    the curvature determines the log function up to the integral of a
    holomorphic 1-form
    $\omega = \pi_1^\ast \omega_1 +
    \pi_2^\ast \omega_2 \in \Omega^1(A \times \hat{A})$, for some $\omega_1 \in \Omega^1(A), \omega_2 \in
    \Omega_1(\hat{A})$. 
    Changing the log function by the integral of $\omega$ changes the log function restricted
    to $A\times \{0\}$ by $\int \omega_1$ and the restriction to $\{0\} \times \hat{A}$ by
    $\int \omega_2$. From this (a) follows easily.

  \item The restriction of the good log
    function on $\P$ with curvature form $\alpha$ to 
    $A\times \{0\}\sim \O_A$ is flat and symmetric, and has curvature~0. 
    Therefore this restriction is the trivial log function, and the same is
    true for the restriction to $\{0\} \times \hat{A}$. The result follows
    from part~\eqref{P:canalt}. \qedhere
\end{enumerate}

\end{proof}

We now prove a result that can be used to show that certain isomorphisms between
symmetric line bundles are isometries. 
\begin{lemma}\label{L:goodness}
  Let $L/A$ be a symmetric line bundle and let $\log_L$ be a good log function with
  respect to the chosen rigidification $r$. 
      \begin{enumerate}[\upshape (a)]
        \item Let $f\colon A'\to A$ be a {homo}morphism
          of abelian varieties. Then $f^*\log_L$ is good.
    \item\label{L:goodsums} Let $M/A$ be another symmetric line bundle with a good log function $\log_M$.
      Then $\log_L+\log_M$ is good. 
    \item 
  Let $\varphi\colon L_1\to L_2$ be 
an isomorphism between line bundles on abelian variety $A'/K$ such that $L_1$ and
          $L_2$ are obtained from $L$ by pullbacks and tensor products as in (a) and (b).
          For $i=1,2$, let  $\log_i$ be the log function on $L_i$ induced by $\log_L$ and let
  $\alpha_i =  \Curve(\log_i)$. Then
  $\varphi$ is an isometry if and only if $\alpha_1=\alpha_2$.
      \end{enumerate}
\end{lemma}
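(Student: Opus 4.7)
The plan is to reduce everything to Lemma~\ref{C:twotest}, which characterizes goodness of a log function on a symmetric line bundle by whether the canonical isomorphism $[2]^*L \isom L^{\otimes 4}$ of~\eqref{m-iso1} is an isometry. For part (a), $f^*L$ is symmetric by Remark~\ref{R:symmfunctorial}, and since $f$ is a group homomorphism it commutes with the doubling map, i.e.\ $[2]_A \circ f = f \circ [2]_{A'}$; pulling back the isometry $[2]_A^*L \isom L^{\otimes 4}$ along $f$ therefore yields an isometry $[2]_{A'}^*(f^*L) \isom (f^*L)^{\otimes 4}$ with log functions induced from $f^*\log_L$, so Lemma~\ref{C:twotest} applies. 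For part (b), $L\otimes M$ is symmetric by Remark~\ref{R:symmfunctorial}, and by uniqueness of isomorphisms of rigidified line bundles (Remark~\ref{R:rigidify}) the canonical isomorphism $[2]^*(L\otimes M) \isom (L \otimes M)^{\otimes 4}$ is the tensor product of the two individual isomorphisms, which is an isometry as a tensor product of isometries.

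For part (c), the ``only if'' direction is immediate because isometries preserve curvature forms. For the converse, I would argue in two steps. First, by repeated application of (a) and (b), both $\log_1$ and $\log_2$ are good log functions on the symmetric bundles $L_1, L_2$. The main step is to show that $\varphi^*\log_2$ is also a good log function on $L_1$: using once more the uniqueness of isomorphisms of rigidified line bundles, the canonical identifications $[2]^*L_i \isom L_i^{\otimes 4}$ for $i=1,2$ fit into a commutative square with left vertical $[2]^*\varphi$ and right vertical $\varphi^{\otimes 4}$. Equipping every corner with the log functions induced from $\varphi^*\log_2$ and $\log_2$, the bottom edge is an isometry because $\log_2$ is good, and the two verticals are isometries by the very definitions of pullback and tensor-product log functions, so the top edge is an isometry as well. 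Hence $\varphi^*\log_2$ is a good log function on $L_1$ with the same curvature $\alpha_2 = \alpha_1$ as $\log_1$, and the uniqueness clause in Theorem~\ref{T:symgood} forces $\log_1 = \varphi^*\log_2$, that is, $\varphi$ is an isometry.

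The one subtle point I anticipate is the rigidification bookkeeping in the commutative square used in (c), which is what guarantees that the canonical ``$[2]^{\ast} = \otimes 4$'' isomorphisms on $L_1$ and $L_2$ are genuinely compatible with $\varphi$; everywhere else the argument is a routine combination of pullback and tensor-product functoriality, the criterion of Lemma~\ref{C:twotest}, and the uniqueness statement in Theorem~\ref{T:symgood}.
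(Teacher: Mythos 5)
Your proof is correct and follows essentially the same path as the paper. The only cosmetic difference is that in parts (a) and (b) you verify goodness via the $[2]^\ast L \cong L^{\otimes 4}$ criterion of Lemma~\ref{C:twotest}, whereas the paper works directly with the four-term isomorphism \eqref{m-iso} on $A\times A$ and notes that a homomorphism $f$ intertwines $s,d,\pi_1,\pi_2$ with themselves via $f\times f$; since Lemma~\ref{C:twotest} establishes these are equivalent, the two arguments are the same in substance. In (c), the paper's proof is terse (``By (a) and (b), $\log_1$ and $\log_2$ are good, so (c) follows''), and your commutative-square argument showing $\varphi^\ast\log_2$ is good and then invoking the uniqueness clause of Theorem~\ref{T:symgood} is exactly the reasoning the paper leaves implicit; your attention to the rigidification bookkeeping that makes the square commute is the right thing to flag.
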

\begin{proof}
  As $f$ is a homomorphism, pullback by $f$ followed by pullback by $s,d,\pi_1,\pi_2$ is the same as first pulling back by $s,d,\pi_1,\pi_2$ followed by pullback by $f \times f$.
  Using Remark~\ref{R:symmfunctorial}, the first two assertions are immediate.  
  For $i=1,2$, there is a unique good log function on 
  $L_i$ with curvature $\alpha_1$.
  By (a) and (b), $\log_1$ and $\log_2$ are good, so (c) follows.
\end{proof}

\begin{prop}\label{P:anticangood}
Let $\alpha$ be a purely mixed curvature form for $\P$ 
and let $\hat{a}\in \hat{A}$.
  Then the log function
  induced by $\log_\P$ on $\P|_{A\times \{\hat{a}\}}$  is good
  for the rigidification $r_{\hat{a}}$
  from Remark~\ref{canrig}. 
\end{prop}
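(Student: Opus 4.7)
My plan is to verify the two criteria from Theorem~\ref{T:antigood}: the induced log function on $\P|_{A\times\{\hat a\}}$ (which is antisymmetric as a fiber of the Poincar\'e bundle) should be both flat and normalized with respect to $r_{\hat a}$. Once both are established, the conclusion follows immediately from Theorem~\ref{T:antigood}.

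First I would check flatness. Let $i_{\hat a}\colon A\hookrightarrow A\times\hat A$ denote the inclusion $a\mapsto(a,\hat a)$; the induced log function on $\P|_{A\times\{\hat a\}}$ is $i_{\hat a}^*\log_\P$, and by functoriality of the curvature form (Definition~\ref{D:relatedlogfunctions}(3)) its curvature is $i_{\hat a}^*\alpha$. Now $i_{\hat a}^*\circ\pi_2^*=0$ both on $\Omega^1(\hat A)$ and on $\hdr^1(\hat A)$, since $\pi_2\circ i_{\hat a}$ is the constant map to $\hat a$, whereas $i_{\hat a}^*\circ\pi_1^*$ is the identity. Decomposing $\alpha=\alpha_{12}+\alpha_{21}$ according to the purely mixed assumption, both summands lie in a tensor product where exactly one factor is pulled back from $\hat A$, so each is killed by $i_{\hat a}^*$. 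Hence $i_{\hat a}^*\alpha=0$, and the induced log function is flat.

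Next I would check normalization, i.e.\ that $(i_{\hat a}^*\log_\P)(r_{\hat a})=0$. By Remark~\ref{canrig}, the rigidification $r_{\hat a}\in\P_{(0_A,\hat a)}$ corresponds under the trivialization $\P|_{\{0_A\}\times\hat A}\isom\O_{\hat A}$ to the value $1(\hat a)$ of the canonical section. By Proposition~\ref{P:cangood}~\eqref{P:canalt}, the restriction of $\log_\P$ to $\{0_A\}\times\hat A$ is exactly the trivial log function on $\O_{\hat A}$ under this trivialization, and by Example~\ref{E:trivmetric} the trivial log function evaluated on $1(\hat a)$ is $\log(1)=0$. Therefore $(i_{\hat a}^*\log_\P)(r_{\hat a})=\log_\P(r_{\hat a})=0$.

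Since the induced log function on the antisymmetric line bundle $\P|_{A\times\{\hat a\}}$ is both flat and normalized with respect to $r_{\hat a}$, Theorem~\ref{T:antigood} implies that it is good. I do not anticipate a serious obstacle here: the entire argument is formal, using only functoriality of pullback for curvatures, the purely mixed hypothesis on $\alpha$, and the explicit characterization of $\log_\P$ provided by Proposition~\ref{P:cangood}~\eqref{P:canalt}. The only subtlety is making sure the rigidification $r_{\hat a}$ on the fiber is indeed the one induced by $r_\P$ via the trivialization along $\{0_A\}\times\hat A$, which is exactly what Remark~\ref{canrig} records.
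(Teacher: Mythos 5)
Your two preliminary verifications are both correct: the purely mixed hypothesis does force $i_{\hat a}^*\alpha=0$, and Proposition~\ref{P:cangood}~\eqref{P:canalt} does give $\log_\P(r_{\hat a})=0$. The problem is the final step. You invoke the ``if'' direction of Theorem~\ref{T:antigood} (flat + normalized $\Rightarrow$ good), but the paper's proof of exactly that direction is itself established by constructing one explicit good flat normalized log function, and that construction \emph{is} Proposition~\ref{P:anticangood}. So the dependency runs Proposition~\ref{P:anticangood} $\Rightarrow$ Theorem~\ref{T:antigood} (``if''), and using the latter to prove the former is circular. You cannot assume Theorem~\ref{T:antigood} here; you must verify the isometry directly.

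The paper does this by noting that on $A\times A\times\hat A$ one has an isomorphism $\tilde s^*\P\isom\tilde\pi_1^*\P\otimes\tilde\pi_2^*\P$ (with $\tilde s=s\times\id$, etc., and the isomorphism pinned down by the seesaw principle and the rigidifications), checking via the purely mixed hypothesis that the two sides have equal curvature, and concluding it is an isometry from Lemma~\ref{L:goodness}(c) --- which is available because $\P$ is \emph{symmetric} with good log function (Theorem~\ref{T:symgood}, proved independently of everything in the antisymmetric discussion). Restriction to $A\times A\times\{\hat a\}$ then yields Proposition~\ref{P:anticangood}. A slightly more hands-on alternative that stays in the spirit of your attempt: since $\log_\L:=i_{\hat a}^*\log_\P$ is flat, both sides of~\eqref{m-iso2} for $\L=\P|_{A\times\{\hat a\}}$ have curvature $0$, so by Remark~\ref{R:logfunctionuniqueness} their difference is $\int(\pi_1^*\eta_1+\pi_2^*\eta_2)$ for holomorphic $\eta_1,\eta_2\in\Omega^1(A)$; restricting to $A\times\{0\}$ and $\{0\}\times A$ and using the normalization $\log_\L(r_{\hat a})=0$ shows $\int\eta_1=\int\eta_2=0$, hence $\eta_1=\eta_2=0$. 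Either route closes the gap; as written, your proof does not.
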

\begin{proof}
  This follows immediately from Theorem~\ref{T:antigood}.
\end{proof}

If $\alpha$ is a purely mixed curvature form for $\P$ and if $\L/A$ is an antisymmetric line bundle with rigidification $r$, then there is a unique
isomorphism $\psi_{L,r}\colon (L,r)\cong (\P|_{A\times \{\hat{a}\}}, r_{\hat{a}})$, where $\hat{a}$ is the class of
$\L$. 
\begin{defn}\label{D:antican}
Let $\L/A$  be an antisymmetric line bundle with rigidification $r$.
 The {\em canonical log function on $\L$} (with respect to $\alpha$) is
  the log function obtained by pulling back the log function from
  Proposition~\ref{P:anticangood} by $\psi_{L,r}$.
\end{defn}
We immediately deduce: 
\begin{cor}\label{C:anticangood}
  Let $\log_L$ be the  canonical log function on an antisymmetric rigidified line bundle
  $(L,r)$. Then $\log_L$ is good. In particular, it is normalized.
\end{cor}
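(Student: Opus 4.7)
The plan is to observe that Corollary~\ref{C:anticangood} is essentially a formal consequence of Proposition~\ref{P:anticangood} together with the fact that the goodness condition for antisymmetric line bundles is preserved by isomorphisms of rigidified line bundles.

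Concretely, let $\hat{a}\in\hat{A}$ be the class of $L$, and let $\psi=\psi_{L,r}\colon (L,r)\to (\P|_{A\times\{\hat{a}\}},r_{\hat{a}})$ be the unique isomorphism of rigidified line bundles. By Proposition~\ref{P:anticangood}, the restriction $\log_{\P|_{A\times\{\hat{a}\}}}$ of $\log_\P$ is a good log function with respect to $r_{\hat{a}}$, meaning that the canonical isomorphism~\eqref{m-iso2} for $\P|_{A\times\{\hat{a}\}}$ (fixed by $r_{\hat{a}}$) is an isometry. Now the first step is to check that the diagram relating the isomorphisms~\eqref{m-iso2} for $L$ and for $\P|_{A\times\{\hat{a}\}}$ through $\psi$ commutes: that is, $\psi$ intertwines the isomorphism $s^*L\cong \pi_1^*L\otimes \pi_2^*L$ (fixed by $r$) with the corresponding isomorphism for $\P|_{A\times\{\hat{a}\}}$ (fixed by $r_{\hat{a}}$). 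This compatibility is automatic because both isomorphisms are the unique ones of rigidified line bundles, and $\psi$ is an isomorphism of rigidified line bundles (it sends $r$ to $r_{\hat{a}}$), so the induced maps on rigidifications match on both sides of~\eqref{m-iso2}.

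Once this compatibility is in place, pulling back the isometry~\eqref{m-iso2} for $\P|_{A\times\{\hat{a}\}}$ via $\psi$ (and its tensor/pullback derivatives $s\times \mathrm{id}$, $\pi_i\times\mathrm{id}$ followed by restriction) yields that the isomorphism~\eqref{m-iso2} for $L$ is an isometry with respect to $\log_L=\psi^*\log_{\P|_{A\times\{\hat{a}\}}}$; by Definition~\ref{D:goodmetricanti} this is precisely the statement that $\log_L$ is good. The second assertion, that $\log_L$ is normalized, then follows from Remark~\ref{R:goodantinorm}, or directly: the good log function on $\P|_{A\times\{\hat{a}\}}$ vanishes at $r_{\hat{a}}$ by Remark~\ref{R:goodantinorm}, and $\psi(r)=r_{\hat{a}}$, so $\log_L(r)=\log_{\P|_{A\times\{\hat{a}\}}}(\psi(r))=0$.

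The only real content is the compatibility of the two instances of~\eqref{m-iso2} under $\psi$, and this is a routine check once one unwinds that the isomorphisms in~\eqref{m-iso2} are fixed by the rigidification via Remark~\ref{R:rigidify}; no new analytic or integral computation is required. Thus the bulk of the work has already been carried out in Proposition~\ref{P:anticangood}, and this corollary is essentially a transport of structure.
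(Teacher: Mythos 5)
Your proof is correct and follows exactly the route the paper intends: the paper's ``We immediately deduce'' relies precisely on the fact that $\log_L$ is by Definition~\ref{D:antican} the pullback along the rigidified isomorphism $\psi_{L,r}$ of the good log function from Proposition~\ref{P:anticangood}, and that goodness transports along isomorphisms of rigidified line bundles because the isomorphism~\eqref{m-iso2} is itself fixed by the rigidification. Your explicit check of the compatibility of the two instances of~\eqref{m-iso2} under $\psi_{L,r}$ and your derivation of normalization are both valid and simply make explicit what the paper leaves to the reader.
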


We now show that the  canonical log function on an antisymmetric line bundle has some
desirable properties. 
First, by dualizing the objects in the proof of Proposition~\ref{P:anticangood}, we get the following result.
\begin{prop}\label{P:tensorcan}
The tensor product of the canonical log functions on two antisymmetric line bundles is
  canonical.
\end{prop}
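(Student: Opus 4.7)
The plan is to mirror the proof of Proposition~\ref{P:anticangood} with the roles of $A$ and $\hat{A}$ interchanged. Let $L_1, L_2$ be antisymmetric line bundles on $A$ with rigidifications $r_1, r_2$ and classes $\hat{a}_1, \hat{a}_2 \in \hat{A}$, so that $L_1\otimes L_2$ has class $\hat{a}_1+\hat{a}_2$ and induced rigidification $r_1\otimes r_2$. Writing $\log_{L_i}$ for the canonical log function on $L_i$, the goal is to show that $\log_{L_1}+\log_{L_2}$ agrees with the canonical log function on $L_1\otimes L_2$ defined via $\psi_{L_1\otimes L_2,\,r_1\otimes r_2}$.

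First I would introduce the homomorphisms
$$\tilde{s}'=\id_A\times s,\quad \tilde{\pi}'_1=\id_A\times\pi_1,\quad \tilde{\pi}'_2=\id_A\times\pi_2 \colon A\times\hat{A}\times\hat{A}\to A\times\hat{A},$$
where now $s,\pi_1,\pi_2$ act on the two $\hat{A}$ factors. Since $\P|_{\{a\}\times\hat{A}}$ is antisymmetric on $\hat{A}$ for every $a\in A$, the seesaw argument used in Proposition~\ref{P:anticangood}, carried out in the $A$-variable rather than the $\hat{A}$-variable, produces a unique isomorphism of rigidified line bundles
$$\tilde{s}'^*\P\;\cong\;\tilde{\pi}'^*_1\P\otimes\tilde{\pi}'^*_2\P.$$

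Next I would check that both sides carry the same curvature when endowed with the log functions induced by $\log_\P$. Because the chosen curvature form $\alpha$ for $\P$ is purely mixed, only the $H_{12}$ and $H_{21}$ components contribute, and the required identity $\tilde{s}'^*\alpha=\tilde{\pi}'^*_1\alpha+\tilde{\pi}'^*_2\alpha$ reduces to $s^*=\pi_1^*+\pi_2^*$ on $\hdr^1(\hat{A})$ and on $\Omega^1(\hat{A})$, respectively. Both sides are symmetric line bundles by Lemma~\ref{L:PoinIsSymm} and Remark~\ref{R:symmfunctorial}, and the log functions on them are good by Lemma~\ref{L:goodness}(a),(b). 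Hence Lemma~\ref{L:goodness}(c) upgrades the isomorphism above to an isometry
$$\tilde{s}'^*\log_\P=\tilde{\pi}'^*_1\log_\P+\tilde{\pi}'^*_2\log_\P.$$

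Finally, restricting this identity to the fiber over $(\hat{a}_1,\hat{a}_2)$ in the last two factors yields an isometry of rigidified antisymmetric line bundles on $A$,
$$\P|_{A\times\{\hat{a}_1+\hat{a}_2\}}\;\cong\;\P|_{A\times\{\hat{a}_1\}}\otimes \P|_{A\times\{\hat{a}_2\}},$$
sending $r_{\hat{a}_1+\hat{a}_2}$ to $r_{\hat{a}_1}\otimes r_{\hat{a}_2}$ exactly as in the rigidification check at the end of Proposition~\ref{P:anticangood}. By uniqueness of isomorphisms of rigidified line bundles (Remark~\ref{R:rigidify}), composing this isometry with $\psi_{L_1,r_1}\otimes\psi_{L_2,r_2}$ must coincide with $\psi_{L_1\otimes L_2,\,r_1\otimes r_2}$, and pulling the identity of log functions back along this composition gives the desired equality. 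The only genuine subtlety in the argument is the rigidification bookkeeping in the final step; this is entirely parallel to the corresponding point in the proof of Proposition~\ref{P:anticangood} and should not require any new input.
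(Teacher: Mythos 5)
Your proof is correct and follows exactly the paper's strategy: the paper's proof of Proposition~\ref{P:tensorcan} likewise says to swap the roles of $A$ and $\hat{A}$ in the proof of Proposition~\ref{P:anticangood}, establishing the isometry $\tilde{s}^\ast\P\cong\tilde{\pi}_1^\ast\P\otimes\tilde{\pi}_2^\ast\P$ on the triple product and then restricting to the fiber over $(\hat{a}_1,\hat{a}_2)$. You simply spell out the curvature computation, the appeal to Lemma~\ref{L:goodness}(c), and the rigidification bookkeeping that the paper summarizes as ``by duality, the result follows.''
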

\begin{proof}
This is equivalent to the statement that for any fixed \( a\in A \) the log function
  induced by $\log_\P$ on $\P|_{\hat{A}\times \{a\}}$  is good, and this
  follows from Theorem~\ref{T:antigood} just like
  Proposition~\ref{P:anticangood}.
\end{proof}
\begin{rk}\label{canrelic}
  The characterization of the good log function on \( \P \) provided by
  Proposition~\ref{P:cangood} may be used for proving
  Propositions~\ref{P:anticangood}~and~\ref{P:tensorcan} without the use of
  flatness, hence, ultimately, without the use of the theorem of the cube
  for metrized line bundles, Proposition~\ref{P:metriccube}. This was our
  original proof, as indicated in \ref{subsec:oldproofs}.
  Let us just sketch a proof that the canonical log function is \( 2
  \)-good in the sense of \ref{subsec:oldproofs}. Such a statement for all
  antisymmetric line bundles is equivalent to the statement that the
  canonical isomorphism \( (\id_{\hat{A}}\times [2])^\ast \P \isom \P
  ^{\otimes 2}  \) is an isometry. For this one simply checks that the log
  function on the square root of the left hand side has curvature \(
  \alpha\) and satisfies the conditions of Proposition~\ref{P:cangood}.
\end{rk}

To understand the behavior of canonical log functions on antisymmetric line bundles under translations, we first prove a preliminary result.
\begin{lemma}\label{L:trivcondition} 
Let $(\L,\log_{\L}), (\M,\log_{\M})$ be metrized line bundles on $A \times \hat{A}$, with an
isomorphism of line bundles $\L \cong \M$. Suppose $\Curve(\log_{\L}) = \Curve(\log_{\M})$, and
that the restrictions of $\log_{\L}$ and $\log_{\M}$ to $A \times \{0\}$ and $\{0\} \times
\hat{A}$ are isometric. 
  Then the given isomorphism is an isometry.
\end{lemma}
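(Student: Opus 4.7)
The plan is to study the discrepancy function $f \colonequals \log_{\M} - \log_{\L}$, where we identify $\L^\times$ with $\M^\times$ via the given isomorphism. Because $\log_{\M}(\lambda u) - \log_{\L}(\lambda u) = \log_{\M}(u) - \log_{\L}(u)$ for all $\lambda \in \overline{K}^\times$ and all nonzero $u$ in a fiber, this difference is fiberwise constant and therefore descends to a Vologodsky function on $A \times \hat{A}$, which we also call $f$.

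First I would show that $df$ is the pullback of a \emph{holomorphic} $1$-form $\omega \in \Omega^1(A \times \hat{A})$. This is precisely the content of Remark~\ref{R:logfunctionuniqueness} in our setting: once $\Curve(\log_\L) = \Curve(\log_\M)$, the $\delbr$-part of $d(\log_\M - \log_\L)$ vanishes and the form itself lies in $\pi^*\Omega^1(A \times \hat{A})$. Using the product decomposition $\Omega^1(A \times \hat{A}) = \pi_1^*\Omega^1(A) \oplus \pi_2^*\Omega^1(\hat{A})$, I write $\omega = \pi_1^* \omega_1 + \pi_2^* \omega_2$ with $\omega_1 \in \Omega^1(A)$ and $\omega_2 \in \Omega^1(\hat{A})$.

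Next I would translate the two restriction hypotheses. The restriction of the given isomorphism $\L \cong \M$ to $A \times \{0\}$ is itself an isomorphism of line bundles on $A$, and the hypothesis that it is an isometry for the restricted log functions means exactly that $f|_{A \times \{0\}} \equiv 0$; differentiating and using the decomposition above gives $\omega_1 = 0$. Symmetrically, the hypothesis on $\{0\} \times \hat{A}$ forces $\omega_2 = 0$. Hence $\omega = 0$, so $df = 0$ on the connected variety $A \times \hat{A}$. By the exact sequence~\eqref{dexact} in Theorem~\ref{T:volsum}, $f$ is a constant, and evaluating at $(0,0) \in A \times \{0\}$ shows that this constant is $0$. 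Therefore $\log_{\L} = \log_{\M}$ under the given identification, i.e.\ the isomorphism is an isometry.

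The argument is short and the only point requiring mild care is matching the global isomorphism with the one used to phrase the isometry hypothesis on the two coordinate axes; this is automatic because in both cases we use the restriction of the same given isomorphism. I do not anticipate any serious obstacle beyond this bookkeeping.
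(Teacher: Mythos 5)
Your proposal is correct and follows essentially the same route as the paper: both isolate the holomorphic form $\pi_1^*\omega_1+\pi_2^*\omega_2$ controlling the difference of the two log functions via Remark~\ref{R:logfunctionuniqueness}, and both restrict to the two coordinate axes to force $\omega_1=\omega_2=0$. Your version is slightly more explicit about the additive constant (you dispose of it by evaluating at $(0,0)$, whereas the paper absorbs it implicitly), but this is a stylistic refinement, not a different argument.
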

\begin{proof}
  Write $\log_\M$, by abuse of notation, for the pulled back log function on $\L$.  
 By Remark~\ref{R:logfunctionuniqueness}, the curvature determines the log
  function up to the integral of a holomorphic 1-form $\omega \in \Omega^1(A \times \hat{A})$, so we have
 \[\log_{\L} = \log_{\M} + \pi_1^\ast \int \omega_1 + \pi_2^\ast \int \omega_2\] 
 for some $\omega_1 \in \Omega^1(A), \omega_2 \in \Omega_1(\hat{A})$. 
 Restricting both sides of the equality above to $A \times \{0\}$ and $\{0\} \times
  \hat{A}$ and using our assumption that the restrictions of the log functions to the
  fibers are isometric, it follows that $\int \omega_1 = \int \omega_2 = 0$, and therefore
  $\log_\L=\log_\M$.
\end{proof}

\begin{prop}\label{P:trans}
  Let $\L/A$ be an antisymmetric line bundle with canonical log function $\log_\L$ and let
  $a\in A$. Then, up to an additive constant, $t_a^\ast\log_\L$ is the canonical log
  function on $t_a^*\L$.
\end{prop}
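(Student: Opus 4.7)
The plan is to lift the comparison to the Poincar\'e bundle $\P$ on $A \times \hat A$ and apply Lemma~\ref{L:trivcondition}. Since $\L$ is antisymmetric, Proposition~\ref{phiprop}~\eqref{P:phipropanti} gives $\phi_\L = 0$, so $t_a^*\L \cong \L$ as line bundles and therefore both bundles classify the same point $\hat a \in \hat A$. Writing $\log_\L = \psi^*(\log_\P|_{A\times\{\hat a\}})$ for the unique rigidified isomorphism $\psi$ underlying the canonical construction, I would first use naturality of pullback along $t_a$ to express $t_a^*\log_\L$ as $(t_a^*\psi)^*(t_a^*\log_\P|_{A\times\{\hat a\}})$, where $t_a^*\psi \colon t_a^*\L \to t_a^*\P|_{A\times\{\hat a\}}$ covers the identity on $A$. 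It then suffices to show that, under some isomorphism $t_a^*\P|_{A\times\{\hat a\}} \cong \P|_{A\times\{\hat a\}}$, the log function $t_a^*\log_\P|_{A\times\{\hat a\}}$ coincides with $\log_\P|_{A\times\{\hat a\}}$ up to an additive constant: pulling such an equality back by $t_a^*\psi$ and comparing with any rigidified isomorphism $\psi' \colon t_a^*\L \cong \P|_{A\times\{\hat a\}}$ will then give $t_a^*\log_\L = \log_{t_a^*\L} + \textup{const}$, because any two isomorphisms between fixed line bundles on the proper connected variety $A$ differ by a scalar in $K^\times$.

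To carry out this comparison on $\P$, the plan is to consider the translation $T_{(a,0)} \colon A \times \hat A \to A \times \hat A$, whose restriction to $A \times \{\hat a\}$ is $t_a$. The bundle $T_{(a,0)}^*\P \otimes \P^{-1}$ restricts trivially to each $A \times \{\hat b\}$, since translating an antisymmetric bundle does not change its class. The seesaw principle will then yield an isomorphism $T_{(a,0)}^*\P \cong \P \otimes \pi_2^* N_a$ for some line bundle $N_a$ on $\hat A$; restricting to $\{0\} \times \hat A$ identifies $N_a \cong \P|_{\{a\}\times \hat A}$. Because the purely mixed curvature form $\alpha$ lies in $\Omega^1\otimes \hdr^1$ and is thus translation-invariant, both $T_{(a,0)}^*\log_\P$ and $\log_\P$ will have the same curvature $\alpha$, so (after normalizing the seesaw isomorphism) their difference is a flat log function on $\pi_2^* N_a$.

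The final step is to identify this flat log function with $\pi_2^*\log_{N_a}$ using Lemma~\ref{L:trivcondition}. By Proposition~\ref{P:cangood}~\eqref{P:canalt}, $\log_\P$ is trivial on both $A \times \{0\}$ and $\{0\} \times \hat A$; the pullback $T_{(a,0)}^*\log_\P$ restricts to the pullback by $t_a$ of the trivial log function on $A \times \{0\}$ (still trivial) and to $\log_\P|_{\{a\}\times \hat A} = \log_{N_a}$ on $\{0\} \times \hat A$. These exactly match the restrictions of $\pi_2^*\log_{N_a}$ (trivial on $A \times \{0\}$ because the canonical log function $\log_{N_a}$ is normalized by Corollary~\ref{C:anticangood}, and $\log_{N_a}$ on $\{0\}\times \hat A$), so Lemma~\ref{L:trivcondition} applied with the identity isomorphism on $\pi_2^*N_a$ yields $T_{(a,0)}^*\log_\P - \log_\P = \pi_2^*\log_{N_a}$. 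Restricting to $A \times \{\hat a\}$ collapses the right hand side to a constant (the value of $\log_{N_a}$ at the point $\hat a$, evaluated at the trivialization of $\pi_2^*N_a|_{A\times\{\hat a\}}$), giving $t_a^*\log_\P|_{A\times\{\hat a\}} = \log_\P|_{A\times\{\hat a\}} + c$, which combined with the reduction above completes the proof. The main technical hurdle will be to normalize the seesaw isomorphism so that both sides of the identity match cleanly on the fibers $A \times \{0\}$ and $\{0\} \times \hat A$, which is essential for applying Lemma~\ref{L:trivcondition}.
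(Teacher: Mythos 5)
Your proposal is correct and follows essentially the same route as the paper's proof: translate to $\P$ via the seesaw isomorphism $T_{(a,0)}^*\P \cong \P \otimes \pi_2^*N_a$ (the paper writes $\tau = t_{(a,0)}$ and $M_a$ for your $N_a$), verify equal curvature using translation-invariance of $\alpha$, check the restrictions to $A\times\{0\}$ and $\{0\}\times\hat A$, invoke Lemma~\ref{L:trivcondition}, and finally restrict to the fiber $A\times\{\hat a\}$ where $\pi_2^*\log_{N_a}$ collapses to a constant. The only difference is that you spell out the initial reduction from $\L$ to $\P$ (via $\psi_{\L,r}$ and the $K^\times$-torsor of isomorphisms) more explicitly than the paper, which jumps straight to the statement about $\P$.
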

\begin{proof}
Let $\tau = t_{(a,0)}\colon A\times \hat{A} \to A\times \hat{A}$. Then, for $\hat{a}\in
  \hat{A}$, 
since $\P|_{A\times \{\hat{a}\}}$ is translation-invariant,
  we have $$\tau^*\P|_{A\times \{\hat{a}\}}\isom \P|_{A\times \{\hat{a}\}}\isom  
  (\P\otimes \pi_2^*M_a)|_{A\times \{\hat{a}\}},$$ where 
  $M_a$ is the line bundle on $\hat{A}$ whose class is $a$ under 
  duality.
  Moreover, the restriction of $\tau^*\P$ to $\{0\}\times \hat{A}$ is isomorphic to
  $\pi_2^*M_a$, so
  $\tau^*\P |_{\{0\}\times \hat{A}}\isom (\P \otimes\pi_2^*M_a)|_{\{0\}\times \hat{A}},$
  which implies 
  \begin{equation}\label{transisomo}
\tau^*\P\isom \P\otimes \pi_2^*M_a\,.
  \end{equation}
  Let $\log_\P$ be the good log function on $\P$ (with respect to a purely mixed curvature
  form $\alpha$ and the rigidification $r_\P$, as usual) and let $\log_{\M_a}$ be the canonical log function on $M_a$
  induced by $\log_\P$ via restriction. We want to use
  Lemma~\ref{L:trivcondition} to show that~\eqref{transisomo} is an isometry for these log
  functions.
  First note that $\Curve(\log_\P)$ is translation-invariant 
  and that $\pi_2^*\log_{M_a}$ has trivial curvature. Furthermore,~\eqref{transisomo} is tautologically an isometry when restricted to
  $\{0\}\times \hat{A}$. It is also an isometry when restricted to
  $A\times \{0\}$, by Remark~\ref{P:cangood} and since the trivial bundle equipped with the trivial log
  function is translation-invariant.
  Hence~\eqref{transisomo} is indeed an isometry as claimed.

  Finally, let $\hat{a} = [L]\in \Pic^0(A)$. Restricting the isometry~\eqref{transisomo} to the fiber
  $A\times \{\hat{a}\}$, the result follows, since $\pi_2^*\log_{M_a}|_{A\times
  \{\hat{a}\}}$ is constant. 
\end{proof}

\subsection{Canonical log functions on arbitrary line bundles}
Fix a purely mixed curvature form $\alpha$ for $\P$.
Let $\log_{\P}$ be the good log function with curvature
$\alpha$ from Proposition~\ref{P:cangood}. Now we systematically fix canonical (with respect to $\alpha$) log functions on all line bundles on $A$ using $\log_{\P}$.
We have already defined canonical log functions for antisymmetric line bundles in
Definition~\ref{D:antican}.

For a line bundle $L/A$, we set $\L^+\colonequals \L \otimes [-1]^* \L$  and $\L^-
\colonequals \L \otimes
([-1]^* \L)^{-1}$, so that $\L^+$ is symmetric, $\L^-$ is antisymmetric and we have 
\begin{equation}\label{E:oddeven}
\L^{\otimes 2} \cong \L^+\otimes \L^-. 
\end{equation}
Recall the homomorphism $\phi_L$ from Proposition~\ref{phiprop}.

\begin{defn}\label{D:arblog}
  Let $\L$ be a line bundle on $A$ with rigidification $r$.
  The \emph{canonical log function} $\log_{\L}$ for $\L$ (with respect to $\alpha$ and $r$) is
  defined as follows:
  \begin{enumerate}[\upshape (a)]
    \item\label{arbloga}  When $\L$ is antisymmetric, then $\log_\L$ is defined in Definition~\ref{D:antican}.
    \item\label{arblogb}  When $\L$ is symmetric, then $\log_\L$ is the good log
      function with curvature $\frac12(\id\times \phi_L)^\ast\alpha$ (with respect to $r$).
    \item\label{arblogc} In general, we define 
$\log_{\L} = \frac{1}{2}(\log_{\L^+}+\log_{\L^-})$ using the 
  canonical decomposition~\eqref{E:oddeven} and
      Definition~\ref{D:relatedlogfunctions}~\eqref{D:dividinglogfunctions},
       where the log functions on $L^+$ and $L^-$ are the canonical log functions with respect to the
      rigidifications $r^+$ (respectively $r^-$) on $L^+$ (respectively $L^-$) induced by $r$.
  \end{enumerate}
 \end{defn}

\begin{rk}\label{R:canlogformula}
  The definition in part~\eqref{arblogc} above is compatible with parts~\eqref{arbloga}
  and~\eqref{arblogb} by
  Proposition~\ref{P:tensorcan} and Lemma~\ref{L:goodness}~(\ref{L:goodsums}), since
  $\L^+ \cong \L^{\otimes 2}$ and $\L^- \cong \O_A$ for $\L$ symmetric, and analogously
  $\L^+ \cong \O_A$ and $\L^- = \L^{\otimes 2}$ for $\L$ antisymmetric. 
 
\end{rk}
Letting $\hat{a} = [L^-]\in \hat{A}$,
we get the following formula for the canonical log function
  for $(\L,r)$ with respect to $\alpha$ as a sum of (scaled) pullbacks:
  \begin{equation}\label{cangeneral}
    \log_{\L} = \frac{1}{2}\psi_{L^+, r^+}^*(\id\times {\phi}_{\L})^* \log_{\P}+
    \frac{1}{2} \psi_{L^-,r^-}^*\log_{\P}|_{A \times \{
    \hat{a} \}}\,,
  \end{equation}
  where $\psi_{L^+,r^+}\colon L^+\cong L^+$ is the unique isomorphism sending $r^+$ to
  $(\id\times {\phi}_{\L})^*r_\P$.

  \begin{lemma}\label{L:can_log_add}
    Let $L_1, L_2$ be two line bundles on $A$ with respective rigidifications
    $r_1$ and $r_2$. For $i\in \{1,2\}$, let $\log_{L_i}$ be the canonical
    log functions on $L_i$  with respect to $\alpha$ and  $r_i$. Then
    $\log_{L_1}+\log_{L_2}$ is the canonical log function with respect to
    $\alpha$ and
    $r_1\otimes r_2$.
  \end{lemma}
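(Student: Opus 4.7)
The plan is to reduce to the symmetric and antisymmetric cases separately using the defining decomposition of Definition~\ref{D:arblog}(c), then apply Proposition~\ref{P:tensorcan} in the antisymmetric case and use the uniqueness of good log functions with a fixed curvature (Theorem~\ref{T:symgood}) in the symmetric case.

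First I would verify the bookkeeping around rigidifications: the operations $L\mapsto L^{\pm}$ are multiplicative, yielding canonical identifications $(L_1\otimes L_2)^{\pm}\cong L_1^{\pm}\otimes L_2^{\pm}$ as rigidified line bundles when $L_1\otimes L_2$ carries the rigidification $r_1\otimes r_2$, and the canonical isomorphism $L^{\otimes 2}\cong L^+\otimes L^-$ is likewise compatible with rigidifications and functorial in $L$. Unwinding Definition~\ref{D:arblog}(c) and Definition~\ref{D:relatedlogfunctions}(\ref{D:dividinglogfunctions}), the identity $\log_{L_1\otimes L_2} = \log_{L_1}+\log_{L_2}$ is equivalent, after multiplying by $2$ and passing to $L^{\otimes 2}\cong L^+\otimes L^-$, to the identity
\begin{equation*}
\log_{(L_1\otimes L_2)^+}+\log_{(L_1\otimes L_2)^-} = (\log_{L_1^+}+\log_{L_2^+})+(\log_{L_1^-}+\log_{L_2^-}),
\end{equation*}
after rearranging the right-hand side. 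It therefore suffices to prove additivity of canonical log functions separately for pairs of symmetric bundles and for pairs of antisymmetric bundles.

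The antisymmetric case is exactly Proposition~\ref{P:tensorcan}. For the symmetric case, recall that $L\mapsto \phi_L$ is additive, i.e.\ $\phi_{L_1\otimes L_2}=\phi_{L_1}+\phi_{L_2}$ as homomorphisms $A\to \hat{A}$, and that the pullback of a sum of homomorphisms of abelian varieties acts additively on $\Omega^1$ and on $\hdr^1$. Since $\alpha$ is purely mixed, so that $(\id\times \phi)^{*}$ involves $\phi^{*}$ linearly on each tensor factor, this yields
\begin{equation*}
\tfrac{1}{2}(\id\times \phi_{L_1\otimes L_2})^{*}\alpha = \tfrac{1}{2}(\id\times \phi_{L_1})^{*}\alpha+\tfrac{1}{2}(\id\times \phi_{L_2})^{*}\alpha.
\end{equation*}
Hence $\log_{L_1}+\log_{L_2}$ has curvature $\tfrac{1}{2}(\id\times \phi_{L_1\otimes L_2})^{*}\alpha$ on $L_1\otimes L_2$; by Lemma~\ref{L:goodness}(\ref{L:goodsums}) it is a good log function for this curvature; and by the uniqueness in Theorem~\ref{T:symgood} it agrees with the canonical log function $\log_{L_1\otimes L_2}$.

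The main obstacle is the rigidification bookkeeping in the reduction step and the purely-mixed pullback identity for $(\id\times \phi)^{*}\alpha$; neither is deep, but both need to be laid out carefully so that the applications of Proposition~\ref{P:tensorcan} and Theorem~\ref{T:symgood} really match the canonical log functions on the nose rather than up to a constant.
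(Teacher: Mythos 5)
Your proof is correct and matches the paper's approach. The paper's one-line proof simply cites Lemma~\ref{L:goodness} and Proposition~\ref{P:tensorcan}, and your argument spells out exactly what those imply: decompose via Definition~\ref{D:arblog}(c), handle the antisymmetric part with Proposition~\ref{P:tensorcan}, and handle the symmetric part by comparing curvatures (using additivity of $\phi_L$ and the purely mixed structure of $\alpha$) and invoking goodness of sums together with uniqueness from Theorem~\ref{T:symgood}.
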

  \begin{proof} This is a consequence of 
Lemma~\ref{L:goodness} and Proposition~\ref{P:tensorcan}. 
  \end{proof}

\begin{rk}\label{R:change_rig2}
  By construction and by Remarks~\ref{R:goodsymmnorm} and~\ref{R:goodantinorm}, the
  canonical log function is normalized.
  In analogy with Remark~\ref{R:change_rig}, 
changing the rigidification $r$ to a rigidification $r' = \lambda r$ changes
  the canonical log function by $\log \lambda$.
\end{rk}

\subsection{Relation with Colmez's theory}\label{subsec:relcol}
For a divisor $D$ on an abelian variety $A/K$ 
Colmez~\cite[Proposition~II.1.19]{Col98} defines Green functions $G_D\colon A(K) \setminus \supp(D)\to
\Q_p$~\footnote{Actually, his Green functions are valued in a polynomial algebra with
variable $\log(p)$; see~\S\ref{S:Colmez}. In this section, we assume that we have
chosen a value for $\log(p)$.}
. He then extends this construction, by pullback, to general nice
varieties (see~\cite[\S II.1.3]{Col98}). The goal of this section
is to relate our canonical log functions to Colmez's Green functions in the
case where $D$ is the theta divisor $\Theta$ on a Jacobian $J$ (see
Theorem~\ref{T:loggreen}). See

\begin{prop}\cite[Proposition~B.3]{Bes05}\label{P:BesLogGreen}
If $\log_L$ is a log function on a line bundle $L$ on a nice variety and
$s$ is a section of $L$, then
$\log_L\circ s$ is a Green function with respect to $\div(s)$. 
\end{prop}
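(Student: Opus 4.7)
The plan is to verify that $\log_L\circ s$ satisfies Colmez's defining local conditions for a Green function with respect to $\div(s)$: namely, that it is Vologodsky (in particular locally analytic) on $X\setminus\supp(\div(s))$, and that near each irreducible component of $\div(s)$ of multiplicity $n$ it differs from $n\log(z)$, with $z$ a local equation of the component, by a locally analytic function. The strategy is to reduce the singular behavior to a standard Vologodsky logarithm by trivializing $L$ locally and applying the scaling axiom of a log function.

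First, I would observe that $s$ restricts to a morphism $s\colon X\setminus\supp(\div(s))\to L^\times$ of $K$-varieties. By Theorem~\ref{T:volsum}~\eqref{t4}, $\log_L\circ s = s^\ast\log_L \in \acol(X\setminus\supp(\div(s)))$, which takes care of the regularity away from the divisor. Next, for the behavior near a point $x\in\supp(\div(s))$, I would choose a Zariski neighborhood $U$ of $x$ on which $L$ admits a nowhere vanishing section $t$, and write $s|_U = f\cdot t$ for some rational function $f$ on $U$ with $\div(f) = \div(s)|_U$. The scaling property of the log function (Definition~\ref{D:logfunction}) then gives
\[
  \log_L(s)|_{U\setminus\supp(\div(s))} \;=\; \log(f) + \log_L(t),
\]
where $\log(f)$ is the Vologodsky logarithm obtained by pulling back the function of Theorem~\ref{T:volsum}~\eqref{t3} along $f$ (as in Example~\ref{E:meroint}~\eqref{P:logf}). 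Since $t$ is nowhere vanishing on $U$, the term $\log_L(t)$ lies in $\acol(U)$. Near a component of $\div(s)|_U$ of multiplicity $n$ with local equation $z$, writing $f = z^n u$ for a local unit $u$ gives $\log(f) = n\log(z) + \log(u)$, which exhibits precisely the logarithmic singularity of order $n$ demanded by the definition of a Green function; this is the higher-dimensional analogue of Lemma~\ref{L:iflogexists}.

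The main obstacle is really bookkeeping: one must match the above local description to Colmez's precise axiomatics for a Green function, including any condition on the associated current or on the transformation under change of local defining equation. Since the definition of a Colmez Green function is characterized by exactly the kind of local logarithmic behavior together with locally analytic smoothness off the divisor, and since the equivalence between Colmez's formalism and the theory of log functions is worked out in \cite[Appendix~B]{Bes05}, the verification amounts to the explicit local computation above together with invoking that comparison.
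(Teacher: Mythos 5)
This proposition is not proved in the paper at all: it is stated with a citation to \cite[Proposition~B.3]{Bes05}, so there is no in-paper argument to compare yours against. Taking your sketch on its own terms, it is incomplete in a way worth flagging. Your local computation is fine and shows that $\log_L\circ s$ lies in $\acol(X\setminus\supp\div(s))$ and acquires a logarithmic singularity of order $n$ along each component of $\div(s)$ of multiplicity $n$. But that is only half of what a Colmez Green function is. Your entire argument uses just the first axiom of Definition~\ref{D:logfunction} (the scaling rule along fibers) and pullback functoriality; you never invoke the second axiom, namely that $d\log_L\in\ocola^1(L^\times)$. That axiom is precisely what distinguishes a log function from an arbitrary Vologodsky function obeying the scaling rule, and it is what matches the extra constraint on a Colmez Green function beyond singularity structure: a Green function is not just any locally analytic function off $D$ with the right log poles, its differential is required to be a once-iterated form (so that the curvature/\"harmonicity" data make sense), and on general nice varieties Colmez's Green functions are moreover obtained by pullback from abelian varieties, as the paper recalls in \S\ref{subsec:relcol}. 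Without using the second axiom you cannot see any of this.

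There is also a circularity in your final paragraph. To close the gap between your local computation and ``Colmez's precise axiomatics'' you invoke the comparison between Colmez's theory and log functions ``worked out in \cite[Appendix~B]{Bes05}.'' But Proposition~B.3 \emph{is} that appendix's comparison; you are appealing to the very statement you are supposed to be proving. A repaired argument would instead (i) state Colmez's definition of a Green function on a nice variety explicitly, (ii) keep your local singularity analysis, and (iii) use $d\log_L\in\ocola^1(L^\times)$ together with functoriality of $\ocola^1$ under pullback (and, for the cohomological part, Proposition~\ref{P:excurve}) to verify the remaining condition on $d(\log_L\circ s)$.
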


In contrast to our approach, there is no notion of good or canonical Green functions
on general abelian varieties and no notion of curvature forms in Colmez's
setup. For the special case of a Jacobian $J$ of a nice curve $C$ and a
theta divisor $\Theta$ on $J$, Colmez is able to associate to each complementary
subspace to $\Omega^1(C) \subset \hdr^1(C)$ that is isotropic with respect
to the cup product pairing a Green function $G_\Theta$,
with respect to $\Theta$, unique up to an additive constant, as follows. 

By~\cite[Proposition~8.10.23]{BG06}, there is a unique
point $w\in J$ such that the divisor $\Theta$ is equal to
$t_w^*[-1]^*\Theta$. Colmez defines a Green function $G_{\Theta}$ to be {\em symmetric} if it is
symmetric with respect to the map $x\mapsto w-x$ (see~\cite[\S
II.2.2]{Col98}). He then shows that symmetric Green functions
up to an additive constant are in bijection with isotropic complementary
subspaces (see~\cite[Proposition~II.2.4]{Col98}).
  In Proposition~\ref{P:curvspace} we will show that purely mixed curvature forms
  are in bijection with complementary subspaces
  $\Omega^1(J) \subset \hdr^1(J)$. 
\begin{thm}\label{T:loggreen}
  Let $J/K$ be a Jacobian of a nice curve $C$ with theta divisor
  $\Theta=\div(s)$ corresponding to an Abel--Jacobi embedding $\iota\colon
  C\to J$ such that 
$\Theta = t_w^*[-1]^*\Theta$, where $s$ is
  a section of a line bundle $\L$ on $J$ and $w\in J$.  Let $\log_L$ be the  canonical log function on $L$
  corresponding to a purely mixed curvature form $\alpha$ on $\P$ and let
  $G_\Theta$ be a symmetric Green function with respect to the
  complementary subspace corresponding to $\alpha$. Then
  $\log_L\circ s-G_\Theta$ is an additive constant. 
\end{thm}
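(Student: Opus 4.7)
The plan is to apply Colmez's uniqueness statement~\cite[Proposition~II.2.4]{Col98}, which characterizes symmetric Green functions on $J$ up to an additive constant in terms of their associated isotropic complementary subspaces of $\hdr^1(C)$. It therefore suffices to prove (i) that $F \colonequals \log_L \circ s$ is a symmetric Green function with respect to $\Theta$, and (ii) that the isotropic complementary subspace that Colmez attaches to $F$ is the one corresponding, via Proposition~\ref{P:curvspace}, to the purely mixed curvature form $\alpha$ used to define $\log_L$.

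Step (i) splits into two pieces. The fact that $F$ is a Green function for $\div(s) = \Theta$ is Proposition~\ref{P:BesLogGreen}. For symmetry, let $\sigma$ denote the involution $x \mapsto w - x$ of $J$. The hypothesis $\Theta \cong t_w^*[-1]^*\Theta = \sigma^*\Theta$ yields an isomorphism $\sigma^*L \isom L$. Combining the functoriality of canonical log functions under homomorphisms (Lemma~\ref{L:goodness}) with their translation behavior (Proposition~\ref{P:trans}), and writing $\sigma = t_w \circ [-1]$, one obtains $\sigma^*\log_L = \log_L + \const$. Transferring along $s$ gives $F \circ \sigma = F + \const$, which is exactly Colmez's notion of symmetry from~\cite[\S II.2.2]{Col98}.

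Step (ii) compares two parametrizations of isotropic complementary subspaces of $\hdr^1(C)$. On our side, the subspace is attached to $\alpha$ by Proposition~\ref{P:curvspace}. On Colmez's side, the subspace is extracted from differential data attached to the symmetric Green function $F$, ultimately from $\delbr d F$. By Proposition~\ref{P:excurve}, the curvature of $\log_L$ is computed by $\pi^*\alpha = \delbr d \log_L$, so the same cohomological datum is carried by both constructions once one pulls back along the Abel--Jacobi embedding $\iota$ to $C$. Verifying that the two recipes then assign the same complement to $\Omega^1(C) \subset \hdr^1(C)$ is a direct unwinding of the definitions, after which Colmez's uniqueness forces $F - G_\Theta$ to be a constant.

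The main obstacle is step (ii): the comparison of the two bijections requires carefully translating between the language of~\cite{Col98} (built around $p$-adic theta functions on $J$ and their delbar) and that of this paper and~\cite{Bes05} (built around log functions on the Poincar\'e bundle together with purely mixed curvature forms). The symmetry and Green-function parts of step (i) are essentially formal consequences of functoriality results already established in earlier sections.
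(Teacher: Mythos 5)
Your high-level strategy matches the paper's --- show that $\log_L \circ s$ is a symmetric Green function in Colmez's sense, then identify Colmez's complementary subspace with $W_\alpha$ --- but step~(i) has a gap. You propose to deduce $f_w$-symmetry by combining the homomorphism functoriality of Lemma~\ref{L:goodness} with the translation result Proposition~\ref{P:trans}, writing $\sigma = t_w \circ [-1]$. However, Lemma~\ref{L:goodness} is stated only for \emph{symmetric} line bundles and Proposition~\ref{P:trans} only for \emph{antisymmetric} ones, and $\L = \O(\Theta)$ is neither: a theta bundle is symmetric only \emph{up to translation}. So neither cited result applies to $\L$, and the identity $\sigma^*\log_L = \log_L + \text{const}$ does not follow as you claim. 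The paper handles this by chaining Lemmas~\ref{L:symmetricdiv}, \ref{L:symmlog} and~\ref{L:changesymm} to reduce $f_w$-symmetry of a Green function for $\Theta$ to $[-1]$-symmetry of a log function on the translate $u^*\L$ (with $2u=w$), which \emph{is} honestly symmetric, and then invokes Theorem~\ref{T:symgood} and Remark~\ref{R:symandgood} to conclude that the good log function on a symmetric bundle is automatically $[-1]$-symmetric. That reduction-to-the-symmetric-case step is what your argument is missing.

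You correctly flag step~(ii) as the technical heart, but calling it a ``direct unwinding of the definitions'' leaves the key ingredient unsupplied. Colmez characterizes his complementary subspace $W$ as the span of the cohomology classes of $d(\partial_i G)$, where the $\partial_i$ are derivations dual to a basis of $\Omega^1(J)$; the paper proves a dedicated result, Proposition~\ref{P:derivrest}, that computes these classes directly from the $\hdr^1$-components of the curvature form and thereby matches Colmez's $W$ to $W_\alpha$ from Proposition~\ref{P:curvspace}. Without Proposition~\ref{P:derivrest} (or an equivalent computation), the comparison does not close; your proposal should identify it as the needed lemma rather than defer to an informal unwinding.
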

To prove Theorem~\ref{T:loggreen}, we will need a few definitions and lemmas.
\begin{defn}\label{D:symmetricdiv}
  Let $f \colon X \rightarrow X$ be an isomorphism of a nice
  variety $X/K$. 
  We say that a Vologodsky function $g$ defined on a nonempty Zariski open
  subset of $X$ is \emph{$f$-symmetric} if $g \circ f = g$.
  
   We say that a divisor $D$ on $X$ is \emph{$f$-symmetric} if
   $f^*D = D$. Let $\L/X$ be a line bundle and $s$ a section of $\L$. We
   call the pair $(\L,s)$  \emph{$f$-symmetric} if there is an
   isomorphism $f^* \L \cong_{\bar{K}} \L$ 
   such that $f^*s = s$ under the induced isomorphism on sections.
   Let $\log_{\L}$ be a log function on a line bundle $\L$ satisfying
   $f^*\L\cong_{\bar{K}} \L$ with curvature
   form $\alpha$ satisfying $f^* \alpha = \alpha$. Then we call $\log_{\L}$
   \emph{$f$-symmetric} if the isomorphism $f^* \L \cong_{\bar{K}} \L$ is an isometry.
\end{defn}

\begin{rk}\label{R:twosymm}
 Note that symmetric log functions on symmetric line bundles as defined in
  Definition~\ref{D:symmetriclog} are simply log functions that are
  $[-1]$-symmetric.
\end{rk}

\begin{lemma}\label{L:symmetricdiv}
 Let $f \colon X \rightarrow X$ be an isomorphism of a nice variety $X$. 
  The map $(\L, s) \mapsto \div(s)$ defines a bijection between
  $f$-symmetric pairs $(\L,s)$, where $s$ is a section of a line bundle
  $\L/X$, and $f$-symmetric divisors on $X$.   
\end{lemma}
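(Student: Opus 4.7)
The plan is to reduce the claim to the standard correspondence between Cartier divisors on $X$ and isomorphism classes of pairs $(\L, s)$, consisting of a line bundle $\L$ together with a nonzero (rational) section $s$, under which $(\L, s) \mapsto \div(s)$. Recall that in this classical correspondence two pairs $(\L_1, s_1)$ and $(\L_2, s_2)$ satisfy $\div(s_1) = \div(s_2)$ if and only if there exists a unique isomorphism $\L_1 \xrightarrow{\sim} \L_2$ carrying $s_1$ to $s_2$. Throughout, we view $f$-symmetric pairs up to the natural equivalence relation: isomorphisms of line bundles that transport the distinguished sections to each other.

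First I would verify that the map is well-defined. If $(\L, s)$ is $f$-symmetric via an isomorphism $\varphi \colon f^*\L \xrightarrow{\sim} \L$ with $\varphi(f^*s) = s$, then
\[
f^*\div(s) \;=\; \div(f^*s) \;=\; \div\bigl(\varphi(f^*s)\bigr) \;=\; \div(s),
\]
using that an isomorphism of line bundles preserves divisors of sections. Hence $\div(s)$ is an $f$-symmetric divisor.

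Injectivity (at the level of equivalence classes of $f$-symmetric pairs) is immediate from the injectivity of the standard divisor--line bundle dictionary. For surjectivity, given an $f$-symmetric divisor $D$ set $\L = \O(D)$ and let $s$ be its canonical rational section, so that $\div(s) = D$. Then $\div(f^*s) = f^*\div(s) = f^*D = D = \div(s)$, so the standard correspondence applied to the two pairs $(f^*\L, f^*s)$ and $(\L, s)$ yields a unique isomorphism $\varphi \colon f^*\L \xrightarrow{\sim} \L$ with $\varphi(f^*s) = s$. Thus $(\L, s)$ is an $f$-symmetric pair and maps to $D$, completing the bijection.

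No substantive obstacle is expected; the argument is essentially an $f$-equivariant repackaging of the divisor--line bundle dictionary. The only subtle point is to fix at the outset the appropriate equivalence on pairs so that the relation "being $f$-symmetric" descends to equivalence classes. Once this is done, both well-definedness and the two directions of the bijection follow from a single application of the standard correspondence.
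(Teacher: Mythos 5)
Your proof is correct and takes essentially the same route as the paper: use $f^*D = D$ to obtain an isomorphism $f^*\O(D)\cong\O(D)$, and then use properness of $X$ (so that global units are constants) to control the scalar ambiguity so the canonical section is carried to itself. Your direct invocation of the divisor--section dictionary to produce $\varphi$ with $\varphi(f^*s)=s$ in a single step is a slightly tidier packaging than the paper's, which first produces some isomorphism, observes $f^*s = \lambda s$, and then rescales; both arguments ultimately rest on $\O_X(X)^\ast = K^\ast$ for $X$ nice.
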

\begin{proof}
  It is easy to see that if $(\L,s)$ is $f$-symmetric, then $\div(s)$ is
  $f$-symmetric. For the converse, if $D$ is an $f$-symmetric divisor, then the equality $f^*D = D$
  induces an isomorphism $f^*\O(D) \cong \O(D)$ and since $X$ is proper, an
  equality $f^*s = \lambda s$ for some $\lambda \in K^*$. We can then
  rescale the isomorphism $f^*\O(D) \cong \O(D)$ and the section $s$ by $\sqrt{\lambda}$ to
  obtain an $f$-symmetric pair.   
\end{proof}

\begin{lemma}\label{L:symmlog}
 Let $s$ be a section of a line bundle $\L$ on a nice variety $X/K$ such that
  $(\L,s)$ is $f$-symmetric, where $f \colon X
  \rightarrow X$ is an isomorphism. Let $\log_{\L}$ be a log function for
  $\L$ with respect to a curvature form $\alpha$ satisfying $f^* \alpha =
  \alpha$. Then the isomorphism $f^* \L \cong_{\bar{K}} \L$ is an isometry if and
  only if the function $g \colonequals \log_{\L} \circ s$ is 
  $f$-symmetric.
\end{lemma}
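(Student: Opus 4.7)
The plan is to apply Remark~\ref{R:logfunctionuniqueness} to the two log functions on $f^*\L$ given by $f^*\log_\L$ (the pullback of $\log_\L$ under $f$ in the sense of Definition~\ref{D:relatedlogfunctions}(3)) and $\tilde\phi^*\log_\L$, where $\phi\colon f^*\L \to \L$ is the isomorphism of rigidified line bundles over $X$ guaranteed by the $f$-symmetry of the pair $(\L,s)$, normalized so that $\phi(f^*s) = s$. Both are log functions on $f^*\L$, and everything reduces to evaluating their difference on the section $f^*s$.

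First I would verify that both log functions have curvature $\alpha$: for $f^*\log_\L$ this follows from functoriality of curvature together with the hypothesis $f^*\alpha = \alpha$, while for $\tilde\phi^*\log_\L$ it follows from the fact that $\phi$ is an isomorphism of line bundles over the same base $X$, so pullback along $\phi$ preserves the form on $X$ arising from $\bar\partial d(\cdot)$. By Remark~\ref{R:logfunctionuniqueness} (and the standard observation that the difference of two log functions with equal curvature is constant on fibers and so descends to $X$), there exists $h \in \acol(X)$ with $dh \in \Omega^1(X)$ such that
\[
\tilde\phi^*\log_\L \;-\; f^*\log_\L \;=\; \pi^* h,
\]
where $\pi\colon (f^*\L)^\times \to X$ is the projection.

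Next I would evaluate both sides at $f^*s(x) \in (f^*\L)_x$. On one hand, since $\phi(f^*s(x)) = s(x)$, we have $(\tilde\phi^*\log_\L)(f^*s(x)) = \log_\L(s(x)) = g(x)$. On the other hand, the canonical map $(f^*\L)^\times \to \L^\times$ used to define the pullback of a log function sends $f^*s(x)$ to $s(f(x))$, so $(f^*\log_\L)(f^*s(x)) = g(f(x))$. Subtracting yields $h(x) = g(x) - g(f(x))$, i.e. $h = g - g\circ f$. Since $\phi$ is an isometry precisely when $\tilde\phi^*\log_\L = f^*\log_\L$, i.e. when $h \equiv 0$, this is equivalent to $g = g\circ f$, which is the definition of $g$ being $f$-symmetric.

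The only real obstacle is the bookkeeping: one must distinguish carefully between the two natural maps $(f^*\L)^\times \to \L^\times$ (the tautological one, used in $f^*\log_\L$, and $\tilde\phi$, used in $\tilde\phi^*\log_\L$) and confirm that both pullback log functions indeed share the curvature $\alpha$. Once these are in place, the equivalence is an immediate consequence of the uniqueness-up-to-integral-of-a-holomorphic-form for log functions with prescribed curvature.
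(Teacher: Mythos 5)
Your proof is correct and follows the same strategy as the paper's: both appeal to Remark~\ref{R:logfunctionuniqueness} to show that the difference of the two relevant log functions on $f^*\L$ descends to $X$, and then evaluate along the section $f^*s$ to identify that difference with $g - g\circ f$. Your version is slightly more explicit in distinguishing the tautological map $(f^*\L)^\times\to\L^\times$ from the one induced by the $f$-symmetry isomorphism $\varphi$, a distinction the paper elides in the shorthand ``$f^*\log_\L - \log_\L$''.
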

\begin{proof}
 If the isomorphism $f^* \L \cong_{\bar{K}} \L$ is an isometry for the log function
  $\log_{\L}$, then since $(\L,s)$ is $f$-symmetric,
  it follows that $g$ is $f$-symmetric. For the converse, by
  Remark~\ref{R:logfunctionuniqueness}, the difference
  $f^*\log_{\L}-\log_{\L}$ is a function that factors through $X$. Hence $f^*\log_{\L}-\log_{\L} = g \circ f-g$ and the lemma follows.
\end{proof}

\begin{lemma}\label{L:changesymm}
  Let $A/K$ be an abelian variety, and let $u,w \in
  A(\overline{K})$ such that $2u+w=0$. Let $\L,\L'$ be two line bundles on
  $J$ such that $\L \cong t_u^* \L'$. Let $\log_{\L'}$ be a log function
  for $\L'$ and let $\log_{\L} = t_u^* \log_{\L'}$. Let $f \colon J
  \rightarrow J$ be the map $x \mapsto w-x$. Then $\log_\L$ is
  $f$-symmetric  if and only if $\log_{\L'}$ is $[-1]$-symmetric.
\end{lemma}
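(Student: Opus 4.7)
The plan is to reduce both symmetry conditions to a single identity by exploiting functoriality of pullback, after first establishing the key relation $t_u \circ f = [-1] \circ t_u$ on $A$. To verify this relation, note that for any $x \in A$ we have $(t_u \circ f)(x) = w - x + u$ and $([-1] \circ t_u)(x) = -x - u$, and these agree exactly when $w = -2u$, which holds by hypothesis.

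From this identity and functoriality of pullback of line bundles, $f^*\L \cong t_u^*[-1]^*\L'$, so $f^*\L \cong \L$ if and only if $[-1]^*\L' \cong \L'$, and an isomorphism $\psi\colon [-1]^*\L' \to \L'$ induces an isomorphism $\theta \colonequals t_u^*\psi \colon f^*\L \to \L$; conversely, since $t_u$ is an isomorphism, any such $\theta$ arises in this way. Applying the same identity at the level of log functions (using Theorem~\ref{T:volsum}~\eqref{t4}) and the definition $\log_\L = t_u^*\log_{\L'}$, one computes
\[
f^*\log_\L \;=\; f^*t_u^*\log_{\L'} \;=\; (t_u\circ f)^*\log_{\L'} \;=\; ([-1]\circ t_u)^*\log_{\L'} \;=\; t_u^*\bigl([-1]^*\log_{\L'}\bigr).
\]

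Now the lemma follows by matching isometries under the identification $\theta = t_u^*\psi$. If $\log_{\L'}$ is $[-1]$-symmetric, then by Definition~\ref{D:relatedlogfunctions}(4) the isomorphism $\psi$ identifies $[-1]^*\log_{\L'}$ with $\log_{\L'}$; pulling this identification back along $t_u$, the isomorphism $\theta$ identifies $t_u^*([-1]^*\log_{\L'}) = f^*\log_\L$ with $t_u^*\log_{\L'} = \log_\L$, so $\log_\L$ is $f$-symmetric. Conversely, if $\log_\L$ is $f$-symmetric via some $\theta$, write $\theta = t_u^*\psi$; since pullback along the isomorphism $t_u$ is injective on log functions, the identification provided by $\theta$ descends to the identification of $[-1]^*\log_{\L'}$ with $\log_{\L'}$ via $\psi$, proving $\log_{\L'}$ is $[-1]$-symmetric.

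The content is essentially the algebraic identity $t_u \circ f = [-1] \circ t_u$; the only real bookkeeping is tracking how the isomorphisms $f^*\L \cong \L$ and $[-1]^*\L' \cong \L'$ correspond under pullback by $t_u$, which I expect to be the main (minor) obstacle, since Definition~\ref{D:symmetricdiv} leaves the choice of isomorphism slightly implicit. This is handled cleanly by taking $\theta \colonequals t_u^*\psi$ once and for all.
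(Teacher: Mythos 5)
Your proposal is correct and takes essentially the same approach as the paper: both hinge on the identity $t_u\circ f\circ t_u^{-1}=[-1]$ (equivalently $t_u\circ f=[-1]\circ t_u$) and then transfer isomorphisms/isometries back and forth along $t_u^*$. The paper's proof ends with the same idea but states the log-function step more tersely (``pulling back $\log_\L$ along all these induced isomorphisms''), which you spell out by writing $\theta = t_u^*\psi$ and checking $\theta^*\log_\L = f^*\log_\L$ is equivalent to $\psi^*\log_{\L'} = [-1]^*\log_{\L'}$.
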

\begin{proof}
 Let $g \colonequals t_u \circ f \circ (t_u)^{-1}$. Then
  \[ g(x) = t_u \circ f \circ (t_u)^{-1}(x) =  t_u \circ f \circ (t_{-u})(x) = t_u \circ f (x-u) = t_u(w-(x-u)) = w-x+u+u = w+2u-x. \]
  Since $w+2u = 0$, it follows that $g(x) = -x$. We have $\L \cong_{\bar{K}} t_u^*
  \L'$, and this implies that $f^*\L \cong_{\bar{K}} \L$ if and only if $f^* (t_u)^*
  \L' \cong_{\bar{K}} t_u^* \L'$, or equivalently that $(t_{-u})^* f^* (t_u)^* \L'
  \cong_{\bar{K}} \L'$  since $t_{u}^{-1} = t_{-u}$. Since $g(x) = -x$, it follows
  that $(t_{-u})^* f^* (t_u)^* \L' \cong_{\bar{K}} \L'$ if and only if $[-1]^*\L'
  \cong_{\bar{K}} \L'$, because
 \[ [-1]^*\L' = g^*\L' = (t_{-u})^* f^* (t_u)^* \L'.\] 
 Pulling back $\log_{\L}$ along all these induced isomorphisms gives the result that we need.
\end{proof}

Our proof of Theorem~\ref{T:loggreen}, we need one more preliminary result.
This will allow us to identify the two complementary subspaces associated to
the Green function $\log_L\circ s$ in our work and in Colmez's.

\begin{prop}\label{P:derivrest}
  Let $X/K$ be a nice variety. Let $\L$ be
  a line bundle on $X$ and let $s$ be a rational section of $\L$. Let
  $U\subset X$ be an affine open on which $s$ is invertible. Fix 
  a basis $\omega_1,\ldots,\omega_g$ for $\Omega^1(X)$ and 
  let $\log_{\L}$ be a log function on $\L$ with
  curvature $\sum x_i \otimes \omega_i$, with $x_i\in
  \hdr^1(X)$. Let $\partial_i$ be the derivation dual to $\omega_i$. Then
  $d \partial_i \log_{\L}(s)$ represents the restriction of $x_i$ to $U$ in
  $\hdr^1(U)$.
\end{prop}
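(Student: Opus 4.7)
The plan is to pull back the fundamental curvature identity $\pi^*\Curve(\log_\L) = \delbr d\log_\L$ of Proposition~\ref{P:excurve}(a) along the section $s\colon U \to \L^\times$, and then decode the resulting equation using the explicit description of $\delbr$ on once-iterated forms from Theorem~\ref{T:volsum}~\eqref{t6}. Set $F \colonequals \log_\L(s) = s^*\log_\L \in \acol(U)$. Since $\pi \circ s = \id_U$, one obtains
\[ \delbr dF = \Curve(\log_\L)|_U = \sum_i \omega_i|_U \otimes x_i|_U \in \Omega^1(U) \otimes \hdr^1(U). \]
Choose closed algebraic 1-forms $\eta_i$ on $X$ representing $x_i$ in $\hdr^1(X)$ and, shrinking $U$ if needed, assume that the $\eta_i$ are regular on $U$ and that $\omega_1,\ldots,\omega_g$ frame $\Omega^1_{X/K}$ on $U$. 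The once-iterated form $G \colonequals \sum_i \omega_i \int \eta_i \in \ocola^1(U)$ satisfies $\delbr G = \sum_i \omega_i|_U \otimes [\eta_i|_U] = \delbr dF$ by Theorem~\ref{T:volsum}~\eqref{t6}, so by exactness of \eqref{delbarexact} there exists $\gamma \in \Omega^1(U)$ with $dF = \sum_i \omega_i \int \eta_i + \gamma$.

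Next, using the framing, expand $\gamma = \sum_j h_j \omega_j$ with $h_j \in \O(U)$, so that $dF = \sum_j \omega_j \cdot (\int \eta_j + h_j)$. The derivation $\partial_i$ dual to $\omega_i$ is a well-defined derivation of $\O(U)$ and extends uniquely to a derivation of $\acol(U)$ over this frame. Applying $\partial_i$ to the above expression gives $\partial_i F = \int \eta_i + h_i$ up to an additive constant, and hence
\[ d \partial_i F = \eta_i + dh_i, \]
which is a closed algebraic 1-form on $U$. Its de Rham class in $\hdr^1(U)$ equals $[\eta_i|_U] = x_i|_U$, as required.

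The main technical point, and the place where some care is needed, is to make precise sense of $\partial_i$ on Vologodsky functions: this requires $\omega_1,\ldots,\omega_g$ to form a local frame of $\Omega^1_{X/K}$. Since the $\omega_i$ span a $K$-basis of $\Omega^1(X)$ they do frame on a Zariski-dense open subset, and shrinking $U$ to the intersection with this subset (and to where $s$ is invertible and the $\eta_i$ are regular) makes all of the above manipulations rigorous without changing the cohomological statement of the proposition, since $\hdr^1$ behaves well under such restrictions in the presence of a regular representative.
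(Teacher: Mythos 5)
Your proof is correct and follows essentially the same route as the paper's: both pull the identity $\delbr\, d\log_\L(s) = \Curve(\log_\L)|_U$ back along $s$ and then use the structure theory of once-iterated forms on the affine $U$ (Theorem~\ref{T:volsum}~\eqref{t6}, equivalently \cite[Prop.~2.7]{Bes05}) to conclude. The only structural difference is where the contraction by $\partial_i$ is done: the paper contracts $\delbr\, dG$ directly and invokes the commutation $\overline{\partial}((dG)|_{\partial_i}) = (\overline{\partial}\, dG)|_{\partial_i}$ from \cite[Lemma~2.8]{Bes05}, while you first solve for $dF = \sum_j \omega_j \int\eta_j + \gamma$ on $U$, expand $\gamma$ in the frame, and then contract. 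These are the same computation phrased in a slightly different order.

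Two small points of hygiene. First, the shrinking of $U$ so that the $\eta_i$ are regular there is unnecessary: since $U$ is affine, you can choose the representatives $\eta_i$ directly as closed algebraic $1$-forms on $U$ representing $x_i|_U$, rather than restricting global representatives. Second, and more substantively, your justification for the framing step (``since the $\omega_i$ span a $K$-basis of $\Omega^1(X)$ they do frame on a Zariski-dense open subset'') is incorrect as a general statement: for a curve of genus $g\ge 2$, a $K$-basis of $\Omega^1(X)$ has $g$ elements but the sheaf $\Omega^1_{X/K}$ has rank $1$, so no such framing exists anywhere. What saves the argument is that the hypothesis of the proposition --- the existence of derivations $\partial_i$ dual to $\omega_i$ on $U$ --- already presupposes that $\omega_1,\ldots,\omega_g$ frame $\Omega^1_{X/K}$ over $U$ (as happens for abelian varieties of dimension $g$, the intended application). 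So the shrinking of $U$ for the framing is also unnecessary; you should simply appeal to the hypothesis.
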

\begin{proof}
The proof is almost the same as the proof of Theorem~7.3 in~\cite{Bes05}.
Let $G=\log_{\L}(s)$. We have on $U$
\begin{equation*}
  d \partial_i G = d\left( (d G)|_{\partial_i} \right)\;.
\end{equation*}
The function $(d G)|_{\partial_i}$ is in $\O_{V,1}(U)$ and
by~\cite[Lemma~2.8]{Bes05} we have
$ \overline{\partial} ((dG)|_{\partial_i})
  =(\overline{\partial} dG)|_{\partial_i},$
where the retraction acts on the second component in $\Omega^1(U) \otimes
\hdr^1(U)$.
But $\overline{\partial} dG$ is just the curvature restricted to $U$, so by
assumption we get
\begin{equation*}
  \overline{\partial} ((dG)|_{\partial_i}) = x_i|_U \otimes 1\;.
\end{equation*}
Since $U$ is affine it follows from Proposition~2.7 in~\cite{Bes05} that
$(dG)|_{\partial_i}= \int \eta_i + f_i$ for some $\eta_i \in \Omega^1(U)$
representing $x_i|_{U}$ and some $f_i \in \O(U)$. Applying $d$ again we get
the result.
\end{proof}

\begin{proof}[Proof of Theorem~\ref{T:loggreen}]
  Denote the map $x\mapsto w-x$ on $J$ by $f_w$.
By Proposition~\ref{P:BesLogGreen}, Lemma~\ref{L:symmetricdiv},
  Lemma~\ref{L:symmlog} and Lemma~\ref{L:changesymm}, a Green function
  for the theta divisor $\Theta$ is $f_w$-symmetric if and only if the induced metric on $\L$ is such that $u^\ast
  \L$ is symmetric for one, hence any, $u$ such that $2u=w$. By
  Theorem~\ref{T:symgood}, any choice of a
curvature form for $\L$ determines a good, hence symmetric, log function on
$u^\ast \L$.  
  
  To complete the proof, it suffices to show that
the complementary subspace $W$ associated by Colmez to $G\colonequals \log_\L\circ s$ is the same as the
  space $W_\alpha$ corresponding to $\alpha$.
Fix a  choice
  $\omega_1,\ldots,\omega_g$ of a basis of $\Omega^1(J)$; by abuse of
  notation, we also denote their pullbacks by $\iota$ by
  $\omega_1,\ldots,\omega_g$.
  By~\cite[Proposition~II.2.4]{Col98}, $W$ is spanned by the classes of
  $d(\partial_1G),\ldots,d(\partial_gG)$, where
  $\partial_i$ is the derivation dual to $\omega_i$.
  The curvature form of $\log_\L$ 
  is given by $\sum_i^g\bar{\omega}_i\otimes\omega_i$, where
  $\bar{\omega}_1,\ldots,\bar{\omega}_g$
  is a basis of $W_\alpha$ that is dual to $\omega_1,\ldots,\omega_g$ with
  respect to the cup product pairing (see the proof of~\cite[Proposition~6.1]{Bes05}, which essentially 
  follows an argument of Faltings).
  Hence the final part of Theorem~\ref{T:loggreen} follows from
  Proposition~\ref{P:derivrest}.
\end{proof}

\begin{rk}\label{R:}
Let $J$ be the Jacobian  of a genus~2 curve  with a
  rational Weierstrass point at infinity and let $\Theta$ be the
  corresponding theta divisor on $J$. 
  In~\cite{Bia23}, Bianchi explicitly constructs symmetric Green
  functions for $\Theta$ as logs of $p$-adic
  sigma functions due to Blakestad~\cite{Bla18} (and generalizations of the
  latter).
  Combined with Bianchi's work, Theorem~\ref{T:loggreen} shows that we can also
  obtain the log of these
  $p$-adic sigma functions from a canonical log function.
\end{rk}

\section{$p$-adic adelic valuations and global $p$-adic heights}\label{sec:global}

Shou-Wu Zhang introduced a theory of real-valued adelic metrics on line bundles on
varieties over
number fields in~\cite{Zha95}.
Such an adelic metric is a family of continuous real-valued metrics, one for every place of the number
field. 
Using the previous three sections, we now develop a $p$-adic version of this theory, a
theory of
adelic line bundles on nice varieties over number fields with values in $\Q_p$.
Via a choice of an id\`{e}le class character, we can use this to develop a fairly general
theory of $p$-adic heights.
We then specialize to the case of abelian varieties and canonical $p$-adic heights,
similar to Zhang's construction of N\'eron--Tate heights using adelic metrics (see
also~\cite{C-L11} and \cite[\S9.5]{BG06} for expositions).

Let $K$ be a number field.  For a place $v$ of $K$, we denote by
  $K_v$ the completion of $K$ at $v$ with ring of integers
  $\O_v$ and 
 uniformizer  $\pi_v$ 
  We also fix an algebraic closure $\bar{K}_v$ of $K_v$.

Let $p$ be a prime number and let
 \begin{equation*}
   \chi = \sum_v \chi_v \colon  \mathbb{A}_K^\times/K^\times \to \Q_p
 \end{equation*}
be a continuous id\`{e}le class character. 
This means that $\chi$ is a continuous homomorphism such that
\begin{itemize}
  \item we have $\chi_\q(\O_{\q}^\times)=0$ for $\q\nmid p$;
\item for every $\p\mid p$, there is a $\Q_p$-linear trace map $t_\p$ such that we can decompose
  \begin{equation}\label{tdefnd}
    \xymatrix{
      {\O_{\p}^\times}  \ar[rr]^{\chi_{\p}} \ar[dr]^{\log_\p} & &   \Q_p.\\
      & K_\p\ar[ur]^{t_\p}
  }
  \end{equation}
\end{itemize}
One can think of $\chi$ as a ``global log''.
We shall assume that $\chi$ is ramified , meaning that $
\chi_{\p}(\O_\p^\times)\ne 0$, at all $\p\mid p$, so that we can
extend~\eqref{tdefnd} to $K_\p^{\times}$, leading to a factorization
\begin{equation}\label{E:chilog}
\chi_\p = t_\p\circ \log_\p\,,
\end{equation}
valid on $K_\p^\times$. Then $\log_\p$ is a branch of the logarithm 
$K_\p^\times\to K_\p$ as in Sections~\ref{sec:background} and~\ref{sec:log}.

Suppose that $X$ is a nice variety over $K$. If $v$ is a finite place of $X$,
then we write $X_v$ for the base change
of $X$ to $K_v$ and $\bar{X}_v$ for the base change to $\bar{K}_v$. Similarly, for a line
bundle $\L$ of $X$, we write $\L_v$ and $\bar{\L}_v$.

\subsection{$p$-adic adelic metrics and heights}\label{subsec:adelic}
The following definition is a $p$-adic analogue of Zhang's adelic metric
introduced in~\cite{Zha95}.
\begin{defn}\label{D:adelic}
  Let $\L$ be a line bundle on $X/K$.
  A {\em $p$-adic adelic metric} on $\L$ consists of the following data:
    \begin{itemize}
      \item for every place $\p\mid p$, a log function
        $\log_{\L,\p}$ on $\L_\p$
        (see Definition~\ref{D:logfunction}),
      \item for every finite place $\q\nmid p$, a $\Q$-valuation
        $v_{\L,\q}$ on $\L_\q$ 
        (see Definition~\ref{D:val}),
   \end{itemize}
        satisfying the following compatibility condition:
        There exists an integral model $\mathcal{X}/\O_K$ of $X$ and an extension
        $\mathcal{\L}$ of $\L$ to $\mathcal{X}$ such that
        for all but finitely many $\q\nmid p$,  the valuation $v_{\L,\q}$ is equal to the
model        valuation $v_{\mathcal{\L}\otimes \O_\q}$ (see
  Example~\ref{E:alg_val}). 

    We call $\overline{L} \colonequals (\L, (\log_{\L,\p})_{\{\p \mid p\}}, (v_{\L,\q})_{\{\q
    \nmid p\}})$ a {\em $p$-adic
    adelically metrized line bundle} on $X$.
\end{defn}
There are obvious notions of tensor products and pullbacks of adelic metrics and adelically
metrized line bundles via the corresponding notions in Sections~\ref{sec:vals}
and~\ref{sec:background}.
\begin{defn}\label{D:ht}
  Let $\overline{L} = (\L, (\log_{\L,\p})_{\{\p \mid p\}}, (v_{\L,\q})_{\{\q
    \nmid p\}})$ be an adelically metrized line bundle on $X/K$ such that for every place
    $\p\mid p$, the branch of the logarithm in Definition~\ref{D:logfunction} is the branch $\log_\p$ induced by $\chi_{\p}$ as in~\eqref{E:chilog}.    
  The {\em $p$-adic height} associated with $\overline{\L}$ and
    $\chi$ is the function $h_{\overline{\L}}\colonequals h_{\overline{L}, \chi}$
    mapping a point $x\in X(K)$ to the value 
    \begin{equation*}
      h_{\overline{\L}}(x) = \sum_{\p\mid p}t_\p(\log_{\L,\p}(u)) + \sum_{\q\nmid p}
      v_{\L,\q}(u)\chi_\q(\pi_\q)\in \Q_p\,,
    \end{equation*}
    where $u\in \L_x(K)\setminus\{0\}$.
\end{defn}
Note that $\log_{\L,\p}(u)\in K_{\p}$ by~\eqref{KtoK} of Remark~\ref{Volthmcom}, so that the formula makes
sense.
The height is independent of the choice of $u$ by the fact that $\chi$ is an id\`{e}le class character. 

\begin{rk}\label{R:gensubvar}
  It would be interesting to generalize our construction of the $p$-adic
  height of a point to the $p$-adic height of a subvariety of $X$.
\end{rk}

\subsection{Canonical $p$-adic heights on abelian varieties}
Let $A/K$ be an abelian variety.
As in previous sections, we equip all line bundles $\L/A$ with a rigidification $r$ at 0,
inducing compatible rigidifications at 0 on all $\L_v$.
\begin{defn}\label{D:goodadelicmetric}
  Let $\L/A$ be a line bundle.
  An adelic metric on $\L$ is called {\em good}  if  the log function at $\p$
    is good for all $\p\mid p$ and if
the valuations at all $\q\nmid p$ are canonical. 
\end{defn}

\begin{prop}\label{P:goodheight}
  Let $\overline{\L}$ be a line bundle on $A$ endowed with a good adelic metric. Then the
  $p$-adic height $h_{\overline{\L}}$ is a quadratic function on $A(K)$. It is a quadratic (respectively linear) form if 
    $\L$ is symmetric (respectively antisymmetric).
\end{prop}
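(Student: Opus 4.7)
The plan is to exploit the standard theorem-of-the-cube isomorphisms \eqref{m-iso} and \eqref{m-iso2}, which are isometries at every place under a good adelic metric, and to sum the resulting local identities via the id\`ele class character $\chi$.

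First I would treat the symmetric case. For symmetric $\L$ with a good adelic metric, the isomorphism \eqref{m-iso} is an isometry at every place: at $\p\mid p$ by Definition~\ref{D:goodmetric}, and at $\q\nmid p$ by the tensor-product additivity and pullback compatibility of canonical valuations (parts \eqref{P:goodadd} and \eqref{P:valfunc} of Proposition~\ref{P:good}, which imply that both sides of \eqref{m-iso} carry the same canonical valuation). Given $x,y\in A(K)$, choose nonzero elements $u\in \L_{x+y}(K)$, $v\in \L_{x-y}(K)$, $w_1\in \L_x(K)$, $w_2\in \L_y(K)$ and let $\lambda\in K^\times$ be the scalar determined by $u\otimes v = \lambda\cdot w_1^{\otimes 2}\otimes w_2^{\otimes 2}$ under the fiber of \eqref{m-iso} at $(x,y)$. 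The isometry hypothesis then translates, at each $\p\mid p$, to
\[ \log_{\L,\p}(u)+\log_{\L,\p}(v) = 2\log_{\L,\p}(w_1)+2\log_{\L,\p}(w_2)+\log_\p(\lambda), \]
with the analogous statement at $\q\nmid p$ (with $\ord_\q(\lambda)$ in place of $\log_\p(\lambda)$). Applying $t_\p$ and multiplying by $\chi_\q(\pi_\q)$ respectively, then summing over all places, the contribution of $\lambda$ becomes $\sum_\p t_\p(\log_\p(\lambda))+\sum_\q\ord_\q(\lambda)\chi_\q(\pi_\q)=\chi(\lambda)=0$ by the id\`ele class character property. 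This yields the parallelogram identity
\[ h_\L(x+y)+h_\L(x-y) = 2h_\L(x)+2h_\L(y). \]
Combined with $h_\L(0)=0$, which follows from Remark~\ref{R:goodsymmnorm} (good log functions are normalized) together with Proposition~\ref{P:good}(g) (canonical valuations vanish at the rigidification), this shows that $h_\L$ is a quadratic form.

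For antisymmetric $\L$, the identical argument with \eqref{m-iso2} in place of \eqref{m-iso}, using Definition~\ref{D:goodmetricanti} at $\p\mid p$ and Proposition~\ref{P:good} (\eqref{P:goodadd}, \eqref{P:valfunc}) at $\q\nmid p$, yields Cauchy's equation $h_\L(x+y)=h_\L(x)+h_\L(y)$, so $h_\L$ is linear. For general $\L$, I would then apply the canonical decomposition~\eqref{E:oddeven}, $\L^{\otimes 2}\cong \L^+\otimes \L^-$, which is an isometry for the good adelic metric by Definition~\ref{D:arblog}\eqref{arblogc} and Remark~\ref{R:canlogformula} at $\p\mid p$ and by Proposition~\ref{P:good}\eqref{P:goodadd} at $\q\nmid p$. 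This gives $2h_\L = h_{\L^+}+h_{\L^-}$, a sum of a quadratic form and a linear form, hence $h_\L$ is a quadratic function on $A(K)$.

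The main technical point I anticipate is verifying that \eqref{m-iso} and \eqref{m-iso2} are indeed isometries for the canonical valuations at $\q\nmid p$: this reduces, via the uniqueness in Proposition~\ref{P:good}, to observing that both sides of these isomorphisms carry the same canonical valuation by the additivity and functoriality properties, so they are automatically isometric. Compatibility of the rigidifications at $0\in A(K)$ with those used at each place is what ensures the scalar $\lambda$ lives globally in $K^\times$ and that the product formula applies termwise.
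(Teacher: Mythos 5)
Your proof is correct and gives precisely the argument implicit in the paper's one-line justification that the claim ``follows at once from the definitions'': the fiberwise consequences of the isometries \eqref{m-iso}, \eqref{m-iso2} and \eqref{E:oddeven} at each place, summed against $\chi$ and simplified via the product formula $\chi(\lambda)=0$, yield the parallelogram law, the Cauchy additivity, and the decomposition $2h_{\L}=h_{\L^+}+h_{\L^-}$, respectively. (A minor note: the invocation of Remark~\ref{R:goodsymmnorm} and Proposition~\ref{P:good} to get $h_{\L}(0)=0$ is unnecessary, since that already follows from the parallelogram law upon setting $y=0$.)
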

\begin{proof} 
By Definition~\ref{D:arblog}\eqref{arblogc} and
  Proposition~\ref{P:good}\eqref{P:goodadd}, it suffices to show that $h_{\overline{\L}}$ is a quadratic
  (respectively linear) form for $\L$ symmetric (respectively
  antisymmetric).
  But this is immediate from the definition of good
  log functions (see~\S\ref{S:good}) and canonical valuations (see the 
  paragraph above Proposition~\ref{P:good}) for $\L$ symmetric or
  antisymmetric. 
\end{proof}
Recall that for all $\q\nmid p$, there is a unique good $\Q$-valuation on 
  the line bundle $\L_\q$ with respect to a rigidification $r$. To define canonical adelic
  metrics, we fix a
  purely mixed curvature form $\alpha_p$ of the 
Poincar\'e bundle $\P_\p$ for all $\p\mid p$.

\begin{defn}\label{D:canadelicmetric}
  Let $\L/A$ be a line bundle, rigidified at~0. For each $\q\nmid p$, let $v_{\L,\q}$ be the canonical valuation
  and for each $\p\mid p$ let $\log_{\L,\p}$ be the canonical log function. Then we call
  $((\log_{\L,\p})_{\{\p \mid p\}}, (v_{\L,\q})_{\{\q
    \nmid p\}}))$ the {\em canonical adelic metric} on $\L$.  
\end{defn}

\begin{lemma}\label{L:canadmmet}
  The canonical adelic metric is good.
\end{lemma}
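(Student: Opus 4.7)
The proof reduces to verifying three claims: (i) at each prime $\p \mid p$, the canonical log function on $L_\p$ is good; (ii) at each prime $\q \nmid p$, the valuation on $L_\q$ is canonical; and (iii) the integral model compatibility condition in Definition~\ref{D:adelic} is satisfied. Claim (ii) is tautological by Definition~\ref{D:canadelicmetric}, so the real content lies in (i) and (iii).

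For (i), I would unwind Definition~\ref{D:arblog} case by case. If $L$ is symmetric then $\log_{L,\p}$ is the unique good log function with the specified curvature form, by Theorem~\ref{T:symgood}; if $L$ is antisymmetric then goodness is Corollary~\ref{C:anticangood}. For a general $L$, the definition
\[ \log_{L,\p} = \tfrac{1}{2}\bigl(\log_{L^+,\p}+\log_{L^-,\p}\bigr) \]
together with the decomposition $L^{\otimes 2} \cong L^+ \otimes L^-$, Proposition~\ref{P:tensorcan}, Lemma~\ref{L:can_log_add}, and Remark~\ref{R:canlogformula}, shows that $\log_{L,\p}$ inherits goodness from its symmetric and antisymmetric constituents.

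For (iii), I would first choose a proper, flat integral model $\mathcal{X}/\O_K$ of $A$, for instance the schematic closure of a projective embedding of $A$, together with an extension $\mathcal{L}$ of $L$ to $\mathcal{X}$ rigidified compatibly with $r$ along the closure of the zero section. By standard spreading-out arguments, there is a finite set $S$ of primes (including those above $p$ and those of bad reduction) such that for every $\q \notin S$, the base change $\mathcal{X}_{\O_\q}$ is smooth and proper over $\O_\q$ with abelian generic fiber, and $A$ has good reduction at $\q$. By the N\'eron mapping property, $\mathcal{X}_{\O_\q}$ then identifies with the N\'eron model $\mathcal{A}_{\O_\q}$, an abelian scheme with irreducible special fiber. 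By Remark~\ref{R:good_good}, the canonical valuation $v_{L,\q}$ equals the model valuation associated to some rigidified extension of $L_\q$ to $\mathcal{A}_{\O_\q}$. Since the special fiber of $\mathcal{A}_{\O_\q}$ is irreducible, any two extensions of $L_\q$ to $\mathcal{A}_{\O_\q}$ differ by a line bundle of the form $\mathcal{O}(n \cdot \mathcal{A}_{\q})$ with $n \in \mathbb{Z}$, which is principal; the rigidification pins down the remaining scalar, so all rigidified extensions yield the same model valuation. It follows that $v_{L,\q} = v_{\mathcal{L} \otimes \O_\q}$ for every $\q \notin S$.

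The main obstacle is (iii), and more specifically the bookkeeping needed to show that the model valuation coming from our chosen extension $\mathcal{L}$ truly agrees with the canonical valuation, rather than differing by a $\q$-dependent constant. The crucial geometric input that makes this work is the irreducibility of the special fiber of an abelian scheme, which forces all vertical line bundles on $\mathcal{A}_{\O_\q}$ to be principal and thus renders the rigidified model valuation independent of the chosen extension.
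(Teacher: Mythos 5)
Your parts (i) and (ii) correspond exactly to the paper's one-line proof, which simply cites Definition~\ref{D:arblog} and Corollary~\ref{C:anticangood} (with Theorem~\ref{T:symgood} implicit in Definition~\ref{D:arblog}(b)). The paper says nothing at all about your part (iii).

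Your reading of the lemma is therefore somewhat broader than the paper's: you treat ``is a good adelic metric'' as also asserting that the data of Definition~\ref{D:canadelicmetric} form an adelic metric in the sense of Definition~\ref{D:adelic}, i.e.\ that the canonical valuations agree with the model valuations of some fixed spread-out $(\mathcal{X},\mathcal{L})$ at all but finitely many primes. The paper treats that as tacitly supplied by Remark~\ref{R:good_good} and a routine spreading-out argument, and confines the lemma to verifying the local goodness conditions. Neither reading is wrong; yours is simply more careful. Your outline of (iii) is essentially right and its key step -- that over an abelian scheme with irreducible special fiber, vertical line bundles are multiples of the principal divisor $\mathcal{A}_\q$, so the rigidified model valuation is independent of the chosen extension -- is the correct geometric input (cf.\ \cite[Example~9.5.22]{BG06}). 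Two small things to tighten: the schematic closure of a projective embedding is not itself smooth at all but finitely many primes; you need to further shrink $S$ (as you implicitly do) so that $\mathcal{X}_{\O_\q}$ is smooth proper with geometrically integral special fiber, at which point it is the N\'eron model by the N\'eron mapping property plus properness. And in (i), the paper never actually defines ``good'' for a log function on a general (neither symmetric nor antisymmetric) line bundle; your reading via the $L^{\otimes 2}\cong L^+\otimes L^-$ decomposition is the natural one and is what Definition~\ref{D:arblog}(c), Proposition~\ref{P:tensorcan}, and Lemma~\ref{L:can_log_add} support, but this is a convention you are supplying rather than one the paper states.
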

\begin{proof}
The proof follows from Definition~\ref{D:arblog} and Corollary~\ref{C:anticangood}.
\end{proof}

Let $\overline{L} \colonequals (\L, (\log_{\L,\p})_{\{\p \mid p\}}, (v_{\L,\q})_{\{\q
    \nmid p\}}))$  be a canonical adelic metrized line bundle.
We write 
\begin{equation}\label{E:hhatbundle}
\hat{h}_{\L}\colonequals h_{\overline{\L}} = h_{\overline{L}, \chi}\,,
\end{equation}
and we call $\hat{h}_L$ the \textit{canonical $p$-adic height on $A(K)$ with respect
to $L$}.
\begin{cor}\label{L:canht}
  The height 
\begin{equation}\label{E:canht}
  \hat{h} \colonequals \hat{h}_{\P}\colon A(K)\times \hat{A}(K)\to \Q_p
\end{equation}
with respect to the Poincar\'e bundle
defines a bilinear pairing.
\end{cor}
\begin{proof}
  This follows at once from Proposition~\ref{P:goodheight}. Alternatively, 
  note that the adelic metric on $\P$ restricts over $\{a\}\times \hat{A}$ and over $A\times \{a'\}$
  to good adelic metrics on antisymmetric line bundles by Proposition~\ref{P:cangood}. The
  induced heights are thus linear by Proposition~\ref{P:goodheight}, which implies
  bilinearity.
\end{proof}
We call $\hat{h}$ the \textit{canonical $p$-adic height
pairing} 
on $A(K)\times \hat{A}(K)$. It depends both on $\ba$ and on $\chi$, but not on the
  rigidification of $\P$ by Remarks~\ref{R:change_rig} and~\ref{R:change_rig2}.

\begin{cor}\label{C:can_ht}
  Let $\L$ be a line bundle on $A$ and let $\hat{a}=[L^-]\in \hat{A}$ be the point
  corresponding to $L^- = L\otimes ([-1]^\ast L)^{-1}$.
  \begin{enumerate}[\upshape (a)]
    \item\label{C:canhtpart1} For all $a\in A(K)$, we have
  \begin{equation*}
    \hat{h}_{\L}(a) = \frac12\hat{h}(a,\phi_{\L}(a)+\hat{a})\,.
    \end{equation*}
    More precisely, the local contributions at all places to both sides induced by any compatible choices of rigidification of $\P$ and $\L$ are equal.
  \item The bilinear form associated with $\hat{h}_{\L}$ is
    \begin{equation*}
      (a,b)\mapsto \frac{1}{4}\left(\hat{h}(a,\phi_{\L}(b))+\hat{h}(b,\phi_{\L}(a))\right).
    \end{equation*}
  \end{enumerate}
  \end{cor}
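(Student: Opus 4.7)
The plan is to reduce to the symmetric and antisymmetric parts of $\L$ via the decomposition $\L^{\otimes 2}\cong \L^+\otimes \L^-$ of Equation~\eqref{E:oddeven}, handle each case separately, and recombine using the bilinearity of the canonical pairing $\hat{h}$ (Corollary~\ref{L:canht}).

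First, Definition~\ref{D:arblog}(c) together with Remark~\ref{R:canlogformula} gives the place-by-place identity $\log_{\L,\p}=\tfrac12(\log_{\L^+,\p}+\log_{\L^-,\p})$ for every $\p\mid p$, and additivity of canonical $\Q$-valuations (Proposition~\ref{P:good}(b), applied to $\L\otimes\L\cong \L^+\otimes\L^-$) similarly yields $v_{\L,\q}=\tfrac12(v_{\L^+,\q}+v_{\L^-,\q})$ for every $\q\nmid p$; here the rigidifications $r^\pm$ on $\L^\pm$ are those induced by $r$. Summing local contributions gives
\[
\hat{h}_{\L}=\tfrac12\bigl(\hat{h}_{\L^+}+\hat{h}_{\L^-}\bigr).
\]
Since $\L^-$ is antisymmetric of class $\hat{a}=[\L^-]\in\hat{A}$, Definition~\ref{D:antican} (and the analogous statement for valuations coming from Proposition~\ref{P:good}(d)) says that the canonical adelic metric on $\L^-$ is pulled back from the restriction of the canonical adelic metric on $\P$ to $A\times\{\hat{a}\}$ via $\psi_{\L^-,r^-}$, so $\hat{h}_{\L^-}(a)=\hat{h}(a,\hat{a})$.

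For the symmetric factor $\L^+$: since $\phi_M$ depends only on $\ch_1(M)\in \NS(A)$ and $[-1]^*$ acts trivially on $\NS(A)$, one has $\phi_{[-1]^*\L}=\phi_\L$ and hence $\phi_{\L^+}=2\phi_\L$. Proposition~\ref{phiprop} applied to $\L^+$ then supplies a rigidified isomorphism $(\L^+)^{\otimes 2}\cong (\id\times 2\phi_\L)^*\P$; combining this with functoriality of the canonical adelic metric under the homomorphism $\id\times 2\phi_\L\colon A\to A\times\hat{A}$ (Lemma~\ref{L:goodness} for log functions at $\p\mid p$, Proposition~\ref{P:good}(d) for valuations at $\q\nmid p$) and with bilinearity of $\hat{h}$, one finds
\[
2\hat{h}_{\L^+}(a)\;=\;\hat{h}_{(\L^+)^{\otimes 2}}(a)\;=\;\hat{h}(a,\,2\phi_\L(a))\;=\;2\hat{h}(a,\,\phi_\L(a)),
\]
so $\hat{h}_{\L^+}(a)=\hat{h}(a,\phi_\L(a))$. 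Invoking bilinearity of $\hat{h}$ once more gives
\[
\hat{h}_{\L}(a)\;=\;\tfrac12\hat{h}(a,\phi_\L(a))+\tfrac12\hat{h}(a,\hat{a})\;=\;\tfrac12\hat{h}(a,\phi_\L(a)+\hat{a}),
\]
proving part~(a).

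Part~(b) then follows by polarization: the associated bilinear form satisfies $B(a,b)=\tfrac12(\hat{h}_{\L}(a+b)-\hat{h}_{\L}(a)-\hat{h}_{\L}(b))$; substituting the formula in~(a) and expanding via bilinearity of $\hat{h}$ and linearity of $\phi_\L$, the terms linear in $a$ alone or in $b$ alone (in particular those involving $\hat{a}$) cancel, leaving $B(a,b)=\tfrac14(\hat{h}(a,\phi_\L(b))+\hat{h}(b,\phi_\L(a)))$. The main technical obstacle is the bookkeeping around rigidifications: one must arrange them on $\P$, $\L$, and $\L^\pm$ so compatibly that every isomorphism invoked above --- notably the one coming from Proposition~\ref{phiprop} --- is an isometry of canonically metrized line bundles, which is what upgrades~(a) to the refined place-by-place identity of local contributions rather than merely an equality after summing.
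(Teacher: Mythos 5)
Your proof is correct and follows essentially the same strategy as the paper: decompose via $\L^{\otimes 2}\cong\L^+\otimes\L^-$, reduce each factor to a pullback of the canonical metric on $\P$, and invoke additivity plus Proposition~\ref{P:good} (away from $p$) and the construction of canonical log functions (above $p$). The only cosmetic difference is that you apply Proposition~\ref{phiprop}~(e) literally to $\L^+$, obtaining $(\L^+)^{\otimes 2}\cong(\id\times 2\phi_\L)^*\P$, and absorb the factor of two via bilinearity of $\hat{h}$, whereas the paper's formula~\eqref{cangeneral} already encodes the rigidified isomorphism $\L^+\cong(\id\times\phi_\L)^*\P$ directly.
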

\begin{proof}
  For $v\nmid p$, the equality of local contributions follows immediately from Proposition~\ref{P:good}. For $\p\mid p$, it is implied by the construction of the canonical log function and by~\eqref{cangeneral}.
  Part~(b) is obvious.
\end{proof}
The following property is a consequence of
Definition~\eqref{E:hhatbundle}, of Proposition~\ref{P:good}\eqref{P:goodadd},
and of Lemma~\ref{L:can_log_add}:
\begin{cor}\label{C:ht_bund_lin}
  The map $L\mapsto \hat{h}_L$ is linear.
\end{cor}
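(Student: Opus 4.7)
The plan is to verify additivity under tensor product place by place. Given two line bundles $L_1, L_2$ on $A$, rigidified at $0$ by $r_1, r_2$, I equip $L_1 \otimes L_2$ with the rigidification $r_1 \otimes r_2$, and I want to show that for every $x \in A(K)$ one has $\hat{h}_{L_1 \otimes L_2}(x) = \hat{h}_{L_1}(x) + \hat{h}_{L_2}(x)$, with the matching linearity statement for $L^{-1}$ following by applying additivity to $L \otimes L^{-1} \cong \O_A$.

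First, I handle the local data away from $p$. For each finite place $\q \nmid p$, Proposition~\ref{P:good}\eqref{P:goodadd} says that the canonical valuation on a tensor product of rigidified line bundles is the sum of the canonical valuations, so $v_{L_1 \otimes L_2, \q} = v_{L_1, \q} + v_{L_2, \q}$.

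Next, I handle the places above $p$. For each $\p \mid p$, Lemma~\ref{L:can_log_add} says that $\log_{L_1, \p} + \log_{L_2, \p}$ is the canonical log function on $L_1 \otimes L_2$ with respect to the rigidification $r_1 \otimes r_2$, so $\log_{L_1 \otimes L_2, \p} = \log_{L_1, \p} + \log_{L_2, \p}$.

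Finally, I assemble the global height. Given $x \in A(K)$, I pick nonzero $u_i \in (L_i)_x(K)$; then $u_1 \otimes u_2$ is a nonzero element of $(L_1 \otimes L_2)_x(K)$. Using the definition of the height in~\eqref{E:hhatbundle} together with Definition~\ref{D:ht}, the $\Q_p$-linearity of each trace map $t_\p$, and the two local additivities above, I get
\begin{align*}
\hat{h}_{L_1 \otimes L_2}(x)
&= \sum_{\p \mid p} t_\p\bigl(\log_{L_1 \otimes L_2, \p}(u_1 \otimes u_2)\bigr)
 + \sum_{\q \nmid p} v_{L_1 \otimes L_2, \q}(u_1 \otimes u_2)\,\chi_\q(\pi_\q) \\
&= \sum_{\p \mid p} t_\p\bigl(\log_{L_1,\p}(u_1)\bigr) + \sum_{\p \mid p} t_\p\bigl(\log_{L_2,\p}(u_2)\bigr) \\
&\quad + \sum_{\q \nmid p} v_{L_1,\q}(u_1)\chi_\q(\pi_\q) + \sum_{\q \nmid p} v_{L_2,\q}(u_2)\chi_\q(\pi_\q) \\
&= \hat{h}_{L_1}(x) + \hat{h}_{L_2}(x).
\end{align*}
No step is a genuine obstacle: every ingredient is already recorded above, and the only thing to check is that the canonical local data transport properly under tensor product with the induced rigidification, which is exactly what the cited results guarantee.
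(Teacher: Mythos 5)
Your proof is correct and takes essentially the same route as the paper, which simply cites Definition~\eqref{E:hhatbundle}, Proposition~\ref{P:good}\eqref{P:goodadd}, and Lemma~\ref{L:can_log_add}; you have just spelled out the assembly in detail.
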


\section{Comparison with Mazur--Tate and Coleman--Gross heights}\label{sec:comparison}
In this section, we first show that the canonical heights constructed in
Section~\ref{sec:global} induce height pairings in the sense of Mazur and Tate~\cite{MT83}.
We then restrict to the Jacobian of a nice curve. In this case, the canonical
height pairing with respect to a theta divisor is the same as the height pairing due to
Coleman and Gross~\cite{CG89}, with appropriate choices.
By~\cite{Bes04, Bes17}, this pairing is the same as the geometric pairing 
constructed by Nekov\'a\v{r}~\cite{Nek93}, if the curve has semistable
reduction at all places above $p$. 
In future work we will prove a more general result dealing with arbitrary abelian
varieties and the Zarhin height pairing~\cite{Zar90} (which is equivalent
to Nekov\'a\v{r}'s pairing, at least for good reduction).

In this section, $K$ denotes a number field and $A/K$ an abelian variety.
We choose a continuous id\`{e}le class character $\chi\colon
A^\times_K/K^\times \to \Q_p$.
For $\p\mid p$, let $\log_\p$ be the branch of the logarithm induced by
$\chi_\p$ as in~\eqref{tdefnd}.

\subsection{Mazur--Tate}\label{subsec:MT}

In~\cite{MT83}, Mazur and Tate construct global height pairings on $A$
using biextensions of $A$ and $\hat{A}$ by $\G_m$.
These global pairings are sums of local pairings defined in~\cite[Section~2]{MT83}.
For all non-archimedean places $v$ of $K$ we now define local pairings 
$\langle\cdot, \cdot\rangle_v$ via canonical valuations. We then show
that they satisfy the conditions of~\cite[\S2.2]{MT83}. In other words, the pairing at
a place $v$ is induced from a $\chi_v$-splitting in the sense of~\cite[Section~1]{MT83}.
In this section, we write $A_v$ for $A\otimes K_v$, and similarly for line
bundles and divisors.
\subsubsection{Local height pairings away from $p$}
First let $\q$ be a non-archimedean place of $K$ such that $\q\nmid p$. We
now 
define a pairing
between zero cycles
$\frka = \sum_x n_xx\in Z^0_0(A_\q)$ of degree~0 and divisors
$D\in\Div(A_\q)$  with disjoint support as follows:
Let $s$ be an (algebraic) meromorphic section of a line bundle $\L$ on $A_\q$ such that $D=\div(s)$ and let $r$
be a rigidification of $\L$. Let $v_{\L,\q}$ be the canonical valuation on $\L$ (with
respect to $r$). We set
\begin{equation}\label{pairingaway}
    \langle \frka, D\rangle_\q \colonequals \sum_x n_xv_{\L,\q}(s(x))\chi_{\q}(\pi_\q),
\end{equation}
where $\pi_\q$ is a uniformizer at $\q$.
Since $\deg(\frka)=0$, this is independent of the choice of $r$.
\begin{lemma}\label{L:localMTq}
  The pairing~\eqref{pairingaway} satisfies the following properties:
  \begin{enumerate}[\upshape (a)]
    \item $\langle \cdot,\cdot\rangle_\q$ is biadditive,
    \item $\langle \cdot,\cdot\rangle_\q$ is translation-invariant,
    \item if $D=\div(f)$, then $\langle \frka, D\rangle_\q =
      \chi_\q(f(\frka)) $.
  \end{enumerate}
\end{lemma}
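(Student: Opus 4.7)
The plan is to prove the three parts separately, relying on the characterizing properties of the canonical valuation from Proposition~\ref{P:good} together with the fact that $\chi_\q$ is unramified at $\q\nmid p$. Part (a) is the easiest: biadditivity in $\frka$ is immediate from~\eqref{pairingaway}, while biadditivity in $D$ reduces, by writing $D_i=\div(s_i)$ for sections $s_i$ of line bundles $\L_i$ ($i=1,2$), to the fact that $D_1+D_2=\div(s_1\otimes s_2)$ as a divisor of a section of $\L_1\otimes \L_2$, combined with $v_{\L_1\otimes \L_2,\q}=v_{\L_1,\q}+v_{\L_2,\q}$ from Proposition~\ref{P:good}\eqref{P:goodadd}.

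For part (c), the plan is to take $\L=\O_{A_\q}$ with its canonical rigidification and to regard $f$ itself as a rational section. Then Proposition~\ref{P:good}\eqref{P:TrivVal} identifies the canonical valuation with $\ord_\q$, so~\eqref{pairingaway} becomes $\chi_\q(\pi_\q)\sum n_x \ord_\q(f(x))$. The hypothesis $\chi_\q(\O_\q^\times)=0$ (as $\q\nmid p$) lets me rewrite $\ord_\q(f(x))\chi_\q(\pi_\q)=\chi_\q(f(x))$ for each $x$, giving $\sum n_x\chi_\q(f(x))=\chi_\q\bigl(\prod f(x)^{n_x}\bigr)=\chi_\q(f(\frka))$.

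For part (b), interpreting translation-invariance as $\langle t_a^\ast\frka, t_a^\ast D\rangle_\q = \langle \frka, D\rangle_\q$ for $a \in A(K_\q)$, I will write $D=\div(s)$, so that $t_a^\ast D=\div(t_a^\ast s)$ with $t_a^\ast s$ a section of $t_a^\ast \L$. The key step is to show that the pullback valuation $t_a^\ast v_{\L,\q}$ on $t_a^\ast \L$ agrees with the canonical valuation $v_{t_a^\ast \L,\q}$ up to an additive constant coming from the rigidification change (cf.\ Remark~\ref{R:change_rig}), since $t_a^\ast r$ at $0\in A$ corresponds to $r$ evaluated at $a$. Granting this, the constant drops out because $\deg(\frka)=0$, and what remains is $\sum n_x\, t_a^\ast v_{\L,\q}(t_a^\ast s(x-a))=\sum n_x v_{\L,\q}(s(x))$ by the very definition of the pullback valuation.

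The main obstacle is justifying the identification $t_a^\ast v_{\L,\q}=v_{t_a^\ast \L,\q}$ up to a constant: Proposition~\ref{P:good}\eqref{P:valfunc} only provides functoriality under homomorphisms of abelian varieties, and $t_a$ is not a homomorphism unless $a=0$. I expect to handle this either by establishing the non-archimedean analogue of Proposition~\ref{P:trans} --- namely translation-invariance of canonical valuations up to a constant, using the isomorphism $t_a^\ast\L\cong \L\otimes\phi_\L(a)$ together with the biextension structure coming from the Poincar\'e bundle and the bilinearity of the induced pairing --- or by appealing to Betts's description of canonical valuations as N\'eron log-metrics in~\cite{Bet17}, where the corresponding translation-invariance of N\'eron functions is classical.
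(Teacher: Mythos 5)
Your proposal is correct in substance but takes a genuinely different route from the paper. The paper's proof is a one-liner: it observes that $\langle\cdot,\cdot\rangle_\q$ is a constant multiple of the real-valued local N\'eron symbol $\langle\cdot,\cdot\rangle_{\q,\R}$ (both are built from the very same canonical valuation $v_{\L,\q}$, just scaled by $\chi_\q(\pi_\q)$ versus $\log_\R\mathrm{Nm}(\q)$), and then cites the proof of~\cite[Theorem~9.5.11]{BG06} for all three properties at once. You instead argue directly: your treatments of (a) and (c) are clean and self-contained, relying only on Proposition~\ref{P:good}\eqref{P:goodadd}, \eqref{P:TrivVal}, and $\chi_\q$ factoring through $\ord_\q$. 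For (b) you correctly identify the real content --- translation-invariance of the canonical valuation up to an additive constant, i.e.\ $t_a^\ast v_{\L,\q}\equiv v_{t_a^\ast\L,\q}$ --- and you correctly note that this does \emph{not} follow from Proposition~\ref{P:good}\eqref{P:valfunc}, which is only stated for homomorphisms. Of your two suggested fixes, the second (deferring to the classical translation-invariance of N\'eron functions via~\cite{Bet17} or~\cite{BG06}) is essentially the same move as the paper's citation, just localized to the one property that actually requires it; the first (proving the $\q$-adic analogue of Proposition~\ref{P:trans} using $t_a^\ast\L\cong\L\otimes\phi_\L(a)$ and the Poincar\'e-bundle/biextension structure) would give a more self-contained argument and is worth carrying out if one wants a proof independent of the real-valued theory. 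Your route is more transparent about which inputs each part uses; the paper's is shorter because it reduces everything at once to a single classical reference.
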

\begin{proof}
  The pairing~\eqref{pairingaway} is the same, up to a constant, 
as the real-valued pairing defined by
\begin{equation}\label{pairingawayR}
  \langle \frka, D\rangle_{\q, \R} \colonequals \sum_x
  n_xv_{\L,\q}(s(x))\log_\R\mathrm{Nm}(\q)\,.
\end{equation}
The pairing~\eqref{pairingawayR}
  satisfies (a)--(c) by  
  the proof of~\cite[Theorem~9.5.11]{BG06}. 
\end{proof}
\begin{rk}\label{localneron}
  The pairing~\eqref{pairingawayR} is the classical local N\'eron symbol at $\q$, see~\cite[Theorem~9.5.11]{BG06}. 
  It is characterized uniquely by (a)--(c) and by local boundedness of the function   
$ x \mapsto \langle D, (x) - (b)\rangle_\q$ 
  for every fixed $b\in A(\bar{K}_\q)\setminus\supp(D)$.
  By~\cite[Proposition~2.3.1]{MT83}, the pairing~\eqref{pairingawayR}
  is equal to the canonical $\chi_\q$-pairing defined
  in~\cite[\S2.3]{MT83} with respect to the canonical $\chi_\q$-splitting
  from~\cite[Theorem~1.5]{MT83}. 
\end{rk}

\subsubsection{Local height pairings above $p$}
Let $\p$ be a prime above $p$. Let $\Div_a(A_\p)$ be the subgroup of
$\Div(A_\p)$ consisting of
  divisors algebraically equivalent to~0.
 We fix a purely mixed curvature form $\alpha_\p$ on $\P_\p$ for
every $\p\mid p$. 
Let
$\frka = \sum_x n_xx\in Z^0_0(A_\p)$ be a zero-cycle of degree~0 and let
$D\in\Div_a(A_\p)$ have disjoint support from $\frka$.
Suppose that $s$ an (algebraic) meromorphic section of a line bundle $\L$ on $A_\p$  such that $D=\div(s)$ and let
$\log_{\L}$ be
the canonical log function on $\L$ induced by $\alpha_\p$ with respect to an arbitrary
choice of rigidification.
Then we define
\begin{equation}\label{pairingp}
  \langle \frka, D\rangle_\p\colonequals t_\p(\sum_x
  n_x\log_{\L}(s(x)))\,,
\end{equation}
where $\chi_\p = t_\p\circ \log_\p$.
\begin{lemma}\label{L:localMTp}
  The pairing~\eqref{pairingp} satisfies
  \begin{enumerate}[\upshape (a)]
    \item $\langle \cdot,\cdot\rangle_\p$ is biadditive,
    \item $\langle \cdot,\cdot\rangle_\p$ is translation-invariant,
    \item if $D=\div(f)$, then $\langle \frka, D\rangle_\p =
      \chi_\p(f(\frka))$.
  \end{enumerate}
  In particular, it satisfies the conditions of~\cite[\S2.2]{MT83}. 
\end{lemma}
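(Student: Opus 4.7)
The plan is to verify (a), (b), (c) by unwinding the definition and reducing to the functoriality properties of the canonical log function established in Section~\ref{sec:log}. Since $D\in\Div_a(A_\p)$ forces $\L=\O(D)$ to be antisymmetric, writing $D=\div(s)$ for a section $s$ of a rigidified $(\L,r)$ places us in the setting of Definition~\ref{D:antican}. Biadditivity in $\frka$ is immediate from the defining sum. For biadditivity in $D$, if $D_i=\div(s_i)$ for sections $s_i$ of rigidified $(\L_i,r_i)$, $i=1,2$, then $D_1+D_2=\div(s_1\otimes s_2)$ as a section of $(\L_1\otimes\L_2,r_1\otimes r_2)$, and Lemma~\ref{L:can_log_add} gives $\log_{\L_1\otimes\L_2}=\log_{\L_1}+\log_{\L_2}$; applying $t_\p$ completes~(a).

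For (b), I would establish the explicit form $\langle\frka,t_a^*D\rangle_\p = \langle t_{a*}\frka,D\rangle_\p$ for every $a\in A(K_\p)$. Writing $D=\div(s)$, the pullback $t_a^*D=\div(t_a^*s)$ is cut out by a section of the antisymmetric $t_a^*\L$, equipped with the rigidification $t_a^*r$. Proposition~\ref{P:trans} says that $t_a^*\log_\L$ and the canonical $\log_{t_a^*\L}$ differ by an additive constant $c\in K_\p$, so
\begin{equation*}
\sum_x n_x \log_{t_a^*\L}(t_a^*s(x)) = \sum_x n_x \log_\L(s(x+a)) + c\,\deg(\frka) = \sum_x n_x \log_\L(s(x+a)),
\end{equation*}
using $\deg(\frka)=0$. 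Re-indexing the right-hand side along the support of $t_{a*}\frka$ and applying $t_\p$ yields the claimed identity.

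For (c), take $(\L,r)=(\O_A,1(0))$ and $s=f$; the statement then reduces to identifying $\log_{\O_A}^{\mathrm{can}}$ with the trivial log function of Example~\ref{E:trivmetric}. Since $\O_A$ is antisymmetric with class $[\O_A]=0\in\hat A$, combining Definition~\ref{D:antican} with Proposition~\ref{P:cangood}(a), which says that $\log_\P$ restricts to the trivial log function on $A\times\{0\}$, gives $\log_{\O_A}^{\mathrm{can}} = \log^{\mathrm{triv}}_{\O_A}$. Hence $\sum_x n_x \log_{\O_A}^{\mathrm{can}}(s(x)) = \sum_x n_x \log_\p(f(x)) = \log_\p(f(\frka))$, and applying $t_\p$ yields $\chi_\p(f(\frka))$ via~\eqref{E:chilog}. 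The final ``in particular'' sentence then follows formally, since (a)--(c) are the defining axioms of a $\chi_\p$-splitting of the relevant biextension in~\cite[\S2.2]{MT83}.

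The main obstacle lies in (b): Proposition~\ref{P:trans} is only stated up to an additive constant, and the whole argument rests on this discrepancy being a genuine constant in $x$ for the chosen rigidifications, so that the hypothesis $\deg(\frka)=0$ absorbs it cleanly. Tracking how the rigidification $t_a^*r$ on $t_a^*\L$ relates to the canonical rigidification used to normalize $\log_{t_a^*\L}$, and checking that the resulting constant really is independent of $x$, is the most delicate bookkeeping in the proof.
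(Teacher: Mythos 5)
Your proof is correct and takes essentially the same route as the paper: (a) via the additivity of canonical log functions on tensor products (you invoke Lemma~\ref{L:can_log_add}, the paper cites Proposition~\ref{P:tensorcan}, which is the antisymmetric special case applicable here); (b) via Proposition~\ref{P:trans} together with $\deg(\frka)=0$ to absorb the additive constant; and (c) via Proposition~\ref{P:cangood}(a) identifying the canonical log function on $\O_A$ with the trivial one. The bookkeeping worry you flag in (b) is genuine but is resolved exactly as you suspect: the pairing~\eqref{pairingp} is already rigidification-independent because $\deg(\frka)=0$, so the constant from Proposition~\ref{P:trans} cancels regardless of how the rigidifications on $\L$ and $t_a^*\L$ are matched.
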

\begin{proof}
  The first property follows from Proposition~\ref{P:tensorcan}.
Now let $\frka, D$ and $\L$ be as above and let $a\in A_\p$. By Proposition~\ref{P:trans},
  the canonical log function $\log_{t_a^*\L}$ on
  $t_a^*\L$  is the same as $\log_{\L}$ up to an
  additive constant. Since $\deg(\frka)=0$, we obtain 
  \begin{equation}\label{}
    \langle t_a^*(\frka), t_a^*(D)\rangle_\p =
    t_\p(\log_{t_a^*\L}(t_a^*(s(\frka)))) = 
    t_\p( \log_{\L}((t_a)_*\circ t_a^*(s(\frka)))) =
    t_\p(\log_{\L}(s(\frka))) = \langle \frka, D\rangle_\p
  \end{equation}
  as in the proof of~\cite[Theorem~9.5.11]{BG06}.

  For (c), let $D=\div(f)$ be principal and consider $f$ as a section of $\O_A$. Let
  $\log'$ be the canonical log function on $\O_A$. Then we have
  $\langle \frka, D\rangle_\p = t_\p(\log'(f(\frka)))$. But by
    Proposition~\ref{P:cangood}, $\log'$ is the
    trivial log function. 
\end{proof}
\begin{rk}\label{R:chipsplitting}
  By~\cite[\S2.2]{MT83}, the pairing $\langle\cdot,\cdot\rangle_\p$ is induced from a
  $\chi_\p$-splitting in the sense of~\cite[Section~1]{MT83}. Hence the choice of
  curvature form $\alpha_\p$ induces a $\chi_p$-splitting.
\end{rk}
\begin{rk}
In contrast to Lemma~\ref{L:localMTq}, we cannot hope for the pairing to be uniquely determined by these properties, since the canonical log function depends on the choice $\alpha_\p$.
\end{rk}

\subsubsection{Global height pairings}
Let 
$$\hat{h} = h_{\P,\ba, \chi}\colon A(K)\times \hat{A}(K)\to \Q_p$$ 
be the canonical height pairing~\eqref{E:canht} relative to the choices $\ba$ and $\chi$. 
It is explained in~\cite[(3.1.1)]{MT83} that the sum of the local
pairings $\langle\cdot,\cdot \rangle_v$ defines another global pairing 
$$\langle \cdot, \cdot\rangle\colon A(K)\times \hat{A}(K)\to \Q_p\,.$$
We now review the construction of the latter pairing and show that both pairings are
equal.

For $\frka \in Z^0_0(A)$ and 
$D\in\Div_a(A)$ with disjoint support, we define 
$$
\langle \frka, D\rangle \colonequals \sum_v\langle \frka_v, D_v\rangle_v\,.
$$
\begin{lemma}\label{L:MTcomp}
 The pairing $\langle \cdot, \cdot\rangle$
  induces a well-defined pairing on $A(K)\times \hat{A}(K)$.
\end{lemma}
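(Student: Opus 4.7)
The plan is to verify the hypotheses of the Mazur--Tate framework~\cite[\S3]{MT83}. By Lemmas~\ref{L:localMTq} and~\ref{L:localMTp}, each local pairing $\langle\cdot,\cdot\rangle_v$ is biadditive, translation-invariant, and satisfies $\langle\frka,\div(f)\rangle_v = \chi_v(f(\frka))$, which are precisely the axioms of a local $\chi_v$-pairing. It then remains to check three things: that the global sum is finite, that it vanishes on principal divisors, and that it vanishes on zero-cycles summing to $0$ in $A(K)$.

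For finiteness, I would use that for all but finitely many $\q\nmid p$ the compatibility condition in Definition~\ref{D:adelic} identifies $v_{\L,\q}$ with the model valuation of an integral extension $\mathcal{L}$ of $\L$; this valuation vanishes on $K$-rational points $x$ whose reduction modulo $\q$ does not lie in $\supp(D)$, so only finitely many local terms are nonzero. For principal divisors $D=\div(f)$, summing property~(c) over $v$ gives $\langle\frka,\div(f)\rangle = \chi(f(\frka))$, and $f(\frka) = \prod_x f(x)^{n_x}$ lies in $K^*$ since $\deg\frka=0$, so this equals zero by the id\`{e}le class character property.

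Finally, to descend the pairing in the first variable, I would use that the kernel of the summation map $Z^0_0(A)\to A(K)$ is generated by the cube relators $(a+b)-(a)-(b)+(0)$ with $a,b\in A(K)$. Applying biadditivity and translation invariance at each place yields
\[
\langle (a+b) - (a) - (b) + (0), D\rangle_v = \langle (b) - (0), t_{-a}^*D - D\rangle_v,
\]
and since $[\O(D)]\in\hat{A}(K)$ corresponds to an element of $\mathrm{Pic}^0(A)$, we have $t_{-a}^*\O(D)\cong\O(D)$, so $t_{-a}^*D-D = \div(g)$ is principal with $g\in K(A)^*$. The principal-divisor step just established then gives the required vanishing after summing over $v$. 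The main obstacle is the standard moving-lemma issue: to make sense of the intermediate pairings one must replace $D$ within its linear equivalence class by a divisor whose support is disjoint from that of the relevant zero-cycles, an operation whose effect on the global pairing vanishes precisely by the principal-divisor step.
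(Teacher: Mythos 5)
Your proposal is correct and takes essentially the paper's approach. The core of both arguments is identical: one shows that $\sum_v \langle\cdot,\cdot\rangle_v$ factors through $Z_0^0(A)/\ker(S)\times\Pic^0(A)$ by (i) noting invariance under replacing $D$ by a linearly equivalent divisor, and (ii) computing that the relators generating $\ker(S)$ pair to $\chi_v(f(\cdot))$ for some principal $t^*_{\pm a}D - D = \div(f)$, so that the global sum vanishes by the id\`ele class character property. (Your cube relators $(a+b)-(a)-(b)+(0)$ are exactly the paper's generators $t_x^*Z-Z$ specialized to $Z=(b)-(0)$, $x=-a$, and each $t_x^*Z-Z$ is in turn a sum of such cube relators, so the two generating sets coincide; note a harmless sign slip: translation invariance gives $\langle t_{-a}^*Z,D\rangle_v=\langle Z,t_a^*D\rangle_v$, so you should get $t_a^*D-D$ rather than $t_{-a}^*D-D$, not that it matters since either difference is principal.) You are somewhat more careful than the paper in explicitly justifying finiteness of the global sum via model valuations and in naming the moving-lemma/disjoint-support issue, both of which the paper handles implicitly (finiteness being deferred to the reference to Mazur--Tate); you should also allow $a,b$ to range over $A(\bar K)$ rather than $A(K)$ to cover cycles supported over extensions.
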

\begin{proof}
  Let $\frka \in Z^0_0(A)$ and 
$D\in\Div_a(A)$ have disjoint support.
  It is obvious that $\langle \frka, D\rangle$ does not change if we replace $D$ by
  another divisor $D'$ such that $\O(D)\simeq \O(D')$. 
Let $S\colon Z^0_0(A) \to A$ be the summation map;
its kernel is generated by
  cycles of the form $t_x^*Z-Z$. Since the class of $D$ is
  antisymmetric, $t_{-x}^*D - D = \div(f)$ is principal for any $x \in A$. We find that \begin{equation}\label{kerS}
    \langle t_x^*Z-Z, D\rangle_v =  \langle Z, t_{-x}^*D - D\rangle_v = \chi_v(f(Z))
\end{equation}
for all $v$, and all $Z$ and $x$ such that the arguments have disjoint support. 
Hence $\sum_v\langle\cdot,\cdot\rangle_v$ factors through 
  $Z^0_0(A)/\ker(S)\times \Pic^0(A) = A\times \hat{A}$.
\end{proof}

\begin{cor}\label{C:MTcomp}
  Fix a continuous id\`{e}le class character $\chi\colon
  A^\times_K/K^\times \to \Q_p$ and, for
every $\p\mid p$, a purely mixed curvature form $\alpha_\p$ for $\P_\p$.
  Let $\hat{h} = \hat{h}_{\P, \chi, (\alpha_\p)_{\p\mid p}}$ denote the corresponding
  $p$-adic height.
Let 
$\langle\cdot,\cdot \rangle$ denote the Mazur--Tate height pairing with respect to the
  following
$\chi$-splitting: For $\q\nmid p$, the splitting is the canonical $\chi_\q$-splitting, and
  for $\p\mid p$ it is the $\chi_\p$-splitting induced by $\alpha_\p$ as in
  Remark~\ref{R:chipsplitting}.

 Then we have 
$$\hat{h}(a,\hat{a}) = \langle a, \hat{a}\rangle$$
for every 
$a\in A(K)$ and $\hat{a}\in \hat{A}(K)$. 
\end{cor}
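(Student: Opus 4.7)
The plan is to express both sides as sums of local contributions, matching them place-by-place modulo a correction that vanishes by the product formula.

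First, I would represent $\hat{a} \in \hat{A}(K)$ by a divisor $D \in \Div_a(A)$ whose support avoids $\{0,a\}$; such a representative is obtained from any given one by modifying by a principal divisor. Let $L = \O(D)$, rigidified at $0$ compatibly with $r_{\hat{a}}$ (Remark~\ref{canrig}) via the distinguished isomorphism $L \cong \P|_{A\times\{\hat{a}\}}$, and let $s$ be the meromorphic section with $\div(s) = D$; by disjointness $s(0) \in L_0^\times$ and $s(a) \in L_a^\times$. Since $L$ is antisymmetric (so $\phi_L = 0$ and $[L^-] = 2\hat{a}$), Corollary~\ref{C:can_ht}(\ref{C:canhtpart1}) combined with bilinearity of $\hat{h}$ identifies $\hat{h}(a,\hat{a}) = \hat{h}_L(a)$ place-by-place. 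Applying Definition~\ref{D:ht} with $u = s(a)$,
\[
\hat{h}(a,\hat{a}) = \sum_{\p\mid p} t_\p\bigl(\log_{L,\p}(s(a))\bigr) + \sum_{\q\nmid p} v_{L,\q}(s(a))\,\chi_\q(\pi_\q).
\]

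Next I would expand the Mazur--Tate side. By~\eqref{pairingaway} and~\eqref{pairingp} applied to $\frka = (a)-(0)$ and the same $D$,
\[
\langle a,\hat{a}\rangle = \sum_{\p\mid p} t_\p\bigl(\log_{L,\p}(s(a)) - \log_{L,\p}(s(0))\bigr) + \sum_{\q\nmid p}\bigl(v_{L,\q}(s(a)) - v_{L,\q}(s(0))\bigr)\chi_\q(\pi_\q).
\]
Writing $s(0) = \lambda r$ with $\lambda \in K^\times$, the scaling of log functions and valuations together with the normalization $\log_{L,\p}(r) = 0$ (Remark~\ref{R:change_rig2}) and $v_{L,\q}(r) = 0$ (Proposition~\ref{P:good}) give $\log_{L,\p}(s(0)) = \log_\p(\lambda)$ and $v_{L,\q}(s(0)) = \ord_\q(\lambda)$. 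Hence, via~\eqref{tdefnd}, the correction terms aggregate to $\sum_v \chi_v(\lambda) = \chi(\lambda) = 0$ by the product formula for the id\`ele class character $\chi$, and the two displays are equal.

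The step I expect to demand the most care is the \emph{local} compatibility inside Corollary~\ref{C:can_ht}(\ref{C:canhtpart1}) when $L$ is antisymmetric: namely that the restriction of the canonical adelic metric on $\P$ to the fiber $A\times\{\hat{a}\}$ recovers the canonical adelic metric on $L$ with the induced rigidification. Above $p$ this is built directly into Definition~\ref{D:antican} via Proposition~\ref{P:anticangood}; for $\q\nmid p$ it amounts to checking that the restricted valuation satisfies the defining axioms of Proposition~\ref{P:good}, which follows from additivity and translation-invariance of the canonical valuation on $\P$.
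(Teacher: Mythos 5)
Your proposal is correct and takes essentially the paper's approach: the paper compresses the argument into $\hat{h}(a,\hat{a}) = \hat{h}(a,\hat{a}) - \hat{h}(0,\hat{a}) = \langle(a)-(0), D\rangle$ (implicitly using $\hat{h}(0,\hat{a})=0$ and that the canonical adelic metric on $\P$ restricted to $A\times\{\hat{a}\}$ is the canonical metric on $L$), which you unpack by writing $s(0)=\lambda r$ and observing that the correction terms aggregate to $\chi(\lambda)=0$ by the product formula --- the same cancellation, made explicit. You also correctly isolate the one compatibility that carries the content (restriction of the canonical metric on $\P$ to a fiber yields the canonical metric on the antisymmetric bundle with its induced rigidification), which the paper builds into Definitions~\ref{D:antican} and~\ref{D:canadelicmetric} and invokes without comment.
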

\begin{proof}
  The definitions of $\hat{h}$ and $\langle\cdot, \cdot\rangle$ imply that 
\begin{align*}
  \hat{h}(a,\hat{a}) &=   \hat{h}(a,\hat{a}) - \hat{h}(0,\hat{a}) = 
   \langle (a)- (0), D\rangle,
\end{align*}
  for any $D$ such that $[\O(D)]=\hat{a}$ and such that $a, 0\notin \supp(D)$.
\end{proof}

\begin{rk}
  One can define a notion of $p$-adic N\'eron functions based on our log functions
  and $\Q$-valuations and use it to give a local decomposition of (Mazur--Tate)
  height \textit{functions}, analogous to the classical decomposition of
  N\'eron--Tate height functions as sums of real-valued N\'eron functions.
  See~\cite{BKM23}.
\end{rk}

\subsubsection{Jacobians}\label{subsec:jacs}
Let $J$ be the Jacobian of a nice curve $C/K$. We assume for simplicity that there is an
Abel--Jacobi map $\iota\colon C \to J$ defined over $K$.
Let $\Theta$ denote the theta divisor with respect to $\iota$. 
Then $J$ is self-dual via
the principal polarization $\phi_\Theta\colonequals \phi_{\O(\Theta)}\colon J\to \hat{J}$.
More precisely, by~\cite[Proposition~8.10.20]{BG06}, we have
$$
(\id\times \phi_{\Theta})^*\P \isom s^*\Theta\otimes (\pi_1^*\Theta)^{-1}\otimes
(\pi_2^*\Theta)^{-1} \equalscolon \delta
$$
with notation as in~\S\ref{sec:app}, and $\Delta^* \delta \isom \O(\Theta)\otimes [-1]^*\O(\Theta)$.
Therefore, for all $a\in A(K)$, we have
$$h_{\P}(a,\phi_{\Theta}(a)) = \hat{h}_{\delta}(a,a) =
\hat{h}_{\O(\Theta)\otimes [-1]^*(\O(\Theta))}(a)$$
where the heights are defined with respect to $\chi$ and the curvatures induced by $\ba$.
We obtain a bilinear pairing 
\begin{align*}
  J(K)\times J(K)&\to \Q_p\\
  (a,b)&\mapsto \hat{h}(a, \phi_{\Theta}(b)) \,.
\end{align*}
We can express this global height pairing as a sum of
local pairings on the curve.
Let $v$ be a non-archimedean place of $K$ and let $a,b \in J(K_v)$. Let 
$w\in
\Div^0(C_v)$ be a representative  of $a$.  Write $w=\div(s)$, where $s$ is a meromorphic section of a line bundle $L\in
\Pic^0(C_v)$. Let $M\in \Pic^0(J_v)$ such that $\L =\iota^*M$. Pick a 
representative $z=\sum_x n_x(x) \in \Div^0(C_v)$ of $b$ such that $w$ and $z$ have disjoint
support. If
$v=\q\nmid p$ with uniformizer $\pi_\q$, then we 
define 
\begin{equation}\label{pairingcurveaway}
    \langle z,w\rangle_{\q} \colonequals \sum_x
    n_xv_{\L,\q}(s(x))\chi_{\q}(\pi_{\q})\,,
    \end{equation}
where $v_{\L,\q} = \iota^*v_{M,\q}$ and $v_{M,\q}$ is the canonical valuation on $M$.
By Galois-equivariance, we have $\langle z,w\rangle_{\q} \in \Q_p$.
Then, up to a constant factor,
$\langle\cdot, \cdot\rangle_\q$ is the local N\'eron symbol on the curve,
see~\cite[Theorem~9.5.17]{BG06}.

If $v=\p\mid p$, let $\log_{M}$ be the canonical log function on $M$ and  let $\log_{L}=\iota^*\log_{M}$.
Then we define 
\begin{equation}\label{pairingcurvep}
  \langle z,w\rangle_{\p} \colonequals t_\p\left(\sum_x
  n_x\log_{\L}(s(x))\right)\,.
\end{equation}
 As above, Galois-equivariance of Vologodsky functions implies that $\langle z,w\rangle_{\p} \in \Q_p$.
Suppose that $w=w_1-w_2$,
where $w_1$ and $w_2$ are effective divisors of degree equal to the genus
of $C$ whose Riemann--Roch space has dimension~1.
For $b_i = \iota(w_i)\in J(K_v)$ we have
by~\cite[Theorem~5.5.8]{Lan83}, $w_i = \iota^*\Theta^-_{b_i}$, where
$\Theta^-_{b_i}= t_{b_i}^*\Theta^-$ and  $\Theta^- = [-1]^*\Theta$.
Moreover, write $z = \sum_i (c_i) - \sum_i(d_i)$, with $c_i,d_i \in C_v$ and let $\frka = \iota_*z =
\sum_i (\iota(c_i)) - \sum_i(\iota(d_i))\in Z_0^0(J_v)$. 
We immediately find that
\begin{equation}\label{E:mt_cg_local}
  \langle \frka,\Theta^-_{b_1} - \Theta^-_{b_2}\rangle_v = \langle z,w\rangle_v
\end{equation}
for all $v$, where the left hand side is defined in~\eqref{pairingaway}
(respectively~\eqref{pairingp}) and the right hand side is defined in 
~\eqref{pairingcurveaway}
(respectively~\eqref{pairingcurvep}) if $v\nmid p$ (respectively $v\mid p$).

\begin{prop}\label{P:MThtglobalpb}
  Let $a, b \in J(K)$ and let $z,w\in \Div^0(C)$ be representatives with disjoint support of $a$ and $b$,
  respectively. 
  Then 
  \begin{equation}\label{Mhtglob}
   \hat{h}(a,\phi_{\Theta}(b))  = -\sum_v\langle z_v, w_v\rangle_v\,.
  \end{equation}
\end{prop}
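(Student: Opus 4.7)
The plan is to reduce the global identity to a sum of local identities via Corollary~\ref{C:MTcomp}, and then match each local pairing on $J$ with the corresponding local pairing on $C$ by an argument of exactly the same flavor as the one leading to~\eqref{E:mt_cg_local}.

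First I would choose convenient global representatives. Take $\frka \colonequals \iota_*w$ as a zero-cycle on $J$ representing $a$, and construct a divisor $D$ on $J$ representing $\phi_\Theta(b)$ directly out of a decomposition of $z$. After possibly replacing $z$ by a linearly equivalent divisor with support disjoint from $w$ (using a standard moving argument), decompose $z = z_1-z_2$ into effective non-special divisors of degree $g$. Setting $b'_i \colonequals \iota(z_i)$ we have $b = b'_1-b'_2$, and by \cite[Theorem~5.5.8]{Lan83}, $z_i = \iota^*\Theta^-_{b'_i}$, so $D\colonequals \Theta^-_{b'_1}-\Theta^-_{b'_2}$ satisfies $\iota^*D = z$. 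A short calculation using $\phi_{[-1]^*L}=\phi_L$ (which follows because $[-1]^*$ acts as $-1$ on $\hat J$) shows $[\O(D)] = \phi_{\Theta^-}(b'_1-b'_2) = \phi_\Theta(b)$ in $\hat J$. Corollary~\ref{C:MTcomp} then gives
\[
  \hat h(a,\phi_\Theta(b)) \;=\; \sum_v \langle (\iota_*w)_v, D_v\rangle_v\,.
\]

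The next step is to identify each local pairing on $J$ with the corresponding local pairing on $C$. Exactly as in the argument leading to~\eqref{E:mt_cg_local}, but with the roles of $z$ and $w$ exchanged, I would unwind the local definition~\eqref{pairingaway} (respectively~\eqref{pairingp}) of $\langle (\iota_*w)_v,D_v\rangle_v$: pick a section $s'$ of $\O(D)$ with $\div(s') = D$, pull it back along $\iota$ to a section of $\iota^*\O(D) = \O(z)$, and use functoriality of canonical valuations (Proposition~\ref{P:good}\eqref{P:valfunc}) at $v\nmid p$ and of canonical log functions at $v\mid p$. This yields, after accounting for the sign convention relating $\iota^*\colon \hat J\to \Pic^0(C)$ and the polarization $\phi_\Theta$ (the standard identity $\iota^* = -\phi_\Theta^{-1}$ on principally polarized Jacobians), an equality
\[
  \langle (\iota_*w)_v, D_v\rangle_v \;=\; -\,\langle w_v, z_v\rangle_v\,.
\]
Symmetry of the classical local N\'eron symbol then gives $\langle w_v,z_v\rangle_v = \langle z_v,w_v\rangle_v$ for $v\nmid p$, and one verifies the analogous symmetry above $p$ directly from the canonical log function on the Poincar\'e bundle (equivalently, from the Green function interpretation of Proposition~\ref{P:BesLogGreen} combined with the symmetric Green function framework of~\S\ref{subsec:relcol}). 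Summing over all places then yields $\hat h(a,\phi_\Theta(b)) = -\sum_v\langle z_v,w_v\rangle_v$.

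The main obstacle is careful sign bookkeeping: verifying the precise sign in the natural identification of $\hat J$ with $\Pic^0(C)$ (which is what forces the overall minus sign), and confirming that the pullback identity behind~\eqref{E:mt_cg_local} continues to go through at primes above $p$, where one must replace canonical valuations by canonical log functions and the trivial section-extension argument by the Vologodsky-integration framework of Section~\ref{sec:background}. A secondary, more technical point is the moving argument needed to find a non-special decomposition $z = z_1-z_2$ with support disjoint from $w$; this is routine and can be handled by the invariance of the local pairings under linear equivalence established implicitly in the proof of Lemma~\ref{L:MTcomp}.
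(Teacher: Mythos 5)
Your overall plan---reduce to a sum of local pairings via Corollary~\ref{C:MTcomp} and then match local pairings on $J$ with the corresponding local pairings on $C$ by a pullback argument as in~\eqref{E:mt_cg_local}---is precisely the route taken in the paper. However, your execution contains a structural error that you then mask with two compensating mistakes.

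You take $\frka \colonequals \iota_*w$ and call it a zero-cycle representing $a$. It is not: $S(\iota_*w) = [w] = b$, so $\iota_*w$ represents $b$. Likewise, decomposing $z = z_1-z_2$ and setting $b'_i = \iota(z_i)$, you assert $b = b'_1 - b'_2$; but $b'_1-b'_2 = [z_1]-[z_2] = [z] = a$, so the divisor $D$ you build has class related to $\phi_\Theta(a)$, not $\phi_\Theta(b)$. You have silently exchanged the roles of $z$ and $w$. With your choices, Corollary~\ref{C:MTcomp} produces $\hat h(b, [\O(D)])$, not $\hat h(a,\phi_\Theta(b))$. The paper avoids this entirely by fixing $\frka = \iota_*z$ and decomposing $w = w_1 - w_2$ from the start.

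You then write $\langle(\iota_*w)_v, D_v\rangle_v = -\langle w_v, z_v\rangle_v$ with a minus sign attributed to $\iota^* = -\phi_\Theta^{-1}$. But the local curve pairings~\eqref{pairingcurveaway} and~\eqref{pairingcurvep} are \emph{defined} by directly pulling back canonical valuations and log functions from $\Pic^0(J)$ via $\iota^*$; so the comparison, exactly as in~\eqref{E:mt_cg_local}, is an unsigned tautology, and the identity $\iota^* = -\phi_\Theta^{-1}$ plays no role in it. You also invoke symmetry of the local N\'eron symbol, $\langle w_v, z_v\rangle_v = \langle z_v, w_v\rangle_v$. This is classical for $v\nmid p$, but above $p$ it is a substantive fact about Coleman--Gross local pairings that the paper never needs: the correct initial choices make the swap unnecessary. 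As written, your spurious minus sign and your appeal to symmetry are two compensating errors that happen to land you on the correct formula; neither is individually justified.

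Finally, the ``standard moving argument'' you cite only takes care of disjointness of supports. It does not produce a decomposition of $w$ into effective \emph{non-special} divisors of degree $g$ defined over $K$; such a decomposition generally only exists after a finite extension $K'/K$. The paper handles this by passing to $K'$ and observing that both sides of~\eqref{Mhtglob} are compatible with base change; this step is missing from your proposal.
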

\begin{proof}
  First suppose that $w=w_1-w_2$, where the $w_i\in \Div^g(C)$ are effective and
  non-special.
  By construction, we have $a=S(\iota_*z) = S(\frka)$, where $S\colon Z^0_0(J) \to J$ is the
  summation map, and $\phi_\Theta(b) = [\Theta_{b_1} - \Theta_{b_2}]$.
  Using Corollary~\ref{C:MTcomp} and~\eqref{E:mt_cg_local}, we obtain
$$
  \hat{h}(a,\phi_{\Theta}(b)) = - \langle \frka,\Theta^-_{b_1} - \Theta^-_{b_2}\rangle
  = -\sum_v \langle \frka,\Theta^-_{b_1} - \Theta^-_{b_2}\rangle_v = -\sum_v
\langle z_v,w_v\rangle_v,
$$
  because $[-1]$ acts on $\Pic^0(A)$ as multiplication by $-1$.

  There is a finite extension $K'/K$ such that $a$ has a
  representative $w=w_1-w_2 \in \Div^0(C_{K'})$, where the $w_i$ are as above. Both sides
  of~\eqref{Mhtglob} can be extended to $K'$.
  Since the sum $\sum_v\langle z,w\rangle_v$ is independent of the choices of $z$ and $w$,
  we deduce~\eqref{Mhtglob}.
\end{proof}

\subsection{Coleman--Gross}\label{subsec:CG}
As in~\S\ref{subsec:jacs}, let $C/K$ be a nice
curve with Jacobian $J$ and let $\iota\colon C \to J$ be an Abel--Jacobi map defined over
$K$. Let $\Theta$ be the associated theta divisor. 
We now show that for appropriate choices, our height pairing on $J$ is the same as the
one constructed by Coleman and Gross~\cite{CG89}.
In fact, to be slightly more general and to
avoid the need to translate between Vologodsky and Coleman integration, we will compare our
height pairing with the one constructed in~\cite{Bes17}, which is the same
as the Coleman--Gross
height pairing with Coleman integration replaced by Vologodsky integration, so that it applies
to curves with bad reduction above $p$ as well. We will nevertheless continue to call this the
Coleman--Gross height pairing. Note that, for general
reduction, this pairing was
actually constructed earlier by Colmez in~\cite{Col98},
see~\S\ref{S:Colmez} below.

The Coleman--Gross $p$-adic height pairing 
$$
h^{CG}\colon J(K)\times J(K)\to \Q_p
$$
depends, in addition
to $\chi$, on the choice, for every $\p |p$, 
of a splitting of the Hodge filtration on $\hdr^1(C_{\p})$; in other words, a
subspace $W_{\p}$ of $\hdr^1(C_\p)$ complementary to the image of the holomorphic differentials.

The goal of this section is to prove the following comparison result.
\begin{thm}\label{T:CGcomp}
    For $\p \mid p$,  let $\alpha_{\p}$ be a choice of curvature form of $\P_\p$ and let
    $W_{\p}$ be the complementary subspace associated to $\alpha_{\p}$ in
    Proposition~\ref{P:curvspace} below. Write 
$\ba = (\alpha_{\p})_{\p \mid p}$ and
$\underline{W} = (W_{\p})_{\p \mid p}$. 
   Relative to these choices, let $\hat{h} = \hat{h}_{\P, \ba, \chi}$ be the canonical height and let
  $h^{CG} = h^{CG}_{\underline{W}, \chi}$ be the Coleman--Gross height pairing.
  Then we have
\begin{equation}\label{E:globaloursCGequal}
  \hat{h}(a,\phi_\Theta(b)) = -h^{CG}(a,b)
\end{equation}
  for all $a,b\in J(K)$. 
\end{thm}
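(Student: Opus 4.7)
The plan is to reduce the identity~\eqref{E:globaloursCGequal} to an equality of local pairings at every finite place $v$ of $K$, and then to verify the local equality at primes away from $p$ and above $p$ separately. By Proposition~\ref{P:MThtglobalpb}, if $z,w\in \Div^0(C)$ are disjoint representatives of $a,b\in J(K)$, then $\hat{h}(a,\phi_\Theta(b)) = -\sum_v \langle z_v,w_v\rangle_v$; and by construction the Coleman--Gross height decomposes as $h^{CG}(a,b) = \sum_v h^{CG}_v(z_v,w_v)$. Thus it suffices to establish the local identity $\langle z,w\rangle_v = h^{CG}_v(z,w)$ at every finite place $v$.

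At a finite place $\q\nmid p$, our local pairing~\eqref{pairingcurveaway} is $\chi_\q(\pi_\q)$ times $\sum_x n_x v_{L,\q}(s(x))$, and by Proposition~\ref{P:good} together with Remark~\ref{localneron}, the canonical valuation $v_{L,\q}=\iota^*v_{M,\q}$ on the curve agrees (up to the standard normalization) with the classical local N\'eron symbol of $z$ against $w$. The local Coleman--Gross pairing at $\q$ is defined to be precisely this quantity, so the two local contributions agree at $\q$.

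At $\p\mid p$ the situation is more delicate. We have $\langle z,w\rangle_\p = t_\p\bigl(\sum_x n_x \log_L(s(x))\bigr)$ where $L=\iota^*M$ for $M\in\Pic^0(J_\p)$ representing $b$ under $\phi_\Theta$, $s$ is a meromorphic section of $L$ with $\div(s)=w$, and $\log_M$ is the canonical log function of Definition~\ref{D:antican}. On the other hand $h^{CG}_\p(z,w) = t_\p\bigl(\int_z \omega_w\bigr)$, where $\omega_w$ is the unique form of the third kind on $C_\p$ with residue divisor $w$ whose cohomology class lies in $W_\p$. The key claim is the identity of Vologodsky 1-forms
\[ d(\log_L\circ s) = \omega_w \quad \text{on } C_\p\setminus \supp(w). \]
Once this is known, $\log_L\circ s$ and $\int\omega_w$ differ by an additive constant, and the contribution of this constant drops out upon evaluation on the degree-zero divisor $z$, giving $\sum_x n_x \log_L(s(x)) = \int_z \omega_w$ and hence $\langle z,w\rangle_\p = h^{CG}_\p(z,w)$.

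To verify the key claim, note first that by Lemma~\ref{L:iflogexists}, $d(\log_L\circ s)$ is a meromorphic 1-form on $C_\p$ with at most simple poles on $\supp(w)$, of residue $\ord_x(s)$ at each $x$; these are the same as the residues of $\omega_w$, so the difference is a holomorphic form. It therefore suffices to show that the cohomology class of $d(\log_L\circ s)$ in $\hdr^1(C_\p)$ lies in $W_\p$. This is exactly what Proposition~\ref{P:derivrest} computes, reading the components of the curvature of $\log_L$ off the derivatives of $\log_L(s)$: since $\log_L = \iota^*\log_M$ is the pullback through $\iota$ of the canonical log function on $M\subset \P_\p|_{J\times\{[M]\}}$, the relevant class is determined by the component of $\alpha_\p$ in $\Omega^1(J_\p)\otimes \hdr^1(\hat J_\p)$; by Proposition~\ref{P:curvspace} (and its compatibility with $\phi_\Theta$ and $\iota^*$), this is precisely the complementary subspace $W_\p \subset \hdr^1(C_\p)$. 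The main obstacle will be this bookkeeping step: carefully tracking $\alpha_\p$ through the principal polarization $\phi_\Theta\colon J\to \hat J$, the Abel--Jacobi map $\iota$, and the restriction of $\P_\p$ to the fiber above $[M]\in \hat J$, and confirming that the resulting subspace in $\hdr^1(C_\p)$ matches the one used to characterize $\omega_w$. This argument parallels, and generalizes from the theta divisor to arbitrary degree-zero divisors, the computation at the end of the proof of Theorem~\ref{T:loggreen}.
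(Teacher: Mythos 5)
Your overall structure matches the paper's proof of Theorem~\ref{T:CGcomp}: reduce via Proposition~\ref{P:MThtglobalpb} to an equality of local pairings, dispose of $\q\nmid p$ by the uniqueness of the local N\'eron symbol (Remark~\ref{R:localequalaway}), and reduce $\p\mid p$ to the identity $d(\log_{\L}\circ s)=\omega_w$ (Proposition~\ref{P:divfromlog} in the paper). The reduction steps are correct, and the claim that $d(\log_\L\circ s) - \omega_w$ is holomorphic because the two forms have matching residue divisors is also right; just be careful that, since $d(\log_\L\circ s)$ has poles, what must land in $W_\p$ is $\Psi(d\log_\L(s))$ in the sense of \cite{CG89}, not a ``cohomology class in $\hdr^1(C_\p)$.''

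The gap is in the justification of this last step. You invoke Proposition~\ref{P:derivrest}, but that result computes the restriction of the curvature class of $\log_\L$ from the partial derivatives of $\log_\L(s)$. Here $\L=\iota^*\M$ with $\M\in\Pic^0(J_\p)$ antisymmetric, so the canonical log function $\log_M$ is \emph{flat}, and hence $\log_\L$ has curvature zero on $C_\p$. Applying Proposition~\ref{P:derivrest} to $\log_\L$ therefore just recovers $0$ and says nothing about $\Psi(d\log_\L(s))$ — in particular it does not identify the complementary subspace. What is actually needed (and what the paper does in Proposition~\ref{P:divfromlog}) is to vary $\L$ over $J_\p$: define the locally analytic homomorphism $r\colon J(F)\to\hdr^1(C_\p)$, $\L\mapsto\Psi(d\log_\L(s))$, pull the Poincar\'e bundle $\P$ back to $\P_C$ on $C\times J$, and differentiate $r$ with respect to a vector field $T$ on $J$ using the curvature $\alpha_C$ of $\P_C$. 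Retracting $\alpha_C$ against $\partial_T$ and restricting to a fiber is what produces the class in $W_\p$; this is a family version of the Proposition~\ref{P:derivrest} computation, but it cannot be obtained by applying that proposition to the single (flat) bundle $\L$.
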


\begin{rk}
  In particular, Coleman--Gross heights are (up to sign) Mazur--Tate
  heights, with appropriate choices. To the best of our knowledge, this has
  not been shown in the literature in this level of generality. For genus~2
  curves, an explicit comparison result is due to
  Bianchi~\cite[Corollary~5.35]{Bia23}. Coleman showed in~\cite{Col91} that
  if $C$ has good ordinary reduction at all places above $p$, then the
  Coleman--Gross height with respect to the unit root splitting is the
  Mazur--Tate height with respect to the canonical splitting constructed
  in~\cite[\S1.9]{MT83}. See~\cite{BKM23} for a local comparison result in
  genus~2, extending the local comparison for elliptic curves
  in~\cite{BB15}.
\end{rk}

\subsubsection{Local decomposition of the Coleman--Gross height pairing}\label{S:CGlocaldecomp}

The Coleman--Gross height is defined on a pair of divisors $z,w\in
\Div^0(C)$ with disjoint support, as the finite sum 
\begin{equation*}
  h^{CG}(z,w) =\sum_{v} h^{CG}_{v}(z,w)\;,
\end{equation*}
over all finite primes $v$ of $K$, of local height pairings. For every such $v$ the local
height pairing $h^{CG}_{v}(z,w)$ depends only on the respective images $z_v, w_v$ of $z,w$ in
$\Div^0(C_v)$.

\begin{rk}\label{R:localequalaway}
For $\q\nmid p$, the local Coleman--Gross pairing between divisors $z,w\in
\Div^0(C_{\q})$ with disjoint support is 
the unique pairing satisfying the conditions of Lemma~\ref{L:localMTq}
(see~\cite[Section~2]{CG89}), so it is immediately seen to be equal to our pairing $\langle z, w\rangle_\q$.
\end{rk}

It remains to consider places $\p\mid p$. The local Coleman--Gross pairing $\langle z,w\rangle_{\p,}$ for $z,w\in
\Div^0(C_{\p})$ with disjoint support is given in this case
as follows: There is a map 
\begin{equation*}
  \Psi\colon \Omega_{K_{\p}(C_{\p})}^1 \to \hdr^1(C_{\p})\;,
\end{equation*}
where the source of the map is the space of meromorphic 1-forms on $C_{\p}$.
This map, constructed in~\cite[Definition~3.9]{Bes05} (there it is called $\Psi'$ ) is an
extension of the map $\Psi$ considered in~\cite{CG89},
which is the logarithm of the universal vectorial extension of the Jacobian of $C$. It has the following two properties:
\begin{equation}\label{E:psipropa}
  \Psi (\eta) =[\eta] \text{ where $[\eta]$ is the cohomology class of the form of the second
  kind $\eta$; }
\end{equation}
  \begin{equation}\label{E:psiprop}
  \Psi d\log(f) =0 \text{ for any rational function $f$}\;.
\end{equation}

It easily follows that corresponding to the divisor $w$ there exists a
unique meromorphic form $\omega_w$ with the properties
\begin{enumerate}[(i)]
\item the form $\omega_w$ has log singularities and its residue divisor is $w$;
\item we have $\Psi(\omega_w)\in W_{\p}$.
\end{enumerate}
The local height pairing is given by 
\begin{equation}\label{eq:cgp}
  h^{CG}_{\p}(z,w) = t_{\p}(\int_z \omega_w)\;,
\end{equation}
where $t_{\p}$ and the branch of the logarithm used for integration are obtained from~\eqref{tdefnd}.

For the same
$z,w\in \Div^0(C_\p)$ as above, our local pairing $\langle z,w\rangle_{\p}$ defined
in~\eqref{pairingcurvep}  depends on the choice of a curvature form
$\alpha_\p$. In order to compare this with the local Coleman--Gross height pairing $h^{CG}_{\p}(z,w)$
we first have to compare this choice with the choice of a splitting of the Hodge filtration on
$\hdr^1(C_\p)$ needed for the Coleman--Gross pairing. This is what we will do in the next
subsection.

\subsubsection{Curvature forms and complementary subspaces}
For notational ease, let $F = K_{\p}$. Let $A$ be an abelian variety over $F$. Recall the
notion of purely mixed curvature forms from Definition~\ref{D:admcurv} and the discussion
preceding it. It is well known (see for example~\cite[p.~1380]{Col98}) that there is a natural duality
between $\hdr^1(A)$ and $\hdr^1(\hat{A})$.
By~\cite[Lemma~5.1.4]{BeBrMe82}, $\Omega^1(\hat{A})$ is   
precisely the annihilator of $\Omega^1(A)$ under the pairing.
In the next result, we use the notation introduced before Definition~\ref{D:admcurv}.
\begin{prop}\label{P:curvspace}
    There is a one to one correspondence between purely mixed curvature 
    forms $\alpha$ on $\P$ and complementary
    subspaces to $\Omega^1(A)$ in $\hdr^1(A)$
    which is given as follows:
    \begin{enumerate}
      \item  For $\alpha$ as above, the element 
     $\alpha_{21}\in H_{21}\isom \Omega^1(\hat{A})\otimes \hdr^1(A)$ gives
     a linear map from $\Hom(\Omega^1(\hat{A}),F)$ to $\hdr^1(A) $ and the corresponding
     subspace $W_\alpha$ is its image.
   \item
     Given a complementary subspace $W$, let $W'$ be its annihilator in $\hdr^1(\hat{A})$. Then
     $W$ is dual to $\Omega^1(\hat{A})$ while $W'$ is dual to $\Omega^1(A)$. The dualities
     provide elements
     \begin{align*}
       \alpha_{21} &\in \Hom(\Hom(\Omega^1(\hat{A}),F),W)\isom \Omega^1(\hat{A})\otimes W \subset
       \Omega^1(\hat{A})\otimes \hdr^1(A) \isom H_{21} \;,\;\\
       \alpha_{12} &\in \Hom(\Hom(\Omega^1(A),F),W')\isom \Omega^1(A)\otimes W' \subset
       \Omega^1(A)\otimes \hdr^1(\hat{A}) \isom H_{12} \;,
     \end{align*}
     and the corresponding curvature form is $\alpha_W=\alpha_{12}+\alpha_{21}$.
   \end{enumerate}
   Explicitly, the form $\alpha_W$ may be written as follows:
Choose bases
\begin{equation}\label{basis}
  \omega_k^1,~ \omega_k^2,~ k=1,\ldots, g
\end{equation}
  for $\Omega^1(A)$ and $\Omega^1(\hat{A})$, respectively. Take a basis $\eta_k^1$
  ($k=1,\ldots,g$) of $W$ which is
dual to the basis $\omega_k^2$ of $\Omega^1(\hat{A})$ and a basis $\eta_k^2$ of $W'$ which is
dual to the basis $\omega_k^1$. In this way, the $\omega_k^1$ and $\eta_k^1$ form a basis for
$\hdr^1(A )$ and the dual basis for $\hdr^1(\hat{A} )$ is provided by the $\eta_k^2$ and
$\omega_k^2$. The curvature form corresponding to $W$ is then given by
\begin{equation}\label{E:explicitalpha}
  \alpha_W = \sum_{k} \pi_1^\ast \omega_k^1 \otimes \pi_2^\ast \eta_k^2 
  - \pi_2^\ast \omega_k^2 \otimes \pi_1^\ast \eta_k^1\;.
\end{equation}
\end{prop}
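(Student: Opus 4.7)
The plan is to exploit the Künneth decomposition
\begin{equation*}
\hdr^2(A \times \hat A) = \hdr^2(A) \oplus \bigl(\hdr^1(A) \otimes \hdr^1(\hat A)\bigr) \oplus \hdr^2(\hat A)
\end{equation*}
together with the standard fact that $\ch_1(\P)$ lies in the mixed summand and, under the perfect pairing between $\hdr^1(A)$ and $\hdr^1(\hat A)$, corresponds to the identity in $\End(\hdr^1(A))$. The triviality of $\P$ on $A \times \{0\}$ and $\{0\} \times \hat A$ is what forces the pure-degree components of $\ch_1(\P)$ to vanish.

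First I would analyze the cup product
\begin{equation*}
\cup\colon \Omega^1(A \times \hat A) \otimes \hdr^1(A \times \hat A) \longrightarrow \hdr^2(A \times \hat A)
\end{equation*}
on each Künneth piece $H_{ij}$: the components $H_{11}$ and $H_{22}$ hit $\hdr^2(A)$ and $\hdr^2(\hat A)$ respectively, while each of $H_{12}$ and $H_{21}$ hits the mixed summand. On $H_{12} = \Omega^1(A) \otimes \hdr^1(\hat A)$ the cup map is the natural inclusion into $\hdr^1(A) \otimes \hdr^1(\hat A)$; on $H_{21} = \Omega^1(\hat A) \otimes \hdr^1(A)$ it is the analogous inclusion composed with the swap isomorphism $\hdr^1(\hat A) \otimes \hdr^1(A) \cong \hdr^1(A) \otimes \hdr^1(\hat A)$, up to a sign coming from graded commutativity. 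Under the purely mixed condition $\alpha_{11} = \alpha_{22} = 0$, the equation $\cup\alpha = \ch_1(\P)$ thus reduces to a single equation in the mixed summand.

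Next I would translate this into linear algebra. Write $V = \hdr^1(A)$ and $\hat V = \hdr^1(\hat A) \cong V^\vee$, so that $\Omega^1(\hat A)$ is the annihilator of $\Omega^1(A)$ by \cite[Lemma~5.1.4]{BeBrMe82}, and $\ch_1(\P) \in V \otimes V^\vee \cong \End(V)$ is the identity. The component $\alpha_{12} \in \Omega^1(A) \otimes \hdr^1(\hat A)$ corresponds to an endomorphism of $V$ with image in $\Omega^1(A)$, while $\alpha_{21} \in \Omega^1(\hat A) \otimes \hdr^1(A)$ corresponds to an endomorphism whose source factors through $V/\Omega^1(A)$ (since elements of $\Omega^1(\hat A)$ vanish on $\Omega^1(A)$). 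The reduced cup-product equation then reads $\id_V = \alpha_{12} + \alpha_{21}$ in $\End(V)$, which forces $W_\alpha \colonequals \im(\alpha_{21})$ to be a complement to $\Omega^1(A)$, and simultaneously determines $\alpha_{12}$ as the projection $V \twoheadrightarrow V/W_\alpha \cong \Omega^1(A) \hookrightarrow V$. This proves injectivity of the assignment $\alpha \mapsto W_\alpha$ and shows that its image lies in the set of complements.

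Conversely, given a complement $W$, I would define $\alpha_{21}$ and $\alpha_{12}$ exactly as in the statement of the proposition via the dualities $W^\vee \cong \Omega^1(\hat A)$ and $\Omega^1(A)^\vee \cong W'$, and verify by reversing the argument above that $\alpha_W \colonequals \alpha_{12} + \alpha_{21}$ is purely mixed and satisfies $\cup \alpha_W = \ch_1(\P)$. The explicit formula~\eqref{E:explicitalpha} then follows by plugging in the chosen dual bases $\{\omega_k^1, \eta_k^1\}$ and $\{\eta_k^2, \omega_k^2\}$ of $V$ and $\hat V$. The main obstacle is a careful bookkeeping of the sign produced by the swap isomorphism on $H_{21}$ (this is what accounts for the minus sign in~\eqref{E:explicitalpha}) and pinning down the precise identification of $\ch_1(\P)$ with the duality element; once these normalizations are fixed, the remainder is essentially linear algebra in finite-dimensional vector spaces.
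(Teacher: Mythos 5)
Your proof is correct and organizes the argument in a more abstract way than the paper's, though both rest on the same two facts: the K\"unneth decomposition of $\hdr^2(A\times\hat A)$, and the identification of $\ch_1(\P)$ with the duality class in $\hdr^1(A)\otimes\hdr^1(\hat A)$. The key difference is in how the constraints are exploited. Setting $V = \hdr^1(A)$ and $\hdr^1(\hat A) = V^\vee$, you recast the condition $\cup\alpha = \ch_1(\P)$ together with $\alpha_{11}=\alpha_{22}=0$ as a single operator equation $\id_V = \alpha_{12} \pm \alpha_{21}$ in $\End(V)$ (the sign being a normalization you correctly flag as needing attention), with $\im(\alpha_{12}) \subseteq \Omega^1(A)$ and $\ker(\alpha_{21})\supseteq\Omega^1(A)$. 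Restricting to $\Omega^1(A)$ gives $\alpha_{12}|_{\Omega^1(A)} = \id$, which forces $\alpha_{12}$ to be the projection onto $\Omega^1(A)$ along $\ker(\alpha_{12}) = \im(\alpha_{21})$, yielding a complement, and conversely every complement produces such a pair of projections. The paper proceeds instead by fixing a reference complement $W_0$, writing an arbitrary purely mixed form in the resulting basis with undetermined coefficients $a^{kl}_{ij}, b^{kl}_{ij}$, extracting the constraints coefficient by coefficient, and verifying by a basis recomputation that the two assignments are mutually inverse. Your $\End(V)$ framing buys a shorter, coordinate-free argument in which both injectivity and surjectivity fall out of the uniqueness of complementary projections, while the paper's computation trades abstraction for an explicit description of the parameter space of all purely mixed forms (useful elsewhere in the paper where explicit bases are needed). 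The only thing I would push you to write out is the sign bookkeeping on the $H_{21}$ component, since that is exactly what makes the minus sign in~\eqref{E:explicitalpha} appear; you acknowledge the issue but your displayed equation $\id_V = \alpha_{12}+\alpha_{21}$ only makes sense once the sign convention for the cup map on $H_{21}$ has been fixed.
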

\begin{proof}
It is easy to see that~\eqref{E:explicitalpha} is just an explicit form of the description of
$\alpha_W$ in terms of $W$.
We recall that the relation between $\P$ and the duality is that $\ch_1(\P)$ 
lies in $\hdr^1(A)\otimes \hdr^1(\hat{A})$, embedded in 
$\hdr^2(A\times \hat{A})$ by the K\"unneth formula and this 
class gives the required duality. In concrete terms, this means that with 
respect to any choice of dual bases for $\hdr^1(A ) $ and $\hdr^1(\hat{A})$, in
particular the bases of $\omega$'s and $\eta$'s we chose before, we have
\begin{equation*}
  \ch_1(\P)= \sum_k (\pi_1^\ast \omega_k^1 \cup \pi_2^\ast  \eta_k^2+ \pi_1^\ast \eta_k^1 \cup \pi_2^\ast
  \omega_k^2)\;.
\end{equation*}
This immediately shows that the curvature form $\alpha_W$ we have associated with $W$ indeed cups
to $\ch_1(\P)$. 
  
  Now fix one complementary subspace $W_0$ and choose bases
corresponding to $W_0$ as above, and write, in terms of this basis, 
  a general curvature form satisfying our two
  conditions as 
  \begin{equation}\label{alphageneral}
  \alpha = \sum_{i,j,k,l} a_{ij}^{kl} \pi_i^\ast \omega_k^i \otimes \pi_j^\ast \omega_l^j
  +\sum_{i,j,k,l} b_{ij}^{kl} \pi_i^\ast \omega_k^i \otimes \pi_j^\ast
  \eta_l^j\;\;\text{such that}\;\, 
  \alpha_{11}=\alpha_{22}=0\;\,\text{ and }\;\, \cup\alpha=\ch_1(\P)
\;,
\end{equation}
  where the $a^{kl}_{ij}$ and $b^{kl}_{ij}$ are constants for which we now need to find
  restrictions. The fact that $\alpha$ cups to $\ch_1(\P)$ implies that $b_{12}^{kk}=1$ and
  $b_{21}^{kk}=-1$ for all $k$ and all other $b^{kl}_{ij}$
vanish and that $a_{ij}^{kl}=a_{ji}^{lk}$ for all possible indices. The condition
$\alpha_{11}=\alpha_{22}=0$ implies that  $a_{11}^{kl}=a_{22}^{kl}=b_{11}^{kl}=b_{22}^{kl}=0$
  for all $k,l$. For an $\alpha$ as in~\eqref{alphageneral}, let us now compute the corresponding $W_\alpha$. We can write
\begin{equation*}
  \alpha_{21}=\sum_k  \pi_2^* \omega_k^2 \otimes  \pi_1^* \overline{\omega}_k 
\end{equation*}
for appropriate classes  
\begin{equation*}
  \overline{\omega}_k = -  \eta_k^1+ \sum_{l} a_{21}^{kl}  \omega_l^1 \in \hdr^1(A)
\end{equation*}
and $W_\alpha=\Span(\overline{\omega}_1,\ldots,\overline{\omega}_g)$.
As the spanning vectors for $W_\alpha$ are congruent modulo $\Omega^1(A)$ to the basis elements
$\eta_k^1$, it is clear that $W_\alpha$ is indeed complementary. 

  To complete the proof, it
remains to show that the two constructions are inverses of one another. It is immediate that
$W_{\alpha_W}= W$. To see that also $\alpha_{W_\alpha}=\alpha$ we now repeat the computation
above of a general curvature form, but this time we start, instead of the basis provided by
$W_0$, with the basis provided by $W=W_\alpha$ itself. With this choice of basis it is now
  immediate that all $a^{kl}_{ij}$ are $0$ and therefore $\alpha=\alpha_W$.
\end{proof}

\subsubsection{Forms of third kind with prescribed residues from log functions}
Continuing with the notation of the previous subsection, suppose now that the abelian variety
is $J=\mathrm{Jac}(C)$, the Jacobian
of a nice curve $C$ over $F=K_\p$. There is an isomorphism $\hdr^1(C) \isom \hdr^1(J)$, compatible with the Hodge filtration.
Hence the choice of a complementary subspace $W \in \hdr^1(C)$, as required for the construction of the local
Coleman--Gross pairing,
 determines a complementary
subspace in $\hdr^1(J)$. We denote this subspace by $W$. By
Proposition~\ref{P:curvspace}, this  choice determines a curvature form
$\alpha$ on the Poincar\'e line bundle on $J\times \hat{J}$.
Let $w\in \Div^0(C)$ and suppose that $\L$ is a line bundle on $C$ and $s$ a
rational section of $\L$ such that $\div(s) = w$. Then 
$\alpha$ induces a canonical log function $\log_\L$ and 
a local height pairing  $\langle \cdot,\cdot\rangle_\p$ as in~\eqref{pairingcurvep}.
The function $\log_{\L}(s)$
is locally analytic outside the support of $w$ and $d \log_{\L}(s)$ is independent of the
ambiguity in $\log_L$. 

\begin{prop}\label{P:divfromlog}
  In the situation above we have $d\log_{\L}(s)= \omega_w $.
\end{prop}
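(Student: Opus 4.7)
The plan is to verify the two defining properties of $\omega_w$: (i) $d\log_\L(s)$ has log singularities with residue divisor equal to $w$, and (ii) $\Psi(d\log_\L(s)) \in W$. The uniqueness of $\omega_w$ then gives the claimed equality.

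Property (i) follows directly from Lemma~\ref{L:iflogexists} applied to $\log_\L$ and $s$: the form $d\log_\L(s)$ extends from the locus where $s$ is invertible to a meromorphic 1-form on $C$ with at most simple poles, and $\res_x(d\log_\L(s)) = \ord_x(s)$ at every $x \in C$. Since $\div(s) = w$, its residue divisor is precisely $w$.

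For property (ii), I would first lift the question from $C$ to $J$. Since $\L$ has degree~$0$, there is an antisymmetric line bundle $M$ on $J$ (the unique one corresponding to $[\L] \in \Pic^0(C) \cong \Pic^0(J)$) with $\iota^*M \cong \L$, and by Definition~\ref{D:antican} the canonical log function on $\L$ equals $\iota^*\log_M$, where $\log_M$ is the restriction of the good log function $\log_\P$ on $\P$ (with curvature $\alpha$) to the fiber $J \times \{[M]\}$. Letting $s_M$ be the section of $M$ with $\iota^*s_M = s$, the identity $d\log_\L(s) = \iota^*(d\log_M(s_M))$, together with the compatibility of $\Psi$ with $\iota^*$ and the identification of $W \subset \hdr^1(J)$ with $W \subset \hdr^1(C)$ via $\iota^*$, reduces the claim to showing that the cohomology class of $d\log_M(s_M)$ in $\hdr^1(J)$ lies in $W$. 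To this end, I would apply Proposition~\ref{P:derivrest} on $J \times \hat{J}$ to $\log_\P(t)$, where $t$ is a rational section of $\P$ restricting to $s_M$ along $J \times \{[M]\}$. In the explicit basis of Proposition~\ref{P:curvspace},
\[ \alpha = \sum_k \pi_1^*\omega_k^1 \otimes \pi_2^*\eta_k^2 - \sum_k \pi_2^*\omega_k^2 \otimes \pi_1^*\eta_k^1, \]
so for the derivation $\partial_k^2$ on $\hat{J}$ dual to $\omega_k^2$, Proposition~\ref{P:derivrest} gives that $d(\partial_k^2 \log_\P(t))$ represents $-\pi_1^*\eta_k^1 \in \hdr^1(J \times \hat{J})$. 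Restricting along the fiber $J \times \{[M]\}$ produces meromorphic forms on $J$ representing the basis $\{-\eta_k^1\}$ of $W \subset \hdr^1(J)$.

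The main obstacle I anticipate is the final identification: extracting the cohomology class of $d\log_M(s_M)$ itself from these $\hat{J}$-directional derivatives. The plan is to decompose $d\log_\P(t)$ via the K\"unneth splitting into its $\pi_1^*\Omega^1(J)$- and $\pi_2^*\Omega^1(\hat{J})$-components; the former restricts along $J \times \{[M]\}$ to $d\log_M(s_M)$, while Proposition~\ref{P:derivrest} gives cohomological information about the coefficients of the latter. Using $d^2\log_\P(t) = 0$ together with the purely mixed shape of $\alpha$, one should then relate the two components and conclude that the class of $d\log_M(s_M)$ is a linear combination of the $-\eta_k^1$, hence lies in $W$. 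The essential point is that the term $\pi_2^*\omega_k^2 \otimes \pi_1^*\eta_k^1$ of $\alpha$ encodes precisely the complementary subspace $W$, which is what ultimately forces $\Psi(d\log_\L(s)) \in W$.
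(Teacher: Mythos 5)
Your overall strategy matches the paper: reduce to the two defining properties of $\omega_w$ and conclude by uniqueness. But there are two genuine gaps, one in each step.

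For property (i), Lemma~\ref{L:iflogexists} does not give that $d\log_\L(s)$ is a meromorphic 1-form on $C$ — it only says the Vologodsky 1-form $d\log_\L(s) \in \ocol^1(U)$ admits \emph{locally} meromorphic extensions at the finitely many poles. On the locus $U$ where $s$ is invertible, $d\log_\L(s)$ is a priori a genuine iterated Vologodsky form (compare the generic formula $d\log_\L(s) = \sum \omega_i \int \eta_i + \gamma$ from \S\ref{subsec:logcurves}), which is not in the domain of $\Psi$. The step you are missing is that $(\L, \log_\L)$ is flat: since $\alpha$ is purely mixed, the restriction of $\log_\P$ to the fiber has trivial curvature (Theorem~\ref{T:antigood}, Corollary~\ref{C:anticangood}), so on $U$ we have $\delbr d\log_\L(s) = 0$, i.e.\ $d\log_\L(s) \in \ker(\delbr) = \Omega^1(U)$. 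It is this flatness, not Lemma~\ref{L:iflogexists} alone, that makes $d\log_\L(s)$ genuinely meromorphic and lets (ii) even make sense.

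For property (ii), the obstacle you flag at the end is the real one, and your proposed bridge (using $d^2\log_\P(t)=0$ plus the shape of $\alpha$ to ``relate the two components'') does not close. The paper's resolution is structural: the assignment $\L \mapsto \Psi(d\log_\L(s))$ is well defined independently of the section $s$ (because $\Psi(d\log f)=0$), sends tensor products to sums, and is therefore a \emph{locally analytic group homomorphism} $r \colon J(F) \to \hdr^1(C)$. For such a map it suffices to check that its derivative with respect to any tangent direction on $J$ lands in $W$, and this derivative is then computed via the retraction/curvature argument along the lines of Proposition~\ref{P:derivrest}, applied not on $J\times\hat J$ but on $C\times J$ (with $\P_C$), which is what naturally parametrizes the sections on $C$. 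Without recognizing $r$ as additive and locally analytic, you are left trying to extract the value of $\Psi(d\log_M(s_M))$ directly from its $\hat J$-directional derivatives; that information alone determines the value only up to a constant, and the additive structure is exactly what pins the constant to $0$.
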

By the construction of the Coleman--Gross height pairing in~\S\ref{S:CGlocaldecomp} and our
pairing in \eqref{pairingp} it is clear that Proposition~\ref{P:divfromlog} implies:
\begin{cor}\label{C:pairingspequal}
  We have $h^{CG}_\p(z,w) = \langle z,w\rangle_\p$.
\end{cor}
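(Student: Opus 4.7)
The form $\omega_w$ is uniquely characterized by two properties: it is a meromorphic $1$-form with log singularities and residue divisor $w$, and its image under $\Psi$ lies in $W$. The plan is to verify both properties for $d\log_\L(s)$; since two meromorphic forms with residue divisor $w$ differ by an element of $\Omega^1(C)$ and $\Omega^1(C)\cap W = 0$ in $\hdr^1(C)$, this forces $d\log_\L(s) = \omega_w$. The residue condition is immediate from Lemma~\ref{L:iflogexists} applied to the invertibility locus $U$ of $s$: the form $d\log_\L(s) \in \ocol^1(U)$ extends meromorphically to $C$ with at worst simple poles and residue $\ord_x(s)$ at each $x$, so its residue divisor equals $w$.

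The heart of the argument is to show $\Psi(d\log_\L(s)) \in W$. I would exploit the construction of $\log_\L$ via the Poincar\'e bundle. Because $\div(s) \in \Div^0(C)$, we have $\L \in \Pic^0(C)$, which under the isomorphism $\iota^*$ corresponds to a unique antisymmetric line bundle $M$ on $J$ of class $\hat a\in\hat J$, so that $\log_\L = \iota^*\log_M$ and $\log_M = \log_\P|_{J \times \{\hat a\}}$ by Definition~\ref{D:antican}. Pull back via $\iota \times \id\colon C\times\hat J \to J\times\hat J$ and pick a rational section $\tilde s$ of $(\iota\times\id)^*\P$ whose restriction to $C\times\{\hat a\}$ equals $s$; set $G = \log_{(\iota\times\id)^*\P}(\tilde s)$, a Vologodsky function on the invertibility locus $V$ of $\tilde s$. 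By Proposition~\ref{P:curvspace}, the purely mixed curvature form $\alpha_W$ on $\P$ contains the summand $-\sum_k \pi_2^*\omega_k^2 \otimes \pi_1^*[\eta_k^1]$, with $\{[\eta_k^1]\}$ a basis of $W$ inside $\hdr^1(J)$. Applying Proposition~\ref{P:derivrest} to $\log_{(\iota\times\id)^*\P}$ and the derivations $\partial^{(2)}_k$ dual to $\pi_2^*\omega_k^2$, the $1$-forms $d(\partial^{(2)}_k G)$ represent $-\pi_1^*(\iota^*[\eta_k^1])$ in $\hdr^1(V)$. Restricting to $C\times\{\hat a\}$ (restriction commutes with the exterior derivative) produces functions $\phi_k = \partial^{(2)}_k G|_{C\times\{\hat a\}}$ on $C\setminus\supp(w)$ whose differentials represent the classes $-\iota^*[\eta_k^1]$, which form a basis of $W \subset \hdr^1(C)$ under the identification $\iota^*\colon\hdr^1(J)\isom\hdr^1(C)$.

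The final step is to identify the $\Psi$-class of $d\log_\L(s) = dG|_{C\times\{\hat a\}}$ with the $W$-valued cohomology classes produced by the $\phi_k$; I expect this to be the main obstacle. Since $\Psi$ is defined via the universal vectorial extension, relating its value on $d\log_\L(s)$ to the information carried by the $\pi_2$-derivatives $\partial^{(2)}_k G$ requires unpacking the construction of $\Psi$ in~\cite[Definition~3.9]{Bes05}, invoking naturality of $\Psi$ under $\iota$, and tracking the Vologodsky iterated integrals implicit in $G$. Once this identification is established, $\Psi(d\log_\L(s)) \in W$, which together with the residue calculation completes the verification and hence the proof.
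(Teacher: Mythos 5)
Your proposal is really a proof attempt for Proposition~\ref{P:divfromlog} (that $d\log_\L(s)=\omega_w$), which is the content-bearing step; in the paper, Corollary~\ref{C:pairingspequal} is noted to follow immediately from that proposition and the definitions of the two pairings, and the paper's argument for Proposition~\ref{P:divfromlog} appears directly after the corollary. So the comparison is really with that argument.

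Two issues. First, a minor one: your verification of the residue condition overstates what Lemma~\ref{L:iflogexists} gives. That lemma only says $d\log_\L(s)$ extends \emph{locally} meromorphically with the correct residues; it does not by itself say $d\log_\L(s)$ is a meromorphic $1$-form on $C$ (which one needs so that $\Psi$ applies). The paper's proof begins by noting that $(\L,\log_\L)$ is \emph{flat} (curvature zero), which holds because $\log_\L$ is pulled back from a flat log function on $J$ (Theorem~\ref{T:antigood}, Corollary~\ref{C:anticangood}, using that $\alpha$ is purely mixed); flatness is what kills the iterated-integral part of $d\log_\L(s)$ and makes it genuinely meromorphic. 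You should state this explicitly.

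Second, and this is the genuine gap: you never connect your $\partial^{(2)}_k$-computation to $\Psi(d\log_\L(s))$, and you acknowledge the disconnection yourself as ``the main obstacle.'' The paper closes exactly this gap by first making a \emph{reduction}: since $\Psi(d\log(s))$ depends only on the class of $\L$ and not on $s$ (by~\eqref{E:psiprop}), the assignment $r\colon \hat J(F)\to\hdr^1(C)$, $[\L]\mapsto\Psi(d\log(s))$, is a well-defined locally analytic group homomorphism; hence it suffices to show that the \emph{derivative} of $r$ at one point lies in $W$. One then interchanges $\Psi$ with differentiation in the family direction, using that the derivative of a family of forms of the third kind is a form of the second kind (so $\Psi$ of it is just its cohomology class, by~\eqref{E:psipropa}). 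Only after this reduction does the curvature/retraction computation you set up --- via Proposition~\ref{P:derivrest} for the pulled-back Poincar\'e bundle --- actually apply to the right quantity. Without the homomorphism-plus-derivative reduction and the $\Psi$-differentiation commutation, your computed classes $[d\phi_k]$ have no established relation to $\Psi(d\log_\L(s))$, so your argument does not close. That is the missing idea, not a routine verification.
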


\begin{proof}[Proof of Proposition~\ref{P:divfromlog}]
  The metrized line bundle $(\L,\log_{\L})$ is flat, because $\log_\L$ is the pullback of a log function on a
line bundle on $J$, which is flat by Theorem~\ref{T:antigood} and
  Corollary~\ref{C:anticangood} because $\alpha$ is purely mixed. 
  Thus, 
the 1-form $d\log_{\L}(s)$ is meromorphic, and it has simple poles with residue divisor exactly
$w$ by Lemma~\ref{L:iflogexists}.
Proving that $d \log_{\L}(s) = \omega_w$ is therefore equivalent to showing that 
  $\Psi(d\log_L(s))\in W$. 
For a given line bundle $\L$ of degree $0$, the class
$\Psi(d\log_L(s))\in \hdr^1(C)$ is independent of the choice of the section $s$,
by~\eqref{E:psiprop}, because for another $s$ the form will
differ by a $d\log$ of a rational function. The association
\begin{equation*}
  \L  \mapsto \Psi(d\log_L(s))
\end{equation*}
clearly maps tensor products to sums and therefore gives a
homomorphism 
\begin{equation*}
  r\colon J(F) \to \hdr^1(C )\;.
\end{equation*}
  \begin{lemma}\label{L:rlocana}
    The map $r$ is locally analytic.
\end{lemma}
\begin{proof} 
Fix a degree $g$ divisor $D$ on $C$, and consider the associated map 
\begin{align}\label{E:deffD}
 f_D\colon C^g &\to J=\Pic^0(C) \\
    \underline{P} &\mapsto  \left[\sum_i P_i-D \right] \,.
\end{align}
Let $U$ be the open subset of $C^g$ where no two coordinates are equal. Since $J$ is covered by the image of $f_D|_U$ as we vary $D$, and the topology on $J$ is induced from the topology on $C^g$, it suffices to show the local analyticity of $r_D \colonequals r \circ f_D|_U$ for every $D$. Furthermore, since we get coordinates on the target $\hdr^1(C)$ as a topological $F$-vector space by cupping with the cohomology classes of forms of the second kind on $C$, it suffices to pick one such form, $\eta$, and show the local analyticity of the map $U \rightarrow F$ given by $\underline{P} \mapsto \eta \cup r_D(\underline{P})$. (To ease notation, we will henceforth drop the restriction of $r_D$ to the open subset $U$ of $C^g$ and talk about analyticity on $C^g$, keeping in mind that we are only interested in proving analyticity at points where no two coordinates are equal.)

By definition \( r_D(\underline{P}) \) is \(   \Psi(d\log_L(s))\), where \(
L= \O(\sum_i P_i -D) \) and \( s \) is any rational section of this line bundle. To
prove local analyticity we will pick the canonical section \( s=1 \) of $L$. Let \( \omega_{\underline{P}} \colonequals d \log_{\O(\sum P_i -D)} 1 \).  We now need to prove that \( \underline{P} \mapsto \eta \cup \Psi(\omega_{\underline{P}}) \) is
locally analytic in \( \underline{P} \).

To prove this, let $\pi_C \colon C^g \times C \rightarrow C$ be the projection onto the
second factor. We will first show that the forms \( \omega_{\underline{P}} \)
patch together to a locally meromorphic section of
 $
\pi_C^*(\Omega^1_C)$, locally analytic outside the support of
\[
  S\colonequals \{(P_1,\ldots ,P_g,P )\;,\quad P=P_i \text{ for some } i \}\;.
\]  
Let \( \mathcal{L} \colonequals \O(S - \pi_C^\ast D) \), a line bundle on \( C^g \times C \).
The fiber of \( \mathcal{L} \) over \( \underline{P}\times C \) is \(
\O(\sum_i P_i - D )\).
Consider the map 
$f_D \times
\id\colon C^g \times C \rightarrow J \times C$ where $f_D$ is as in \eqref{E:deffD}. Then 
by the universal property
of the Poincar\'e bundle $\P_C$, we have a canonical isomorphism of the fiber
$\left( f_D \times \id \right)^* \P_C|_{\underline{P} \times C} \cong
L$. Let \( \log_{\mathcal{L}} \) be the pulled
back log function. Since the isomorphism $\psi_{L,r}: (L,r) \cong \P|_{J \times \hat{a},r_{\hat{a}}}$  appearing in the definition of $\log_\L$ factors through $(f_D \times \id)^*$, the restriction of \( \log_{\mathcal{L}}
\) to \( \underline{P}\times C \) is \( \log_L \). 
The line bundle \( \mathcal{L} \) has a canonical rational section \( 1 \),
regular outside the support of \( S - \pi_C^\ast D\) and 
restricting to the section \( 1 \) of \( L \) over each \(\underline{P} 
\). Consequently, we get
\begin{equation*}
  \omega_{\underline{P}}= 
  d \log_{\O(\sum P_i -D)} 1=
  d ((\log_{\mathcal{L}} 1)|_{\underline{P}\times C})
  = (\pi_{\Omega_C^1}(d \log_{\mathcal{L}} 1))|_{\underline{P}\times C}
\end{equation*}
where \( \pi_{\Omega_C^1} \) indicates the projection corresponding to the
direct sum decomposition \( \Omega_{C^g\times C}^1= \Omega_C^1 \oplus
\Omega_{C^g}^1 \).
Since $d \log_{\mathcal{L}} 1
$ is locally meromorphic, we get that the \(
\omega_{\underline{P}} \) patch together to a locally meromorphic section of
 $
\pi_C^*(\Omega^1_C)$, locally analytic outside the support of $S$ as claimed before.

Now, by~\cite[Corollary~3.14]{Bes05} we have
  \begin{equation}\label{}
    \eta \cup \omega_{\underline{P}} = \sum_x \res_x \left(\omega_{\underline{P}}\int
  \eta\right) \;,
  \end{equation}
The right hand side is a 
finite sum of residues at the $P_i$, points in the support of $D$ and
  singular points of $\eta$. We may assume that the singular points of \(
  \eta \) are disjoint from the other points. Since \(
  \omega_{\underline{P}} \) has simple poles by our assumption that $\underline{P}$ is in the set $U$ where no two $P_i$s are equal, and its residue divisor is
  exactly \( \sum P_i -D \), by Lemma~\ref{L:iflogexists}, we get
  \begin{equation}\label{}
    \eta \cup \omega_{\underline{P}} = \left(\int \eta \right) (\sum P_i -D)+ \sum_{x'}
    \res_{x'} \left(\omega_{\underline{P}}\int
  \eta\right) \;,
  \end{equation}
  where the sum now is only over the singular points \( x' \) of \( \eta
  \). Since \(
  \int \eta(\sum P_i -D) \) is locally analytic in \( \underline{P} \), it suffices to prove local analyticity of the sum. Let \( z \) be a local parameter near one such point \( x'=x_0 \). Write the Laurent expansion \( \int  \eta = \sum_i a_i z^i \) in a (analytic) neighborhood of $x_0$, and the expansion
   \( \omega_{\underline{P}} = \sum_j
  b_j(\underline{P}) z^j dz \) around the point \(( (\underline{P}_0), x_0) \). Note
  that each \( b_j \) is locally analytic in $\underline{P}$. Then, \[
    \res_{x_{0}} \omega_{\underline{P}} \int \eta = \sum_{i+j=-1}a_i
    b_j(\underline{P})
  \] 
  (a finite sum) is locally analytic near \( \underline{P}_0 \) by local analyticity of $b_j$. This finishes the proof.
\end{proof}

Proposition~\ref{P:divfromlog} follows if we can show that the image of $r$ is contained in $W$. 

We will follow closely the proof of Theorem~7.3 in~\cite{Bes05}. 
By Lemma~\ref{L:rlocana}, it suffices to show that the
derivative of $r$ with respect to any vector field on $J$ at one, hence at any, point is in $W$.
The computation is essentially the same as in the proof of Theorem
  7.3 in~\cite{Bes05}. Let $\P_C$ be the pullback of $\P$ to  $C\times J$ via the map $C\times J \to J\times J$.
Let $\alpha_C$ be the curvature of $\P_C$ induced by pullback of $\alpha$. It has exactly the same formula
as~\eqref{E:explicitalpha}
under the identification of the cohomologies and forms of $C$ and $J$. To compute the
derivative of $r$ with respect to a vector field $T$ on $J$, we pick a rational section $s$
defined on an open $U\subset C\times J$. Locally, in the analytic topology on $J$, the relative
form $d \log_{\P_C}(s)$ is a family of forms of the third kind on $C$ 
  in the sense of~\cite[Definition~A.1]{Bes05} (called
there relative forms of the third kind). We need to apply
$\Psi$ fiber by fiber and then differentiate with respect to $T$. However,
because the derivative of a family of forms of the third kind is a form of the second
kind by Definition~A.3 in \cite{Bes05} and the following discussion, and these are sent by $\Psi$ to their associated cohomology classes
according to~\eqref{E:psipropa},
we can instead differentiate with respect to $T$ and then take the cohomology class fiber by
fiber. We now repeat the computation from the proof of~\cite[Theorem~7.3]{Bes05},
adjusting notation. We need to compute $\partial_T d\log_{\P_C}(s)$. This is the same as
$d(d\log_{\P_C}(s)|_{\partial_T})$, where the last notation means retraction in the direction
of $T$. Then, just as in the above reference, we notice that 
$\overline{\partial}(d\log_{\P_C}(s)|_{\partial_T})=\overline{\partial}(d\log_{\P_C}(s))|_{\partial_T}$
with the retraction acting on the first coordinate of the tensor product. The term
  $\overline{\partial}(d\log_{\P_C}(s))$ is precisely the curvature $\alpha_C$ restricted to $U$. Just as
in~\cite{Bes05} after retraction and restriction to the fiber of the projection to $J$, the only term
that survives is 
\begin{equation*}
  \sum \omega_k^2|_{\partial_T}\overline{\omega}_k\,.
\end{equation*}
Again as in~\cite{Bes05}, this shows that the required cohomology class lies in the subspace
spanned by the $\overline{\omega}_k$, which is $W$.
\end{proof}

We finally prove the main theorem of this section.
\begin{proof}[Proof of Theorem~\ref{T:CGcomp}]
It suffices, by Proposition~\ref{P:MThtglobalpb}, to show
that $h^{CG}_{v}(z,w)=\langle z,w\rangle_v$, where the latter is defined in~\eqref{pairingcurveaway}
(respectively~\eqref{pairingcurvep}) if $v\nmid p$ (respectively $v\mid p$).
The result now follows from Remark~\ref{R:localequalaway} and Corollary~\ref{C:pairingspequal}.
\end{proof}

\begin{rk}\label{R:localnotequal}
The proof strategy above for Theorem~\ref{T:CGcomp} does not show that the local
  contributions to both sides of \eqref{E:globaloursCGequal} (with respect to suitable
  choices) are equal. In fact, as stated, there is no natural local decomposition of the
  left hand side of \eqref{E:globaloursCGequal} (which is also the left hand side of the
  equality \eqref{Mhtglob} in Proposition~\ref{P:MThtglobalpb}). The proof
  does show that the local Coleman--Gross height pairing coincides with the local pairing defined in~\eqref{pairingaway} and ~\eqref{pairingp}.
\end{rk}

\section{Quadratic Chabauty}\label{sec:qc}
Let $C/\Q$ be nice curve of
genus $g >1$ such that $C(\Q)\ne \varnothing$ and let $p$ be a prime number.
Our goal in this section is to use the theory of (canonical) $p$-adic heights developed above (in particular,
the properties of the local terms) to construct a nonconstant locally analytic function on $C(\Q_p)$
whose values on $C(\Q)$ can be controlled.
Fixing a base point $b \in
C(\Q)$, we let $\iota=\iota_b\colon C\into J$ denote the corresponding
Abel--Jacobi map, inducing a morphism $\NS(J)\to
\NS(C)$, where $J$ is the Jacobian of $C$.
We fix a purely mixed curvature form $\alpha$ on $\P_p$. 

We assume that  $\ker(\NS(J)
\to\NS(C))$ has positive rank. 
 This assumption,  combined with the fact that the restriction of $\iota^*$
 to the subgroup $\Pic^0(J)$ of the Picard group $\Pic(J)$ is an
 isomorphism onto $\Pic^0(C)$, guarantees that there is some line bundle $\L$ on $J$ such that
  $\iota^*\L= \O_C$  and $L\notin \Pic^0(J)$. Henceforth we fix such a line bundle $\L$.
Since $\iota^*\L= \O_C$, we get a map on (the complement of the zero section of) total spaces of line bundles $\tilde{\iota} \colon C \times \mathbb{G}_m \rightarrow \L^{\times}$. We let $1$ denote the section $x \mapsto (x,1)$ of the natural projection $C \times \mathbb{G}_m \rightarrow C$
We endow $\L$ with the rigidification $r_b \colonequals\tilde{\iota}(1(b))$;
the corresponding canonical adelic metric $(\log_{\L,p}, \{v_{\L, q}\}_{q})$ then
satisfies 
 \begin{equation}\label{rigidq}
   v_{\L,{q}}(r_b) =0 
 \end{equation}
 for all $q\ne p$ and 
 \begin{equation}\label{rigidp}
   \log_{\L,{p}}(r_b) =0\,. 
 \end{equation}
We write $\hat{h}_{\L}$ for the corresponding $p$-adic height. 
By pullback, we obtain an adelic metric on 
$\O_C$: 
\begin{equation}\label{vq}
(\log_{p}, \{v_{q}\}_{q}) \colonequals \iota^*(\log_{\L,p}, \{v_{\L, q}\}_{q})\,.
\end{equation}
  For a prime $q\ne p$, the valuation $v_{q}$ is a $\Q$-valuation (see
  Remark~\ref{R:Qval}) and we define a local height function
  \begin{equation}\label{eq:lambdadef}
  \lambda_{q}\colon C(\Q_{q})\to \Q\,;\quad x \mapsto v_{q}(1(x))\,.
  \end{equation}

\begin{lemma}\label{L:fin_many}
  Let $q\ne p$ be a prime. Then $\lambda_q$
  \begin{enumerate}
    \item\label{fin} takes only finitely many values;
    \item\label{id0} is identically 0 if $C$ has potentially good reduction at
  $q$.
  \end{enumerate}
\end{lemma}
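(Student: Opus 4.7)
For part~(\ref{fin}), the plan is straightforward: combine the local constancy of the canonical valuation $v_{\L,q}$ on $L^\times(\Q_q)$ (Proposition~\ref{P:good}(e)) with the compactness of $C(\Q_q)$. Since the map $\tilde\iota\circ 1\colon C(\Q_q)\to L^\times(\Q_q)$ is continuous, $\lambda_q = v_{\L,q}\circ\tilde\iota\circ 1$ is locally constant on the compact space $C(\Q_q)$ and therefore takes only finitely many values.

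For part~(\ref{id0}), the plan is to reduce to good reduction and then exploit that canonical valuations agree with model valuations there (Remark~\ref{R:good_good}). Since $C$ has potentially good reduction at $q$, so does its Jacobian $J$, and I would choose a finite extension $K/\Q_q$ over which both acquire good reduction. Let $\mathcal{C}_K/\O_K$ denote the smooth proper model of $C_K$ and $\J_K/\O_K$ the N\'eron model of $J_K$, which is an abelian scheme. By the N\'eron mapping property applied to the smooth model $\mathcal{C}_K$, the Abel--Jacobi map extends to a morphism $\iota\colon \mathcal{C}_K\to \J_K$, and the rigidified line bundle $(\L,r_b)$ extends uniquely to a rigidified line bundle $(\mathcal{L},\bar r_b)$ on $\J_K$, with rigidification at the identity section (which is the reduction of $\iota(b)=0$). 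By Remark~\ref{R:good_good} the canonical valuation $v_{\L,q}$ coincides with the model valuation $v_{\mathcal{L}}$. A direct check from Example~\ref{E:alg_val} shows that model valuations are compatible with pullback along morphisms of models, so $\iota^* v_{\mathcal{L}} = v_{\iota^*\mathcal{L}}$ as valuations on $\iota^*\L = \O_C$.

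It then remains to identify $(\iota^*\mathcal{L},\iota^*\bar r_b)$ with the trivially rigidified line bundle $(\O_{\mathcal{C}_K},1(b))$. Since $\mathcal{C}_K$ has geometrically integral special fiber $F$, any line bundle on $\mathcal{C}_K$ whose restriction to the generic fiber is $\O_C$ is of the form $\O_{\mathcal{C}_K}(nF)$; because $F$ is principal, such a bundle is isomorphic to $\O_{\mathcal{C}_K}$. The normalization $r_b = \tilde\iota(1(b))$ then guarantees that the pullback of $\bar r_b$ restricts to $1(b)$ on the generic fiber, which pins down the global isomorphism so that $\iota^*\bar r_b$ corresponds to $1(b)$. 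The trivial model valuation applied to the canonical section $1$ is identically zero, yielding $\lambda_q(x) = v_{\iota^*\mathcal{L}}(1(x)) = 0$ for every $x\in C(K)$, and hence for every $x\in C(\Q_q)$. The hardest part will be the bookkeeping in this last step: without keeping track of the rigidifications one could introduce a nonzero twist by a power of a uniformizer of $\O_K$, and it is precisely the choice $r_b = \tilde\iota(1(b))$ that prevents this.
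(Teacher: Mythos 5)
Your argument is correct and follows essentially the same approach as the paper: local constancy of the canonical valuation on the compact set $C(\Q_q)$ gives part~(\ref{fin}), and for part~(\ref{id0}) one passes to a good-reduction extension and invokes Remark~\ref{R:good_good} to identify the canonical valuation with a model valuation on the N\'eron model, whose pullback to a good model of $C$ is constant. The only difference is that you pin down the extension of $\O_C$ and its rigidification explicitly, whereas the paper simply observes that the pulled-back model valuation is constant and then uses~\eqref{rigidq} to see it vanishes at $b$, hence identically.
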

\begin{proof}
Since $v_{\L, q}$ is a canonical valuation on $\L$, Proposition~\ref{P:good} implies that 
  $\lambda_q$ is a locally constant $\Q$-valued function on $C(\Q_q)$, implying the first
  statement. For the second, we may assume that $J$ has good reduction at $q$ by passing
  to an extension. Then a canonical valuation on $\L$ is a model valuation on the N\'eron
  model~$\mathcal{J}$ by Remark~\ref{R:good_good}.
  The closure $\mathcal{L}$ of $\L$ on $\mathcal{J}$ pulls back to an extension of
  $\O_C$ to a smooth model $\mathcal{C}$ of $C$, so $v_q$ is constant, hence
  identically~0 by~\eqref{rigidq}.
\end{proof}
We will give  a more precise version of~\eqref{fin} below in
Proposition~\ref{P:factor}.

We now combine Lemma~\ref{L:fin_many} with the quadraticity of $\hat{h}_L$
and the fact that $\log_p\circ 1$ is a Vologodsky function to explain our version of Quadratic Chabauty.

\subsection{Quadratic Chabauty for rank $=$ genus}\label{S:reqg}
Suppose, in addition to $\rank\NS(J)>1$, that $\rank J(\Q) =g$. We also assume that $p$ is a prime number such that the $p$-adic closure of $J(\Q)$ has finite index in $J(\Q_p)$.
Fix a basis $(\omega_0,\ldots,\omega_{g-1})$ of $H^0(J,\Omega^1)$.
By abuse of notation, we also write $\omega_i$ for $\iota^*\omega_i$.
Our assumptions on $J$ imply that $(J(\Q)\otimes \Q_p)^\vee$ is generated by $f_0,\ldots,
f_{g-1}$ , where $f_i(x) = \log(x)(\omega_i) = \int^x_0\omega_i$. 
 Let $\chi\colon \A^\times_{\Q}/\Q^\times\to \Q_p$ be a nontrivial continuous id\`ele class
character. Then $\chi$ is (a scalar multiple of) the cyclotomic id\`ele
class character (i.e. the id\`ele class character corresponding to the cyclotomic character by
class field theory), and satisfies
$\chi_p(p)=0$.
By Proposition~\ref{P:goodheight}, the height $\hat{h}_{\L}$ with respect to $\alpha$ and
$\chi$ is a quadratic polynomial in the $f_i$ with constant term 0, 
say 
\begin{equation}\label{globalhtints}
  \hat{h}_{\L} = \sum a_{ij} f_if_j + \sum b_kf_k\,, \quad
  a_{ij},b_k\in \Q_p\,.
\end{equation}
The idea of the Quadratic Chabauty method
is to solve for the constants $a_{ij}$ and $b_k$ and to use
that for $x\in C(\Q)$, Equation~\eqref{globalhtints} implies
\begin{equation}\label{}
  \sum a_{ij} (\int_b^x\omega_i)( \int_b^x\omega_j)+
  \sum b_k\int_b^x\omega_k-\log_p\circ 1(x) = \sum_{q\ne p} \lambda_{q}(x)\chi_q(q)\,.
\end{equation}
\begin{thm}\label{T:QC}
  Suppose that $\rank\NS(J)>1$, $\rank J(\Q) =g$ and that  the
$p$-adic closure of $J(\Q)$ has finite index in $J(\Q_p)$.
  Let $L$ be a line bundle on $J$ whose class in $\NS(J)$ is nontrivial and which
  satisfies 
  $\iota^*L=\O_C$. With notation as above, 
  the function  
$$
  F\colon C(\Q_p)\to \Q_p\,;\quad x\mapsto \sum a_{ij}
  (\int_b^x\omega_i)(\int_b^x\omega_j)+
  \sum b_k\int_b^x\omega_k-\log_p\circ 1(x)
$$
is a Vologodsky function. It
takes values on $C(\Q)$ in the finite
  set $T = \{\sum_{q\ne p}l_q\cdot \chi_q(q)\}$, where $l_q$ runs through the values that the
  function 
  $\lambda_{q}$ takes on $C(\Q_q)$.
    For every $t\in T$ there are only finitely many points $x\in C(\Q_p)$ such that
  $F(x) = t$.
\end{thm}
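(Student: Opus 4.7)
The plan is to verify the four assertions in turn, with the last (finiteness of the fibers) as the main technical obstacle.

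First I would check that $F\in\acol(C)$. The iterated integrals $\int_b^x\omega_i\int_b^x\omega_j$ and $\int_b^x\omega_k$ are the standard $2$- and $1$-iterated Vologodsky functions constructed from holomorphic forms as in~\S\ref{331}, with $D_2$-class $[\omega_i]\otimes[\omega_j]$ by Corollary~\ref{C:Dkonkitint}. The term $\log_p\circ 1=\iota^*\log_{\L,p}\circ 1$ is a Vologodsky function on $C$ via Theorem~\ref{T:volsum}~\eqref{t4} applied to the section $1\colon C\to \O_C^\times=\iota^*\L^\times$. Thus $F$ is a Vologodsky function, and in particular locally analytic.

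Second, I would deduce $F(C(\Q))\subset T$ by combining the quadraticity of $\hat{h}_\L$ with the local-global decomposition. By Proposition~\ref{P:goodheight}, $\hat{h}_\L$ is quadratic on $J(\Q)$ with vanishing constant term; the finite-index hypothesis implies the logarithms $f_0,\ldots,f_{g-1}$ span $\mathrm{Hom}(J(\Q),\Q_p)$, so $\hat{h}_\L|_{J(\Q)}=\sum a_{ij}f_if_j+\sum b_kf_k$. On the other hand, the canonical adelic metric on $\L$ pulls back via $\iota$ to the adelic metric $(\log_p,\{v_q\}_q)$ on $\O_C$, with the chosen rigidification $r_b$ satisfying $\log_p(1(b))=0$ and $v_q(1(b))=0$; evaluating Definition~\ref{D:ht} on the canonical section $1$, and normalizing $\chi$ so that $t_p=\mathrm{id}$, we get
\[
\hat{h}_\L(\iota(x))\;=\;\log_p(1(x))\;+\;\sum_{q\ne p}\chi_q(q)\lambda_q(x).
\]
Subtraction yields $F(x)=\sum_{q\ne p}\chi_q(q)\lambda_q(x)\in T$. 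Finiteness of $T$ is immediate from Lemma~\ref{L:fin_many}: each $\lambda_q$ takes only finitely many values on $C(\Q_q)$ and vanishes identically for primes of potentially good reduction, so only finitely many bad-reduction primes contribute.

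The hard part is the last assertion, finiteness of $F^{-1}(t)$ for each $t\in T$. The strategy is the standard Chabauty-Coleman one: $C(\Q_p)$ decomposes into finitely many residue disks, on each of which $F-t$ is a convergent power series in a local parameter (with possible logarithmic contributions controlled via Lemma~\ref{L:iflogexists} by the curvature of $\log_{\L,p}$); by Strassman's theorem such a function has finitely many zeros unless it vanishes identically on some disk. Hence the problem reduces to showing that $F-t$ cannot be constant on any residue disk.

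My approach to this nonconstancy is to examine the global invariant $D_2F\in\hdr^1(C)^{\otimes 2}$. By linearity of $D_2$, Corollary~\ref{C:Dkonkitint}, Theorem~\ref{hdelbth}~\eqref{hd3}, and the computation $D_2(\log_p\circ 1)=[\delbr d(\log_p\circ 1)]=[\iota^*\alpha']$ from Proposition~\ref{P:Dkdesc} and Theorem~\ref{T:volsum}~\eqref{t6} (where $\alpha'$ is the curvature of $\log_{\L,p}$), one computes
\[
D_2F=\sum a_{ij}[\omega_i]\otimes[\omega_j]-[\iota^*\alpha'].
\]
The first summand lies in $F^1\otimes F^1\subset\hdr^1(C)^{\otimes 2}$, whereas because $\alpha$ is purely mixed and $\L\notin\Pic^0(J)$, the pulled-back class $[\iota^*\alpha']$ has a nonzero component in $F^1\otimes W$ for any complement $W$ to $F^1$; this forces $D_2F\ne 0$. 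The main obstacle is then to argue that a $2$-iterated Vologodsky integral with $D_2F\ne 0$ cannot be constant on any residue disk: the global $D_2$-class controls the leading-order behavior of the local power-series expansion, and its nonvanishing rules out identical cancellation disk-by-disk. Making this last step precise --- in particular handling the interplay between the global invariant $D_2F$ and local constancy --- is the technical heart of the argument, and is precisely where the higher $\delbr$-operators of~\S\ref{S:higherdelbar} enter in the more general rank setting.
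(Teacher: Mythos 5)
Your proof matches the paper's for the first two assertions (that $F$ is a Vologodsky function, and that $F(C(\Q))\subset T$ with $T$ finite via Lemma~\ref{L:fin_many}). But the core of the theorem is the finiteness of the fibers, and there you have left two genuine gaps.

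First, you reduce to showing ``$F-t$ cannot be constant on any residue disk,'' then flag the passage from this disk-wise statement to the computable global invariant $D_2F$ as ``the technical heart'' and leave it open. In fact this passage is a one-liner: the identity principle for Vologodsky functions (\cite[Lemma~2.5]{Bes05}) says that a Vologodsky function constant on an open subset is constant globally. So disk-wise nonconstancy reduces immediately to global nonconstancy of $F$, and by the exact sequences~\eqref{dexact} and~\eqref{delbarexact} it suffices to show $\delbr\, d F\ne 0$. You do not need the full machinery of the higher $D_k$ operators here --- the paper reserves~\S\ref{S:higherdelbar} for the more general setting $\rank J(\Q)<g+\rank\NS(J)-1$; for $\rank J(\Q)=g$ the basic delbar operator from Theorem~\ref{T:volsum}~\eqref{t6} is enough. (Your $D_2F$ and the paper's $\delbr\, dF$ carry the same information by Theorem~\ref{hdelbth}~\eqref{hd4}, but the paper's route is more elementary.)

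Second, your justification that the key invariant is nonzero is wrong as stated. You attribute the nonvanishing of the component of $[\iota^*\alpha']$ outside $\Omega^1\otimes\Omega^1$ to ``$\alpha$ is purely mixed.'' The purely mixed condition on the Poincar\'e curvature plays no role in this step; what matters is only that $\Curve(\log_L)$ cups to $\ch_1(\L)$. The correct argument is Hodge-theoretic: $\delbr\, d(F-\log_p\circ 1)$ lies in $\Omega^1(C)\otimes\Omega^1(C)$ (it is a polynomial in single integrals of holomorphic forms), while $\delbr\, d(\log_p\circ 1)=\iota^\ast\Curve(\log_L)$. One then shows $\Curve(\log_L)\notin\Omega^1(J)\otimes\Omega^1(J)$ by passing to $\C$: $\ch_1(\L)$ is anti-invariant under complex conjugation, and since $\L$ is nontrivial in $\NS(J)$ this class is nonzero, so it cannot lie in $F^2\hdr^2(J)$ because $F^2\cap\overline{F^2}=0$. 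If $\Curve(\log_L)$ were in $\Omega^1(J)\otimes\Omega^1(J)$, its cup product $\ch_1(\L)$ would land in $F^2$, a contradiction. You should replace your purely-mixed heuristic with this argument.
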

\begin{proof}\label{P:}
   Since $\log_p$ is a Vologodsky function on $\O_C^\times$ and since $1\colon C(\Q_p)\to
   \O_C^\times(\Q_p)$ has no poles, the function $\log_p\circ 1$ is indeed a Vologodsky function on all of $C(\Q_p)$. Hence the same is
   true for $F$.
  
  If $x\in C(\Q)$, then we have
  $$F(x) = \hat{h}_L(\iota(x)) - \log_p\circ 1(x)= \sum_{q\ne p} \lambda_{q}(x)\chi_q(q)\,.$$ 
  The finiteness of $T$ follows from Lemma~\ref{L:fin_many}.

  To prove the final claim, note that a Vologodsky function
is locally analytic. If it assumes a value an infinite number of
times on a residue disc, then it must be constant on that disc, hence by the identity principle for
  Vologodsky functions~\cite[Lemma~2.5]{Bes05} it must be constant. It is therefore sufficient to prove that $F$ is not
  constant. The exact sequences~\eqref{dexact} and~\eqref{delbarexact} imply that it is sufficient to prove that 
  \begin{equation}\label{toshow}
    \overline{\partial} d(F) \ne 0.
  \end{equation}
  
  First note that  $\overline{\partial} d \log_p\circ 1 = \Curve(\log_p)$ as in the
  proof of~\cite[Proposition~4.4]{Bes05}.
But $F - \log_p\circ 1$  is a product of integrals of holomorphic forms and thus its
  $\overline{\partial} d$ resides in $\Omega^1(C)\otimes
\Omega^1(C)\subset \Omega^1(C) \otimes \hdr^1(C ) $. 
  It suffices then to prove that $\Curve(\log_p)$ is not in this subspace. As
  $\Curve(\log_p) = \iota^*\Curve(\log_L)$,
  it suffices to show  that 
  \begin{equation}\label{Curvnotinsubsp}
  \Curve(\log_L) \notin \Omega^1(J) \otimes \Omega^1(J)\,.
  \end{equation}
  To show this, we use the Hodge filtration over $\C$.
  It is well-known that over $\C$, the Chern class of $\L$ 
  is anti-invariant with respect to complex conjugation.
  Hence, since $\L$ is nontrivial in $\NS(J)$ by assumption, the Chern class of $\L$ cannot be in $F^2 \hdr^2(J)$,  because $F^2 \cap \overline{F^2}=0$. 
  This proves~\eqref{Curvnotinsubsp}, and hence
  completes the proof of the theorem.
\end{proof}

\begin{rk}\label{R:equiv1}
  Theorem~\ref{T:QC} is the analogue of \cite[Theorem~1.2]{BD18} using our
  setup. In fact, Theorem~\ref{T:localcomparison} below shows that
  the two results are equivalent in the good reduction setting (while~\cite{BD18}
  assumes good reduction throughout, we do not).
\end{rk}
\begin{rk}\label{R:moregeneralA}
 More generally, suppose that we have an isogeny quotient $\pi \colon J \rightarrow A$ such that
  $\rank A(\Q)  = \dim(A)$, the $p$-adic closure of $A(\Q)$ has finite index in $A(\Q_p)$,
    and a line bundle $\L$ on $A$ whose class in $\NS(A)$ is nontrivial and whose
  pullback to the curve is the trivial bundle. Then we can analogously use the
  canonical $p$-adic height $\hat{h}_L$ associated to $\L$ to write down a nonconstant
  Vologodsky function that takes on finitely many values on the rational
  points $C(\Q)$. Indeed, the class of $\pi^*\L$ is nontrivial in $\NS(J)$, so the same
  Hodge-theoretic argument as in Theorem~\ref{T:QC} shows that the
  difference of the global height and the local height at $p$ is a
  nonconstant Vologodsky function on $C(\Q_p)$. The assumption
  $\rank A(\Q)  = \dim(A)$ guarantees that this function, when pulled back
  to the curve, can analogously be expanded using single and double
  Coleman integrals as before, and takes values on $C(\Q)$ in a finite set
  coming from the local heights away from $p$. See also~\cite{DLF19} for an
  extension of the finiteness statements of Balakrishnan--Dogra~\cite{BD18}
  to this setting.
\end{rk}

 \begin{rk}\label{R:numberfields}
   We have chosen to restrict to the base field $\Q$ for simplicity.
   Our approach can be extended to nice curves $C/K$, where $K$ is a number field, and the
   Jacobian $J/K$ of $C$
   satisfies 
   \begin{equation}\label{rk_nf}
     \rank\NS(J)>1\;\quad\text{ and }\quad\rank J(K)+\rank \O_K^\times\le
     [K:\Q]g\,.
   \end{equation}
 This works
   exactly like the extension of
   the Quadratic Chabauty techniques from~\cite{BBM16} for integral points on hyperelliptic curves over
   $\Q$ presented in~\cite{BBBM21}. More
   precisely, the obvious analogue of Lemma~\ref{L:fin_many} continues to hold. Every
 nontrivial continuous $\Q_p$-valued id\`{e}le class character $\chi$ of $K$ gives us a nontrivial
   Vologodsky function $F^{\chi}$, inducing a relation satisfied by $C(K)$ as in
   Theorem~\ref{T:QC}.  We need
   $[K:\Q]$ relations to have any chance to cut out a finite set, which leads to the
   requirement~\eqref{rk_nf}. As in~\cite{BBBM21}, we cannot hope that this rank condition suffices
   to guarantee finiteness, for instance when $C$ is the base-change of a curve over $\Q$
   that does not satisfy the rank conditions of this section.
   \end{rk}

   \subsection{Quadratic Chabauty for rank $>$ genus}\label{S:nonconstant}
  In \cite[\S5.3]{BD18},  Balakrishnan--Dogra generalize their Theorem~1.2 to
  the setting
  $$\rank J(\Q)< g+\rank \NS(J)- 1\,.$$
 Our Theorem~\ref{T:QC} may also be generalized, without any assumption on
 the reduction, to this case, as we now explain.
  
  Recall that we have fixed a purely mixed
  curvature form on $\P_p$ and a nontrivial continuous id\`{e}le class
  character on $\Q$.  
  This gives a canonical $p$-adic height function associated with any line bundle on $J$, which is
  linear in the line bundle. The association  
  \begin{equation*}
    V'\colonequals \ker(\iota^* \colon \NS(J) \rightarrow \NS(C)) \mapsto \Q_p^{J(\Q)}
  \end{equation*}
  given by sending a class in the kernel to the canonical $p$-adic height associated with the unique line
  bundle in its class that pulls back to the trivial line bundle on $C$, is
  linear by Corollary~\ref{C:ht_bund_lin} and therefore
we may tensor it with $\Q_p$. By also evaluating on $J(\Q)$ the integrals of elements of
$H^0(C_p,\Omega^1)$ we get a map
  \begin{equation}\label{eq:V}
P \colon J(\Q) \rightarrow V \colonequals H^0(C_p,\Omega^1)^{\vee} \oplus
    { V'}^{\vee} \otimes {\Q_p} \cong \mathbb{Q}_p^{g+\nu}\,;\quad \nu =
    \rank \NS(J)-1\,,
  \end{equation}
  where the maps $J(\Q) \rightarrow H^0(C_p,\Omega^1)^{\vee}$ is linear, and $J(\Q) \rightarrow { V'}^{\vee} \otimes {\Q_p}$ is quadratic, since by Proposition~\ref{P:goodheight}, we know that the canonical  $p$-adic height $\hat{h}_{\L}$ associated to any line bundle $\L$
  is a   quadratic function. Therefore, the map $P$ extends to a polynomial map  
$P \colon J(\Q) \otimes \Q_p \rightarrow V$. By our assumption that $$\rank
J(\mathbb{Q}) = \dim J(\mathbb{Q}) \otimes \mathbb{Q}_p <
g+\rank\NS(J)-1\,,$$ 
the image of $P$ is a proper Zariski closed subvariety. Hence we obtain:
  \begin{lemma}\label{L:noncons}
   There is a nonconstant polynomial function $R$ on $V$ such that $R\circ P=0$ on $J(\Q)$.
  \end{lemma}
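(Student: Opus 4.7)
The plan is to exploit the polynomial nature of $P$ together with the dimension inequality. First, I would check that each coordinate of $P$ is a polynomial function on $J(\mathbb{Q})$ of degree at most~$2$: the projections to $H^0(C_p,\Omega^1)^\vee$ are the abelian logarithms $b\mapsto (\omega\mapsto \log(b)(\omega))$, which are $\mathbb{Z}$-linear, while the projections to $(V')^\vee \otimes \mathbb{Q}_p$ are of the form $b\mapsto \hat{h}_L(b)$ for a class in $V'$, and these are quadratic functions with no constant term by Proposition~\ref{P:goodheight}. Writing each such quadratic as $b_i(\cdot,\cdot)+\ell_i(\cdot)$ with $b_i$ a symmetric $\mathbb{Z}$-bilinear form and $\ell_i$ a $\mathbb{Z}$-linear form on $J(\mathbb{Q})$, I extend $\mathbb{Q}_p$-bilinearly/linearly to $J(\mathbb{Q})\otimes \mathbb{Q}_p$; this yields a well-defined morphism of $\mathbb{Q}_p$-varieties
\[
\tilde{P}\colon J(\mathbb{Q})\otimes \mathbb{Q}_p \longrightarrow V,
\]
of total degree at most~$2$, whose restriction to the image of $J(\mathbb{Q})$ is $P$.

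Next I would argue by dimension. The source of $\tilde{P}$ is a $\mathbb{Q}_p$-affine space of dimension $\dim_{\mathbb{Q}_p}\big(J(\mathbb{Q})\otimes \mathbb{Q}_p\big)=\rank J(\mathbb{Q})$. By Chevalley's theorem, the image of $\tilde{P}$ is a constructible subset of~$V$, and its Zariski closure $Z\subseteq V$ has dimension at most $\rank J(\mathbb{Q})$. The hypothesis
\[
\rank J(\mathbb{Q})< g+\nu = \dim V
\]
then forces $Z$ to be a proper Zariski-closed subvariety of $V$. I would then choose any nonconstant polynomial $R\in \mathbb{Q}_p[V]$ in the ideal of~$Z$, so that $R$ vanishes on the image of $\tilde{P}$, hence on~$P(J(\mathbb{Q}))$.

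The only delicate point is verifying that $\hat{h}_L$ extends as advertised, i.e.\ that the quadratic map $\hat{h}_L\colon J(\mathbb{Q})\to \mathbb{Q}_p$ really is given by the restriction to $J(\mathbb{Q})$ of a polynomial map on $J(\mathbb{Q})\otimes \mathbb{Q}_p$ of degree~$\le 2$ in the coordinates of some $\mathbb{Q}_p$-basis. This, however, is forced by the fact that a quadratic function on a finitely generated abelian group (in the sense used throughout this section, namely, the sum of a bilinear form and a linear form) uniquely extends $\mathbb{Q}_p$-polynomially to its $\mathbb{Q}_p$-scalar extension. With this in hand, the rest of the argument is a standard dimension count and poses no obstacle.
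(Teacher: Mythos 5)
Your proof is correct and follows essentially the same route as the paper: use Proposition~\ref{P:goodheight} to see $\hat h_L$ is quadratic (and the abelian logs linear), extend $P$ to a degree-$\le 2$ polynomial map on $J(\Q)\otimes\Q_p$, and then observe that the rank hypothesis forces the (closure of the) image to be a proper subvariety of~$V$. Your invocation of Chevalley's theorem to justify bounding the dimension of the Zariski closure of the image is a mild tightening of the paper's phrasing, which asserts the image is a proper Zariski closed subvariety rather than speaking of its closure, but the argument is the same.
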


To get a result about rational points on $C$ we combine this with the following.
   \begin{thm}\label{T:nonconstant}
   For any nonconstant polynomial function $R$ on $V$ there is a finite collection
   $\{f_t\}$ of nonzero Vologodsky functions on  $C(\Q_p)$ such that for any $x\in
   C(\Q)$ there exists some $t$ such that 
   \begin{equation*}
     R(P(\iota(x)))= f_t(x)
   \end{equation*}
  \end{thm}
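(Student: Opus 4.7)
The plan is to split the pull-back $R \circ P \circ \iota$ on $C(\Q)$ into a Vologodsky contribution at $p$ plus finitely many constant contributions from primes $q \ne p$, and then use the higher $\delbr$-operators $D_k$ of Section~\ref{S:higherdelbar} to show that the resulting Vologodsky functions $f_t$ are nonzero.

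First, fix a $\Z$-basis $L_1, \ldots, L_\nu$ of $V'$. By Proposition~\ref{P:goodheight}, Lemma~\ref{L:fin_many} and the local decomposition used in the proof of Theorem~\ref{T:QC},
\begin{equation*}
  \hat{h}_{L_j}(\iota(x)) = \log_p \circ 1_{L_j}(x) + c_j(x), \qquad c_j(x) := \sum_{q \ne p} \lambda_{L_j,q}(x)\chi_q(q),
\end{equation*}
where $c_j$ takes values on $C(\Q)$ in a finite subset $T_j \subset \Q_p$. Setting $T := \prod_j T_j$ and
\begin{equation*}
  f_t(x) := R\bigl(\textstyle\int_b^x \omega_i;\; \log_p \circ 1_{L_j}(x) + t_j\bigr)\qquad (t = (t_j) \in T),
\end{equation*}
each $f_t$ is a polynomial expression in Vologodsky functions on $C \otimes \Q_p$ and hence a Vologodsky function itself. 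For $x \in C(\Q)$ one has $R(P(\iota(x))) = f_{t(x)}(x)$ with $t(x) := (c_j(x)) \in T$.

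To show that $f_t$ is nonzero for each $t$, I assign weighted degree $1$ to each $\int_b^x \omega_i \in I^1$ and weighted degree $2$ to each $\log_p \circ 1_{L_j} \in I^2$; the latter membership follows from the local description~\eqref{E:logfromitint} together with Example~\ref{E:meroint}(\ref{P:itint}). Let $d$ be the top weighted degree of $R$ and let $R_d$ be its (nonzero) homogeneous component. The substitution $v_j \mapsto v_j + t_j$ only affects terms of strictly lower weighted degree, so Theorem~\ref{hdelbth}(\ref{hd1})(\ref{hd3}) gives that $D_d(f_t) \in \hdr^1(C)^{\otimes d}$ is independent of $t$. Combining the shuffle-product identity of Theorem~\ref{hdelbth}(\ref{hd2}) with Corollary~\ref{C:Dkonkitint} and Theorem~\ref{hdelbth}(\ref{hd4}), $D_d(f_t)$ equals the shuffle-algebra evaluation of $R_d$ at $u_i \mapsto [\omega_i]$ and $v_j \mapsto D_2(\log_p \circ 1_{L_j}) = [\iota^*\Curve(\log_{L_j})]$.

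The main obstacle is to show this shuffle evaluation is nonzero; once this is established, $f_t \not\equiv 0$ and the theorem follows. My approach generalizes the Hodge-theoretic argument from the proof of Theorem~\ref{T:QC}. After projecting modulo $\Omega^1(C)^{\otimes d} \subset \hdr^1(C)^{\otimes d}$, all shuffle products of pure-holomorphic classes $[\omega_i]$ vanish, so only the monomials of $R_d$ containing at least one $v_j$ contribute. For these I use that via the Albanese isomorphism $\iota^* \colon \hdr^1(J) \xrightarrow{\sim} \hdr^1(C)$ the classes $[\iota^*\Curve(\log_{L_j})]$ correspond to $[\Curve(\log_{L_j})] \in \Omega^1(J) \otimes \hdr^1(J)$, whose cup products recover the Chern classes $\ch_1(L_j) \in \hdr^2(J)$; the latter are $\Q_p$-linearly independent (the $L_j$ are independent in $\NS(J) \otimes \Q_p$) and none lies in $F^2 \hdr^2(J)$ (by anti-invariance under complex conjugation). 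Tracking the Hodge filtration on tensor powers and combining with the linear independence of the $[\omega_i]$ in $\Omega^1(C)$ then yields the desired non-vanishing.
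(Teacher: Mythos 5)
The setup — decomposing $\hat h_{L_j}\circ\iota$ into a Vologodsky term at $p$ plus a finitely-valued constant $c_j$, defining $f_t$ by shifting along $T = \prod_j T_j$, checking $f_t \in \acol(C_p)$, and then attacking nonvanishing via the top-degree component of $D_d$ together with the shuffle identity of Theorem~\ref{hdelbth} — mirrors the paper's argument, including the observation that the $t$-shift only perturbs lower-weight monomials (Theorem~\ref{hdelbth}~(\ref{hd1},\ref{hd3})) so that $D_d(f_t)$ is $t$-independent, and the identification $D_2 h_j = [\iota^*\Curve(\log_{L_j})]$ with the Hodge-theoretic independence of their images in $\hdr^2(J)/F^2$ (the paper's Lemma~\ref{dtindep}). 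Up to this point you and the paper agree.

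The gap is the last sentence. The heart of Theorem~\ref{T:nonzeropoly} is showing that the shuffle-algebra evaluation you describe does not vanish, and this is not a formal consequence of linear independence of the $[\omega_i]$ and of the curvature classes: because the shuffle product is highly non-injective, one needs the nontrivial combinatorial input packaged in the paper's Corollary~\ref{modfinj}, namely that $\Symm^m(F)\otimes\Symm^k(F\otimes W)\to T(H)/F^{k+m+1}$ is injective. The paper proves this by an analytic detour (realising a putative relation as a vanishing iterated integral of arbitrary $C^\infty$-forms on $[0,1]$ and showing this forces a polynomial identity to be trivial). ``Tracking the Hodge filtration and combining with linear independence'' elides exactly this step. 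A second, related problem: you project modulo $\Omega^1(C)^{\otimes d}$ and declare that ``pure-holomorphic'' shuffle products die, so only $v_j$-containing monomials contribute. That is fine as a sufficient criterion, but it means your argument says nothing when $R_d$ happens to involve only $u_i$'s — a case the paper must (and does) handle separately, using injectivity of $\Symm^m\hdr^1(C)\to T(\hdr^1(C))$. Also note that to isolate the contribution of the monomials with the maximal number of $v_j$'s from those with fewer, the paper does not work modulo $\Omega^1(C)^{\otimes d}=F^d$ but modulo the coarser $F^{k+m+1}$, after first restricting $R_d$ (the paper's $E_1$) to the sub-polynomial $G$ of minimal $\deg_2$; skipping that refinement leaves contributions from several strata of the filtration on the table, and establishing that they cannot cancel requires reproducing the same lemma anyway. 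So the proposal is structurally correct but is missing the one genuinely new ingredient.
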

  Since a nonzero Vologodsky function has finitely many zeroes in $C(\Q_p)$, we get the following corollary.
\begin{cor}\label{C:nonconstantapplication} Assume that $\rank J(\Q) < g + \rank \NS(J)-1$. 
 Then there is a finite collection
   $\{f_t\}$ of nonzero Vologodsky functions on  $C(\Q_p)$ such that
   \begin{equation*}
     C(\Q)\subset \cup_t \{x\in C(\Q_p),\; f_t(x)=0 \}.
   \end{equation*}
   Furthermore each $f_t$ has finitely many zeroes in $C(\Q_p)$.
\end{cor}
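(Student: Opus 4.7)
The plan is to write $R\circ P\circ \iota$, after restriction to $C(\Q)$, as a finite family of candidate Vologodsky functions indexed by the possible values of the local heights away from $p$, and then to use the higher $D_k$-operators of Theorem~\ref{hdelbth} to certify that each such candidate is nonzero.

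First I would set up the decomposition. Put $F_i(x)\colonequals \int_b^x \iota^*\omega_i$, a $1$-iterated Vologodsky integral with $D_1(F_i)=[\iota^*\omega_i]\in \hdr^1(C)$. For $j=1,\ldots,\nu$, let $\log_{p,j}$ denote the canonical log function on $\iota^*L_j=\O_C$ obtained by pulling back the canonical log function on $L_j$ along $\iota$, and set $H_j(x)\colonequals \log_{p,j}(1(x))$. The sketch of construction of log functions on curves in \S\ref{subsec:logcurves} together with Example~\ref{E:meroint}~(b) shows that $dH_j\in \ocola^1$; hence by Proposition~\ref{iterdf} we have $H_j\in I^2$, and Theorem~\ref{hdelbth}~(d) gives $D_2(H_j)=\iota^*\Curve(\log_{L_j,p})\in H^0(C,\Omega^1)\otimes \hdr^1(C)\subset\hdr^1(C)^{\otimes 2}$. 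The local decomposition of the canonical height~\eqref{E:hhatbundle}, using $\chi_p(p)=0$, gives for $x\in C(\Q)$
$$\hat h_{L_j}(\iota(x))=H_j(x)+s_j(x),\qquad s_j(x)\colonequals \sum_{q\neq p}\lambda_{q,j}(x)\chi_q(q),$$
and Lemma~\ref{L:fin_many} ensures each $s_j$ takes values in a finite set $T_j\subset \Q_p$. For each tuple $\vec t=(t_1,\ldots,t_\nu)\in T_1\times\cdots\times T_\nu$, define
$$f_{\vec t}(x)\colonequals R(F_1(x),\ldots,F_g(x),H_1(x)+t_1,\ldots,H_\nu(x)+t_\nu),$$
so that $R(P(\iota(x)))=f_{\vec t(x)}(x)$ for every $x\in C(\Q)$, with $\vec t(x)=(s_1(x),\ldots,s_\nu(x))$. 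This produces a finite family, and it remains to prove that each $f_{\vec t}$ is not the zero function.

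Since $R(F,H+\vec t)$ is again a nonconstant polynomial in $(F,H)$ whenever $R$ is, it suffices to treat the case $\vec t=\vec 0$. Assign weight $1$ to each $F_i$ and weight $2$ to each $H_j$, so that by Proposition~\ref{P:prod} any monomial $F^{\vec a}H^{\vec b}$ lies in $I^{|\vec a|+2|\vec b|}$. Let $w\ge 1$ be the maximum weight of a monomial of $R$ with nonzero coefficient; this exists because $R$ is nonconstant. Using Theorem~\ref{hdelbth}~(a)--(c), Proposition~\ref{dkprop}~(\ref{P:prodtoshuffle}), and Corollary~\ref{C:Dkonkitint}, one obtains
$$D_w(f_{\vec 0})=\sum_{\substack{M=F^{\vec a_M}H^{\vec b_M}\\ |\vec a_M|+2|\vec b_M|=w}}c_M\cdot \mathrm{Sh}_M \ \in\ \hdr^1(C)^{\otimes w},$$
where $\mathrm{Sh}_M$ denotes the shuffle product in $T(\hdr^1(C))$ of the $D_1(F_i)$'s and $D_2(H_j)$'s corresponding to $M$. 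The problem reduces to showing this sum is nonzero whenever some $c_M\ne 0$.

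The main obstacle is proving precisely this shuffle nonvanishing, i.e., that the elements $\{[\iota^*\omega_i]\}_{i=0}^{g-1}\cup\{D_2(H_j)\}_{j=1}^{\nu}$ are algebraically independent in the shuffle algebra on $\hdr^1(C)$. The plan for this uses Hodge theory. The $[\iota^*\omega_i]$ form a basis of $F^1\hdr^1(C)=H^0(C,\Omega^1)$. The map $L\mapsto \Curve(\log_{L,p})$ on $V'\otimes \Q_p$ is injective because its composition with the cup product is the Chern class map $\NS(J)\otimes \Q_p\to \hdr^2(J)$, which is injective by Lefschetz $(1,1)$; moreover, $\iota^*$ induces isomorphisms $H^0(J,\Omega^1)\otimes\hdr^1(J)\cong H^0(C,\Omega^1)\otimes\hdr^1(C)$, so the $D_2(H_j)$ are linearly independent in $\hdr^1(C)^{\otimes 2}$. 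Fixing a splitting $\hdr^1(C)=F^1\oplus W$, each $D_1(F_i)$ contributes one tensor factor in $F^1$ and none in $W$, whereas each $D_2(H_j)$ contributes one tensor factor in $F^1$ and, by the Hodge-theoretic argument used already in the proof of Theorem~\ref{T:QC} (applied to any nonzero linear combination of the $L_j$), a nonzero component lying in $F^1\otimes W$, linearly independent across $j$. Counting $W$-factors separates shuffles coming from monomials with different values of $|\vec b|$, and within each fixed $|\vec b|$ the remaining separation follows from the linear independence of the $W$-projections of the $D_2(H_j)$'s, of the $[\iota^*\omega_i]$'s, and the standard freeness of the shuffle algebra on linearly independent generators. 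This yields $D_w(f_{\vec 0})\neq 0$, hence $f_{\vec 0}\neq 0$, completing the proof.
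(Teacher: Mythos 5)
Your overall route matches the paper's own proof: you build a finite family $f_{\vec t}$ indexed by the possible values of the away-from-$p$ local heights (as in the proof of Theorem~\ref{T:nonconstant}), you put $F_i\in I^1$ and $H_j\in I^2$ and apply $D_w$ at the top weight, and you invoke the Hodge-theoretic independence of the curvatures $D_2 H_j$ modulo $\Omega^1(C)^{\otimes 2}$ together with a shuffle-algebra injectivity to conclude that each $f_{\vec t}$ is nonzero. This is the argument of Theorems~\ref{T:nonconstant} and~\ref{T:nonzeropoly}.

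The genuine gap is in the final nonvanishing step, where you appeal to ``the standard freeness of the shuffle algebra on linearly independent generators.'' What is actually needed is injectivity of the evaluation map
\[\Symm^m(F)\otimes \Symm^k(F\otimes W)\longrightarrow T(\hdr^1(C))/F^{k+m+1}\]
where $F=\Omega^1(C)$ and $W$ is a complement, i.e.\ with generators of two different tensor degrees (the $[\iota^*\omega_i]$ in degree $1$, the $W$-projections $z_j$ of $D_2 H_j$ in degree $2$, not pure tensors). This is \emph{not} a standard freeness statement; the paper proves it as a nontrivial lemma (followed by Corollary~\ref{modfinj}) using an analytic argument with iterated integrals of differentiable functions on $[0,1]$. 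Without that step, your chain is incomplete. Relatedly, your phrase ``counting $W$-factors separates shuffles coming from monomials with different values of $|\vec b|$'' is too strong: a monomial with $k'$ copies of $H_j$ contributes to every $W$-factor count $0,\ldots,k'$, so only the \emph{maximal} $W$-count isolates a single homogeneity type (which is exactly the role of the filtration $F^\bullet$ and of working modulo $F^{k+m+1}$ in the paper). That maximal piece suffices for the nonvanishing, so this second imprecision is repairable; the first one is the place where you would need to reproduce or cite the paper's injectivity lemma.

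One smaller point: you frame the obstacle as ``proving that $\{[\iota^*\omega_i]\}\cup\{D_2(H_j)\}$ are algebraically independent in the shuffle algebra.'' That is a stronger statement than what is needed, and your $W$-count argument does not prove it in general (for non-maximal $|\vec b|$). The paper proves precisely the homogeneous vanishing needed for its top-weight, bottom-$\deg_2$ piece $G$, and treats the pure-$l$ case separately via the symmetrization map; you should similarly restrict the claim to the piece you can actually control.
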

\begin{proof}[{Proof of Theorem~\ref{T:nonconstant}}]
We pick a basis $\omega_1,\ldots,\omega_g$ for $H^0(C_p,\Omega^1)$  and
we let $l_1,\ldots,l_g \in H^0(C_p,\Omega^1)^\vee$ the corresponding dual basis (note that the restriction of each $l_i$ to the points of $C(\Q)$ may be computed by a Vologodsky integral of the corresponding $\omega_i$). 
We also pick line
bundles $\L_1,\ldots,\L_\nu$ on $J$ whose classes form a basis of $V'$ and which
all satisfy $\iota^*\L_j = \O_C$. These give
coordinates on $V$ and we can write any polynomial function $R$ on $V$ as 
\begin{equation*}
  R= E(l_1,\ldots,l_g,\hat{h}_{\L_1},\ldots,\hat{h}_{\L_\nu})
\end{equation*}
for some polynomial $E$ in $g+\nu$ variables. 
For $x\in C(\Q)$ we get 
\begin{equation*}
  R(P(\iota(x))) = E(l_1(x),\ldots,l_g(x),
  \hat{h}_{\L_1}(\iota(x)),\ldots,\hat{h}_{\L_\nu}(\iota(x))).
\end{equation*}
For each index $j$ we have  $\hat{h}_{\L_j}(\iota(x))= h_p^j(x) + \sum_{q \neq p}
h_q^j(x)$,  where $h_p^j = \log^j_p\circ 1$ and $h_q^j =
v_q\circ 1$; here $1$ is a nowhere vanishing section on $\O_C$ and the
functions $\log^j_p$ and $v_q$ are as in~\eqref{vq}. 
By Lemma~\ref{L:fin_many}, 
the functions $h_q^j$ take values on $C(\Q_q)$ in a finite set $T_q^j$,
and we have $T_q^j\colonequals \{0\}$ when $q$ is a prime of potentially
  good reduction. We set $T_j \colonequals 
\{\sum_{q\ne p} t_q\,:\, t_q \in T_q^j\}$, so that
the function $h_f^j\colonequals \sum_{q \neq p} h_q^j$ takes values in
the finite set $T_j$ on  $C(\Q)$.
Let $T\colonequals T_1\times \cdots\times T_\nu$ and for $t \in T$ let 
$$f_t (x)\colonequals E(l_1(x),\ldots,l_g(x),h_p^1(x)+t _1,\ldots,h_p^\nu(x)+t _\nu)$$ 
for $x\in C(\Q_p)$. Since the $t_i$ are constants, we may view $E$ as a
  nonzero polynomial evaluated in $l_1(x),\ldots,l_g(x)$,
  $h_p^1(x),\ldots,h_p^\nu(x)$. As Vologodsky functions are closed under addition and multiplication
    it follows that $f_t $ is a Vologodsky function for each
    $t \in T$.  For each $x\in C(\Q)$ we have 
   \begin{equation*}
     R(P(\iota(x)))= f_t (x)\,,\quad\text{where}\;t
     =(h_f^1(x),\ldots,h_f^\nu(x))\,.
   \end{equation*}
   For simplicity, write $h_j$ for $h_p^j$. It remains to show that all the functions
       $f_t $ are nonzero. This follows from Theorem~\ref{T:nonzeropoly} below since $E$ is a nonzero polynomial.
  \end{proof}
  
  \begin{thm}\label{T:nonzeropoly}
  Let $l_1,\ldots,l_g,h_1,\ldots,h_\nu$ be as in the proof of Theorem~\ref{T:nonconstant}. 
   Suppose that we have a polynomial $E$ in $g+\nu$ variables
  such that  $E(l_1,\ldots,l_g,h_1,\ldots,h_\nu)$ 
  is identically $0$ as a
  Vologodsky function on $C(\Q_p)$.  Then $E$ is identically zero.
  \end{thm}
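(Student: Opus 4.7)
The plan is to apply the higher-order differential operators $D_K$ of Theorem~\ref{hdelbth} in order to reduce the algebraic independence of the $(l_i,h_j)$ as Vologodsky functions to a linear-independence question for shuffle products in tensor powers of $\hdr^1(C)$. First I would impose the weight grading $\mathrm{wt}(X_i)=1$, $\mathrm{wt}(Y_j)=2$, matching the filtration levels $I^{\bullet}$: indeed $l_i=\int\omega_i\in I^1$, and $h_j\in I^2$ by Proposition~\ref{iterdf} (since $dh_j$ is a once-iterated form by the second defining property of log functions). Proposition~\ref{P:prod} then guarantees that every monomial $X^\alpha Y^\beta$ of weight $K=|\alpha|+2|\beta|$ evaluates into $I^K$. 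Assuming $E\neq 0$ toward a contradiction, let $K_0$ be the largest weight with nonzero graded part $E^{(K_0)}$ and apply $D_{K_0}$ to $E(l,h)=0$; Theorem~\ref{hdelbth}(c) kills all lower-weight contributions and leaves $D_{K_0}\bigl(E^{(K_0)}(l,h)\bigr)=0$.

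By the shuffle compatibility of Theorem~\ref{hdelbth}(b), Corollary~\ref{C:Dkonkitint} and Theorem~\ref{hdelbth}(d) -- which identify $D_1(l_i)=[\omega_i]\in V^+\colonequals\Omega^1(C)$ and $\zeta_j\colonequals D_2(h_j)=[\iota^{\ast}\Curve(\log^{\,j}_p)]\in V^+\otimes\hdr^1(C)$ -- this vanishing unpacks to $\sum_{|\alpha|+2|\beta|=K_0}c_{\alpha,\beta}\,[\omega]^{\ast\alpha}\ast\zeta^{\ast\beta}=0$ in $\hdr^1(C)^{\otimes K_0}$. Pick a complementary subspace $W\subset\hdr^1(C)$ to $V^+$ (for instance the one associated to the chosen purely mixed curvature form via Proposition~\ref{P:curvspace}) and decompose $\zeta_j=\zeta_j^+ +\zeta_j^-$ with $\zeta_j^+\in V^+\otimes V^+$ and $\zeta_j^-\in V^+\otimes W$. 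Grade $\hdr^1(C)^{\otimes K_0}$ by \emph{impurity} -- the number of tensor positions in $W$ -- and let $M$ be the maximal $|\beta|$ with $c_{\alpha,\beta}\neq 0$. Since $[\omega]^{\ast\alpha}\ast\zeta^{\ast\beta}$ has impurity at most $|\beta|$, with impurity-$|\beta|$ component equal to $[\omega]^{\ast\alpha}\ast(\zeta^-)^{\ast\beta}$, projecting the relation onto impurity $M$ annihilates all terms with $|\beta|<M$ and yields $\sum_{|\beta|=M}c_{\alpha,\beta}\,[\omega]^{\ast\alpha}\ast(\zeta^-)^{\ast\beta}=0$. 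Iterating this top-impurity projection (a descending induction on $M$) reduces the theorem to showing, for every fixed $d\ge 0$, that the collection $\bigl\{[\omega]^{\ast\alpha}\ast(\zeta^-)^{\ast\beta}:|\beta|=d,\ |\alpha|=K_0-2d\bigr\}$ is linearly independent in $V^{\otimes K_0}$.

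Two ingredients close the argument. First, $\zeta_1^-,\ldots,\zeta_\nu^-$ are linearly independent in $V^+\otimes W$: the map $\L\mapsto\zeta_\L^-$ is linear by Corollary~\ref{C:ht_bund_lin} (canonical log functions, and thus their curvatures, add linearly in $\L$), and since $\iota^{\ast}\colon\hdr^1(J)\xrightarrow{\sim}\hdr^1(C)$ is an isomorphism of Hodge structures, a nonzero $\L\in\ker\bigl(\NS(J)\to\NS(C)\bigr)\otimes\Q_p$ with $\zeta_\L^-=0$ would yield $\Curve(\log_{\L,p})\in\Omega^1(J)\otimes\Omega^1(J)$, forcing $\ch_1(\L)\in F^2\hdr^2(J_{\mathbb C})$ and contradicting $F^2\cap\overline{F^2}=0$ exactly as in the proof of Theorem~\ref{T:QC}. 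Second, since the commutative shuffle algebra $T(\hdr^1(C))^{\mathrm{sh}}$ is a polynomial ring on its Lyndon basis, the subalgebra generated by $\{[\omega_i]\}\subset V^+$ together with $\{\zeta_j^-\}\subset V^+\otimes W$ is a free polynomial algebra on these $g+\nu$ generators: the Lie-primitive projections of the $\zeta_j^-$ to the antisymmetric part of $V^{\otimes 2}$ remain linearly independent, since $V^+\cap W=0$ makes the antisymmetrization $V^+\otimes W\hookrightarrow V\wedge V$ injective on their span. This yields the required linear independence of shuffle products, forces all $c_{\alpha,\beta}=0$, and contradicts $E^{(K_0)}\neq 0$. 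The principal obstacle I anticipate is this last freeness claim in the shuffle algebra; while it follows in principle from the Lyndon-basis structure theorem for $T(V)^{\mathrm{sh}}$, the careful matching of the Hodge complement $W$ with the chosen curvatures so as to verify non-degeneracy of the Lie-primitive projections is the technical heart of the argument.
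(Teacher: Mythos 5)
Your proposal is essentially correct and follows the same broad strategy as the paper's proof, with one genuine variation in the key linear-independence lemma. Let me compare the two.

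The setup is the same: you assign weight $1$ to the $l_i$ and weight $2$ to the $h_j$ (this is the paper's $\deg_1$), apply $D_{K_0}$ to the top-weight part so that Theorem~\ref{hdelbth}\eqref{hd3} kills the lower strata, and then project to a graded piece to isolate the terms with the most $h_j$'s. Your ``impurity'' grading (number of tensor slots in $W$) is precisely the dual of the paper's decreasing filtration $F^\bullet$ on $T(H)$, where $F^r$ consists of tensors with at least $r$ slots in $F=\Omega^1(C)$: impurity $=M$ corresponds to working modulo $F^{K_0-M+1}$, and your top-impurity projection is the same as the paper's passage from $E_1$ to $G$ (the sub-polynomial of maximal $\deg_1$, minimal $\deg_2$) followed by reduction mod $F^{k+m+1}$. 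Your iterated descending induction on $M$ is slightly more circuitous than the paper's single-shot ``show $G=0$'' contradiction, but it is equivalent. Your identification of $D_2 h_j$ with the pulled-back curvature class, and your argument for the independence of the $\zeta_j^-$ modulo $\Omega^1(C)^{\otimes 2}$ via the Hodge-theoretic fact $F^2\cap\overline{F^2}=0$, are exactly the content of the paper's Lemma~\ref{dtindep}.

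The one point where the two proofs genuinely diverge is the freeness statement you flag as ``the technical heart'': that $\omega_i$ (degree $1$, in $F$) together with $\zeta_j^-$ (degree $2$, in $F\otimes W$) generate a free polynomial subalgebra of the shuffle algebra. The paper proves this as the unnamed Lemma preceding Corollary~\ref{modfinj}, by realizing shuffle products as iterated integrals of $1$-forms on $[0,1]$, specializing to forms of type $f_i\,dx$ and $dg_j$, and observing that the resulting values and inner products can be prescribed arbitrarily; this forces any shuffle-polynomial relation to vanish identically. You instead invoke Radford's theorem that $T(H)^{\mathrm{sh}}$ is a polynomial ring on Lyndon words and argue by independence of the degree-$2$ primitive (antisymmetric) projections of the $\zeta_j^-$. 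This does work, and your observation that $F\otimes W\cap\Symm^2 H=0$ (since $F\otimes W$ and $W\otimes F$ are distinct direct summands) correctly establishes that linear independence of the $\zeta_j^-$ already gives linear independence of their antisymmetrizations. What remains to be said — and what you should make explicit rather than merely gesture at — is why independence-mod-decomposables in degree $2$ upgrades to algebraic independence: since each $\zeta_j^-$ differs from its degree-$2$ Lyndon primitive only by a decomposable (symmetric-tensor) correction, replacing those primitives by the $\zeta_j^-$ is a triangular, hence invertible, change of generators in the Lyndon polynomial presentation of $T(H)^{\mathrm{sh}}$. Note that this upgrade is \emph{not} automatic for arbitrary degree-$2$ elements (e.g., in $\Q[x,y]$ the elements $x,\,y^2,\,xy$ have independent leading parts but satisfy $(xy)^2=x^2\cdot y^2$); it is the special position of the $\zeta_j^-$ in $F\otimes W$, disjoint from $\Symm^2 F$, that makes it go through. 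The paper's $[0,1]$-integration trick sidesteps the structure theory of the shuffle algebra entirely, which is arguably more self-contained, while your Lyndon-word route is more purely algebraic. Both are correct.

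One small notational caution: you wrote ``$\zeta_j^+\in V^+\otimes V^+$'', overloading the superscript $+$; it would be cleaner to keep $V^+$ as your name for $\Omega^1(C)$ and use a different symbol for the pure/impure decomposition of $\zeta_j$.
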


For proving this theorem, we will use the results of \S\ref{S:higherdelbar}.
We will also need a few preparatory lemmas.
\begin{lemma}
Let $H$ be a finite dimensional vector space over a field $K$ of characteristic $0$ with a decomposition  
$H=F\oplus W$. Consider the map 
\begin{equation*}
  S\colon \Symm^m(F) \otimes \Symm^k(F\otimes W) \to T(H)  
\end{equation*}
whose target $T(H)$ is the shuffle algebra on $H$ (i.e. the tensor algebra
  $T(H)$ on $H$, with (commutative) multiplication coming from the shuffle
  product \eqref{shuffle}) defined as follows. The map is induced by the
  embeddings $F\otimes W
\to H\otimes H$, $F\to H$, and extended to the symmetric powers $\Symm^m(F)$ and $\Symm^k(F\otimes W)$  using the shuffle product~\eqref{shuffle} on the commutative algebra $T(H)$. Then $S$ is injective.
\end{lemma}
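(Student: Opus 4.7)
The plan is to prove injectivity by exhibiting a single projection on $H^{\otimes(m+2k)}$ that detects the input faithfully. Since $H = F \oplus W$, the tensor power $H^{\otimes(m+2k)}$ admits a canonical direct-sum decomposition indexed by $\{F,W\}$-strings of length $m+2k$. I would project onto the single summand
$$U \colonequals (F \otimes W)^{\otimes k} \otimes F^{\otimes m} \subset H^{\otimes(m+2k)},$$
whose first $2k$ positions carry the pattern $F,W,F,W,\ldots,F,W$ and whose last $m$ positions all lie in $F$. Call this projection $\pi$.

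Write $\tilde f \in F^{\otimes m}$ for the iterated shuffle of a pure tensor $f_1 \cdots f_m \in \Symm^m(F)$, namely $\tilde f = \sum_{\sigma \in S_m} f_{\sigma(1)} \otimes \cdots \otimes f_{\sigma(m)}$, and $\tilde g \in (F \otimes W)^{\otimes k}$ for the analogous image of a pure tensor in $\Symm^k(F \otimes W)$. By definition of $S$, the image of such a product under $S$ is the shuffle product $\tilde f \ast \tilde g$.

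The main combinatorial step is to show that among all $(m,2k)$-shuffles appearing in $\tilde f \ast \tilde g$, exactly one lands in $U$: the one placing the entries of $\tilde g$ in the first $2k$ positions and the entries of $\tilde f$ in the last $m$. Indeed, $\tilde g$ carries the internal pattern $F,W,F,W,\ldots$, so if its entries occupy positions $p_1 < p_2 < \cdots < p_{2k}$ in a shuffle term, the $W$-positions of $U$ being $\{2,4,\ldots,2k\}$ force the $k$ even-indexed entries $p_{2j}$ to equal $\{2,4,\ldots,2k\}$ in increasing order, i.e., $p_{2j} = 2j$; the interleaving $p_{2j-1} < p_{2j}$ then pins $p_{2j-1} = 2j-1$. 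Hence $\pi(\tilde f \ast \tilde g) = \tilde g \otimes \tilde f$.

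Finally, the iterated-shuffle maps $\Symm^m(F) \to F^{\otimes m}$ and $\Symm^k(F \otimes W) \to (F \otimes W)^{\otimes k}$ are $m!$ (respectively $k!$) times the standard symmetrization, hence injective in characteristic zero. Composed with the obvious swap $F^{\otimes m} \otimes (F \otimes W)^{\otimes k} \cong (F \otimes W)^{\otimes k} \otimes F^{\otimes m}$, this shows that $\pi \circ S$ is injective, whence so is $S$. The only genuinely non-routine point is the combinatorial identification of the projected image as a single shuffle term, which is the reason for the specific choice of $U$; everything else is formal once one works in characteristic zero.
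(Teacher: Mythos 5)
Your approach---projecting onto the single string-type summand $U = (F\otimes W)^{\otimes k}\otimes F^{\otimes m}$ of $H^{\otimes(m+2k)}$ and reading off the two symmetrizations---is genuinely different from the paper's proof, which reduces to $K=\Q$, realizes $T(H)\otimes\R\to\R$ via iterated integrals on $[0,1]$, and then shows that a nonzero polynomial in the resulting ``variables'' cannot vanish for all choices of integrands. Yours is a direct, purely algebraic linear-algebra argument valid over any field of characteristic $0$ with no reduction step. The idea of choosing $U$ so that exactly one shuffle survives is the right one.

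There is, however, a genuine error in the identification of the image of $S$. You claim $S\bigl((f_1\cdots f_m)\otimes(g_1\cdots g_k)\bigr)=\tilde f\ast\tilde g$ with $\tilde g\in(F\otimes W)^{\otimes k}$. This is not so: by definition $S$ sends it to $\tilde f\ast(g_1\ast\cdots\ast g_k)$, and the shuffle $g_1\ast\cdots\ast g_k$ in $T(H)$ is \emph{not} confined to $(F\otimes W)^{\otimes k}$, since each $g_j\in H^{\otimes2}$ gets split apart by the shuffle. For $g_j=a_j\otimes b_j$ ($a_j\in F$, $b_j\in W$) and $k=2$ it already contains, e.g., the word $a_1\otimes a_2\otimes b_1\otimes b_2$ of pattern $FFWW$. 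Your $\tilde g$ is only the $(F\otimes W)^{\otimes k}$-component of $g_1\ast\cdots\ast g_k$. Consequently your combinatorial step (``exactly one shuffle lands in $U$'') is stated and argued only for shuffling $\tilde f$ against a word of pattern $FWFW\cdots FW$; you must additionally show that words of other patterns appearing in $g_1\ast\cdots\ast g_k$ contribute nothing after shuffling with $\tilde f$ and projecting to $U$. This is true but requires a further argument: if $w$ has its $W$-letters at $w$-positions $i_1<\cdots<i_k$ and a shuffle of $\tilde f$ with $w$ lands in $U$, then the positions $p_1<\cdots<p_{2k}$ of $w$'s entries satisfy $p_{i_j}=2j$; since always $p_i\ge i$, this gives $i_j\le 2j$, while the condition that in $w$ each $a_l$ precede $b_l$ forces the $j$ matching $a$'s and the $j-1$ earlier $b$'s all to sit before $w$-position $i_j$, so $i_j\ge 2j$; hence $i_j=2j$ and $w$ has pattern $FW\cdots FW$ after all. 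With this supplement your identity $\pi\circ S\bigl((f_1\cdots f_m)\otimes(g_1\cdots g_k)\bigr)=\tilde g\otimes\tilde f$ does hold, and the remainder of your argument (injectivity of the symmetrizations in characteristic $0$) completes the proof.
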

\begin{proof}
Since the shuffle product is defined over $\Q$, it is enough to prove the result assuming $K = \Q$. Let $n \colonequals \dim F$, and $l \colonequals \dim W$.  
Any choice of one-forms  $\{\omega_i \ | \  i=1,\ldots,n\}$ and $\{\eta_j \ | \ j=1,\ldots,l\}$ on the interval $[0,1]$ gives a well defined map $T(H) \otimes \R \to \R$ by sending words to the appropriate iterated integral from $0$ to $1$. (Note that one could replace $[0,1]$ with other intervals or other spaces.) The shuffle rule for the product of iterated integrals precisely means that this map is a
homomorphism. Suppose that $S$ is not injective. Then a nontrivial element in the kernel of $S$ gives a nonzero polynomial $u$ in the variables
$\{ y_i \ | \ i=1,\ldots, n \} \cup \{ x_{ij} \ | \  i=1,\ldots,n, j=1,\ldots,l\}$, homogeneous of degree $m$ in the $y$'s and of degree $k$ in the $x$'s, such that 
\begin{equation}\label{E:polyuinker}
  u(\int_0^1 \omega_1,\ldots, \int_0^1 \omega_n, \int_0^1 \omega_1 \circ
  \eta_1,\ldots,
  \int_0^1 \omega_n\circ \eta_l )=0
\end{equation}
for any choice of $\omega$'s and $\eta$'s. We can choose arbitrary differentiable functions
$f_i$ and $g_j$ on $[0,1]$ and set $\omega_i = f_i dx$ and $\eta_j = d
  g_j$. Note that if we choose the $g_j$'s to vanish at $0$, then $\int_0^1 \omega_i \circ
  \eta_j = \int_{0}^1 f_i g_j dx$, which is the standard inner product of
  the differentiable functions $f_i$ and $g_j$ on the interval $[0,1]$. The
  statement about $u$ in~\eqref{E:polyuinker} now translates into $u$ vanishing for any substitution of $\int_0^1 f_i dx$ into the variables $y_i$ and the inner product matrix of the functions $f_i$ and $g_j$ on the interval $[0,1]$ into the variables $x_{ij}$. This is true for any choice of $f$'s and $g$'s with the $g$'s vanishing at $0$. By first picking the $f$'s appropriately and then the $g$'s, we can see that we may obtain any set of values in the variables. Thus, the polynomial $u$ must vanish, which contradicts the assumption that $S$ is not injective. 
\end{proof}
We have a decreasing filtration on $T(H)$ such that $F^r$ is generated by tensors that contain
at least $r$ elements from $F$. 
\begin{cor}\label{modfinj}
  The map $S$ modulo $F^{k+m+1}$ is still injective.
\end{cor}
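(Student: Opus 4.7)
The plan is to reduce the corollary to the already-established injectivity of $S$ by exhibiting a refinement of the filtration in which the image of $S$ sits in a graded piece disjoint from $F^{k+m+1}$. Concretely, the decomposition $H = F \oplus W$ induces a bigrading $T(H) = \bigoplus_{a,b \geq 0} T(H)_{a,b}$, where $T(H)_{a,b}$ is the subspace spanned by pure tensors having exactly $a$ letters from $F$ and exactly $b$ letters from $W$. This bigrading is basis-independent (it can be verified using any bases of $F$ and $W$ and is invariant under change of basis within each summand), and in terms of it the filtration takes the explicit form $F^r = \bigoplus_{a \geq r,\, b \geq 0} T(H)_{a,b}$.

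The key observation is that the shuffle product preserves this bigrading, namely $T(H)_{a,b} \ast T(H)_{c,d} \subseteq T(H)_{a+c,\, b+d}$, because shuffling two words merely interleaves their letters without altering the underlying multiset. Consequently, the image of $\Sym^m(F)$ under the shuffle-extended embedding $F \hookrightarrow T(H)_{1,0}$ lands in $T(H)_{m,0}$, while the image of $\Sym^k(F \otimes W)$ under the shuffle-extended embedding $F \otimes W \hookrightarrow T(H)_{1,1}$ lands in $T(H)_{k,k}$. By multiplicativity, the whole image of $S$ is therefore contained in $T(H)_{m+k,\, k}$.

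Since $T(H)_{m+k,\, k}$ consists of tensors with exactly $m+k$ letters from $F$, we have $T(H)_{m+k,\, k} \cap F^{k+m+1} = 0$. Thus the projection $T(H) \twoheadrightarrow T(H)/F^{k+m+1}$ is injective when restricted to the image of $S$, and combining this with the injectivity of $S$ provided by the preceding lemma yields that $S$ modulo $F^{k+m+1}$ is injective.

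The only step that requires a small verification is the well-definedness of the bigrading and its compatibility with the shuffle product; once that is in hand the remainder is essentially formal, so I do not anticipate any substantive obstacle.
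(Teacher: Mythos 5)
Your proof is correct and takes essentially the same approach as the paper: both show that the image of $S$ consists of tensors avoiding $F^{k+m+1}$. You make explicit (via the bigrading $T(H)_{a,b}$ and its compatibility with the shuffle product) what the paper's two-sentence proof states more tersely, namely that the image lies in degree $2k+m$ with exactly $m+k$ letters from $F$ and $k$ from $W$.
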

\begin{proof}
The image of $S$ consists of tensors of degree $2k+m $ that contain at least $k$ entries from
$W$. Thus, it is disjoint from $F^{k+m+1}$.
\end{proof}

We will need one more ingredient from the theory of generalized delbar operators $D_k$ and iterated integrals of Theorem~\ref{hdelbth} for the proof of Theorem~\ref{T:nonzeropoly}. 
The $l_i$ are $1 $-iterated integrals and $D_1 l_i= \omega_i$. On the other hand, Corollary~\ref{C:Dkonkitint} shows  that the $dh_j$ are one-iterated differential forms, so, by \eqref{hd4} of Theorem~\ref{hdelbth} the $h_j$ are
$2$-iterated integrals and we may apply the operator $D_2$ to $h_j$ and
$D_2 h_j= \delbr d h_j $
is simply the curvature of $\iota^{\ast}\log_{\L_j,p}$, an element of
$\Omega^1(C) \otimes \hdr^1(C)\subset \hdr^1(C)^{\otimes 2}$. 
Here $\log_{\L_j,p}$ is the canonical log function on $L_j\otimes \Q_p$
with respect to the curvature $\alpha$.
\begin{lemma}\label{dtindep}
  The elements $D_2 h_j$ are independent modulo $\Omega^1(C)^{\otimes 2}$.
\end{lemma}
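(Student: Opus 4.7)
The plan is to establish injectivity of the $\Q_p$-linear map
\[
\Phi\colon V'\otimes_{\Z}\Q_p \longrightarrow \Omega^1(C)\otimes \bigl(\hdr^1(C)/\Omega^1(C)\bigr),\qquad [\L]\longmapsto [\iota^*\Curve(\log_{\L,p})].
\]
The discussion preceding the lemma identifies $D_2 h_j$ with the curvature of $\iota^*\log_{\L_j,p}$; by functoriality of the curvature form (Definition~\ref{D:relatedlogfunctions}(3)) this equals $\iota^*\Curve(\log_{\L_j,p})$, so since $\L_1,\ldots,\L_\nu$ form a basis of $V'$, the lemma is exactly the injectivity of $\Phi$. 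The map $\Phi$ descends to N\'eron--Severi classes (rather than depending on the individual line bundle) because the canonical log function on any antisymmetric bundle is flat (Theorem~\ref{T:antigood} together with Corollary~\ref{C:anticangood}), so replacing $\L$ by $\L\otimes \L''$ with $\L''\in \Pic^0(J)$ does not alter $\Curve(\log_{\L,p})$.

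First I would transfer the injectivity question from $C$ back to $J$. The Abel--Jacobi embedding $\iota\colon C\to J$ induces an isomorphism $\iota^*\colon \hdr^1(J)\xrightarrow{\sim}\hdr^1(C)$ that restricts to an isomorphism $\Omega^1(J)\xrightarrow{\sim}\Omega^1(C)$ on holomorphic forms. Tensoring yields an isomorphism
\[
\iota^*\colon \Omega^1(J)\otimes \bigl(\hdr^1(J)/\Omega^1(J)\bigr)\xrightarrow{\sim} \Omega^1(C)\otimes \bigl(\hdr^1(C)/\Omega^1(C)\bigr),
\]
so it is enough to show that the map $V'\otimes \Q_p \to \Omega^1(J)\otimes (\hdr^1(J)/\Omega^1(J))$ sending $[\L]$ to $[\Curve(\log_{\L,p})]$ is injective.

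Suppose then that $\sum_j a_j \Curve(\log_{\L_j,p})\equiv 0 \pmod{\Omega^1(J)^{\otimes 2}}$ for some $a_j\in \Q_p$. I would apply the cup product: using $\cup\Curve(\log_{\L_j,p})=\ch_1(\L_j)$ from Proposition~\ref{P:excurve}(a), together with the fact that cup product respects the Hodge filtration and hence sends $\Omega^1(J)^{\otimes 2}$ into $F^2\hdr^2(J)$, I obtain $\sum_j a_j \ch_1(\L_j)\in F^2\hdr^2(J)$. To conclude that this sum vanishes, I would invoke the same Hodge-theoretic argument used in the proof of Theorem~\ref{T:QC}: after choosing an embedding $\Q_p\hookrightarrow \C$, every Chern class lies in $H^{1,1}(J_\C)$, whereas $F^2\cap \overline{F^2}=H^{2,0}\cap H^{0,2}=0$. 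Combined with the injectivity of $\ch_1$ on $\NS(J)\otimes \Q_p$ (Lefschetz $(1,1)$, descended from $\C$ via faithfully flat base change) and the fact that $[\L_1],\ldots,[\L_\nu]$ form a basis of $V'\otimes \Q_p$, this forces $a_j=0$ for all $j$.

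The only substantive point in the argument is the Hodge-theoretic step at the very end; since it is precisely the reduction to complex geometry already used in the proof of Theorem~\ref{T:QC}, I would cite that argument rather than reproduce it. The rest is a direct application of the functorial properties of curvature forms developed in Section~\ref{sec:background} and the standard comparison of de Rham cohomologies under the Abel--Jacobi map.
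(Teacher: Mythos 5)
Your proof is correct and follows essentially the same route as the paper: transfer the question from $C$ to $J$ via $\iota^\ast$, apply the cup product to identify the classes with $\ch_1(\L_j)\bmod F^2$, and then deduce independence from the complex Hodge decomposition ($H^{1,1}\cap F^2 = 0$) together with Lefschetz $(1,1)$. The only cosmetic difference is that the paper first reduces to a $\Q$-coefficient relation and clears denominators to exhibit a nontrivial line bundle in $V'$ whose Chern class would lie in $F^2$, whereas you extend scalars to $\Q_p\hookrightarrow\C$ and argue linearly; these are interchangeable since $\Q$-linear independence of rational cohomology classes is preserved under any field extension.
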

\begin{proof}
It suffices to show this after applying the map 
\begin{equation*}
  \Omega^1(C) \otimes \hdr^1(C) / \Omega^1(C)^{\otimes 2}
  \xrightarrow{(\iota^{\ast})^{-1}} 
  \Omega^1(J) \otimes \hdr^1(J) / \Omega^1(J)^{\otimes 2}\xrightarrow{\cup}
  \hdr^2 / F^2.
\end{equation*}
The image of $D_2 h_j$ under this map is the Chern class of $\L_j$ modulo $F^2$. As
the Chern classes of line bundles are rational and $C$ is defined over $\Q$, it suffices to prove the linear independence of Chern classes of $\L_j$ modulo $F^2$ over $\Q$. 
Clearing any denominators in a nontrivial dependence relation between the
  images of $D_2h_j$ gives rise to a nontrivial line bundle whose Chern
  class is in $F^2$. But such line bundles do not exist, by the same
  argument we used at the end of the proof of Theorem~\ref{T:QC}.  
\end{proof}

\begin{proof}[Proof of Theorem~\ref{T:nonzeropoly}]
Note that by the identity principle, a Vologodsky function is identically $0$ as soon as this function is identically $0$ in a neighborhood of one point.
We will first show that the 
$h_j$ do not show up in $E$. We define two different
  degrees $\deg_1$ and $\deg_2$ on the monomials $M =
  M(l_1,\ldots,l_g,h_1,\ldots,h_\nu)$ as follows. Let
  $\deg_1$ satisfy $\deg_1(h_j) =2$ and
  $\deg_1(l_i) =1$ and let $\deg_2$ satisfy $\deg_2(h_j) = \deg_2(l_i)=1$.
  Clearly, a monomial $M=\prod_{i=1}^g l_i^{n_i}\prod_{j=1}^{\nu}h^{m_j}_j$
  satisfies $\deg_1(M)=\deg_2(M)$ if and only of all $m_j$ are zero.
  
  Let us assume that $E$ is not a polynomial in the $l_i$'s only. By the
  above there is a non-empty collection of
monomials that have maximal $\deg_1$ 
 which we use to form a polynomial $E_1$, and of these, a non empty
  collection of monomials that have  minimal $\deg_2$, which we use to form
  a polynomial $G$. We may assume that the maximal $\deg_1$ is $2k+m$ and
  the minimal $\deg_2$ is $k+m$,
respectively, so that $G$ is homogeneous of degree $k>0$ in the $h_j$'s and of degree
$m$ in the $l_i$'s.

Let $H=\hdr^1(C)$ and let $F\subset H$ be the space of holomorphic differentials. Pick a
complementary subspace $W$ to $F$ in $H$. The $l_i$ are $1 $-iterated integrals
and $D_1 l_i\in F$ while the $h_j$ are $2$-iterated integrals and we have $D_2 h_j
\in F\otimes H$. By \eqref{hd1} of Theorem~\ref{hdelbth} all monomials $M$ in $E(l_1,\ldots,l_g,
  h_1,\ldots,h_\nu)$ give $s=\deg_1(M)\le 2k+m$-iterated integrals.
  We apply the operator $D_{2k+m}$ to $E(l_1,\ldots,l_g,
h_1,\ldots,h_\nu)$ 
and take a further quotient to land in $H^{\otimes 2k+m}/F^{k+m+1}$. By
assumption we get $0$. By \eqref{hd3} of Theorem~\ref{hdelbth} the operator
  $D_{2k+m}$ kills all monomials $M$ such that 
  $\deg_1(M)<2k+m$ and by \eqref{hd2} of Theorem~\ref{hdelbth} we see
that 
$$D_{2k+m}E(l_1,\ldots,l_g, h_1,\ldots,h_\nu) = E_1(D_1 l_1,\ldots,D_1 l_g,
  D_2(h_1),\ldots,D_2(h_\nu))$$
where the computation of $E_1$ uses the shuffle product~\eqref{shuffle}. 

Since this product is just a sum of
products of possibly permuted coordinates, we easily see that all monomials
  $M$ in $E_1$ 
  such that $\deg_2(M)>k+m$ land in $F^{k+m+1}$  when applied to the $D_1 l_i$'s and $D_2 h_j$'s and therefore
die in the quotient. Putting it all together we get
\begin{equation*}
  0 \equiv D_{2k+m} G(l_1,\ldots,l_g, h_1,\ldots,h_\nu) 
  \equiv G(D_1 l_1,\ldots,D_1 l_g, D_2(h_1),\ldots,D_2(h_\nu))\pmod{F^{k+m+1}}\;.
\end{equation*}
The $D_2(h_j)$ belong to $F\otimes H$. Replacing them with their projections $z_j$ in $F\otimes W$ along $F\otimes F$ changes nothing 
once we quotient out by $F^{k+m+1}$, so we finally get that 
$$ G(D_1 l_1,\ldots,D_1 l_g, z_1,\ldots,z_\nu)\equiv 0 \pmod{F^{k+m+1}}\,.$$ But Corollary~\ref{modfinj} plus the fact
that the $D_1 l_i$ are independent and the $z_j$ are independent (because the
$D_2(h_j)$ are
independent modulo $F\otimes F$ by Lemma~\ref{dtindep}) tells us that 
$G=0$. This gives a contradiction. Thus, the polynomial $E$ contains only $l_i$'s.

  Now we assume that $E$ is a nonzero polynomial such that
$E(l_1,\ldots,l_g)=0$. Let $E_1$ be the homogeneous component with the
largest degree of $E$, and let this degree be $m$. Applying $D_m$ to the
equality we obtain as before $E_1(\omega_1,\ldots,\omega_g)=0$ in
$T\left(\hdr^1(C) \right) $. We now claim that the induced map $\Symm^{m}\hdr^1(C)
\to T(\hdr^1(C))$ is injective. One can verify using induction and the definition of the shuffle product that this induced map sends 
\[ v_1 \otimes v_2 \otimes \cdots \otimes v_n \mapsto \sum_{\sigma \in S_n} v_{\sigma(1)} \otimes v_{\sigma(2)} \otimes \cdots \otimes v_{\sigma(n)}, \] 
in other words to the associated symmetric tensor. This map is clearly injective, and we conclude that $E_1=0$ which gives a contradiction.
\end{proof}

\begin{rk}\label{R:construct_ft}
  While  Corollary~\ref{C:nonconstantapplication} only guarantees the
  existence of the functions $f_t$, the proof of
  Theorem~\ref{T:nonconstant} shows how these functions may be constructed.
\end{rk}

\begin{rk}\label{R:higher_BD}
Following comments by the referees, we want to clarify the relation of
Theorem~\ref{T:nonconstant} and Theorem~\ref{T:nonzeropoly}
with Zariski density arguments. 
  Balakrishnan--Dogra consider the $p$-adic Albanese map 
  \begin{equation}\label{E:padicAJ}
  C(\Q_p) \to \selmervariety\,,
\end{equation} 
where $U_Z$ is a certain nonabelian quotient of the $\Q_p$-\'etale unipotent fundamental group of
  $C_{\overline{\Q}}$ depending on a cycle $Z$ as in our setup and
  $\selmervariety$ is Kim's local Selmer variety (see~\cite{Kim05, Kim09}).

Zariski density is the statement that the
  the image of each residue disc under~\eqref{E:padicAJ} is Zariski dense in
the local Selmer variety $\selmervariety$. As a
consequence, a nonzero polynomial function on $\selmervariety$  does not
  vanish identically on the image of the residue disc.
Therefore, if there is a nonzero polynomial function on $\selmervariety$
that vanishes on the image of the global Selmer variety $\globalselmer$,
then it produces by pullback a nonzero function on this residue disc
vanishing on all rational points. Such a function is Coleman analytic, hence we get  finiteness of rational points on that disc by 
Coleman's uniqueness principle
  (see~\cite[Lemma~2.4.4]{CdS88} and
\cite[Corollary~4.13]{Bes02}), which says that a nonzero Coleman function cannot vanish identically on a residue disc.
Such a function exists when
the dimension of $\globalselmer$ is smaller than the dimension
of $\selmervariety$ by the commutativity of the following diagram:
 \begin{equation}\label{}
  \xymatrix{
   C(\Q)\ar[d] \ar[r]& \globalselmer\ar[d] \\
      C(\Q_p) \ar[r] & \selmervariety. \\
  }
    \end{equation}
We note that Zariski density of the image
  of~\eqref{E:padicAJ} can be deduced by combining
the injectivity of the map from polynomial functions on $\selmervariety$ to Coleman functions
on $C$ given in~\cite[Proposition~1.6.9]{Bes10a} with 
Coleman's uniqueness principle.

But Zariski density on its own does not tell us that a specific Coleman
function constructed to vanish on all rational points will be nonzero -- knowing that such a function is nonzero is necessary for constraining the set of rational points to a finite set. Since our Coleman function is a polynomial in logarithms and local height
functions, we need an algebraic independence statement to show that no
nontrivial polynomial may vanish when evaluated on these functions. This
is the content of Theorem~\ref{T:nonzeropoly}.

Viewing these $g+\nu$ Coleman functions as coordinates on $\selmervariety$, an
alternative approach is to show that the induced map to
$\mathbb{A}^{g+\nu}$ is surjective (or at least has dense image). This is
stated in~\cite[Remark 5.7]{BD18}. The proof is not given but Dogra indicated (personal
communication) that it follows in a straightforward way
from the isomorphism $\selmervariety \cong
  H^1_f(G_p,T_p(J)) \oplus \Q_p$ (see~\cite[Lemma~3.10]{BD21}).

  Via Theorem~\ref{T:localcomparison} and Corollary~\ref{R:localcomparisonp} below,
  our version of Quadratic Chabauty when $\rank J(\Q) < g + \rank
  \NS(J)-1$ is  essentially equivalent to that developed by
  Balakrishnan--Dogra in~\cite[\S5.3]{BD18}. In particular, we may
  recover~\cite[Proposition~5.9]{BD18}.
\end{rk}

\subsection{Computing rational points using Quadratic Chabauty}
Let $C/\Q$ be a nice curve satisfying the conditions of Theorem~\ref{T:QC}. 
In order to compute the rational points on $C$, 
we need to be able to do the following (to suitable finite precision):
  \begin{enumerate}[(I)]
    \item\label{lambdap} compute $\lambda_p(x)$ for $x\in C(\Q_p)$
      (see~\S\ref{subsec:lambdap});
    \item\label{lambdaq} find all possible values of $\lambda_q(x)$ for bad primes $q\ne p$  and $x\in
      C(\Q_q)$ (see~\S\ref{subsec:lambdaq});
    \item\label{htsolve} solve for the coefficients $a_{ij}$ and $b_k$ (see~\S\ref{subsec:solve})
    \item\label{findsols} compute the set 
     $\mathcal{Z} \colonequals \{z \in C(\Q_p)\,:\, F(z)\in T\}\,;$
    \item\label{ratinpadic} identify $C(\Q)$ inside $\mathcal{Z}$.
  \end{enumerate}
  These tasks are the same as in~\cite{BD18, BDMTV19}.
  The final two steps are standard: Step~\eqref{findsols} is possible in practice using the
  Weierstrass preparation theorem. If $\mathrm{rk}\mathrm{NS}(J)>2$, we can repeat
  Steps~\eqref{lambdap}--\eqref{findsols} for another line bundle $L'$ on $J$ such that
  $\iota^*L'\cong \O_C$ and such that the classes of $L$ and $L'$ in $\mathrm{NS}(J)$ are
  independent. We expect that the sets $\mathcal{Z}$ for $L$ and $L'$ will usually only have
  $C(\Q)$ as common roots, unless there is a geometric reason for further
  common roots (see~\cite{Bia20} for a discussion of geometric reasons for
  multiple roots in the context of Quadratic Chabauty). Alternatively, we can combine $\mathcal{Z}$ with information at
  primes $q\ne p$ via the Mordell--Weil sieve~\cite{BruinStoll, BBM17}. 
  
  We briefly discuss Steps~\eqref{lambdap},~\eqref{lambdaq} and~\eqref{htsolve} below, referring
  to future work for more details.

\subsubsection{Computing $\lambda_p$ }\label{subsec:lambdap}
For Step~\eqref{lambdap}, we need to expand the function $\lambda_p = \log_p\circ 1$ into
a convergent power series on every residue disk of $C(\Q_p)$. 
Recall that $\log_p=\iota^*\log_{\L,p}$ has curvature $\iota^*\alpha_\L$, where 
$$\alpha_\L = \frac{1}{2}\alpha_{\L^+} = \frac{1}{2}(\id\times \phi_\L)^*\alpha\,.$$
However, $\alpha_\L$ does not determine $\log_{\L,p}$; in fact, it is not straightforward
to construct \textit{any} good log function on $L_p$. 
Fortunately, this is not necessary. 
Let ${\log'_p}$  be \textit{some} log function on $\O_C$ with 
curvature $\iota^*\alpha_\L$ and set
$\lambda'_p \colonequals \log'_p\circ 1$.
By Proposition~\ref{P:excurve},
we have $\log'_p = \log_p +\int \theta$ for some unknown
holomorphic form $\theta\in \Omega^1({C_p})$ which induces via $\iota^*$ a linear form 
$$\ell\colon J(\Q_p)\to \Q_p\,\quad a \mapsto \int_0^a\theta\,.$$ 
We set $$h'\colonequals \hat{h}_{\L} + \ell\,.$$
Then Theorem~\ref{T:QC} remains true if we replace $\hat{h}_{\L}$ by $h'$ and $\lambda_p$
by $\lambda'_p$, without changing $\lambda_q$ for $q\ne p$. Hence we can work with
$\log'_p$ in place of $\log_p$ and there is no need to compute $\theta$.
An explicit construction of a log function
$\log'_p$ on $\O_C$ with curvature form $i^*\alpha_\L$ as an iterated integral
is given in \S\ref{subsec:logcurves}. It thus remains to compute the curvature form
$\alpha_\L$; we will describe in future work how this can be done in terms of a
complementary subspace $W$ and how this can be used to compute $\log'_p$ in practice. 

\subsubsection{Computing (all possible values of) $\lambda_{q}$}\label{subsec:lambdaq}

Let $q\ne p$ be a prime of bad reduction for $C$. We would like to compute a finite set
$T_q\subset \Q$ such that $\lambda_q(C(\Q_q))\subset T_q$.
We first strengthen Lemma~\ref{L:fin_many}. 

\begin{prop}\label{P:factor}
Let $K/\Q_q$ be a finite extension such that $C$ has a semistable regular model $\mathcal{C}$ over $\O_K$. Then $\lambda_q(P)$ only depends on the
  component of the special fiber $\mathcal{C}_q$ that $P$ reduces to.  
\end{prop}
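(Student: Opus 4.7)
The plan is to decompose $\lambda_q$ into a model-theoretic intersection number plus a component-group correction, both of which factor through the components of $\mathcal{C}_q$.

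First, after possibly enlarging $K$ I would assume that $J$ also acquires semistable reduction; this does not affect the claim. Let $\mathcal{J}$ denote the N\'eron model of $J$ over $\O_K$. By the N\'eron mapping property, $\iota\colon C\to J$ extends to a morphism $\tilde{\iota}\colon \mathcal{C}\to\mathcal{J}$. After a further finite base change if necessary, $L$ extends to a line bundle $\mathcal{L}$ on $\mathcal{J}$. Since $\iota^*L=\O_C$, the line bundle $\tilde{\iota}^*\mathcal{L}$ is an extension of $\O_C$ to $\mathcal{C}$, so it has the form $\O_{\mathcal{C}}(E)$ for some $\Q$-divisor $E$ supported on the special fiber $\mathcal{C}_q$. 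Under the induced identification $(\tilde{\iota}^*\mathcal{L})|_C\cong\O_C$, the canonical trivialization $1$ of $\O_C$ corresponds to a rational section of $\tilde{\iota}^*\mathcal{L}$ whose divisor is $-E$.

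For a point $P\in C(\bar{K}_q)$ that reduces to a smooth point of an irreducible component $V$ of $\mathcal{C}_q$, the model valuation $v_{\tilde{\iota}^*\mathcal{L},q}$ applied to $1(P)$ equals the intersection multiplicity of the horizontal divisor $\bar{P}$ with $-E$, which is $-\operatorname{mult}_V(E)$ and so depends only on $V$. (The case where $P$ reduces to a node can be dispatched by blowing up, which does not change the claim.)

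It remains to compare the canonical valuation $v_{L,q}$ with the model valuation $v_{\mathcal{L},q}$. By Remark~\ref{R:good_good} these differ in general, but their difference defines a function on $J(\bar{K}_q)$ which, by the uniqueness properties of canonical valuations (Proposition~\ref{P:good}) combined with the explicit structure of the N\'eron model in the semistable case, factors through the component group $\pi_0(\mathcal{J}_{\bar{k}_q})$. Composing with $\tilde{\iota}$, the correction becomes a function on $C(\bar{K}_q)$ that depends only on which component of $\mathcal{J}_{\bar{k}_q}$ the image $\tilde{\iota}(P)$ reduces to, and hence only on the component of $\mathcal{C}_q$ that $P$ reduces to. Summing the model contribution and the correction yields the claim.

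The main obstacle is the last step: rigorously identifying the defect between $v_{L,q}$ and $v_{\mathcal{L},q}$ as factoring through $\pi_0(\mathcal{J}_{\bar{k}_q})$. This is the $p$-adic analogue of the classical fact for real-valued N\'eron functions, and reducing it to the symmetric/antisymmetric decomposition of $L$ (via Proposition~\ref{phiprop}~(\ref{phiprope}) and Definition~\ref{D:arblog}) together with the invariance properties of canonical valuations should suffice, but verifying the factorization concretely requires a careful analysis of how canonical valuations behave on non-identity components.
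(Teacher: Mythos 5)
Your high-level decomposition is sound, and the first half is essentially correct: extending $L$ to a line bundle $\mathcal{L}$ on the N\'eron model $\mathcal{J}$, pulling back by $\tilde\iota$ to get $\O_{\mathcal{C}}(E)$ with $E$ a vertical divisor, and observing that the resulting model valuation of $1(P)$ is an intersection number with $E$ depending only on the component $P$ reduces to. You also correctly identify the remaining gap as the real issue. But you do not fill that gap, and it is precisely the nontrivial content of the proposition: you assert without argument that $v_{L,q}-v_{\mathcal{L},q}$, as a function on $J(\bar K_q)$, factors through the component group $\pi_0(\mathcal{J}_{\bar k_q})$, and you only gesture at ``uniqueness properties of canonical valuations'' and ``the explicit structure of the N\'eron model.'' Neither Proposition~\ref{P:good} nor anything else in the surrounding material gives this factorization; it is exactly what needs to be proved, and ``should suffice'' is not a proof.

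The paper sidesteps the dichotomy between canonical and model valuations entirely by invoking a classical theorem of Lang (\cite[Theorem~11.5.1]{Lan83}), which says that the canonical valuation on $L$ is, up to an additive constant $\gamma_s$, the intersection number $i(D,a)$ of the closures of $D=\div(s)$ and $a$ on the N\'eron model. Once one has this, the projection formula along $\bar\iota\colon \mathcal{C}^{\mathrm{sm}}\to\mathcal{J}$ expresses $\lambda_q(x)-\lambda_q(y)$ as an intersection of a fixed vertical divisor $V_s$ with $\mathcal P$, which visibly depends only on the component $\mathcal P$ hits. Your proposed correction term is implicitly the difference between $i(D,\cdot)$ and the model-theoretic intersection $(\mathrm{div}_{\mathcal{L}}(s)\,.\,\bar a)$, which is an intersection with a vertical divisor and hence does factor through components---but one only sees this \emph{after} invoking Lang's theorem. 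In short: your structure is compatible with the paper's argument, but the step you flag as an obstacle is the one that requires the classical N\'eron-model input, and without naming and using that result your proof is incomplete.
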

\begin{proof}
 Suppose that $x$ and $y$ are points in $C(K)$
  such that $\iota(x)$ and $\iota(y)$ reduce to the same component
of the special fiber $\mathcal{J}_q$ of the N\'eron model of $J$ over $\O_K$.  
It suffices to show that we have
  $\lambda_q(x) = \lambda_q(y)$.

  Let $s$ be a meromorphic section of $\L$ such that $\iota(x)$ and $\iota(y)$ are not in
  the support of $D\colonequals\div(s)$. By~\cite[Theorem~11.5.1]{Lan83}, 
  we have
  \begin{equation}\label{langneron}
    v_{L,q}(s(a)) = i(D,a) + \gamma_s
  \end{equation}
for all $a\in J(K)\setminus \supp(D)$,
  where $i(D,a)$ is the intersection multiplicity of the extensions of $D$
  and $a$ to
  $\mathcal{J}$ defined before~\cite[Theorem~11.5.1]{Lan83}
  and where $\gamma_s$ is a constant that only depends on the component of $\mathcal{J}_q$ that $a$
  reduces to.  
  Since $\iota^*\L = \O_C$, the section $s$ pulls back to a rational function $f$ on $C$.
  We then have
      \begin{align*}
        \lambda_q(x) - \lambda_q(y) &= v_{\L, q}(s(\iota(x)) - v_{\L, q}(s(\iota(y)) -(\ord_q(f(x))-
        \ord_q(f(y)))\\
        &= i(D, \iota(x))- i(D, \iota(y))-(\ord_q(f(x))- \ord_q(f(y)))\,.
      \end{align*}
      To show that this vanishes, we use the fact that $\iota$ induces a morphism 
      $$
  \bar{\iota}\colon \mathcal{C}^{sm} \to \mathcal{J}\,,
      $$
      where $\mathcal{C}^{sm}$ is the smooth locus of $\mathcal{C}$. 
      Denoting the closure of $D$ on $\mathcal{J}$ by $\mathcal{D}$ and the closure of
      $D'\colonequals\div(f)$ on $\mathcal{C}$ by $\mathcal{D}'$, we then have 
      $\bar{\iota}^*\mathcal{D} = \mathcal{D}' + V'_s$ for some vertical divisor $V'_s$ on
      $\mathcal{C}$. We extend $V'_s$ to a vertical divisor $V_s$ on $\mathcal{C}$.
      If $P\in C(K)$ with closure $\mathcal{P}$ on $\mathcal{C}$, then the
      projection formula implies
  $$i(D, \iota(P)) = (\mathcal{D}\,.\,\bar{\iota}(P)) = (\mathcal{D'}\,.\,\mathcal{P}) +
  (V_s\,.\,\mathcal{P}) = \ord_q(f(P)) + (V_s\,.\,\mathcal{P})\,. $$
Since  the intersection multiplicity $(V_s\,.\,\mathcal{P})$ only depends on the component
  of $\mathcal{C}_q$ that
$P$ reduces to, we deduce that $\lambda_q(x) = \lambda_q(y)$.
\end{proof}

By the proof of Proposition~\ref{P:factor}, it suffices to find, for every component
of $\mathcal{J}_q$, one point $x\in J(K)$ reducing to it and to
compute $v_{\L,q}(s(x))$ for some section $s$ of $\L$ such that $x\notin \div(s)$.
However, this is a difficult problem in general. Even for $\L=\Theta$, it is only known
how to compute the canonical valuation explicitly for hyperelliptic curves of genus~$\le
3$ (see~\cite{Flynn-Smart, Stoll:H2, MS16, Sto17}).

In Section~\ref{subsec:VarVol} we give a new construction of the canonical
valuation using iterated Vologodsky integrals. In future work, we will show
how this may be used to compute $\lambda_q$ in practice, by pulling back
the canonical valuation $v_{\L,q}$ to the curve and working with iterated
Vologodsky integrals on the curve.

\subsubsection{Solving for the height}\label{subsec:solve}

We briefly discuss possible approaches to the computation of the constants $a_{ij}$ and $b_k$ in Theorem~\ref{T:QC}.
The line bundle $\L$ induces an endomorphism $E_{\L} = \phi_{\Theta}^{-1}\circ\phi_\L$ on
$J$, and hence on $\hdr^1(C)\cong \hdr^1(J)$.
We set $J_p \colonequals J(\Q)\otimes \Q_p$. If there are enough points in $C(\Q)$ such
that their images generate $(J_p\otimes J_p)\oplus J_p$ under the embedding $(\iota\otimes E_\L\circ
\iota + [\L^-], \iota)$, then we can compute the constants $a_{ij}$ and $b_k$ by
computing $\hat{h}_\L$ (or $h'$) and the functions $f_i$ and $f_if_j$ in these images. 

Alternatively, note that 
the bilinear pairing associated to $\hat{h}_{\L}$ and
$h'$ is the same, say 
$$
B(x,y) \colonequals \frac{1}{2}(h'(x+y) -h'(x)-h'(y))=
\frac{1}{2}(\hat{h}_{\L}(x+y) -\hat{h}_{\L}(x)-\hat{h}_{\L}(y))\,.
$$
We want to write $B$ in terms of the basis 
of the symmetric part of $(J(\Q)\otimes J(\Q)\otimes\Q_p)^*$ given by
$$(g_{ij})_{0\le
i\le j<g}\,,\quad g_{ij}(x,y)= \frac{1}{2}(f_i(x)f_j(y)+f_j(x)f_i(y)))\,,$$
by evaluating $B$ and the $g_{ij}$ in sufficiently many points. 
We conclude that
\begin{equation}\label{}
  B(x,y) =  \hat{h}_{\Theta}(x, E_{\L}(y))\,.
\end{equation}
If we evaluate $\hat{h}_{\Theta}$ in sufficiently many points, we get an expression 
$$\hat{h}_{\Theta}(x,y) = \sum_{i,j}c_{ij}g_{ij}(x,y)$$ 
and we obtain the constants $a_{ij}$ by evaluating, using
$f_i(E_\L(y)) =\int_0^{E_\L(y)} = \int_0^y E_\L^*(\omega_i)$. 
Since $B(x,x) = \hat{h}_{\L^+}(x)$ is the quadratic term of both $\hat{h}_{\L}$ and $h'$,
the linear part of $h'$ is
$h'(x) - B(x,x)$, and we can compute it by evaluating
in sufficiently many points. This gives us the constants $b_k$.

\section{Comparison with Balakrishnan and Dogra's approach to Quadratic Chabauty}\label{sec:BDcomparison}

In this section we clarify the relation between our Quadratic Chabauty construction and
the original one of Balakrishnan and Dogra~\cite{BD18}. 
Using the equivalence between the height pairings of Nekov\'a\v{r} and of
Coleman--Gross, their
local contributions are of the form $$h_{v}^{CG} (z-b,D(b,z)),$$ where $D(b,z)$ is a divisor on $C$
constructed using the line bundle $\L$ equipped with a section.
It turns out that our contribution and theirs are the same up to a constant
multiple (see Theorem~\ref{T:localcomparison}).

We first spell out the construction of the divisor $D(b,z) \in \Div(C)$ starting from a
divisor in the class of the line bundle $\L \in \Pic(J)$. For this, we modify the construction of the divisor $D_Z(b)$ in \cite[Section~2.2]{DLF19}. The divisor $D_Z(b)$ of \cite{DLF19} corresponds to the diagonal cycle $D(b,b)$ in our notation.

\subsection{Construction of the divisor $D(b,z)$}\label{S:construction}
Let $C$ be a nice curve of
genus $g >1$ over a field $K$ such that $C(K)\ne \varnothing$. Fix a base point $b \in
C(K)$. Let $z \in C(\overline{K})$. 
We then have natural maps
$i_{1,b},i_{2,z},\Delta\colon C \to C\times C$, defined as follows.
We denote by  $\Delta$ the
diagonal embedding. 
Let $i_{1,b} \colon C \rightarrow C \times C$ be the map defined by $i_{1,b}(x)
\colonequals (x,b)$. Similarly, we define $i_{2,z} \colon C
\rightarrow C \times C$ by $i_{2,z}(x) = (z,x)$. 

Let $\iota = \iota_b \colon C \rightarrow J$ be the Abel--Jacobi map with respect to the base point
$b$ and let $m \colon J \times J \rightarrow J$ denote the group law on the Jacobian. We
define $\iota^{(2)} \colon C \times C \rightarrow J$ to be the composition $m \circ
(\iota,\iota)$.
Let $\phi_z \colon \Div(C \times C) \rightarrow \Div(C)$ be the map $\phi_z \colonequals \Delta^* - i_{1,b}^*-i_{2,z}^*$, where $\Delta^* \colon \Div(C \times C) \rightarrow \Div(C)$ is the pullback map induced by $\Delta \colon C \rightarrow C \times C$, etc.

\begin{defn}\label{D:divcon}
 Let $\L \in \Pic(J)$ and let $s$ be a section of $\L$. Let $Z \colonequals (\iota^{(2)})^*(\L, s) \in \Div(C \times C)$. Define $D(b,z)$ to be the divisor
  corresponding to the image of $(\L,s)$ under the composition
 \[ \Div(J) \xrightarrow{(\iota^{(2)})^*} \Div(C \times C) \xrightarrow{\phi_z} \Div(C). \]
   We will denote the composite map $\theta_{C,b,z}$. 
\end{defn}

Balakrishnan and Dogra have an intersection-theoretic condition on the
divisor $Z$ as above \cite[Definition~6.2, \S6.3]{BD18}. 
They use this condition to justify that certain mixed extensions of Galois
representations constructed out of $D$ are isomorphic (see the end of
\cite[\S6.3]{BD18} for more details).
We now prove that their condition is equivalent to the condition that $\deg(\iota^*\L) =
0$, which is key for running the approach to Quadratic Chabauty in this paper. 
\begin{lemma} 
 The condition $Z.(\Delta-C\times P_1-P_2\times C)=0\quad \text{for all }P_1,P_2\in C$ is
  equivalent to the condition that $\deg(\iota^* \L) = 0$. 
\end{lemma}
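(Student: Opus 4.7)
The plan is to interpret each term $Z.\Delta$, $Z.(C\times P_1)$, and $Z.(P_2\times C)$ as the degree of an explicit line bundle on $C$ obtained by pullback along an appropriate map to $J$, and then compute each of these degrees in terms of $d\colonequals\deg(\iota^\ast \L)$. The computation should force all three terms to be multiples of $d$, and the combination $Z.(\Delta-C\times P_1-P_2\times C)$ will turn out to be $2d$, making the equivalence immediate.

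First I would handle the two ``slice'' intersections. Since $C\times\{P_1\}$ is a fiber of $\pi_2$, we have $Z.(C\times P_1)=\deg\bigl((\iota^{(2)}|_{C\times\{P_1\}})^\ast \L\bigr)$, and the restriction of $\iota^{(2)}=m\circ(\iota\times\iota)$ to $C\times\{P_1\}$ is the map $x\mapsto \iota(x)+\iota(P_1)=t_{\iota(P_1)}\circ\iota(x)$. Thus the relevant bundle on $C$ is $\iota^\ast t_{\iota(P_1)}^\ast \L$. Because $t_a^\ast\L\otimes\L^{-1}\in\Pic^0(J)$ for any $a\in J$ (Proposition~\ref{phiprop}), and the pullback of a degree-zero bundle is a degree-zero bundle, we obtain $Z.(C\times P_1)=d$. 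Symmetrically $Z.(P_2\times C)=d$.

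Next I would compute $Z.\Delta$. By projection we have $\Delta^\ast Z=(\iota^{(2)}\circ\Delta)^\ast \L=([2]\circ\iota)^\ast\L=\iota^\ast[2]^\ast\L$. The theorem of the cube gives $[2]^\ast\L\cong \L^{\otimes 3}\otimes[-1]^\ast\L$. Moreover $[-1]^\ast\L\otimes\L^{-1}$ is antisymmetric and hence lies in $\Pic^0(J)$ (Proposition~\ref{sumdiff}\eqref{sumdiffanti}), so $\deg\bigl(\iota^\ast[-1]^\ast\L\bigr)=\deg(\iota^\ast\L)=d$. Therefore $Z.\Delta=\deg\bigl(\iota^\ast[2]^\ast\L\bigr)=3d+d=4d$.

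Assembling these, $Z.(\Delta-C\times P_1-P_2\times C)=4d-d-d=2d$, and this value is independent of $P_1,P_2$. Hence the Balakrishnan--Dogra condition is equivalent to $2d=0$, i.e.\ $\deg(\iota^\ast \L)=0$, as claimed. The only slightly delicate step is the identification $[2]^\ast\L\cong \L^{\otimes 3}\otimes[-1]^\ast\L$, but this is a standard consequence of the theorem of the cube and is used only up to its effect on the degree of the pullback to $C$, which we control via $\Pic^0$-invariance.
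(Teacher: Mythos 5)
Your argument is correct and follows the same basic strategy as the paper's proof: convert each intersection number into the degree of a pullback to $C$ along a composition through $J$ via the projection formula, then apply theorem-of-the-cube identities together with the vanishing of $\deg\circ\iota^*$ on $\Pic^0(J)$. Two points are worth noting. You keep $P_1,P_2$ arbitrary and show directly that each slice term $Z.(C\times P_1)$ and $Z.(P_2\times C)$ equals $d\colonequals\deg(\iota^*\L)$ because $t_a^*\L\otimes\L^{-1}\in\Pic^0(J)$; this spells out what the paper compresses into ``we may assume $P_1=P_2=b$.'' More substantively, your total $Z.(\Delta-C\times P_1-P_2\times C)=2d$ is the correct value, and it can be reached in one line from the identity $[2]^*\L\otimes\L^{\otimes -2}\cong\L\otimes[-1]^*\L=\L^+$, so the quantity equals $\deg(\iota^*\L^+)=2d$. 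The final value $\tfrac{1}{2}\deg(\iota^*\L^+)$ appearing in the paper's displayed computation is off by a factor of $2$, traceable to the line-bundle identity being recorded there as $(\L^+)^{\otimes 2}\otimes\L^{\otimes 2}$ when it should be $(\L^+)^{\otimes 2}\otimes\L^{\otimes 4}$; since the lemma only asserts an equivalence of vanishing and $\deg(\iota^*\L^+)=2\deg(\iota^*\L)$, this slip is immaterial to the statement, but your computation is the one to retain.
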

\begin{proof}
We may assume that $P_1 = P_2 = b$ without any loss of generality. Using the projection formula
  in the first line, $Z = (\iota^{(2)})^*(\L, s)$ in the second line, the identities 
  \[ \iota^{(2)} \circ i_\Delta = [2] \circ \iota, \quad\quad \iota^{(2)} \circ i_{1,b} =
  \iota^{(2)} \circ i_{2,b} = \iota \] 
in the third line, and the isomorphisms from Equations~\eqref{E:oddeven}, ~\eqref{m-iso1}
  and ~\eqref{m-iso3} 
\[ [2]^*(\L^{\otimes 2}) = [2]^*(\L^{+}) \otimes [2]^*(\L^{-}) = (\L^+)^{\otimes 4} \otimes {\L^-}^{\otimes 2} = (\L^+)^{\otimes 2} \otimes {\L}^{\otimes 2}\] 
in the fourth line, we get
\begin{align*}
 Z.(\Delta-C\times \{b\}-\{b\}\times C) &= \deg({\Delta}^*(Z))-\deg( i_{1,b}^*(Z) ) -\deg( i_{2,b}^*(Z) ) \\
  &= \deg(({\Delta}^* {\iota^{(2)}}^*\L) \otimes (i_{1,b}^* {\iota^{(2)}}^*\L)^{-1}
  \otimes (i_{2,b}^* {\iota^{(2)}}^*\L)^{-1}) \\
  &= \deg(\iota^*([2]^*\L \otimes \L^{\otimes -2})) \\
  &= \frac{\deg(\iota^*\L^+)}{2}.
\end{align*}
We are allowed to divide by $2$, because $\NS(C) \cong \Z$ is torsion-free.
  Since we have
  $\deg(\iota^* \L) = \deg(\iota^* ([-1]^* \L))$, it follows that $\deg(\iota^* \L^+) = 2
  \deg(\iota^* \L)$ and we are done.
\end{proof}

\subsection{Endomorphisms of $J$ and $D(b,z)$}
We keep the notation of the previous subsection and
 we assume, in addition, that
$\iota^*\L \cong \O_C$. 
Next, we modify the arguments in \cite[Section~2.1]{DLF19} to reinterpret the divisor class of
$D(b,z)$ in Definition~\ref{D:divcon} in terms of the action of the endomorphism of $J$
corresponding to the line bundle $\L$ acting on the divisor class $\iota(z)$ of $z-b$
(Lemma~\ref{L:commdiag}~\eqref{L:finalformendo}). Along the way, we show
that we can identify the Chow--Heegner point/Diagonal cycle
$D_Z(b)$ of \cite[Section~2.2]{DLF19} with the pullback of $\L^-$ by $\iota$
(Lemma~\ref{L:commdiag}~\eqref{L:extractingDbb}).
We will then use Lemma~\ref{L:commdiag} to prove Proposition~\ref{P:CGcomp}, which
is a comparison between the global Coleman--Gross height pairing between the divisors $z-b$
and $D(b,z)$, and our canonical height for the line bundle $L$ at $\iota(z)$. 

We first need some more notation. Let $\pi_1,\pi_2$ denote the standard projections $C \times C
\rightarrow C$ to the first and second factors respectively. Let \[\psi \colon \Pic(C
\times C) \rightarrow \End(J)\] denote the usual action on $J$ of a correspondence on $C
\times C$, where we first pull back a degree $0$ divisor on $C$ by $\pi_1$, then intersect it with the given class in $\Pic(C \times C)$ and then push forward this intersection by $\pi_2$. The map $\psi$ is surjective, with kernel the fibral divisors corresponding to $\pi_1,\pi_2$.

We now describe a natural splitting of the exact sequence
\begin{equation}\label{E:exact_seq} 0 \rightarrow \pi_1^*(\Pic(C)) \oplus \pi_2^*(\Pic(C)) \rightarrow \Pic(C \times C) \xrightarrow{\psi} \End(J) \rightarrow 0 \end{equation}
using the maps $i_{1,b},i_{2,z}$. This splitting will then be used to show
that the map $\theta_{C,b,z}$ from Definition~\ref{D:divcon} factors
through $\End(J)$. The splitting is straightforward -- since $\pi_2 \cdot
i_{1,b}$ and $\pi_1 \cdot i_{2,z}$ are constant maps, we see that
$\ker(i_{1,b}^*) \cap \ker(i_{2,z}^*)$ is a natural complement to the
image of $\pi_1^*(\Pic(C)) \oplus \pi_2^*(\Pic(C))$ in $\Pic(C \times C)$.
This gives a natural isomorphism, which by similar abuse of notation as in
\cite{DLF19}, we call $\psi_z^{-1}$:
\[ \psi_{z}^{-1} \colon \End(J) \xrightarrow{\cong} \ker(i_{1,b}^*) \cap \ker(i_{2,z}^*). \]

\begin{rk}
 Using the decompositions
 \[ \Pic^0(C \times C) = \pi_1^*(\Pic^0(C)) \oplus \pi_2^*(\Pic^0(C)),\]
 and
 \[ \Pic(C \times C) = \pi_1^*(\Pic(C)) \oplus \pi_2^*(\Pic(C)) \oplus \ker(i_{1,b}^*) \cap \ker(i_{2,z}^*),\]
 which in turn induce the decomposition
 \[ \NS(C \times C) = \pi_1^*(\NS(C)) \oplus \pi_2^*(\NS(C)) \oplus \ker(i_{1,b}^*) \cap \ker(i_{2,z}^*),\]
 we may view $\ker(i_{1,b}^*) \cap \ker(i_{2,z}^*)$ as a subspace of $\NS(C \times C)$, if we wish to do so.
\end{rk}
 We recall some of the notation introduced in Section~\ref{sec:app}. For any $a \in J$, recall that $t_a \colon J \rightarrow J$ denotes the translation by $a$ map. Recall that $[-1]$ denotes the inversion map on $J$. For any $\L \in \Pic(J)$, let $\L^+\colonequals \L \otimes [-1]^* \L$ and let $\L^- \colonequals \L \otimes
([-1]^* \L)^{-1}$. Given $\L \in \Pic(J)$, recall that we defined $\phi_{\L} \colon J \rightarrow
\hat{J}$ to be the map $a \mapsto t_a^*(\L) \otimes \L^{-1}$.
If $\Theta$ is the theta divisor on $J$ with respect to $\iota$, then $
\phi_{\Theta}$ denotes the corresponding principal polarization. 
\begin{lemma}\label{L:commdiag}
 Let $\tilde{\theta}_{C,b,z} \colon \End(J) \rightarrow \Pic(C)$ be the map $\tilde{\theta}_{C,b,z} \colonequals \phi_z \cdot \psi_z^{-1}$. 
 Let $\tilde{\phi} \colon \Pic(J) \rightarrow \End(J)$ be the map
  $\tilde{\phi}(\L) \colonequals \phi_{\Theta}^{-1} \cdot \phi_\L$. Then 
 \begin{enumerate}[\upshape (a)]
   \item\label{L:alt_defn_phi} $\psi \cdot (\iota^{(2)})^* =   -\tilde{\phi}$.
  \item\label{L:keycommute} $-\theta_{C,b,z} =  \tilde{\theta}_{C,b,z} \cdot \tilde{\phi}$. 
  \item\label{L:chasingEL} 
  $\tilde{\theta}_{C,b,z}(\tilde{\phi}(\L)) = -[D(b,z)]$.
  \item\label{L:antisymmetry} 
  $2\tilde{\phi}(\L) = \tilde{\phi}(\L^+)$. 
\item\label{L:extractingDbb} $[D(b,b)] = \phi_{\Theta}^{-1} (\L^-)$.
\item\label{L:endoaction} $ [D(b,z)]-[D(b,b)] = -\iota^*(\phi_{\L}([z-b])) = \tilde{\phi}(\L)([z-b]).$ 
  \item\label{L:finalformendo} $[D(b,z)] = \phi_{\Theta}^{-1}(\L^-)+ \tilde{\phi}(\L)([z-b])$ 
 \end{enumerate} 
\end{lemma}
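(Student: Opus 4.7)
The proof of Lemma~\ref{L:commdiag} proceeds by unpacking definitions, with parts~\eqref{L:alt_defn_phi}, \eqref{L:extractingDbb}, and \eqref{L:endoaction} containing the essential content and the others following formally. The single principle tying everything together is the autoduality of the Jacobian: the pullback $\iota^{*} \colon \hat{J} = \Pic^{0}(J) \to \Pic^{0}(C) = J$ equals $-\phi_{\Theta}^{-1}$, which is standard (see \cite[\S11.4]{BG06}).

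For part~\eqref{L:alt_defn_phi}, I would carry out the classical Mumford-type calculation. Writing $\iota^{(2)} = m \circ (\iota \times \iota)$ and using the theorem of the cube, the line bundle $m^{*}\L \otimes p_{1}^{*}\L^{-1} \otimes p_{2}^{*}\L^{-1}$ on $J \times J$ restricts on $\{a\} \times J$ to $\phi_{\L}(a)$, hence equals $(\id \times \phi_{\L})^{*}\P$. Pulling back via $\iota \times \iota$ yields $(\iota^{(2)})^{*}\L = \pi_{1}^{*}\iota^{*}\L \otimes \pi_{2}^{*}\iota^{*}\L \otimes (\iota \times \phi_{\L} \circ \iota)^{*}\P$. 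The first two factors lie in $\ker\psi$, while the family of line bundles on $C$ parametrized by $x \in C$ with fiber $\iota^{*}(\phi_{\L}(\iota(x)))$ extends, by autoduality, to the endomorphism $-\phi_{\Theta}^{-1} \circ \phi_{\L} = -\tilde{\phi}(\L)$.

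For~\eqref{L:keycommute}, the key observation is that $\phi_{z}$ annihilates $\pi_{1}^{*}\Pic(C) \oplus \pi_{2}^{*}\Pic(C)$: for $D \in \Pic(C)$, one has $\phi_{z}(\pi_{1}^{*}D) = D - D - 0 = 0$ (since $\pi_{1} \circ i_{2,z}$ is constant at $z$), and similarly for $\pi_{2}^{*}D$. Combined with~\eqref{L:alt_defn_phi} and the defining property of $\psi_{z}^{-1}$, this yields $\theta_{C,b,z} = \phi_{z} \circ (\iota^{(2)})^{*} = \phi_{z} \circ \psi_{z}^{-1} \circ \psi \circ (\iota^{(2)})^{*} = -\tilde{\theta}_{C,b,z} \circ \tilde{\phi}$, proving~\eqref{L:keycommute}; part~\eqref{L:chasingEL} is an immediate specialization. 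For~\eqref{L:antisymmetry}, a direct computation using $t_{a} \circ [-1] = [-1] \circ t_{-a}$ and that $[-1]^{*}$ acts as inversion on $\hat{J}$ gives $\phi_{[-1]^{*}\L} = \phi_{\L}$, so $\phi_{\L^{+}} = 2\phi_{\L}$ and thus $\tilde{\phi}(\L^{+}) = 2\tilde{\phi}(\L)$.

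For~\eqref{L:extractingDbb} and~\eqref{L:endoaction}, I would use the identities $\iota^{(2)} \circ \Delta = [2] \circ \iota$, $\iota^{(2)} \circ i_{1,b} = \iota^{(2)} \circ i_{2,b} = \iota$, and $\iota^{(2)} \circ i_{2,z} = t_{\iota(z)} \circ \iota$ to expand $D(b,z)$. Using $\iota^{*}\L = \O_{C}$ throughout, one gets $[D(b,z)] = \iota^{*}[2]^{*}\L - \iota^{*}t_{\iota(z)}^{*}\L$ and $[D(b,b)] = \iota^{*}[2]^{*}\L$. Subtracting yields $[D(b,z)] - [D(b,b)] = -\iota^{*}(t_{\iota(z)}^{*}\L \otimes \L^{-1}) = -\iota^{*}(\phi_{\L}([z-b]))$; the second equality in~\eqref{L:endoaction} follows from the autoduality. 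For~\eqref{L:extractingDbb}, the theorem of the cube applied to the triple of morphisms $(\iota, \iota, -\iota) \colon C \to J$, together with $\iota^{*}\L = 0$ and triviality of $0^{*}\L$ under a suitable rigidification, gives $\iota^{*}[2]^{*}\L = \iota^{*}[-1]^{*}\L$ directly, avoiding the $2$-torsion ambiguity in the identity $[2]^{*}\L \simeq \L^{\otimes 3} \otimes [-1]^{*}\L$. Then $\iota^{*}[-1]^{*}\L = -\iota^{*}\L^{-}$ (from $[-1]^{*}\L = \L \otimes (\L^{-})^{-1}$), and autoduality once more yields $\phi_{\Theta}^{-1}(\L^{-})$. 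Part~\eqref{L:finalformendo} is just the sum of~\eqref{L:extractingDbb} and~\eqref{L:endoaction}. The main obstacle I anticipate is the careful sign bookkeeping in~\eqref{L:alt_defn_phi}, where the Mumford-type computation must be aligned with the autoduality in the correct orientation; the remaining parts are mostly formal consequences.
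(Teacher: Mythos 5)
Your proof is correct and follows essentially the same route as the paper's: everything reduces to the autoduality $-\iota^{*}=\phi_{\Theta}^{-1}$, the fact that $\phi_{z}$ kills fibral classes so that $\psi_{z}^{-1}\circ\psi$ acts as the identity modulo $\ker\phi_{z}$, and the pullback identities $\iota^{(2)}\circ\Delta=[2]\circ\iota$, $\iota^{(2)}\circ i_{1,b}=\iota^{(2)}\circ i_{2,b}=\iota$, $\iota^{(2)}\circ i_{2,z}=t_{\iota(z)}\circ\iota$. The only local variants are minor: in~(d) you show $\phi_{[-1]^{*}\L}=\phi_{\L}$ directly rather than the paper's observation that $\tilde{\phi}$ factors through $\NS(J)$ and kills $\L^{-}$, and in~(e) you reach $\iota^{*}[2]^{*}\L=\iota^{*}[-1]^{*}\L$ via the theorem of the cube rather than the paper's identity $[2]^{*}\L\cong\L^{\otimes3}\otimes[-1]^{*}\L$; both yield $-\iota^{*}\L^{-}$ and the arguments are equivalent. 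You also spell out the Mumford-type seesaw computation behind part~(a), which the paper outsources to~\cite[\S2.1]{DLF19}.
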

\begin{proof} \hfill
\begin{enumerate}[\upshape (a)]
  \item The inverse of $\phi_{\Theta}$ is $-\iota^*$,
    and after unwinding definitions (see \cite[Section~2.1]{DLF19} for more details), this in turn implies that 
    \[ \psi \cdot (\iota^{(2)})^* =   -\tilde{\phi}.\]

  \item Let $\L \in \Pic(J)$ and let $[Z] \colonequals (\iota^{(2)})^*(\L) \in \Pic(C \times
  C)$. Then, since $\psi_z$ splits the exact sequence~\eqref{E:exact_seq}, there is a fibral
    divisor $[F] \in \pi_1^*(\Pic(C)) \oplus \pi_2^*(\Pic(C))$ such that $\psi_z^{-1}
    \cdot \psi([Z]) = [Z]+[F]$. Since  $\phi_z$ is trivial on fibral divisors, and in
    particular on $[F]$, combining this with~\eqref{L:alt_defn_phi}, it follows that
    \[ -\theta_{C,b,z}(\L) = -\phi_z \cdot (\iota^{(2)})^*(\L) = -\phi_z \cdot \psi_z^{-1} \cdot
    \psi \cdot (\iota^{(2)})^*(\L) = \phi_z \cdot \psi_z^{-1} \cdot \tilde{\phi}(\L) =
    \tilde{\theta}_{C,b,z} \cdot \tilde{\phi}(\L). \]

\item Part~\eqref{L:chasingEL} follows from Part~\eqref{L:keycommute} and Definition~\ref{D:divcon}.
 
 \item Note that $\L^-$ is antisymmetric, and antisymmetric line bundles form precisely the
   kernel of the natural map $\Pic(J) \rightarrow \NS(J)$. Since the map $\tilde{\phi} \colon \Pic(J) \rightarrow \End(J)$ factors as
\[ \tilde{\phi} \colon \Pic(J) \rightarrow \NS(J) \rightarrow \End(J), \] 
combining this with the previous sentence, it follows that $\tilde{\phi}(\L^-) = 0$.
    Applying the homomorphism $\tilde{\phi}$ to the identity $\L^{\otimes 2} = \L^+ \otimes \L^-$ and using the previous line, we get
\[ 2\tilde{\phi}(\L) = \tilde{\phi}(\L^{\otimes 2}) = \tilde{\phi}(\L^+ \otimes \L^-) =
    \tilde{\phi}(\L^+) + \tilde{\phi}(\L^-) = \tilde{\phi}(\L^+). \]

\item

  Let $[2] \colon J \rightarrow J$ denote the multiplication by $2$ map on $J$. Since
    $\iota^{(2)} \cdot i_{1,b} = \iota^{(2)} \cdot i_{2,b} = \iota$ and $\iota^{(2)} \cdot
    \Delta = [2] \cdot \iota$, a direct computation with Definition~\ref{D:divcon} and the identity 
\[ [2]^*\L = \L^{\otimes 3} \otimes [-1]^* \L\] shows that
    \begin{equation}\label{E:Dbb} [D(b,b)] = \iota^*([2]^* \L \otimes \L^{\otimes -2}) =
    \iota^*(\L^+). \end{equation}

    Since $\iota^*(\L) = 0$ by choice of $\L \in \Pic(J)$, it follows that
    \[ 0 = (\iota^*\L)^{\otimes 2} = \iota^*(\L^{\otimes 2}) = \iota^*(\L^+) + \iota^*(\L^-), \]
and therefore
    \begin{equation}\label{E:L+L-} \iota^*(\L^+) = - \iota^*(\L^-).\end{equation}

      Combining Equations~\eqref{E:Dbb} and \eqref{E:L+L-} with the equality $-\iota^* = \phi_{\Theta}^{-1}$, we get
    \[ [D(b,b)] = -\iota^*(\L^-) = \phi_{\Theta}^{-1} (\L^-) .\]

  \item Combining Definition~\ref{D:divcon} with the identities $\iota^{(2)} \cdot i_{2,b}
    = \iota$ and $\iota^{(2)} \cdot i_{2,z} = t_{\iota(z)} \cdot \iota$ and once again using the
    equality $-\iota^* = \phi_{\Theta}^{-1}$, we get

    \[ [D(b,z)]-[D(b,b)] = (i_{2,b}^*-i_{2,z}^*)((\iota^{(2)})^*\L) = -\iota^*(
    t_{\iota(z)}^*\L \otimes
    \L^{-1}) =-\iota^*(\phi_{\L}(\iota(z))) = \tilde{\phi}(\L)(\iota(z)).\]
\item Add the equations in Parts~\eqref{L:extractingDbb} and \eqref{L:endoaction}.\qedhere
\end{enumerate}
\end{proof}

\subsection{Comparison of global heights}
We now compare the global height in our approach to Quadratic Chabauty with the one used
in~\cite{BD18}. 
We keep the notation of the previous subsection.  In addition, we assume that $K$ is a number field
for the rest of this section.
\begin{prop}\label{P:CGcomp}
  Fix a purely mixed curvature form $\alpha_\p$ on $\P_\p$ for every prime
  $\p\mid p$ and a continuous id\`{e}le class character $\chi\colon
  \A^\times_{K}/K^\times\to
  \Q_p$.
  Let $\hat{h}_{\L}$ be the canonical $p$-adic height with respect to these
  choices and let $h^{CG} \colon \Div^0(C) \times \Div^0(C) \rightarrow
  \mathbb{Q}_p$ denote the global Coleman--Gross height pairing between degree $0$ divisors
  relative to $\chi$ and the 
  complementary subspaces $W_\p$ corresponding to $\alpha_\p$ as  in Proposition~\ref{P:curvspace}.
  Then we have for all $z\in C(\Q)$  
  \begin{equation}\label{E:CGmain} 2 \hat{h}_\L(\iota(z)) =  -h^{CG}(z-b,D(b,z))\end{equation} 
\end{prop}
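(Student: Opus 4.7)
The plan is to chain together three ingredients that have already been established in the paper: the reduction of $\hat{h}_\L$ to the canonical pairing $\hat{h}$ via Corollary~\ref{C:can_ht}\eqref{C:canhtpart1}, the identification of $[D(b,z)]$ with the image of $\iota(z)$ under the endomorphism associated to $\L$ plus a fixed antisymmetric correction (Lemma~\ref{L:commdiag}\eqref{L:finalformendo}), and the comparison of $\hat{h}$ with $h^{CG}$ given by Theorem~\ref{T:CGcomp}.

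The first step is to set $a = \iota(z) = [z-b] \in J(K)$ and $\hat{a} = [\L^-] \in \hat{J}(K)$, and to apply Corollary~\ref{C:can_ht}\eqref{C:canhtpart1}, which yields
\begin{equation*}
 \hat{h}_\L(\iota(z)) = \tfrac{1}{2}\hat{h}\bigl(\iota(z),\, \phi_\L(\iota(z)) + [\L^-]\bigr).
\end{equation*}
The second step is to rewrite the second argument using the identities $\tilde{\phi}(\L) = \phi_\Theta^{-1}\circ\phi_\L$ and Lemma~\ref{L:commdiag}\eqref{L:finalformendo}: since
\begin{equation*}
 [D(b,z)] \;=\; \phi_\Theta^{-1}([\L^-]) + \tilde{\phi}(\L)([z-b]) \;=\; \phi_\Theta^{-1}\bigl([\L^-] + \phi_\L(\iota(z))\bigr),
\end{equation*}
applying $\phi_\Theta$ to both sides gives $\phi_\Theta([D(b,z)]) = \phi_\L(\iota(z)) + [\L^-]$. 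Substituting this back into the formula above yields
\begin{equation*}
 \hat{h}_\L(\iota(z)) = \tfrac{1}{2}\hat{h}\bigl(\iota(z),\, \phi_\Theta([D(b,z)])\bigr).
\end{equation*}

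The third and final step is to apply Theorem~\ref{T:CGcomp}, which says that for every $a,b' \in J(K)$ one has $\hat{h}(a,\phi_\Theta(b')) = -h^{CG}(a,b')$, once the chosen purely mixed curvature forms $\alpha_\p$ correspond to the chosen complementary subspaces $W_\p$ via Proposition~\ref{P:curvspace}. Taking $a=\iota(z)$ and $b' = [D(b,z)]$ gives
\begin{equation*}
 \hat{h}\bigl(\iota(z),\, \phi_\Theta([D(b,z)])\bigr) = -h^{CG}\bigl(\iota(z),\, [D(b,z)]\bigr).
\end{equation*}
Since the global Coleman--Gross height pairing is defined on degree-zero divisors and descends to a pairing on divisor classes, it is computed on any representatives with disjoint support; in particular, we may use $z-b$ for the class $\iota(z)$ and (after a harmless perturbation by a principal divisor if needed to achieve disjoint support) $D(b,z)$ for its own class, giving $-h^{CG}(\iota(z),[D(b,z)]) = -h^{CG}(z-b,D(b,z))$. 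Combining this with the previous display produces the required equality~\eqref{E:CGmain} after multiplying by~$2$.

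The only step requiring any care is the bookkeeping at the last stage: verifying that replacing the Picard-class arguments by explicit divisor representatives is valid and that any adjustment by principal divisors does not alter the pairing value. This is immediate from the standard property that $h^{CG}$ vanishes when either argument is principal, but it is the only place where a minor computation is needed; all three main steps are formal manipulations using results already proved.
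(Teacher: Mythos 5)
Your proof is correct and takes essentially the same route as the paper: both use Lemma~\ref{L:commdiag}\eqref{L:finalformendo} to identify $[D(b,z)]$, Theorem~\ref{T:CGcomp} to compare $\hat{h}$ with $h^{CG}$, and Corollary~\ref{C:can_ht}\eqref{C:canhtpart1} to relate $\hat{h}_\L$ to $\hat h_{\P}$. You simply chain the three lemmas in the reverse order from the paper and spell out the (routine) passage from divisor classes to divisor representatives, which the paper leaves implicit.
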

\begin{proof} We will use the notation of Lemma~\ref{L:commdiag}. Let $E_{\L} \colonequals
  \tilde{\phi}(\L) = \phi_{\Theta}^{-1} \cdot \phi_{\L} \in \End(J)$. Let $\hat{a}$ be the point in $\Pic^0(J)$ corresponding to the antisymmetric  line bundle $\L^-$.
  By Lemma~\ref{L:commdiag}~\eqref{L:finalformendo}, we have 
  \[  [D(b,z)] = \phi_{\Theta}^{-1} (\hat{a}) + E_{\L}(\iota(z)). \]
  Hence Theorem~\ref{T:CGcomp} implies that
  \[ h^{CG}(z-b,D(b,z)) =  -\hat{h}_{\P}(\iota(z), \hat{a}+\phi_{\L}(\iota(z))).\]
  Now, by Corollary~\ref{C:can_ht}~\eqref{C:canhtpart1}, it follows that 
  \begin{equation*}
   \hat{h}_{\P}(\iota(z), \hat{a}+\phi_{\L}(\iota(z))) = 2 \hat{h}_{\L}(\iota(z))\,.
    \qedhere
  \end{equation*}
\end{proof}

\begin{rk}\label{R:cantrefine}
The left hand side of~\eqref{E:CGmain} admits a local decomposition coming from the choice
  of rigidification of the line bundle $\L$ in~\eqref{rigidq} and choices of curvature form
  for $\L_\p$ at all $\p\mid p$. We may further assume that the curvature form is chosen compatibly with
  the complementary subspace needed for the local decomposition of the
  Coleman--Gross
  height pairing as in Theorem~\ref{T:CGcomp}. However, the proof of
  Proposition~\ref{P:CGcomp} for equality of global heights given above  cannot directly
  be extended to show that the two sides of~\eqref{E:CGmain} agree locally. This is because the proof uses Theorem~\ref{T:CGcomp}, which is inherently global.     
\end{rk}

\subsection{Comparison of local heights}\label{S:locht}
In the rest of this section, we refine the comparison result of the global heights in
Proposition~\ref{P:CGcomp} to a local comparison result, see
Theorem~\ref{T:localcomparison}. 

For the rest of this section, we work locally at a prime $\q\nmid p$ of our number field $K$. We will
mostly suppress $\q$ from the notation. 
In Definition~\ref{D:vcan}, we first recall the standard construction of a canonical
valuation on $D(b,z)$ that appears in the Coleman--Gross height pairing. This is done by
restricting the canonical valuation on the Poincar\'e bundle to the fiber
corresponding to the divisor $D(b,z)$. We make the maps appearing in this
construction explicit in Lemma~\ref{L:psi}. We then give an alternate
pullback construction of the canonical valuation on $D(b,z)$ in
Proposition~\ref{P:Dbzotherpullback}, which factors through $C \times C$
instead, and is more favorable for proving a comparison result with local
contributions to our canonical $p$-adic height. The intermediate pull-back
line bundle $\L_{\beta}$ on $C \times C$ has the property that it equals
the class of the divisor $[D(b,z)]$ when restricted to $C \times
\{z\}$.  In Lemma~\ref{L:trivial}, we write down an explicit section of a
line bundle $\L_{\beta}$ on $C \times C$ that fiber by fiber equals
the divisor $D(b,z)$. We finally combine
Proposition~\ref{P:Dbzotherpullback} and Lemma~\ref{L:trivial} to prove
Theorem~\ref{T:localcomparison}. 

\subsubsection{The canonical valuation on $D(b,z)$ by alternate pull-backs
from the Poincar\'e bundle}

\begin{rk}\label{R:rignopick}
In the following, we will have to consider various valuations arising as pull-backs of the canonical valuation on the Poincar\'e bundle. To make this precise, we would have
  to be careful about choices of rigidifications and induced choices of isomorphisms that
  are isometries. Since changing the rigidification only changes the associated valuation by a constant,  to simplify the exposition, we will talk about pull-back valuations without choosing rigidifications on the relevant bundles first --  we will instead write
  $v\equiv v'$ if $v$ and $v'$ are valuations on the same line bundle that are the same up
  to an additive constant. This actually suffices, since we ultimately only care about the valuations in
  Theorem~\ref{T:localcomparison} and these are determined uniquely by
  their value at $1(b)$. 
\end{rk}

 For $z\in C$, let $L_z=\O(D(b,z))$.
We first recall the standard construction of the canonical valuation
$v_z \colonequals v^\mathrm{can}_{L_z}$ on $L_z$ from the proof of
\cite[Theorem~9.5.16]{BG06}.  
Let $v^{\mathrm{can}}_{\mathcal{P}}$ denote the canonical valuation on
$\mathcal{P}$ with respect to a choice of rigidification above $(0,0)$ as in Remark~\ref{canrig}. 

Let $\mathcal{P}_C$ denote the Poincar\'e bundle on $C \times J$ such that $\P_{C,b}$ is
trivial, and $\mathcal{P}_C^t$ the pull-back of $\mathcal{P}_C$ under the natural map $J
\times C \rightarrow C \times J$ that swaps the two factors. Let $\mu \colon C \rightarrow
\Pic^0(J)$ be the unique map such that $(\mathrm{id}_J \times \mu)^* \mathcal{P} =
\mathcal{P}_C^t$. 
For any $a\in J$, let $i_{2,a} \colon \Pic^0(J) \rightarrow J \times \Pic^0(J)$ be the map
$c \mapsto (a,c)$.

\begin{defn}\label{D:vcan}\cite[Theorem~9.5.16]{BG06}
We define  $v_z 
  \colonequals \mu^* i_{2,c_z}^* v^{\mathrm{can}}_{\mathcal{P}}$, where  $c_z\in
  J$ is the class of $D(b,z)$.
\end{defn}

We want to relate the valuation $v_z$ to the valuation $v_\q$ on $\O_C$
that appears in our Quadratic Chabauty result. The latter is also obtained by pullback from
$\P$, see~\eqref{vq}. (In~\eqref{vq}, we only define $v_q$ for a prime number
$q\ne p$, since we assume that $C$ is defined over $\Q$. However, an
extension to primes $\q$ of number fields $K$ is immediate.)

\begin{lemma}\label{L:psi} Let $\phi_{\Theta}$ be as in Lemma~\ref{L:commdiag}. Let $t \colon J \times J \rightarrow J \times J$ be the map that swaps the two factors. 
\begin{enumerate}[\upshape (a)]
\item\label{L:deltatranspose}\label{L:rigisom} Let $\delta \colonequals (\mathrm{id}_J \times \phi_{\Theta})^*\mathcal{P}$. Then $\delta^t \colonequals t^*\delta = \delta$ and therefore $(\mathrm{id} \times \phi_{\Theta}^{-1})^* t^* (\mathrm{id} \times
  \phi_{\Theta})^*  {\mathcal{P}} = {\mathcal{P}}$. Furthermore,
    \begin{equation}\label{vequiv}
     (\mathrm{id} \times \phi_{\Theta}^{-1})^* t^* (\mathrm{id} \times
  \phi_{\Theta})^*  v^{\mathrm{can}}_{\mathcal{P}} \equiv v^{\mathrm{can}}_{\mathcal{P}}
    \end{equation}
  \item\label{L:Ppullbacks} $(\iota \times \mathrm{id}_J)^* \delta = -\mathcal{P}_C.$ 
  \item\label{L:mualt} $\mu = -\phi_{\Theta} \cdot \iota $. 
\end{enumerate}
\end{lemma}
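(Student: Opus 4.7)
The plan is to treat parts (a), (b), and (c) in turn. Part (a) is essentially the autoduality of the Poincar\'e bundle with respect to the principal polarization $\phi_\Theta$; part (b) follows from the seesaw principle after an explicit computation with $\delta$; and part (c) is a direct corollary of (b).

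For (a), I will use the formula $\delta \cong s^*\Theta \otimes \pi_1^*\Theta^{-1} \otimes \pi_2^*\Theta^{-1}$ on $J \times J$ recorded in \S\ref{subsec:jacs}. The identity $t^*\delta = \delta$ is immediate: $s \circ t = s$ while $t$ interchanges $\pi_1$ and $\pi_2$. Pulling this symmetry back by $(\mathrm{id} \times \phi_\Theta^{-1})$ then yields the stated isomorphism for $\mathcal{P}$. For the valuation statement, write $g \colonequals (\mathrm{id} \times \phi_\Theta) \circ t \circ (\mathrm{id} \times \phi_\Theta^{-1})$, a homomorphism $J \times \hat{J} \to J \times \hat{J}$ of abelian varieties under which $\mathcal{P}$ pulls back to itself. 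Functoriality of canonical valuations under homomorphisms (Proposition~\ref{P:good}(d)) combined with their uniqueness up to rigidification (Proposition~\ref{P:good}(a)) then gives $g^*v^{\mathrm{can}}_\mathcal{P} \equiv v^{\mathrm{can}}_\mathcal{P}$.

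For (b), I will apply the seesaw principle to compare $(\iota \times \mathrm{id}_J)^*\delta$ with $-\mathcal{P}_C$ as line bundles on $C \times J$. Expanding the left-hand side via the formula for $\delta$ yields a tensor product of pullbacks of $\Theta^{\pm 1}$ along the morphisms $(x,a) \mapsto \iota(x)+a$, $\iota(x)$, and $a$. Restricted to $\{b\} \times J$, the identity $\iota(b) = 0$ causes the nontrivial factors to cancel, matching the defining triviality of $\mathcal{P}_C|_{\{b\} \times J}$. Restricted to $C \times \{a\}$, the constant factors drop out and the result equals $\iota^*\phi_\Theta(a)$; by the identity $\iota^* = -\phi_\Theta^{-1}$ recorded in Lemma~\ref{L:commdiag}, this represents the class $-a \in \Pic^0(C) = J$, which agrees with the restriction of $-\mathcal{P}_C$ since $\mathcal{P}_C|_{C \times \{a\}}$ is by construction the line bundle of class $a$. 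The seesaw principle then yields $(\iota \times \mathrm{id}_J)^*\delta \cong -\mathcal{P}_C$.

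Part (c) follows by restricting the isomorphism of (b) to $\{x\} \times J$: the explicit computation shows that the left-hand side represents $t_{\iota(x)}^*\Theta \otimes \Theta^{-1}$, the antisymmetric bundle corresponding to $\phi_\Theta(\iota(x)) \in \hat{J}$, so $\mathcal{P}_C|_{\{x\} \times J}$ represents $-\phi_\Theta(\iota(x))$. On the other hand, by the defining property of $\mu$, we have $\mathcal{P}_C|_{\{x\} \times J} = \mathcal{P}_C^t|_{J \times \{x\}} = \mathcal{P}|_{J \times \{\mu(x)\}}$, which represents $\mu(x)$. Comparing yields $\mu(x) = -\phi_\Theta(\iota(x))$. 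The main technical nuisance throughout is the bookkeeping of rigidifications, but the convention of Remark~\ref{R:rignopick} of recording equalities only up to an additive constant bypasses most of this, and the seesaw argument produces the required isomorphism of underlying line bundles without further complication.
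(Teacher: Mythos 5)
Your proof is correct, and it tracks the paper for part (a) while taking a genuinely different route for (b) and (c). For (a), the valuation statement comes from functoriality under homomorphisms (Proposition~\ref{P:good}) exactly as in the paper, and your direct verification of $\delta^t=\delta$ from $s\circ t=s$ and the formula $\delta \cong s^*\Theta\otimes\pi_1^*\Theta^{-1}\otimes\pi_2^*\Theta^{-1}$ simply replaces the paper's citation to \cite[Proposition~8.10.20]{BG06}. For (b) the paper defers entirely to \cite[Proposition~8.10.16]{BG06}, whereas you reconstruct the isomorphism by seesaw, matching restrictions on each $C\times\{a\}$ and on the slice $\{b\}\times J$; this makes explicit where the identity $\iota^*=-\phi_\Theta^{-1}$ enters, which the citation hides. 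For (c) the paper threads (a) and (b) through a chain of pullback manipulations ending at the defining property $(\mathrm{id}_J\times\mu)^*\mathcal{P}=\mathcal{P}_C^t$; you instead restrict (b) to each $\{x\}\times J$ and compare the resulting classes in $\hat J$. Both routes are valid; yours is more self-contained and geometric, the paper's is shorter at the cost of external citations and an opaque manipulation. One caution worth flagging: the identity $\iota^*=-\phi_\Theta^{-1}$ that you invoke in both (b) and (c) is merely stated in the proof of Lemma~\ref{L:commdiag}(a), not proved there (it is a standard fact about the principal polarization $\phi_\Theta$); there is no circularity, but if you want your seesaw argument to stand independently you should cite it separately rather than as a consequence of Lemma~\ref{L:commdiag}.
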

\begin{proof} \hfill
\begin{enumerate}[\upshape (a)]
 \item The isomorphism $t^*\delta \cong \delta$ follows from
   \cite[Proposition~8.10.20]{BG06}. Since $t$ and $\mathrm{id}_J \times \phi_{\Theta}$
    are homomorphisms of abelian varieties, Prop~\ref{P:good}~\eqref{P:valfunc} 
   implies~\eqref{vequiv}.     
 \item This is \cite[Proposition~8.10.16]{BG06}.
  \item By the definition of $\mu$, it suffices to show that $(\mathrm{id}_J \times
   -\phi_{\Theta} \cdot \iota)^* \mathcal{P} = \mathcal{P}_C^t$.
 Now, using parts~\eqref{L:deltatranspose} and \eqref{L:Ppullbacks}, we get
 \[ (\mathrm{id}_J \times -\phi_{\Theta} \cdot \iota)^* \mathcal{P} = -(\mathrm{id}_J \times
    \iota)^* (\mathrm{id}_J \times \phi_{\Theta})^* \mathcal{P} = -(\mathrm{id}_J \times
    \iota)^* \delta = -(\mathrm{id}_J \times \iota)^* \delta^t = \mathcal{P}_C^t .  \qedhere\] 
\end{enumerate}   
\end{proof}

The following construction is inspired by~\cite[Section~7]{EL21}.
Let $\beta_1,\beta_2 \colon J \rightarrow J$ be the maps $a \mapsto \tilde{\phi}(\L)(a)$
and $a \mapsto \phi_{\Theta}^{-1}(\L^-)$ respectively. Let $D_1 \colonequals
D(b,z)-D(b,b)$ and $D_2 \colonequals D(b,b)$. For $i\in \{1,2\}$, let $L_{\beta_i}$ be the line bundle on
$C\times C$ defined by $(\iota\times \phi_{\Theta}\cdot {\beta_i} \cdot \iota)^*\P$ and define
\begin{equation}\label{Lbeta}
  L_\beta\colonequals L_{\beta_1}\otimes L_{\beta_2}\,.
\end{equation}
Let $v_i \colonequals (\iota \times \phi_{\Theta} \cdot \beta_i \cdot
\iota)^* v^{\mathrm{can}}_{\mathcal{P}} $ and define a valuation $v_\beta$
on $L_\beta$ by $v_{\beta}\colonequals v_1+v_2$. 
Recall that $i_{1,z}(x) = (x,z)$ for any $x \in C$ and that $\Delta \colon C \rightarrow C
\times C$ denotes
the diagonal map. 
Recall the definition of the trivial
valuation on $\O_J$ from Proposition~\ref{P:good}~\eqref{P:TrivVal}. We will also call the
pullback of the trivial valuation on $\O_J$ to $C$ by the Abel--Jacobi map $\iota$ the trivial
valuation on $\O_C$ and denote it by $v^{\mathrm{triv}}$. 
\begin{prop}\label{P:Dbzotherpullback}\hfill
\begin{enumerate}[\upshape (a)]
\item\label{candecomp} 
We have $
  v_z\equiv \mu^* i_{2,[D_1]}^* v^{\mathrm{can}}_{\mathcal{P}} + \mu^* i_{2,[D_2]}^*
      v^{\mathrm{can}}_{\mathcal{P}}$.
  \item\label{Lalphai1z} We have $i_{1,z}^*L_{\beta}  \cong O(D(b,z))$
   and $v_z\equiv -i_{1,z}^* (v_\beta )$. 
  \item\label{LalphavsL} We have $\Delta^* L_{\beta} \cong \O_C$ and $\Delta^*(v_\beta
    )\equiv 2v_{\q}$.
  \item\label{Lalphai2b} We have $i_{2,b}^* L_{\beta} \cong
    \O_C$ and $i_{2,b}^*(v_\beta )\equiv 2v^{\mathrm{triv}}$.
\end{enumerate}

\end{prop}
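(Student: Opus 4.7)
My plan is to reduce each of the four claims to the behavior of the canonical valuation $v^{\mathrm{can}}_{\P}$ on the Poincar\'e bundle $\P$ on $J \times \hat{J}$. The main tools are: (i) the biextension structure of $\P$ combined with the additivity of canonical valuations under tensor products and their functoriality under homomorphisms of abelian varieties (Proposition~\ref{P:good}); (ii) the identifications $[D_1] = \tilde{\phi}(\L)(\iota(z))$ and $[D_2] = \phi_{\Theta}^{-1}(\L^-)$ from Lemma~\ref{L:commdiag}(e)--(g); and (iii) the normalizations $\iota(b) = 0_J$, $-\iota^* = \phi_{\Theta}^{-1}$, and $\mu = -\phi_{\Theta}\iota$, together with the symmetry $\delta^t \equiv \delta$ of Lemma~\ref{L:psi}(a),(c). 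Throughout, equalities of valuations are modulo additive constants, as permitted by Remark~\ref{R:rignopick}.

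Part (a) is immediate from Lemma~\ref{L:commdiag}(g), which gives $c_z = [D_1] + [D_2]$, combined with the biadditivity of $c \mapsto i_{2,c}^*\P$ and the additivity of canonical valuations under tensor products (Proposition~\ref{P:good}(b)); pullback by $\mu$ preserves the decomposition. For part (c), the identity $\phi_{\Theta} \cdot \tilde{\phi}(\L) = \phi_{\L}$ and functoriality reduce the computation of $\Delta^*L_{\beta_1}$ to $\iota^*F^*\P$, where $F \colon J \to J \times \hat{J}$ sends $x$ to $(x, \phi_{\L}(x))$. The Theorem of the Cube on $J \times J$, or equivalently the formula $[2]^*\L \cong \L^{\otimes 3} \otimes [-1]^*\L$ applied to the Mumford bundle $(\mathrm{id} \times \phi_{\L})^*\P \cong s^*\L \otimes \pi_1^*\L^{-1} \otimes \pi_2^*\L^{-1}$, yields $F^*\P \cong \L^+$, so that $\Delta^*L_{\beta_1} \cong \iota^*\L^+$. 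Since $\beta_2$ is constant at $\phi_{\Theta}^{-1}(\L^-)$, we have $\Delta^*L_{\beta_2} \cong \iota^*\L^-$; combining and using $\L^+ \otimes \L^- = \L^{\otimes 2}$ together with $\iota^*\L \cong \O_C$ gives $\Delta^*L_\beta \cong \O_C$, and additivity of canonical valuations yields $\Delta^*v_\beta \equiv 2\iota^* v^{\mathrm{can}}_{\L} \equiv 2 v_{\q}$. For part (d), since $\iota(b) = 0_J$, both compositions $(\iota \times \phi_{\Theta}\beta_i\iota) \circ i_{2,b}$ factor through $\{0_J\} \times \hat{J}$, on which $\P$ restricts to the trivial bundle with trivial canonical valuation by Proposition~\ref{P:cangood}(a), hence $i_{2,b}^* v_\beta \equiv 2 v^{\mathrm{triv}}$.

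I expect part (b) to be the main obstacle, since it requires comparing the pullback of $\P$ along a horizontal slice $J \times \{\phi_{\Theta}\beta_i\iota(z)\}$ (appearing in $i_{1,z}^*L_{\beta_i}$) with one along a vertical slice $\{[D_i]\} \times \hat{J}$ (appearing in part (a)). Direct unwinding gives $i_{1,z}^*L_{\beta_i} \cong \iota^*(\P|_{J \times \{\phi_{\Theta}\beta_i\iota(z)\}})$, whose class in $\Pic^0(C)$ equals $-\beta_i(\iota(z)) = -[D_i]$ by $-\iota^* = \phi_{\Theta}^{-1}$ combined with Lemma~\ref{L:commdiag}(e),(f). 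To match this against $\mu^* i_{2,[D_i]}^* v^{\mathrm{can}}_{\P}$, one invokes the $t$-symmetry $\P_{(a,\phi_{\Theta}(b))} \cong \P_{(b,\phi_{\Theta}(a))}$ from Lemma~\ref{L:psi}(a) to swap the roles of the two factors, and then the antisymmetry of $\P$ in the second factor combined with $\mu = -\phi_{\Theta}\iota$ (Lemma~\ref{L:psi}(c)) to absorb the negative sign in $\mu$. These two successive sign flips produce the leading minus sign in $v_z \equiv -i_{1,z}^*(v_\beta)$, which combined with part (a) yields the claim. The main technical delicacy lies in careful bookkeeping of rigidifications and signs, rather than in any deep conceptual obstacle.
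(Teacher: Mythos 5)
Your approach matches the paper's on all four parts. In (a) you use the decomposition $c_z=[D_1]+[D_2]$ and additivity of canonical valuations; in (c) you compute $\Delta^*L_{\beta_1}\cong\iota^*\L^+$ via the Theorem of the Cube and $\Delta^*L_{\beta_2}\cong\iota^*\L^-$ from the constancy of $\beta_2$; in (d) you use that the restriction of $\P$ to the axes is trivial; in (b) you identify $\beta_i\iota(z)=[D_i]$ and then combine the $t$-symmetry $\delta^t\cong\delta$ with $\mu=-\phi_\Theta\iota$. All of this is exactly what the paper does, via the corresponding commutative diagrams.

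Two small corrections. First, in part (b), the phrase ``two successive sign flips produce the leading minus sign'' is arithmetically off: two flips would cancel. The sign is produced \emph{once}. The $t$-swap $(\id\times\phi_\Theta^{-1})^*t^*(\id\times\phi_\Theta)^*v^{\mathrm{can}}_\P\equiv v^{\mathrm{can}}_\P$ from Lemma~\ref{L:psi}(a) is a genuine equality (no sign); the unique sign comes from substituting $\phi_\Theta\iota=-\mu$ and the antisymmetry of $\P$ in the second factor, which produces a dualization and hence negates the canonical valuation. Your identification of the two lemmas involved is correct; only the bookkeeping narrative needs this repair. (As a side remark: your computation that $i_{1,z}^*L_{\beta_i}$ lies in class $-[D_i]$ rather than $+[D_i]$ is exactly what makes the dualization in $v_z\equiv -i_{1,z}^*v_\beta$ necessary, and is consistent with the valuation conclusion.) Second, in part (d), the triviality of $v^{\mathrm{can}}_\P$ on $J\times\{0\}$ and $\{0\}\times\hat J$ should be cited from Proposition~\ref{P:good}\eqref{P:TrivVal} and \eqref{P:valfunc}, which concern canonical valuations away from $p$; Proposition~\ref{P:cangood}~\eqref{P:canalt}, which you invoke, is about log functions at places above $p$ and has no bearing on the valuations $v_\beta$ here, even though the formal content is parallel.
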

\begin{proof}\hfill
\begin{enumerate}[\upshape (a)]
\item Since $D(b,z) = D_1+D_2$, the result follows from \cite[Proposition~9.5.16(b)]{BG06} (the analogue of Proposition~\ref{P:good}~\ref{P:goodadd} for more general varieties). 
 \item Lemma~\ref{L:commdiag}~(\ref{L:endoaction},\ref{L:extractingDbb}) implies that
   $\beta_i \cdot \iota(z) = [D_i]$. This in turn implies that 
 \begin{equation}\label{E:alphai} (\mathrm{id} \times \beta_i \cdot i) \cdot i_{1,z} = i_{1,[D_i]},\end{equation}
 where $i_{1,[D_i]} \colon C \rightarrow C \times J$ is the map $x \mapsto (x,[D_i])$. Let $i_{2,[D_i]} \colon C \rightarrow J \times C$ be the map $x \mapsto ([D_i],x)$ and $t \colon J \times J \rightarrow J \times J$ the map that swaps the two factors. We have the following commutative diagram.

$$\xymatrix{
C \times C \ar[r]^{\iota \times \beta_i \cdot \iota} &  J \times J \ar[r]^{t}  & J \times J  
\ar[r]^{\mathrm{id} \times \phi_{\Theta}}  & J \times \Pic^0(J) \ar@{=}[d] \\
C 
\ar[rr]^{i_{2,[D_i]}} \ar[u]^{i_{1,z}} \ar[d]_{\mu} & & J \times C 
\ar[r]^{\mathrm{id} \times \mu } \ar[u]_{\mathrm{id} \times \iota} \ar[d]^{\mathrm{id} \times \mu } & J \times \Pic^0(J) \ar@{=}[d] \\
\Pic^0(J) \ar[rr]^{i_{2,[D_i]}}  & & J \times \Pic^0(J) \ar@{=}[r] & J \times \Pic^0(J)
}$$

  Combining Equation~\eqref{E:alphai} with the factorization
 \[ \iota \times \phi_{\Theta} \cdot \beta_i \cdot \iota = (\mathrm{id} \times
    \phi_{\Theta}) \cdot (\iota \times \mathrm{id}) \cdot (\mathrm{id} \times \beta_i
    \cdot \iota)\, ,\] 
  we get
 \begin{align*}
  i_{1,z}^* \cdot (\iota \times \phi_{\Theta} \cdot \beta_i \cdot \iota)^*
   v^{\mathrm{can}}_{\mathcal{P}} &= i_{1,z}^* (\mathrm{id} \times \beta_i \cdot \iota)^*
   (\iota
   \times \mathrm{id})^* (\mathrm{id} \times \phi_{\Theta})^*  v^{\mathrm{can}}_{\mathcal{P}} \\
  &= i_{1,[D_i]}^* (\iota \times \mathrm{id})^* (\mathrm{id} \times \phi_{\Theta})^*  v^{\mathrm{can}}_{\mathcal{P}} \\
  &= i_{2,[D_i]}^* (\mathrm{id} \times \iota)^* t^* (\mathrm{id} \times \phi_{\Theta})^*  v^{\mathrm{can}}_{\mathcal{P}} \\
  &= i_{2,[D_i]}^* (\mathrm{id} \times \iota)^* (\mathrm{id} \times \phi_{\Theta})^*
   (\mathrm{id} \times \phi_{\Theta}^{-1})^* t^* (\mathrm{id} \times \phi_{\Theta})^*  v^{\mathrm{can}}_{\mathcal{P}} \\
 &= -i_{2,[D_i]}^* (\mathrm{id} \times \mu)^*  (\mathrm{id} \times
   \phi_{\Theta}^{-1})^* t^* (\mathrm{id} \times \phi_{\Theta})^*  v^{\mathrm{can}}_{\mathcal{P}} \\
 &=  -\mu^* i_{2,[D_i]}^* (\mathrm{id} \times \phi_{\Theta}^{-1})^* t^* (\mathrm{id}
   \times \phi_{\Theta})^*  v^{\mathrm{can}}_{\mathcal{P}}\,. 
  \end{align*}
 In the third to last line, we used Lemma~\ref{L:psi} \eqref{L:mualt} to
    deduce $\mu^* = -\iota^*
  \phi_{\Theta}^*$.  The $i_{2,[D_i]}$ in the second to last line is the map $\Pic^0(J)
    \rightarrow J \times \Pic^0(J)$ defined before Definition~\ref{D:vcan}. Now,
    Lemma~\ref{L:psi}~\eqref{L:rigisom} implies that
  $$ (\mathrm{id} \times \phi_{\Theta}^{-1})^* t^* (\mathrm{id} \times
  \phi_{\Theta})^*  v^{\mathrm{can}}_{\mathcal{P}} \equiv
    v^{\mathrm{can}}_{\mathcal{P}}\,$$
    and therefore the two valuations continue to differ by a constant
    after further pullback by $\mu^* i_{2,[D_i]}^*$. The result now follows from
    Definition~\ref{D:vcan} and part~\eqref{candecomp}.
\item  We have a commutative diagram 
$$\xymatrix{
  C \times C \ar[r]^{\id\times \beta_i\cdot \iota} &  C \times J    \ar[r]^{\iota \times
\phi_{\Theta}}
 & J \times \Pic^0(J)
 \\
C 
\ar[r]^{\iota} \ar[u]_{{\Delta}} & J \ar[r]^{\id\times \beta_i}& J \times J \ar[u]_{\id \times
\phi_{\Theta}}
 \\
}$$
Since 
\[ (\id\times (\phi_{\Theta}\cdot \beta_1) )^*\mathcal{P} \cong (\id\times
\phi_\L )^*\mathcal{P}  \cong \L^+, \qquad (\id\times (\phi_{\Theta}\cdot
\beta_2) )^*\mathcal{P} \cong (i_{1,[\L^-]})^*\mathcal{P} \cong \L^- \]
and $\L^{\otimes 2} \cong \L^+ \otimes \L^-$,
the commutative diagram above implies 
\[ {\Delta}^*L_{\beta} = {\Delta}^*{(\id\times (\phi_{\Theta}\cdot \beta_1) )^*\mathcal{P}
  \otimes (\id\times (\phi_{\Theta}\cdot \beta_2) )^*\mathcal{P} } \simeq \iota^*(L^+ \otimes
  L^-) \simeq \iota^*L^{\otimes 2}\simeq \O_C^{\otimes 2} \simeq \O_C, \]
and $\Delta^* v_i \equiv \iota^*(\id\times (\phi_{\Theta}\cdot \beta_i) )^*
  v^{\mathrm{can}}_{\mathcal{P}}$.
Furthermore, \cite[Proposition~9.5.13, Theorem~9.5.16]{BG06} imply $(\id\times
  (\phi_{\Theta}\cdot \beta_1) )^* v^{\mathrm{can}}_{\mathcal{P}} \equiv
  v^{\mathrm{can}}_{\L^+}$, and similarly, $(\id\times (\phi_{\Theta}\cdot \beta_2) )^*
  v^{\mathrm{can}}_{\mathcal{P}} \equiv v^{\mathrm{can}}_{\L^-}$. By further pullback, it
  follows that 
  \begin{equation*}
  \iota^*(\id\times (\phi_{\Theta}\cdot \beta_1) )^*
    v^{\mathrm{can}}_{\mathcal{P}}+\iota^*(\id\times (\phi_{\Theta}\cdot \beta_2) )^*
    v^{\mathrm{can}}_{\mathcal{P}} \equiv v^{\mathrm{can}}_{\L^+}+v^{\mathrm{can}}_{\L^-}\,,
  \end{equation*}
and the result follows from Proposition~\ref{P:good}~\eqref{P:goodadd}.
\item Using a similar diagram as in the previous part with $i_{2,b}$ in place of $\Delta$, and $0 \times \beta_i, \{0\} \times J$ in place of $\id\times \beta_i, J \times J$ , the universal property of the Poincar\'e bundle (in particular, that $\delta_J|_{0 \times J} \cong O_J$) we find
\[ i_{2,b}^* (L_{\beta_1} \otimes L_{\beta_2}) \simeq \O_C^{\otimes 2}  \simeq \O_C\,.\]
Since $v^{\mathrm{can}}_{\mathcal{P}}$ is the trivial valuation on
$\mathcal{O}_{J}$ ($\mathcal{O}_{\Pic^0(J)}$, respectively) 
 when restricted to $J \times \{0\}$ ($\{0\} \times
\Pic^0(J)$, respectively) by Proposition~\ref{P:good}~(\ref{P:TrivVal},\ref{P:valfunc}), the result follows.\qedhere  
\end{enumerate}

\end{proof}

\subsubsection{Comparison of the two height functions}\label{S:comparison}
Recall from Definition~\ref{D:divcon} that we picked $\L \in \Pic(J)$, and
$s$ a section of $\L$. By assumption $i^*\L \simeq \O_C$. We defined $Z
\colonequals (\iota^{(2)})^*(\L, s) \in \Div(C \times C)$ and $D(b,z) =
(\Delta^*-i_{1,b}^*-i_{2,z}^*)Z$ for a divisor $Z$ on $C \times C$. Define
a divisor $D'$ on $C \times C$ by $D' \colonequals \pi_2^*\Delta^*
Z-\pi_2^* i_{1,b}^* Z-Z$, and let $s'$ be the corresponding section of
$\O(D')$. Let $t \colon C \times C \rightarrow C \times C$ denote the map
that swaps the two factors.
\begin{lemma}\label{L:trivial}\hfill
\begin{enumerate}[\upshape(a)]
 \item We have $\L_{\beta} = t^* \O(D')$. Furthermore, if $s_{\beta} \colonequals t^*s'$,
   then the section $s_z \colonequals i_{1,z}^*s_\beta$ satisfies $\div(s_z) = D(b,z)$ for all $z \in C$.
 \item\label{L:trivsec} The section ${\Delta}^*(s_\beta)\otimes i_{2,b}^*(s_\beta)^{-1}$ has trivial divisor.
\end{enumerate}
\end{lemma}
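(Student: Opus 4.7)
The plan is to reduce the lemma to two explicit divisor-theoretic calculations. After rescaling, we may assume the isomorphism $\iota^{\ast}\L \cong \O_C$ sends $\iota^{\ast} s$ to the constant $1$, so that $i_{1,b}^{\ast} Z = \iota^{\ast}\div(s) = 0$ as a divisor on $C$; we also assume (a generic condition) that $s$ does not vanish at the origin of $J$, so that $\Delta^{\ast} Z$ avoids the base point $b$. Under these normalizations the canonical section $s'$ of $\O(D') \cong \pi_2^{\ast}\iota^{\ast}[2]^{\ast}\L\otimes (\iota^{(2)})^{\ast}\L^{-1}$ is described rationally by $s' = \pi_2^{\ast}\Delta^{\ast} s\cdot ((\iota^{(2)})^{\ast} s)^{-1}$.

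For part~(a), we first establish the isomorphism $\L_\beta \cong t^{\ast}\O(D')$ on $C\times C$ by the seesaw principle. On each horizontal slice $C\times\{z\}$, Proposition~\ref{P:Dbzotherpullback}~\eqref{Lalphai1z} yields $i_{1,z}^{\ast}\L_\beta \cong \O(D(b,z))$, while the identity $t\circ i_{1,z} = i_{2,z}$ together with $\pi_2\circ i_{2,z} = \id$ gives $i_{1,z}^{\ast} t^{\ast}\O(D') = \O(i_{2,z}^{\ast} D') = \O(\Delta^{\ast} Z - i_{1,b}^{\ast} Z - i_{2,z}^{\ast} Z) = \O(D(b,z))$. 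On the vertical slice $\{b\}\times C$, Proposition~\ref{P:Dbzotherpullback}~\eqref{Lalphai2b} gives $i_{2,b}^{\ast}\L_\beta \cong \O_C$, while $i_{2,b}^{\ast} t^{\ast}\O(D') = i_{1,b}^{\ast}\O(D') \cong \O_C$, since $\pi_2\circ i_{1,b}$ is constant at $b$ (so the first two summands of $D'$ pull back trivially under the genericity assumption on $s(0)$) and $i_{1,b}^{\ast} Z = 0$. The seesaw principle then yields the required isomorphism, and, setting $s_\beta = t^{\ast} s'$, we obtain $s_z = i_{1,z}^{\ast} s_\beta = i_{2,z}^{\ast} s'$ with divisor $i_{2,z}^{\ast} D' = D(b,z)$, as required.

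For part~(b), the identities $t\circ\Delta = \Delta$ and $t\circ i_{2,b} = i_{1,b}$ give $\Delta^{\ast} s_\beta = \Delta^{\ast} s'$ and $i_{2,b}^{\ast} s_\beta = i_{1,b}^{\ast} s'$. Using $\pi_2\circ\Delta = \id$ and the cancellation $\Delta^{\ast} D' = \Delta^{\ast} Z - i_{1,b}^{\ast} Z - \Delta^{\ast} Z = -i_{1,b}^{\ast} Z = 0$, the rational description of $s'$ shows that $\Delta^{\ast} s' = 1$. Similarly, since $\pi_2\circ i_{1,b}$ is constant at $b$ and $\Delta^{\ast} Z$ avoids $b$, we find $i_{1,b}^{\ast} D' = -i_{1,b}^{\ast} Z = 0$, and $i_{1,b}^{\ast} s'$ evaluates to the nonzero constant $s(0)$. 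Therefore $\Delta^{\ast} s_\beta \otimes i_{2,b}^{\ast}(s_\beta)^{-1} = \Delta^{\ast} s'\otimes (i_{1,b}^{\ast} s')^{-1}$ is the nonzero constant $1/s(0)$, which has trivial divisor.

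The main subtlety is the bookkeeping in part~(a): the seesaw isomorphism $\L_\beta \cong t^{\ast}\O(D')$ is only pinned down up to a global scalar, but this ambiguity is precisely the rigidification freedom acknowledged in Remark~\ref{R:rignopick} and does not affect the divisor equalities being asserted. The rest of the argument is essentially a careful tracking of restrictions and sections through the natural maps $\iota$, $\iota^{(2)}$, $\Delta$, $i_{1,b}$, $i_{2,z}$, and $t$.
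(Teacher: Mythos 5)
Your overall strategy—seesaw on $C\times C$ for part~(a), and tracking pullbacks of divisors and sections for part~(b)—matches the paper's argument. However, the normalization you set up at the outset is not available. You claim that after rescaling the trivialization $\iota^*\L\cong\O_C$ one may assume $\iota^*s$ is the constant $1$, and therefore $i_{1,b}^*Z = \div(\iota^*s) = 0$. Rescaling a trivialization only multiplies $\iota^*s$ by a nonzero constant; it cannot remove zeros and poles. In fact $i_{1,b}^*Z = \div(\iota^*s)$ is a degree-zero principal divisor on $C$ (the divisor of the rational function obtained from $\iota^*s$ under the trivialization), and it is nonzero in general: for $g\ge 2$ the curve $\iota(C)$ and the codimension-one divisor $\div(s)$ in $J$ generically meet in a nonempty finite set, and no choice of nonzero constant changes that.

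This error propagates into both parts. For part~(a), the needed conclusion $i_{1,b}^*\O(D')\cong\O_C$ still holds, but for the reason that $i_{1,b}^*D' = -i_{1,b}^*Z$ is a \emph{principal} divisor (being the divisor of $(\iota^*s)^{-1}$), not because it is the zero divisor. For part~(b), you assert $\Delta^*D' = 0$ and $i_{1,b}^*D' = 0$ separately; neither vanishes in general. Your own computation (correctly) gives $\Delta^*D' = -i_{1,b}^*Z$ and, after justifying the constant-map pullbacks by a position assumption ($0_J\notin\supp(\div(s))$ and $b\notin\supp(i_{1,b}^*Z)$), also $i_{1,b}^*D' = -i_{1,b}^*Z$. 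The correct observation, and the one the paper makes, is that these two pullbacks are \emph{equal}; their difference $\Delta^*D' - i_{1,b}^*D' = 0$ is precisely what forces $\Delta^*(s_\beta)\otimes i_{2,b}^*(s_\beta)^{-1}$ to have trivial divisor. Your concluding claim that this section is literally the constant $1/s(0)$ likewise rests on the bogus normalization: the section is nowhere vanishing and hence constant (since $C$ is projective), but it is not the particular constant you compute, and attempting to pin it down only obscures the structure. Drop the rescaling claim and work with the equality of pullback divisors instead.
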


\begin{proof}\hfill
  \begin{enumerate}[\upshape (a)]
  \item By Proposition~\ref{P:Dbzotherpullback}~(\ref{Lalphai1z},\ref{Lalphai2b}) and the
    seesaw principle \cite[Corollary~8.4.4]{BG06}, it suffices to show that $i_{2,z}^*D' =
      D(b,z)$ for every $z \in C$ and that $i_{1,b}^*\O(D') \simeq \O_C$. The first claim
      follows since $\pi_2 \cdot i_{2,z}$ is the identity map on $C$. Now we prove the
      second claim. Since $\pi_2 \cdot i_{1,b}$ is the constant map that sends every point
      to $b$, and since $\iota^{(2)} \cdot i_{1,b} = \iota$, it follows that 
  \[ i_{1,b}^*D' = -i_{1,b}^*Z = -i_{1,b}^*((\iota^{(2)})^*(\L, s)) = -\iota^*(L,s). \]
  By our choice of $\L$, it follows that $\iota^*L \simeq \O_C$ and the result follows.
   \item It suffices to show that ${\Delta}^*(D') - i_{1,b}^*(D')=0$. Recall from Definition~\ref{D:divcon} that $D(b,z) = (\Delta^*-i_{1,b}^*-i_{2,z}^*)Z$ for a divisor $Z$ on $C \times C$.
    Since $\pi_2 \cdot {\Delta} = \id_C$, we have
    \[ {\Delta}^*(D') = ({\Delta}^* \pi_2^*) {\Delta}^* Z -({\Delta}^* \pi_2^*)
    i_{1,b}^* (Z) - {\Delta}^* (Z) = {\Delta}^* Z - i_{1,b}^* (Z) - {\Delta}^* (Z) = -i_{1,b}^* (Z). \]
Since $\pi_2 \cdot i_{1,b} \colon C \rightarrow C$ is the constant function $b$, we have
    \[ i_{1,b}^*(D') = (i_{1,b}^* \pi_2^*) {\Delta}^* Z -(i_{1,b}^* \pi_2^*) i_{1,b}^*
    (Z) - i_{1,b}^* (Z) =0-0 -i_{1,b}^* (Z) = -i_{1,b}^* (Z), \]
and this implies the result. \qedhere
  \end{enumerate}  
  \end{proof}

Let $L_{\beta_i},v_i$ be as in Proposition~\ref{P:Dbzotherpullback}. 
Recall that $v_{\q}$ is a valuation on the trivial bundle $\O_C$ defined as in \eqref{vq} and
its value on the chosen nowhere vanishing section $1$ is the local contribution to the height in our
approach to Quadratic Chabauty. By our choice of rigidification
in \eqref{rigidq}, it follows that $v_{\q}(1(b)) = 0$ for any choice of nowhere vanishing section $1$ of $\O_C$. 

The following result shows that our local contribution at $\q$ coincides with
the one of Balakrishnan and Dogra in~\cite{BD18} up to a constant multiple.

\begin{thm}\label{T:localcomparison}  
Let $1$ denote a nowhere vanishing section of the trivial bundle $\O_C$,
  and $v_\q$ the corresponding valuation, normalized so that
$v_{\q}(1(b)) = 0$.  Then we have, for all $z\in C(K_\q)$,
\begin{equation*} h_{\q}^{CG} (z-b,D(b,z)) = - 2v_\q(1(z))\chi_{\q}(\pi_{\q})\,, \end{equation*}
where $h_{\q}^{CG}$ is the local Coleman--Gross height at $\q$ with respect
  to the local component of a continuous id\`{e}le class character
  $\chi\colon \A_{K}^\times/K^\times\to \Q_p$ and $\pi_\q$ is a uniformizer at $\q$.      
      \end{thm}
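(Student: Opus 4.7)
The plan is to combine the two preceding results: Proposition~\ref{P:Dbzotherpullback}, which expresses the canonical valuation $v_z$ on $L_z = \O(D(b,z))$ as a pullback along $i_{1,z}$ of a valuation $v_\beta$ on an auxiliary bundle $L_\beta$ on $C\times C$, together with Lemma~\ref{L:trivial}, which provides an explicit section $s_\beta$ of $L_\beta$ whose restriction along $i_{1,z}$ cuts out $D(b,z)$.

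First, by Remark~\ref{R:localequalaway}, the local Coleman--Gross pairing at $\q\nmid p$ agrees with the local pairing from~\eqref{pairingcurveaway}. Choosing the representative $(L,s)$ generically so that both $D(b,z)$ and $D(b,b)$ avoid $\{b,z\}$, and setting $s_z \colonequals i_{1,z}^*s_\beta$, the definition of the pairing and Lemma~\ref{L:trivial}(a) give
$$
h^{CG}_\q(z-b,D(b,z)) = \bigl(v_z(s_z(z)) - v_z(s_z(b))\bigr)\chi_\q(\pi_\q).
$$
I would then substitute $v_z \equiv -i_{1,z}^*v_\beta$ from Proposition~\ref{P:Dbzotherpullback}(b); the additive constant from the $\equiv$ cancels inside the difference. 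Using $(z,z)=\Delta(z)$ and $(b,z)=i_{2,b}(z)$, this rewrites the difference as
$$
v_z(s_z(z)) - v_z(s_z(b)) = -\Delta^*v_\beta(\Delta^*s_\beta(z)) + i_{2,b}^*v_\beta(i_{2,b}^*s_\beta(z)).
$$

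Denoting by $\sigma_1,\sigma_2 \in H^0(C,\O_C)$ the images of $\Delta^*s_\beta$ and $i_{2,b}^*s_\beta$ under the isomorphisms $\Delta^*L_\beta \cong \O_C$ and $i_{2,b}^*L_\beta \cong \O_C$ from Proposition~\ref{P:Dbzotherpullback}(c,d), the relations $\Delta^*v_\beta \equiv 2v_\q$ and $i_{2,b}^*v_\beta \equiv 2v^{\mathrm{triv}}$ in those same parts convert the right-hand side, up to an additive constant, into $-2v_\q(\sigma_1(z)) + 2v^{\mathrm{triv}}(\sigma_2(z))$. The crucial input is Lemma~\ref{L:trivial}(b): since $\sigma_1$ and $\sigma_2$ have the same divisor in $\O_C$, their ratio is a nonzero scalar $\lambda \in K_\q^\times$. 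Writing $\sigma_2(z) = \mu_z\cdot 1(z)$ and repeatedly applying the scaling property of valuations, the preceding expression collapses to
$$
-2\ord_\q(\lambda) - 2v_\q(1(z)),
$$
and hence $v_z(s_z(z)) - v_z(s_z(b)) = -2v_\q(1(z)) + C$ for a single constant $C$ independent of $z$.

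The main (and only mildly substantive) obstacle is to show $C=0$. I would pin this down by specialising to $z=b$: the left-hand side is then manifestly zero, and the normalization $v_\q(1(b))=0$ forces $C=0$. Multiplying through by $\chi_\q(\pi_\q)$ produces the stated identity. The bulk of the substantive work has already been carried out in Proposition~\ref{P:Dbzotherpullback} and Lemma~\ref{L:trivial}; what remains is the careful tracking of constants arising from the various implicit rigidifications, which can alternatively be eliminated from the outset by choosing compatible rigidifications of $L$, $L_\beta$ and $\P$ once and for all.
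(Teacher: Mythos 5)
Your proof follows essentially the same route as the paper's: both start from the Coleman--Gross local pairing $h^{CG}_\q(z-b,D(b,z)) = (v_z(s_z(z)) - v_z(s_z(b)))\chi_\q(\pi_\q)$, substitute $v_z \equiv -i_{1,z}^* v_\beta$ and rewrite via $\Delta$ and $i_{2,b}$, convert the restricted valuations to $2v_\q$ and $2v^{\mathrm{triv}}$ using Proposition~\ref{P:Dbzotherpullback}, invoke Lemma~\ref{L:trivial}(b) to control the resulting sections, and pin down the surviving constant by evaluating at $z=b$. The only cosmetic difference is that you track the two restricted sections $\sigma_1,\sigma_2$ and their scalar ratio $\lambda$ separately, whereas the paper packages them as the single nowhere-vanishing section $\Delta^*s_\beta\otimes i_{2,b}^*s_\beta^{-1}$ of $\O_C$ and works with the combined valuation $v'=\Delta^*v_\beta - i_{2,b}^*v_\beta$.
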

      \begin{proof} Let $s_\beta, s_z$ be as in Lemma~\ref{L:trivial}. By
        the definition of the local Coleman--Gross height, we have
 \begin{equation}\label{CGsz}
   h_{\q}^{CG} (z-b,D(b,z)) = (v_z(s_z(z)) -
   v_z(s_z(b)))\chi_{\q}(\pi_{\q})\,.
 \end{equation}

   Since $s_z = i_{1,z}^* s_{\beta}$ and $$z-b = i_{1,z}^*({\Delta}(z) -i_{2,b}(z)),$$ 
  we find that
  \begin{equation}\label{E:section_pullback}
    s_z(z)-s_z(b) = s_{\beta}({\Delta}(z) -i_{2,b}(z)) = {\Delta}^*s_\beta(z) -i_{2,b}^*s_\beta(z).
  \end{equation}
     Define a valuation $v'$ on 
        $${\Delta}^*\L_{\beta} \otimes i_{2,b}^*(\L_{\beta} )^{-1} \simeq
        \O_C$$ by
        \begin{equation}\label{E:defv'}
    v' \colonequals {\Delta}^*v_\beta - i_{2,b}^*v_\beta\,.
  \end{equation}
By Lemma~\ref{L:trivial}~\eqref{L:trivsec}, it follows that the section $1\colonequals
        {\Delta}^*s_\beta\otimes i_{2,b}^*s_\beta^{-1}$  of $\O_C$ is nowhere vanishing.
Therefore,
    \begin{align*}
(i_{1,z}^*v_{\beta}) (i_{1,z}^* s_{\beta}(z))  -  (i_{1,z}^*v_{\beta}) (i_{1,z}^* s_{\beta}(b)) &= v_{\beta}(s_{\beta}({\Delta}(z))) -
      v_{\beta}(s_{\beta}(i_{2,b}(z))) \\ 
    &= v'(({\Delta}^*s_\beta\otimes i_{2,b}^*s_\beta^{-1})(z)\\
      &= v'(1(z))\\
      &= v'(1(z)\otimes 1(z))\\
      &= {\Delta}^*(v_\beta)(1(z))- i_{2,b}^*(v_\beta)(1(z))\,.
    \end{align*}
      In Proposition~\ref{P:Dbzotherpullback}~(\ref{candecomp},\ref{Lalphai1z}), we showed
        that $v_{z}\equiv -  i_{1,z}^*v_\beta$. By
        Proposition~\ref{P:Dbzotherpullback}~\eqref{LalphavsL} we have that
        $\Delta^*v_\beta\equiv 2v_q$. Since $i_{2,b}^*v_\beta \circ 1 \equiv
        2v^{\mathrm{triv}}(1) =
        0$, we get that $h_{\q}^{CG}
        (z-b,D(b,z)) + 2v_\q(1(z))\chi_{\q}(\pi_{\q})$ is constant as we vary over $z$ in $C$, by combining
        the previous two sentences with~\eqref{CGsz}. By plugging in $z=b$, we see that
        this constant is $0$ by our choice of rigidification in~\eqref{rigidq} and by
        biadditivity of 
        $h_{\q}^{CG}$.
      \end{proof}
\begin{rk}\label{R:localcomparisonp}  
By combining Theorem~\ref{T:localcomparison} with
Proposition~\ref{P:CGcomp} one sees that the sum of
the contributions above $p$ also have to be the same. In particular, when
  $K=\Q$ or, more generally, when $p$ is inert, all contributions are the
  same. Another case where we can draw this conclusion is when $F$ is
  imaginary quadratic and $p=\p\bar{\p}$ splits in $\O_F$, since there is 
a nontrivial continuous id\`{e}le class character $\chi$ with $\chi_\p=0$,
and similarly for $\bar{\p}$. Applying   Theorem~\ref{T:localcomparison}
  and Proposition~\ref{P:CGcomp} to the heights associated to these two
  characters then suffices to prove equality of all local contributions for
  all characters of $F$. A different idea would be needed to prove, for
  instance, a similar statement for $F$ real quadratic and $p$ split.
\end{rk}

\section{A unified theory of $p$-adic adelic metrics: a purely $p$-adic approach}\label{sec:VolVal}

In~\cite[\S9.5]{BG06}, the existence of the canonical real-valued metric on a line bundle on an abelian
variety is proved using a dynamical approach, which does not carry over to
the $p$-adic setting. We now show that there is an alternative
method to construct valuations that uses neither models nor dynamics, but
is instead based on ($p$-adic) log functions, and that this also yields a
canonical valuation on an abelian variety in the sense of Section~\ref{sec:vals}. 
In this way, we get a unified construction of
``canonical $p$-adic metrics'' for line bundles on abelian varieties over number fields at
all places.

Rather than fixing the branch of the logarithm when constructing log
functions, as we did up to this point, we instead allow the branch to vary
and our metric is based on the coefficient of $\log(p)$ in these log
functions. To carry this out we first have to develop the theory of
Vologodsky integration and log functions with variable branch. 
The idea is similar to Colmez's approach in~\cite{Col98} who develops a theory of
single $p$-adic integrals with values in the polynomial algebra in the
variable $\log(p)$ over a finite
extensions $K$ of $\Q_p$. He uses this to give a purely analytic
adelic construction of the  
Coleman--Gross height on a curve. 
In what follows, we develop a theory of Vologodsky integrals that takes
values in a polynomial algebra over $K$ in the formal variable $\log(p)$.
For many applications, such as the restriction to curves of the theory of
log functions on abelian varieties, it suffices to consider Vologodsky
integrals valued in $K\oplus K\log(p)$, see Remark~\ref{R:linearlogp}
below. For a more detailed comparison, see Section~\ref{S:Colmez} below. 

\subsection{Vologodsky integration with a variable branch of log}\label{subsec:VarVol}
Let $K/\Q_p$ be finite and fix an embedding $K \rightarrow \bar{K}$ into an algebraic
closure of $K$. We denote by
$\kst$ the polynomial algebra over $K$ in the formal variable $\log(p)$.
In this section, we let $X/K$ be a smooth, geometrically connected
algebraic variety.
As discussed in Section~\ref{sec:background}, underlying the theory of Vologodsky functions is the Vologodsky canonical path in the
fundamental groupoid of the category of unipotent flat connections on $X$. When $(V,\nabla) $ is
such a connection and $x,y$ are two $K$-rational points of $X$, the theory provides a canonical
$\kst$-linear isomorphism $\gamma_{x,y}\colon V_x \otimes \kst \to V_y \otimes \kst$. By choosing a
branch of the $p$-adic logarithm we can specialize to get a $K$-linear isomorphism and the
theory developed in \cite{Bes05} uses that isomorphism.

We are interested in studying the dependency of
the theory on the branch of the $p$-adic logarithm. For this, we first take the
theory of Vologodsky functions with respect to a fixed branch of the logarithm and transform it
into a $\kst$-valued theory.
Let us give a few details. For simplicity, suppose $X$ has a
$K$-rational point $x$ (this will always be the case for us. See Definition~2.2 in
\cite{Bes05} for how to
deal with the general case). An {\em abstract Vologodsky function with values in a locally free
sheaf $\mathcal{F}$} is a quadruple
$(V,\nabla ,v,s)$ where $(V,\nabla )$ is a unipotent flat connection as before, $v\in
V_{x}\otimes \kst$
and $s \in \Hom(V,\mathcal{F})$ is a morphism of vector bundles (not respecting the connection). Given
this data one obtains an associated $\bar{K}\otimes \kst$-valued function $F$ by setting 
\begin{equation*}
  F(y)=s(\gamma_{x,y}(v))\,.
\end{equation*}

Here, for $y$ an $L$-valued point, with $L\supset K$, the path
$\gamma_{x,y}$ is the $L_{st}$-valued path obtained by viewing $x$ as an
$L$-valued point. Functoriality with respect to field extension guarantees
that this is independent of the choice of $L$.

The theory is independent of the choice of the $K$-rational point $x$.
Choosing a different point $x'$ we may simply parallel transport the vector
$v$ to $x'$. Indeed, in~\cite{Bes02}  we used a system of compatible vectors with
respect to parallel transport.

There is an obvious notion of morphism of abstract Vologodsky functions and an actual
Vologodsky function is just a connected component in the category of abstract Vologodsky
functions. It is easy to see that the associated function is well defined. By changing
$\mathcal{F}$ we get Vologodsky differential forms and functions, $\ocols^i(X)$.  We note that as
discussed in the proof of Theorem~\ref{T:volsum}, in
this theory the integral of $\frac{dz}{z}$ is the universal logarithm defined by letting $ \log(p)$
be its name sake in $\kst$.  We routinely check
the following.
\begin{prop}
    The resulting theory satisfies Theorem~\ref{T:volsum} with $K$ replaced with $\kst$ and the branch
    of the logarithm replaced by the universal logarithm $\log$ as in
    equation~\eqref{E:loguniv}. 
\end{prop}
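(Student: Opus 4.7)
The plan is to observe that the construction outlined just before the proposition realizes the $\kst$-valued theory as a $\kst$-linear extension of the $K$-valued theory of Theorem~\ref{T:volsum}. The canonical groupoid isomorphisms $\gamma_{x,y}\colon V_x\otimes \kst\to V_y\otimes \kst$ are $\kst$-linear by construction (logarithmic ambiguity enters only through $\log_{\mathrm{univ}}$), so the operations of $d$, multiplication, pullbacks, and restriction to opens, all defined from fiberwise connection data, extend uniquely $\kst$-linearly to the spaces $\ocols^i(X)$. With this observation in hand, I would verify each of (a)--(f) by reducing to the corresponding statement in the $K$-valued theory.

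For (a), exactness on the left reduces to $\ker(d\colon\ocols^0(X)\to\ocols^1(X))=\kst$, which follows by writing an element as a $\kst$-combination of representatives coming from $K$-valued Vologodsky functions and applying Theorem~\ref{T:volsum}(a) coefficient by coefficient in $\log(p)$. For (b), the locally analytic embeddings extend to $\bar K\otimes\kst$-valued locally analytic forms by $\kst$-linearity. For (c), $\log(z)$ is by construction $\log_{\mathrm{univ}}$ as in~\eqref{E:loguniv}; its properties follow because they hold after specializing $\log(p)$ to any chosen branch, recovering Theorem~\ref{T:volsum}(c). Parts (d) and (e) follow by $\kst$-linear extension, using that pullbacks of connections induce $\kst$-linear maps on groupoid isomorphisms, and that exactness of $\kst$-linear complexes can be checked after specialization.

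The most substantive point is (f). I would define $\ocolas^1(X)\subset\ocols^1(X)$ via $\kst$-linear extension and extend $\delbr$ $\kst$-linearly, landing in $\Omega^1(X)\otimes\hdr^1(X)\otimes_K\kst$. The defining formula $\delbr(\omega\int\eta)=\omega\otimes[\eta]$ produces an image independent of $\log(p)$, reflecting the fact that the right-hand side is a de Rham cohomology class, insensitive to the branch of the logarithm. Exactness of the resulting extension of~\eqref{delbarexact} then reduces to the $K$-valued exactness established in Theorem~\ref{T:volsum}(f). I expect the main technical obstacle to lie in the careful bookkeeping needed to confirm that the defining ``once-iterated'' filtration on $\ocolas^1$ and its behavior under $d$, products, and pullbacks are fully compatible with the $\kst$-scalar extension; once this is verified, the $\kst$-valued theory inherits each clause of Theorem~\ref{T:volsum}.
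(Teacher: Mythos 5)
Your overall strategy — view the $\kst$-theory as extending the $K$-theory along scalars and verify each clause of Theorem~\ref{T:volsum} by reduction, often coefficient-by-coefficient in $\log(p)$ — is the right one and matches the paper's approach in spirit. However, you misidentify where the genuine care is needed. The paper singles out precisely one point as requiring a new argument: the \emph{identity principle} (\cite[Lemma~2.5]{Bes05}), namely that a Vologodsky function vanishing on a nonempty Zariski open set vanishes identically. This is not a side remark; it underlies the well-definedness of the functions (independence from the choice of representing quadruple), the restriction/sheaf property~(e), and ultimately the uniqueness clauses throughout. The classical proof (via \cite[Proposition~4.12]{Bes02}) proceeds by showing that if the function attached to $(V,\nabla,v,s)$ vanishes on $U$, then $v$ lies in the maximal integrable subconnection of $\ker(s)$, and this is preserved by every $\gamma_{x,y}$. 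In the $\kst$-setting, $v=\sum_i v_i\,\log^i(p)$ is a formal polynomial, and one must argue \emph{separately for each coefficient} $v_i$ — which works because the $\gamma_{x,y}$ are $\kst$-linear and thus act on each graded piece. You invoke a ``coefficient-by-coefficient'' argument only for the exactness in (a) and otherwise dispose of (d)--(e) with ``exactness can be checked after specialization,'' without noticing that the nontrivial step is precisely this one lemma; by contrast, you flag (f) as the locus of the main technical obstacle, whereas the paper treats (f) as routine (the only change there is that the $\delbr$-operator now lands in $\Omega^1(X)\otimes\hdr^1(X/K)\otimes\kst$, with essentially the same proof). A second, smaller caveat: ``the $\kst$-theory is a $\kst$-linear extension of the $K$-theory'' is not something one can assume \emph{a priori} — the $K$-theory is obtained from the $\kst$-theory by \emph{specializing} $\log(p)$ to a chosen branch, which is a quotient rather than a subobject, so the transfer of results in either direction requires the coefficient-wise argument rather than abstract flatness or base-change generalities.
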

\begin{proof}
This is mostly a laborious exercise in checking that all definitions and proofs 
  carry over to this setting. One
  point to look at more closely is the identity principle~\cite[Lemma~2.5]{Bes05} which
  states that a Vologodsky
function vanishing locally must vanish globally as well. The proof in the ``classical" case
  (which is given in the proof of~\cite[Proposition~4.12]{Bes02}) follows from the
  fact that if the function corresponding to $(V,\nabla ,v,s)$ vanishes on an open $U$, then $v$ is a
section of the maximal integrable subconnection contained in the kernel of $s$, and that by
functoriality this remains true for all  $\gamma_{x,y}(v)$. In the present setting $v$ is
replaced by a formal combination $\sum v_i \log^i(p)$ and one simply argues the same for each
$v_i$. The analogue of property~\eqref{t6} in Theorem~\ref{T:volsum} 
  states that
there is a short exact sequence, right exact as well in the affine case,
    \begin{equation*}
    0\to \Omega^1(X)\otimes \kst \to \ocola^1\xrightarrow{\delbr} \Omega^1(X) \otimes
    \hdr^1(X/K)\otimes \kst\;.
  \end{equation*}
  The proof is essentially the same as the proof of Theorem~\ref{T:volsum}~\eqref{t6}.
\end{proof}

Iterated integrals as discussed in~\S\ref{331} are very easily described in this theory. Indeed, $
\int_x^z \omega_1\circ \cdots \circ \omega_k$ corresponds to the
connection~\eqref{basicconnection}, the vector
$v=(0,0,\ldots,0,1)$ and the projection $s$ on the first component. Similarly, if we start with
an arbitrary vector $v=(a_k,a_{k-1},\ldots,a_1,1)$ we get what might be called an iterated
integral with constants of integration $a_1,\ldots,a_k$.

We are now going to define a theory which is in the middle between the theory we just outlined
and the one with respect to a fixed branch of the logarithm. 
The idea is to restrict $v$ to be defined over $K$, i.e., to be
in $V_{x}$ rather than in $V_x\otimes \kst$. This gives a smaller
space. The reason for restricting to this smaller space of
functions is so that the theory of valuations built from such functions as
in Definition~\ref{D:volval} is $\Q$-valued instead of $\Qpb$-valued. (See
also Remark~\ref{R:whylogind}). The down side is that parallel transporting along the canonical path will not preserve
the property of being defined over $K$, so the theory now depends on the basepoint $x$.

\begin{defn}
A ($\kst$-valued) Vologodsky function valued in a locally free sheaf $\mathcal{F}$ is said to be \emph{branch independent} at the point $x\in
X(K)$ if there is a representation $(V,\nabla ,v,s)$ which is branch independent in the sense that
$v\in V_x \subset V_x\otimes \kst$. 
\end{defn}
An important point to
note is that if $F$ is branch independent at $x_1$ it may not be branch independent at $x_2\ne
x_1$.

\begin{rk}
Furusho~\cite{Fur04} has a notion of branch independence, which means something quite different.
Furusho proves that changing the branch of the logarithm produces isomorphic rings of
Coleman functions, with
the isomorphism commuting with integration. This fact, which in his terminology is called
branch independence of Coleman integration, is captured by the $\kst$-valued
integration theory.
\end{rk}

The following is clear:
\begin{lemma}\label{easyindep}
   An iterated integral with constants of integration at $x$ is branch independent at $x$ if all the
   constants of integration are in $K$.
\end{lemma}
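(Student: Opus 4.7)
The plan is to unwind the definitions: an iterated integral with constants of integration at $x$ has, by construction, an explicit canonical representation as an abstract Vologodsky function, and branch independence at $x$ is read off directly from the entries of the initial vector in that representation.

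More concretely, the iterated integral $\int_x^z \omega_1 \circ \cdots \circ \omega_k$ with constants of integration $a_1, \ldots, a_k$ at $x$ corresponds to the data $(V, \nabla, v, s)$, where $(V, \nabla)$ is the rank $k+1$ connection from \eqref{basicconnection} whose connection matrix is built from $\omega_1, \ldots, \omega_k$, the initial vector at $x$ is $v = (a_k, a_{k-1}, \ldots, a_1, 1) \in V_x \otimes \kst$, and $s$ is the projection on the first coordinate. This is exactly the description given in the paragraph immediately following the discussion of \eqref{basicconnection}, adapted to the $\kst$-valued setting; the sole role of the constants $a_i$ is to prescribe the initial vector.

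Under the hypothesis that $a_1, \ldots, a_k \in K$, the vector $v = (a_k, \ldots, a_1, 1)$ already lies in the $K$-subspace $V_x \subset V_x \otimes \kst$. Hence this particular representation $(V, \nabla, v, s)$ witnesses branch independence at $x$ in the sense of the definition, and the lemma follows with essentially no further work.

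There is no real obstacle here: once one has fixed the correspondence between iterated integrals with prescribed constants of integration and the explicit quadruple $(V, \nabla, v, s)$ of \eqref{basicconnection}, the statement is a tautology. The only minor thing to keep in mind is that branch independence is a property of the Vologodsky function together with the point $x$, not of the function alone, so one should emphasize that the natural representation singled out by the choice of base point $x$ is the one being used, and that the same Vologodsky function, regarded from a different base point, need not admit a representation with initial vector in $V_{x'}$.
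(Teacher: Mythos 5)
Your proof is correct and takes essentially the same approach as the paper: the paper introduces this lemma with ``The following is clear'' and gives no further argument, the intended reasoning being exactly the unwinding of definitions you carry out, namely that the quadruple $(V,\nabla,v,s)$ with $v=(a_k,\ldots,a_1,1)$ already witnesses branch independence once all $a_i\in K$.
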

Obviously, if $F$ is branch independent at  $x\in X(K)$, then $F(x)\in K$. The converse
is not true. To see this, consider the function $F= f \cdot \int \omega$ where $f$ has a zero at
$x$. Then, regardless of the constants of integration, even if they are
$\kst\setminus K$-valued, $F$ vanishes at $x$. 
\begin{prop}\label{easyres}
    Let $U\subset X$ be open and let $F$ be a $\kst$-valued Vologodsky function on $X$. Let
    $x\in U(K)$. Then $F$ is branch independent at $x$ if and only if $F|_U$ is branch independent at $x$.
\end{prop}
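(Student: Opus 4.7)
The forward direction is routine: if $(V,\nabla,v,s)$ is a branch independent representation of $F$ at $x$ with $v\in V_x$, then $(V|_U,\nabla|_U,v,s|_U)$ is a representation of $F|_U$ whose starting vector lies in $V_x=(V|_U)_x$, so it witnesses branch independence of $F|_U$ at $x$.

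For the converse, my plan is to fix an arbitrary representation $(V,\nabla,v,s)$ of $F$ on $X$ and write $v=\sum_{i=0}^{k}v_i\log^i(p)$ with $v_i\in V_x$. The $\kst$-linearity of the canonical Vologodsky parallel transport $\gamma_{x,\bullet}$ then yields a decomposition $F=\sum_{i=0}^{k}\log^i(p)\,F_i$ of $\kst$-valued Vologodsky functions on $X$, where $F_i$ is the function represented by $(V,\nabla,v_i,s)$ and is hence branch independent at $x$. The goal reduces to showing $F_i\equiv 0$ on $X$ for every $i\geq 1$, for then $F=F_0$ is itself branch independent at $x$ via $(V,\nabla,v_0,s)$. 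By the $\kst$-analogue of the identity principle explained in the proof of the preceding proposition (which runs the argument of \cite[Proposition~4.12]{Bes02} on each monomial $v_i$ separately), it suffices to verify $F_i|_U=0$ for $i\geq 1$. On $U$ we then have two representations of the same Vologodsky function $F|_U$, namely the restriction $(V|_U,\nabla|_U,v,s|_U)$ and the given branch independent representation $(W,\mu,w,t)$, and so they are connected by a chain of morphisms of abstract Vologodsky functions on $U$.

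The main obstacle is extracting from this equivalence the vanishing of the higher $\log(p)$-components $F_i|_U$ for $i\geq 1$. Essentially this is the uniqueness statement: if $G$ is branch independent at $x$ and $G=\sum_{i\geq 0}\log^i(p)G_i$ with each $G_i$ branch independent at $x$, then $G_0=G$ and $G_i=0$ for $i\geq 1$. My plan to prove this uniqueness is by induction on the $\log(p)$-degree, pushing $v-v_0$ through the chain of morphisms connecting $(V|_U,\nabla|_U,v,s|_U)$ to $(W,\mu,w,t)$ and showing the image of each $v_i$ with $i\geq 1$ lies in the maximal integrable subconnection contained in $\ker(s|_U)$; then the identity principle applied to each $F_i$ separately forces $F_i|_U=0$. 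The delicate point is that morphisms of abstract Vologodsky functions need only match starting vectors $\kst$-linearly, not $K$-linearly, so care must be taken to compare the $K$-structure on $v$ coming from the decomposition with the $K$-structure on $w\in W_x$; this is handled by passing to the direct sum $V|_U\oplus W$ with connection $\nabla|_U\oplus\mu$ and comparing the two starting vectors there.
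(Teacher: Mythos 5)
Your forward direction matches the paper's. The converse is where you genuinely diverge: the paper invokes the minimal-representative machinery of~\cite{Bes02} (the restriction to $U$ of a minimal representative of $F$ is again minimal, hence by uniqueness of minimal subquotients it is the minimal subquotient of the given branch-independent representative of $F|_U$; since branch independence passes to subquotients, this forces $v\in V_x$ for the minimal representative on $X$). You instead decompose the starting vector of an arbitrary representative into $\log(p)$-homogeneous pieces $v_i\in V_x$, use the $\kst$-valued identity principle to reduce to $U$, and then kill the higher pieces by comparing with the given branch-independent representative on $U$. This bypasses the minimal-representative theory entirely, which is a real simplification.

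The ``delicate point'' you flag is not actually an obstacle. Morphisms of abstract Vologodsky functions are by definition morphisms in the $K$-linear category of unipotent integrable connections, so they act $K$-linearly on fibers, and their $\kst$-linear extensions automatically respect the $\log^i(p)$-grading of starting vectors. Hence the zigzag of morphisms connecting $(V|_U,\nabla|_U,v,s|_U)$ to $(W,\mu,w,t)$ restricts, degree by degree, to a zigzag connecting $(V|_U,\nabla|_U,v_i,s|_U)$ to $(W,\mu,w_i,t)$; since $w\in W_x$ forces $w_i=0$ for all $i\geq 1$, the latter is the zero function, so $F_i|_U=0$ directly. No induction on $\log(p)$-degree and no direct-sum construction is needed; with that clarification your argument is complete.
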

\begin{proof}
If $F$ is branch independent at $x$, then restricting the representative to $U$ tells us that $F|_U$ is also branch independent at $x$. 
For the other direction, we use the notion of minimal representatives, as defined in ~\cite[Definition~4.16]{Bes02}. According to 
Lemma~4.17 of loc. cit., every Vologodsky function has a minimal subquotient, and it is unique by
Proposition~4.18 of loc. cit..
Lemma~4.19 of loc. cit. states that the restriction of a minimal representative to an open
subset is also minimal. We claim that a Vologodsky function is branch
  independent at $x$ if and only if its minimal representative is branch
  independent at $x$. This is because the notion of branch independence is
  passed on to subquotients. By the above,
   we see that if $F$ is a minimal representative of a Vologodsky function on $X$ such that its restriction to $U$ has a representative that is branch independent, then the minimal representative $F|_U$ is also branch independent at $x$ and hence the original $F$ is branch independent at $x$.
\end{proof}

Similar to Vologodsky functions with respect to a fixed or variable branch of the $p$-adic
logarithm, we have the following result concerning branch independent Vologodsky functions at a
point $x$. 

\begin{thm}\label{logindepprop}
  Let $X$ be a smooth geometrically connected variety over $K$ and let $x\in X(K)$. The spaces $\ocols^i(X)_x \subset
    \ocols^i(X)$ of
    Vologodsky functions and forms on $X$ which are branch independent at $x$ satisfy the
    following properties:
    \begin{enumerate}[\upshape (a)]
        \item functoriality with respect to basepoint preserving maps: If $f\colon X\to Y$ is a
	morphism of $K$-varieties and $y=f(x)\in Y(K)$, then $f^\ast \ocols^i(Y)_y \subset
	\ocols^i(X)_x$. 
	\item The following sheaf property is satisfied: Suppose $X$ is covered by open $U_j$
	and suppose that $\omega_j \in \ocols^i(U_j)$ are compatible and
        for some $ U_j\ni x$,
        $\omega_j$ is branch independent at $x$. Then there is $\omega\in \ocols^i(X)_x$
	restricting to the $\omega_j$ for each $j$.
      \item\label{lip:ses} There is a short exact sequence, right exact when $X$ is affine,
    \begin{equation*}
    0\to \Omega^1(X)\to \ocolas^1(X)_x\xrightarrow{\delbr} \Omega^1(X) \otimes \hdr^1(X/K)\;.
  \end{equation*}
    \end{enumerate}
\end{thm}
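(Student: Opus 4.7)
The plan is to deduce each property from the analogous statement in the full $\kst$-valued theory, checking at every step that branch-independence at the distinguished basepoint $x$ is preserved.

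For part (a), I would simply pull back representations: given $\omega \in \ocols^i(Y)_y$ with branch-independent representative $(V, \nabla, v, s)$ where $v \in V_y$, the pull-back $(f^*V, f^*\nabla, v, f^*s)$ witnesses branch-independence of $f^*\omega$ at $x$, since $(f^*V)_x$ is canonically identified with $V_{f(x)} = V_y$ and $v$ sits in this $K$-rational fiber. For part (b), the sheaf property of $\ocols^i$, carried over from Theorem~\ref{T:volsum}\eqref{t5}, produces a global $\omega \in \ocols^i(X)$ restricting to each $\omega_j$; picking a $U_j \ni x$ for which $\omega_j$ is branch-independent at $x$ and applying Proposition~\ref{easyres} to $\omega|_{U_j} = \omega_j$ then transfers branch-independence to $\omega$ itself at $x$.

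For part (c), I would first observe that any $\omega \in \Omega^1(X)$ is branch-independent at every $K$-point via the trivial representation $(\O_X, d, 1, 1 \mapsto \omega)$, yielding the injection $\Omega^1(X) \hookrightarrow \ocolas^1(X)_x$. The key step is to verify that when $\omega \in \ocolas^1(X)_x$ is represented by a $1$-extension $(V, \nabla, v, s)$ with $v \in V_x$, the class $\delbr\omega$ already lies in $\Omega^1(X) \otimes \hdr^1(X/K)$ rather than in the larger $\otimes\kst$ space: this holds because $\delbr$ only sees the extension class of the underlying connection, the restriction of $s$ to the subobject, and the image of $v$ in the quotient, all of which are $K$-rational under our hypothesis. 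Concretely, when $\omega = \omega_1 \int \eta$ with $\int\eta$ normalized at $x$, Theorem~\ref{T:volsum}\eqref{t6} gives $\delbr\omega = \omega_1 \otimes [\eta] \in \Omega^1(X) \otimes \hdr^1(X/K)$.

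Middle exactness then follows from the $\kst$-sequence: a $\delbr$-kernel element in $\ocolas^1(X)_x$ lies in $\Omega^1(X) \otimes \kst$ and admits a branch-independent representative at $x$, which forces it into $\Omega^1(X)$. For right exactness in the affine case, given $\alpha = \sum \omega_i \otimes [\eta_i]$, the form $\sum \omega_i \int \eta_i$ with each $\int\eta_i$ normalized to vanish at $x$ is branch-independent at $x$ by Lemma~\ref{easyindep} and maps to $\alpha$ under $\delbr$. The hard part will be rigorously checking that $\delbr$ restricted to $\ocolas^1(X)_x$ actually lands in the $K$-rational subspace $\Omega^1(X) \otimes \hdr^1(X/K)$; this requires a careful unwinding of the definition of $\delbr$, along the lines of the description in Proposition~\ref{dkprop}\eqref{P:dD1}, to confirm that the $K$-rationality of $v$ propagates through the construction to the output.
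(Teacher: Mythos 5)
Your proof is correct and follows essentially the same route as the paper, which simply asserts that functoriality is obvious, derives the sheaf property from Proposition~\ref{easyres} and the $\kst$-valued sheaf property, and declares the exact-sequence argument ``identical to'' the fixed-branch case of Theorem~\ref{T:volsum}~\eqref{t6}. You have filled in the details the paper leaves implicit — in particular the check that $\delbr$ restricted to $\ocolas^1(X)_x$ lands in $\Omega^1(X)\otimes\hdr^1(X/K)$ (which, as you anticipate, follows from tracking $K$-rationality of $v$, the extension class, and $s|_{M_1}$ through the description of $\delbr$), and the use of minimal representatives to justify that $\Omega^1(X)\otimes\kst$ meets the branch-independent subspace precisely in $\Omega^1(X)$; both are the routine verifications the paper implicitly delegates.
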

\begin{proof}
Functoriality is obvious. The sheaf property is an immediate consequence of the sheaf property
  for $\kst$-valued forms and Proposition~\ref{easyres}. The proof
  of~\eqref{lip:ses} is in fact
    identical to the proof of the corresponding result for $K$-valued Vologodsky forms.
\end{proof}

We now consider the implications on the theory of log functions discussed
in \S\ref{subs32}.
In the theory, we can replace Vologodsky functions with respect to a fixed branch with those for a variable
branch. We would like to see how much of the theory can be made branch independent at a point.
\begin{rk}
Note that log functions are Vologodsky functions, but they are not
branch independent. Instead, we define branch independence of log functions
as follows.
\end{rk}

\begin{defn}
  Let $\L$ be a line bundle on $X$. A log function on $\L$ is called {\em branch independent
  at $x\in X(K)$}
    if given an open $U\in X$ containing $x$ and a section $s\in \L(U)$, the Vologodsky differential form
	$d\log(s)$ is branch independent at $x$. 
\end{defn}

One immediately observes that it is sufficient to have this condition for one $U $ and one
section $s$.  We have the following strengthening of
Proposition~\ref{P:excurve} when $X$ is nice.
\begin{thm}\label{indepmet}
  Let $X/K$ be a nice variety, let $x\in X(K)$ and let $L$ be a line bundle on $X$ such that
$$\ch_1(\L)\in \im\left(\cup\colon  \Omega^1(X)\otimes \hdr^1(X )\to
  \hdr^2(X)\right)\,.$$ 
  Let 
$\alpha\in \Omega^1(X)\otimes \hdr^1(X )$ such that $\cup                    
    \alpha = \ch_1(\L)$.
Then there exists a log function on $L$ with curvature $\alpha$ which is
  branch independent at $x$.
    It is unique up to the integral of a holomorphic form on $X$.  
    \end{thm}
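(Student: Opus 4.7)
The plan is to start with an arbitrary log function $\log_L^0$ on $L$ with curvature $\alpha$ (whose existence in the $\kst$-valued theory is guaranteed by Proposition~\ref{P:excurve}) and modify it by the integral of an appropriate $\omega\in\Omega^1(X)\otimes\kst$ to enforce branch independence at $x$. In the $\kst$-setting the ambiguity among log functions with prescribed curvature is an $\Omega^1(X)\otimes\kst$-torsor, which gives us enough flexibility to carry out the correction.

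First I would fix an affine open $U\subset X$ containing $x$ together with a nowhere vanishing section $s$ of $L|_U$, and set $F\colonequals d\log_L^0(s)\in\ocola^1(U)$, so that $\delbr F=\alpha|_U$. By the right-exactness of the branch independent sequence on the affine $U$ (Theorem~\ref{logindepprop}(\ref{lip:ses})), there exists a branch independent lift $\eta\in\ocolas^1(U)_x$ with $\delbr\eta=\alpha|_U$, and then $F-\eta$ lies in the kernel $\Omega^1(U)\otimes\kst$. Writing $F-\eta=\sum_{i\geq 0}\omega_i\log(p)^i$ with $\omega_i\in\Omega^1(U)$, the tail $\sum_{i\geq 1}\omega_i\log(p)^i$ is independent of the choice of $\eta$ (since changing $\eta$ by any form in $\Omega^1(U)$ only alters $\omega_0$), and it captures the obstruction to $\log_L^0$ being branch independent at $x$.

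The crucial step is to show that $\omega_i\in\Omega^1(U)$ extends to a global holomorphic form on $X$ for every $i\geq 1$. For this I would compare the construction on an overlap $U\cap U'$ with another nowhere vanishing section $s'=\mu s$: one has $d\log_L^0(s')-d\log_L^0(s)=d\mu/\mu$, which is a holomorphic form on $U\cap U'$ with no $\log(p)$-contribution. Hence the $\omega_i$ for $i\geq 1$ agree on overlaps, and by the sheaf property of $\Omega^1$ they patch to global elements $\omega_i\in\Omega^1(X)$. Setting $\tilde\omega\colonequals\sum_{i\geq 1}\omega_i\log(p)^i\in\Omega^1(X)\otimes\kst$, I would then define $\log_L\colonequals\log_L^0-\int\tilde\omega$. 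A direct computation shows $d\log_L(s)=F-\tilde\omega|_U=\eta+\omega_0\in\ocolas^1(U)_x$, establishing branch independence at $x$.

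For uniqueness, two log functions with curvature $\alpha$ that are both branch independent at $x$ differ by the integral of some $\omega\in\Omega^1(X)\otimes\kst$; pulling back by $s$, the form $\omega|_U$ must lie in $\ocolas^1(U)_x\cap(\Omega^1(U)\otimes\kst)=\Omega^1(U)$. Since $X$ is nice, the restriction map $\Omega^1(X)\otimes\kst\to\Omega^1(U)\otimes\kst$ is injective on each $\log(p)^i$-component, so all higher $\log(p)$-components of $\omega$ must vanish, forcing $\omega\in\Omega^1(X)$. The main obstacle will be the overlap computation ensuring that the tail $\sum_{i\geq 1}\omega_i\log(p)^i$ is globally well-defined; once this is in hand, the remainder is a formal manipulation of the exact sequences already in place.
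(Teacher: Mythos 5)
Your uniqueness argument is fine, and your reduction of existence to producing a form $\tilde\omega\in\Omega^1(X)\otimes\kst$ with $\tilde\omega|_U$ equal to the tail of $F-\eta$ is also correct; but the extension step, which you correctly flag as the crux, does not go through with the argument you sketch. To patch the local tails $\tilde\omega_{U'}=\sum_{i\geq 1}\omega_i'\log(p)^i$ into a global section of $\Omega^1(X)\otimes\kst$, you need $\tilde\omega_{U'}$ to be defined on opens $U'$ that do \emph{not} contain $x$, but on such $U'$ there is no branch independent lift $\eta'\in\ocolas^1(U')_x$ at all, so your recipe produces no candidate. If you instead use branch independence at some auxiliary $K$-point $y'\in U'$, the resulting tail genuinely changes with $y'$: for instance $\omega\int_{y'}\theta-\omega\int_{y''}\theta=\bigl(\int_{y''}^{y'}\theta\bigr)\omega$ lies in $\Omega^1(U')\otimes\kst$ with a typically nonzero $\log(p)$-coefficient, so the two prescriptions of the tail do not agree on $U'\cap U''$. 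Your overlap computation $d\log_L^0(s')-d\log_L^0(s)=d\mu/\mu$ only controls the dependence on the trivializing section, not the dependence on the choice of branch independent lift; it therefore shows agreement of tails only for pairs of opens both containing $x$, which is not enough to cover $X$.

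The paper avoids this issue entirely by re-running the \v{C}ech construction of \cite[Proposition~4.4]{Bes05} from scratch: it chooses all constants of integration --- both in the iterated integrals defining $d\log(s_i)$ on the distinguished $U_i\ni x$ and in the subsequent gluing adjustments --- to lie in $K$ rather than in $\kst$, so branch independence at $x$ is built in by Lemma~\ref{easyindep} and no a posteriori extension of a correction form is needed. Your post-hoc correction strategy is a genuinely different route, and would be cleaner if it worked, but as it stands you have not established the one step on which everything hinges; to salvage it you would need some further input forcing the local tail to be the restriction of a global form (for instance by controlling poles of the $\omega_i$ along $X\setminus U$ as is done for curves in \S\ref{subsec:logcurves}, or by exhibiting a branch independent lift of $\alpha$ globally), and you have not indicated how to do that.
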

\begin{proof}
One needs to follow the proof of Proposition~4.4 in~\cite{Bes05}. Given a covering $U_i$
and sections $s_i\in \L(U_i)$, the forms $d\log(s_i)$ are given as iterated integrals and one can certainly
choose the integrals for one $U_i$ containing $x$ to have values in $K$ at $x$, making it branch independent by
Lemma~\ref{easyindep}. One then needs to adjust the constants of integration (see page
336 of loc. cit.), but this can also be done over $K$. The fact that two branch independent log functions
differ by the integral of an $\omega\in \Omega^1(X)$ is clear from Theorem~\ref{logindepprop}.
\end{proof}
\begin{rk}
Note that the degree of freedom in the integral of a
    holomorphic differential is in $\kst$.  For our purposes, this will not matter.
\end{rk}

\subsection{Vologodsky valuations and contributions to local heights at all finite primes}\label{subsec:VolVal}

\newcommand{\ldp}{{(1)}}
\newcommand{\dR}{{\textup{dR}}}
\newcommand{\rig}{{\textup{rig}}}

We keep the notation of~\S\ref{subsec:VarVol}.
\begin{defn}
Let $F$ be a $\kst$-valued Vologodsky function on $X$. Its value at every
$x\in X(K)$ is a polynomial
  $\sum_i a_i (\log p)^i \in \kst$. We set $F^{\ldp}(x)\colonequals a_1$. In this way we get a
  function $F^{\ldp} \colon X(K)\to K$.
\end{defn}
It is easy to see the following.
\begin{lemma}\label{locanval}
For any Vologodsky function $F$ the function $F^{\ldp}$ is a locally
analytic function.
\end{lemma}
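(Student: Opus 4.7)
The plan is to reduce local analyticity of $F^{\ldp}$ to that of the classical $K$-valued Vologodsky functions by specializing the formal variable $\log(p)$ to various elements of $K$ and extracting the coefficient of $\log(p)^1$ by Vandermonde interpolation.

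First I would establish a uniform bound on the $\log(p)$-degree of $F(y) \in \kst$. Represent $F$ by a quadruple $(V,\nabla,v,s)$ with $v \in V_x \otimes \kst$, and let $n$ denote the length of the weight filtration on the unipotent connection $(V,\nabla)$. Inspection of the inductive construction of the canonical path $\gamma_{x,y}$ in~\cite{Vol03, Bes02} shows that each step of the filtration contributes at most one additional iterated integration, and hence at most one additional factor of $\log(p)$ in the sense of Theorem~\ref{T:volsum}~(\ref{t3}). Thus $\gamma_{x,y}(v) \in V_y\otimes \kst$ is polynomial in $\log(p)$ of total degree at most some $N=N(V,v)$ independent of $y$, and we may write
\[
F(y) = \sum_{i=0}^{N} F^{(i)}(y)\,\log(p)^{i},\qquad F^{(i)}\colon X(K)\to K.
\]

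Next, for every $\alpha \in K$ the specialization homomorphism $\mathrm{sp}_\alpha\colon \kst \to K$, $\log(p)\mapsto \alpha$, corresponds to the choice of branch of the $p$-adic logarithm with $\log_\alpha(p)=\alpha$. Applying $\mathrm{sp}_\alpha$ to the representation $(V,\nabla,v,s)$ produces a classical ($K$-valued) Vologodsky function $F_\alpha\colon X(K)\to K$ with respect to that branch, and by the inclusion $\O_V(X)\hookrightarrow \O_{\mathrm{loc}}(X)$ of Theorem~\ref{T:volsum}~(\ref{t2}) together with Remark~\ref{Volthmcom}~(\ref{KtoK}), $F_\alpha$ is locally analytic as a $K$-valued function. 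The previous paragraph yields
\[
F_\alpha(y) \;=\; \sum_{i=0}^{N} F^{(i)}(y)\,\alpha^{i}.
\]

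Finally, choose $N+1$ distinct values $\alpha_0,\ldots,\alpha_N \in K$. The Vandermonde matrix $(\alpha_j^{\,i})_{0\le i,j\le N}$ is invertible over $K$, so every $F^{(i)}$, and in particular $F^{\ldp}=F^{(1)}$, is expressible as a $K$-linear combination of the locally analytic functions $F_{\alpha_0},\ldots,F_{\alpha_N}$ on $X(K)$, and is therefore itself locally analytic. The main obstacle is the degree bound $N$ of the first paragraph; once this is in hand, the Vandermonde argument is purely formal. Making the bound precise requires a careful tracking through the inductive construction of the canonical path for unipotent connections, which I anticipate can be handled by an induction on the length of the weight filtration using a splitting of~\eqref{delbarexact}.
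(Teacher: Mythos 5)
Your approach is correct, though a bit more roundabout than what the paper evidently intends by labelling this lemma ``easy to see'' and omitting a proof. The direct argument is: for a fixed representative $(V,\nabla,v,s)$, the parallel transport along the Vologodsky canonical path applied to any unipotent connection of nilpotency index $n$ is polynomial in $\log(p)$ of degree $\le n$, so $F$ takes values in the finite-dimensional $K$-subspace $\bigoplus_{i\le N}K\log(p)^i \subset \kst$ for some fixed $N$; in local $K$-coordinates on a residue disc $F$ is then a convergent power series with coefficients in this finite-dimensional space, and the projection onto the $\log(p)^1$-coordinate is a $K$-linear map, so $F^{\ldp}$ is again given locally by a convergent power series. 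Your Vandermonde interpolation is a clean alternative way to execute the same final step: instead of observing that coefficient extraction preserves local analyticity directly, you express $F^{\ldp}$ as an explicit $K$-linear combination of the branch specializations $F_{\alpha_0},\dots,F_{\alpha_N}$, each of which is a classical Vologodsky function and hence locally analytic by Theorem~\ref{T:volsum}~(\ref{t2}). Both routes rest on the same key input, which you correctly single out: the uniform bound on the $\log(p)$-degree of $F(y)$. That bound is indeed standard and follows from the representation being finite-dimensional and unipotent, so your reservation at the end is unwarranted — no elaborate induction on the weight filtration is really needed beyond the observation that a nilpotent endomorphism of index $n$ exponentiates to a degree-$(n-1)$ polynomial.
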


The behavior of the function $F^\ldp$ with respect to pull-backs is clear.
\begin{lemma}\label{pld}
  If $f\colon X\to Y$ is a morphism of $K$-varieties and $F$ is a $\kst$-valued
  function on $Y$, then $f^\ast F^{\ldp}= (f^\ast F)^{\ldp}$.
\end{lemma}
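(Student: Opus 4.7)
The plan is to show this by unwinding definitions, exploiting functoriality of $\kst$-valued Vologodsky pullback together with the fact that the coefficient extraction $\sum_i a_i (\log p)^i \mapsto a_1$ is a $K$-linear operation on $\kst$ commuting with everything in sight.

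First I would fix a point $x\in X(K)$ and set $y \colonequals f(x)\in Y(K)$. By the analogue of Remark~\ref{Volthmcom}(\ref{KtoK}) in the $\kst$-valued setting (which holds because the canonical path $\gamma_{\ast,y}$ is $\kst$-linear and because $\mathcal{O}_V(\Spec K) = \kst$), the evaluation $F(y)$ lies in $\kst$, so we may write $F(y) = \sum_{i\ge 0} a_i(y)(\log p)^i$ with $a_i(y)\in K$. By definition, $F^\ldp(y) = a_1(y)$, hence $(f^\ast F^\ldp)(x) = F^\ldp(f(x)) = a_1(y)$.

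Next I would compute $(f^\ast F)^\ldp(x)$. The $\kst$-valued pullback is defined so that if $F$ is represented by $(V,\nabla, v, s)$ with $v \in V_y\otimes \kst$, then $f^\ast F$ is represented by $(f^\ast V, f^\ast \nabla, v', s\circ (f^\ast))$ where $v'$ is the image of $v$ under the canonical identification $(f^\ast V)_x \otimes \kst \cong V_y \otimes \kst$. Evaluating the resulting function at $x$ therefore yields exactly $s(\gamma_{y,y}(v))= F(y)$ as elements of $\kst$; in other words $(f^\ast F)(x) = F(y)$ in $\kst$. Extracting the coefficient of $\log(p)$ gives $(f^\ast F)^\ldp(x) = a_1(y)$, which matches the previous paragraph.

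This is essentially a tautology once the $\kst$-valued theory is in place, and I expect no real obstacle: the only point requiring care is to check that the pullback map on $\kst$-valued Vologodsky functions is $\kst$-linear on the fiber at a $K$-point, so that coefficient extraction commutes with pullback. This is immediate from the construction of $\gamma_{x,y}$ as a $\kst$-linear isomorphism and from the fact that the section $s$ and its pullback act $K$-linearly, hence $\kst$-linearly after base change. Combining the two computations above yields $(f^\ast F)^\ldp(x) = (f^\ast F^\ldp)(x)$ for every $x\in X(K)$, which is the desired equality of functions.
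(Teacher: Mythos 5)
Your argument is correct and is the argument the paper has in mind: the paper states the lemma with no proof at all (``the behavior \ldots is clear''), and the content is precisely that pullback commutes with evaluation at $K$-points, $(f^\ast F)(x)=F(f(x))$ in $\kst$, after which extracting the coefficient of $\log(p)$ is a tautology. One small notational slip: in the $\kst$-valued theory the vector $v$ in a representative $(V,\nabla,v,s)$ lives in the fiber $V_{x_0}\otimes\kst$ at a \emph{fixed basepoint} $x_0$, not at the evaluation point; you write $v\in V_y\otimes\kst$ with $y=f(x)$ the image of the point you evaluate at, and then invoke $\gamma_{y,y}$, which is the trivial path. What is actually used is that the pullback representative has its vector at a basepoint $x_0\in X(K)$ mapping to the basepoint used for $F$, and that the canonical paths are functorial, so $(f^\ast F)(x)=s(\gamma^Y_{f(x_0),f(x)}(v))=F(f(x))$. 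The rest of your reasoning — $K$-linearity (hence $\kst$-linearity) of the extraction $\sum_i a_i(\log p)^i\mapsto a_1$ and functoriality of pullback — then gives the statement verbatim.
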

\begin{prop}\label{vholz}
  Let $\omega$ be a holomorphic form on 
  $X$. Let $F$ be a Vologodsky integral of $\omega$, branch independent at
  some $x\in X(K)$. Then $F^{\ldp}=0$.
\end{prop}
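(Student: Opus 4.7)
The plan is to first reduce the statement to showing that the canonical Vologodsky integral $G(y) \colonequals \int_x^y \omega$ has vanishing $\log(p)$-coefficient, then to transfer this to an abelian variety via the Albanese map (implicitly assuming $X$ is proper---note the claim is false on $\Gm$ for $\omega = dz/z$, so some hypothesis of this kind is needed and is automatic in the applications of this section), and finally to compute the relevant integral on the abelian variety by means of the formal group logarithm.

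First I would unwind the branch-independence hypothesis. Any Vologodsky integral of $\omega$ admits a representation via the connection~\eqref{basicconnection} with $k=1$, that is, a rank-two unipotent extension $V$ on $X$ with vector $v = (a,b) \in V_x$ and section $s$ the first-coordinate projection; Vologodsky parallel transport then yields $F(y) = a + b \cdot G(y)$, where $G(y) \colonequals \int_x^y \omega$ is the canonical Vologodsky integral vanishing at $x$. Branch independence at $x$ forces $a,b \in K \subset \kst$, whose $\log(p)$-coefficients vanish, so $F^{\ldp} = b \cdot G^{\ldp}$, and it suffices to show $G^{\ldp} \equiv 0$.

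Next I would use the Albanese morphism $\pi \colon X \to A \colonequals \mathrm{Alb}(X)$ with $\pi(x) = 0$. The universal property produces a unique translation-invariant $\eta \in \Omega^1(A)$ with $\pi^*\eta = \omega$, and the canonical Vologodsky integral $I(z) \colonequals \int_0^z \eta$ on $A$ satisfies $\pi^* I = G$ by uniqueness (both sides have differential $\omega$ and vanish at $x$, while the kernel of $d$ is $\kst$). Lemma~\ref{pld} then gives $G^{\ldp} = \pi^* I^{\ldp}$, so it remains to show $I^{\ldp} \equiv 0$ on $A(K)$. Translation invariance of $\eta$ and uniqueness force $I \colon A(K) \to \kst$ to be a group homomorphism. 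On the kernel of reduction of $A(K)$, Vologodsky integration agrees with formal-analytic integration, and the pairing of $\eta$ with the formal group logarithm of $A$ produces a $K$-valued integral. For an arbitrary $z \in A(K)$, choosing $n \geq 1$ with $[n]z$ in the kernel of reduction and using additivity gives $I(z) = \tfrac{1}{n} I([n]z) \in K$, hence $I^{\ldp}(z) = 0$.

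The main obstacle will be carefully justifying that on the formal-group neighborhood of $0 \in A$ the Vologodsky integral of a translation-invariant form agrees with its formal-analytic integral, so that no $\log(p)$-terms are introduced by the Vologodsky path from $0$ to a point of the formal group. A more direct but less clean alternative would be to observe that $dF^{\ldp} = 0$ (since $\omega$ has no $\log(p)$-component) and try to propagate the vanishing at $x$ via some identity principle, but as $F^{\ldp}$ is not itself a Vologodsky function in a canonical way, this route looks harder than the Albanese reduction.
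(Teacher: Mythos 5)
Your proof is essentially the paper's: normalize so $F(x)=0$, push to the Albanese abelian variety via Lemma~\ref{pld}, and identify the remaining integral with the $p$-adic abelian logarithm, whose branch independence you justify by the formal-group plus multiplication-by-$n$ argument that the paper leaves implicit. Your parenthetical about properness is a correct and worthwhile observation: the standing hypothesis of \S\ref{subsec:VarVol} is only that $X$ be smooth and geometrically connected, yet, as your $\Gm$ example with $\omega = dz/z$ shows, both the statement and the step ``every holomorphic $1$-form on $X$ pulls back from its Albanese abelian variety'' require $X$ proper --- which does hold in all applications here (Proposition~\ref{valdet}, Theorem~\ref{goodval}). (A tiny point: since two $\kst$-valued Vologodsky primitives of $\omega$ differ by a constant, in your reduction $F = a + bG$ one necessarily has $b = 1$ whenever $dF = \omega$; carrying a free $b$ is harmless but unnecessary.)
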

\begin{proof}
Since $F$ is branch independent at $x$, we have $F(x)\in K$ and by subtracting
it we may assume that $F(x)=0$.
Let $f:X\to A$ be the Albanese map of $X$ with base point $x$. There
exists an invariant differential $\omega'$ on $A$ such that $f^\ast
\omega'=\omega$. By Lemma~\ref{pld} it suffices to prove the result for the
form $\omega'$ and a Vologodsky integral of $\omega'$, branch independent at
$0$, and we may assume that it vanishes at $0$. But this is given by pairing $\omega'$ with the logarithm map of $A$,
independent of the choice of the branch of the logarithm.
\end{proof}
\begin{prop}
  Let $\L$ be a line bundle on a nice variety $X/\Qpb$ and let $\log_\L$ be
  a log function on $\L$.
  Then the function
    \begin{equation}\label{eq:volval}
      v_{\L} \colonequals \log_{\L}^{\ldp}
    \end{equation} 
    is a valuation on $\L$ in the sense of
    Definition~\ref{D:val}.
\end{prop}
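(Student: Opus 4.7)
The plan is to read off the claim directly from the defining scaling property of a log function after extracting the coefficient of $\log(p)$, using the explicit description of the universal logarithm. First I would observe that a log function on $\L$ (in the $\kst$-valued theory of \S\ref{subsec:VarVol}) satisfies
\[
\log_\L(\lambda u) \;=\; \log(\lambda) + \log_\L(u)
\]
for every nonzero $u \in \L_x$ and every nonzero $\lambda \in \overline{\Q_p}$, where $\log$ is now the universal logarithm with values in $\kst$ (or its extension to $\overline{K}$). Applying the $\Qpb$-linear functional $(\cdot)^{(1)}$ that picks off the coefficient of $\log(p)$ to both sides reduces the proposition to the single identity $\log^{(1)}(\lambda) = \ord_p(\lambda)$ for $\lambda \in \overline{\Q_p}^\times$.

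Next I would verify this identity. By Theorem~\ref{T:volsum}~\eqref{t3} (in the $\kst$-valued version), the universal logarithm is characterized by $d\log(z) = dz/z$ together with $\log(p) = \log(p)$ as the formal variable in $\kst$. For $\lambda \in K^\times$ written as $\lambda = p^{a} u$ with $u \in \O_K^\times$ and $a = \ord_p(\lambda)$, multiplicativity of $\log$ (which is a consequence of its construction, cf.\ the proof of Theorem~\ref{T:volsum}) gives $\log(\lambda) = a \log(p) + \log(u)$, and $\log(u) \in K$ because it is given by the classically convergent series. Thus $\log^{(1)}(\lambda) = a = \ord_p(\lambda)$. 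For general $\lambda \in \overline{\Q_p}^\times$, pick $n \geq 1$ with $\lambda^n \in K^\times$; then $n\log(\lambda) = \log(\lambda^n) = n\ord_p(\lambda)\log(p) + (\text{element of }\overline{K})$, so again $\log^{(1)}(\lambda) = \ord_p(\lambda)$.

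Combining these two steps gives
\[
v_\L(\lambda u) \;=\; \log_\L^{(1)}(\lambda u) \;=\; \log^{(1)}(\lambda) + \log_\L^{(1)}(u) \;=\; \ord_p(\lambda) + v_\L(u),
\]
which is precisely the scaling axiom of Definition~\ref{D:val}. Finally, the fact that $v_\L$ takes values in $\overline{\Q_p}$ (rather than $\kst$) follows because $\log_\L$, being a Vologodsky function on $\L^\times$, takes values in $\overline{\Q_p}\otimes\kst$ at each $\overline{\Q_p}$-point, and $(\cdot)^{(1)}$ maps this into $\overline{\Q_p}$. There is no real obstacle here; the content of the proposition lies entirely in the $\kst$-valued theory set up in \S\ref{subsec:VarVol}, which ensures that the log function exists and that the coefficient extraction $(\cdot)^{(1)}$ is well-defined and linear. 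I would remark in passing that when $\log_\L$ happens to be branch independent at $x$, the function $v_\L$ is locally analytic on the $K$-points of a neighborhood of $x$ (by Lemma~\ref{locanval}), which foreshadows the use of $v_\L$ as a local height contribution in later sections.
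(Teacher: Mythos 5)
Your argument is correct and is essentially the paper's: the paper's proof is exactly the scaling observation via the decomposition $\log(z) = \ord_p(z)\log(p) + \log\bigl(z/p^{\ord_p(z)}\bigr)$ (equation~\eqref{E:volscale}), and you simply spell out the same reduction $v_\L(\lambda u) = \log^{(1)}(\lambda) + v_\L(u)$ followed by the verification $\log^{(1)}(\lambda) = \ord_p(\lambda)$. One small wording slip: for a general $\lambda\in\Qpb^\times$ you cannot necessarily choose $n$ with $\lambda^n\in K^\times$; rather, pick $n$ with $n\ord_p(\lambda)\in\Z$ so that $\lambda^n/p^{n\ord_p(\lambda)}$ is a unit whose $\log$ lands in $\Qpb$, and then divide by $n$ as you do.
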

\begin{proof}
  This follows because for the universal branch $\log$ of the $p$-adic
  logarithm on $X$ as in Equation~\eqref{E:loguniv}, we have
\begin{equation}\label{E:volscale}
  \log(z) = v(z)\log(p) + \log\left(\frac{z}{p^{v(z)}}\right)\,,
\end{equation}
where $v$ is the valuation on $\Qpb$.
\end{proof}

\begin{rk}
The higher derivatives of $\log_L$ with respect to $\log(p)$ are likely
$0$ for proper varieties, and certainly are for proper curves. See
Remark~\ref{R:linearlogp}.
\end{rk}
\begin{defn}\label{D:volval}
  Let $\L$ be a line bundle on a nice variety $X/\Qpb$. 
    The valuation~\eqref{eq:volval} associated with a log function $\log_{\L}$ on
    $\L$ is called a {\em Vologodsky valuation}.
\end{defn}

\begin{rk}\label{R:whylogind}
  Recall that a $\Q$-valuation is a valuation with values in $\Q$. In general a
Vologodsky valuation need not be a $\Q$-valuation. 
We first restrict to branch independent log functions, since otherwise, even the constant of integration which
has values in $\kst$ will add arbitrary $K$-constants to the valuation.
\end{rk}

We extend Definition~\ref{D:normalized} of a normalized log function
as follows:
\begin{defn}
  A log function
  $\log_\L$ is said to be {\em normalized with respect to a rigidification
  $r \in L_x(K)$}
   if it is branch
  independent at $x$ and $\log_{\L}(r)=0$.
\end{defn}
\begin{prop}\label{valdet}
  Suppose that $L$ is a line bundle on a proper variety $X$ that satisfies the
  conditions of Theorem~\ref{indepmet} and let $r\in L_x(K)$
  be a fixed rigidification of $L$. Then the Vologodsky valuation associated with a log function on $L$,
  normalized with respect to $r$, 
  depends only on its curvature form. 
\end{prop}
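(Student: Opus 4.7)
The plan is to argue that any two normalized log functions with the same curvature form have Vologodsky valuations that agree, by reducing their difference to an integral of a holomorphic form and then invoking Proposition~\ref{vholz}.

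First, let $\log_L$ and $\log'_L$ be two log functions on $L$, both normalized with respect to $r$ and both having curvature form $\alpha$. Since the two log functions have the same scaling behavior along fibers (the first defining property of a log function), their difference $\log_L - \log'_L$ is constant on fibers of $\pi\colon L^\times \to X$, so it descends to a $\kst$-valued Vologodsky function $g$ on $X$ with $\log_L - \log'_L = \pi^* g$. By the uniqueness clause of Theorem~\ref{indepmet}, there exists a holomorphic form $\omega \in \Omega^1(X)$ and a Vologodsky integral $F$ of $\omega$ such that $g = F$ (possibly after shifting $F$ by a $\kst$-constant).

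Next, I would verify that $F$ is branch independent at $x$. Both log functions are branch independent at $x$ (by the definition of \emph{normalized}), which means that for a local section $s$ of $L$ near $x$, the forms $d\log_L(s)$ and $d\log'_L(s)$ are branch independent at $x$. Hence so is their difference $dg = dF = \omega$, and moreover evaluating at $r$ gives $g(x) = \log_L(r) - \log'_L(r) = 0 - 0 = 0 \in K$. Combined with the fact that $F$ differs from $g$ only by an additive constant in $\kst$, we see this constant lies in $K$, and thus $F$ itself is branch independent at $x$ in the sense of~\S\ref{subsec:VarVol}.

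Now apply Proposition~\ref{vholz} to conclude $F^{\ldp} = 0$, hence $g^{\ldp} = 0$. Pulling back along $\pi$ yields
\[
(\log_L)^{\ldp} - (\log'_L)^{\ldp} = \pi^*(g^{\ldp}) = 0,
\]
which is exactly the equality of the associated Vologodsky valuations $v_L = v'_L$. No step presents a serious obstacle; the only subtlety is bookkeeping around the $\kst$-valued constant of integration to ensure that $F$, not merely $g$, is branch independent at $x$ so that Proposition~\ref{vholz} applies cleanly.
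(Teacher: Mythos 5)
Your proof is correct and follows the same route as the paper: use the uniqueness clause of Theorem~\ref{indepmet} to identify the difference of the two normalized log functions with an integral of a holomorphic form, observe that normalization forces this integral to vanish at $x$ (hence, via Lemma~\ref{easyindep}, to be branch independent at $x$), and apply Proposition~\ref{vholz}. The only stylistic quibble is that the parenthetical detour through ``$F$ differing from $g$ by a $\kst$-constant'' is unnecessary---once you know $g(x)=0\in K$, $g$ itself is branch independent at $x$ by Lemma~\ref{easyindep}, so you can apply Proposition~\ref{vholz} to $g$ directly.
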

\begin{proof}
  By Theorem~\ref{indepmet} two log functions with the same curvature differ by the integral of a
holomorphic differential. If they are both normalized, then this integral
vanishes at the base point $x$, and is therefore branch independent at $x$. The
result now follows from Proposition~\ref{vholz}.
\end{proof}
\begin{rk}
 By Remark~\ref{R:change_rig2}, if we change $r$ to $r'$, then we need to add $v(r /r')$ to the normalized log
function to make it normalized again. Thus, the curvature, without
rigidification, determines the Vologodsky valuation up to an additive
constant. 
\end{rk}
We now show that in good reduction normalized log
functions give rise to model valuations. 
\begin{prop}
  Let $\mathcal{X}$ be a proper smooth $\O_K$-scheme such that
  $\mathcal{X}_K
  = X$ and let
  $\mathcal{L}/\mathcal{X}$ be a line bundle with  generic fiber $L /X$.
  Let $x\in X(K)$ and fix a
  rigidification $r\in L_x(K)$ which is integral with respect to
  $\mathcal{L}$. Then the Vologodsky valuation associated with any log function
  $\log_L$ which is normalized with respect to $r$ is the model valuation
  associated to $\mathcal{L}$ in the sense of
  Example~\ref{E:alg_val}.
\end{prop}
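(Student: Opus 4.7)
The plan is to exhibit a specific normalized log function $\log_L^{\mathrm{mod}}$ built from the integral model whose Vologodsky valuation equals $v_{\mathcal{L}}$ by construction, and then to argue that any other normalized log function yields the same Vologodsky valuation. The decisive input is that in good reduction, a Vologodsky function on $X$ that is branch-independent at some $K$-rational point is $K$-valued everywhere and hence has vanishing $(1)$-coefficient.

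First, I would construct $\log_L^{\mathrm{mod}}$ via a \v{C}ech-style gluing analogous to the proof of Proposition~4.4 of \cite{Bes05}. Choose an affine Zariski cover $\{U_i\}$ of $\mathcal{X}$ with integral trivializing sections $s_i$ of $\mathcal{L}|_{U_i}$, arranged so that $x \in U_0$ and $s_0(x) = r$; this is possible because $r$ is an integral generator of $\mathcal{L}_x$. The transition functions $f_{ij} = s_i/s_j$ lie in $\mathcal{O}_{\mathcal{X}}^{\times}(U_i \cap U_j)$, so their $p$-adic logarithms $\log(f_{ij})$ are Vologodsky functions on the generic fiber, and in good reduction they are $K$-valued and branch-independent at every $K$-point. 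Assembling these \v{C}ech data into a log function $\log_L^{\mathrm{mod}}$ normalized at $r$ yields a local description of the form $\log_L^{\mathrm{mod}}(\lambda s_i(y)) = \log(\lambda) + g_i(y)$, with $g_i$ a branch-independent $K$-valued Vologodsky function on $U_i$ and $g_0(x) = 0$. By Equation~\eqref{E:volscale} together with $g_i^{(1)} \equiv 0$, it follows that $(\log_L^{\mathrm{mod}})^{(1)}(\lambda s_i(y)) = \ord_p(\lambda) = v_{\mathcal{L}}(\lambda s_i(y))$ on each chart, so $\log_L^{\mathrm{mod}}$ realizes the model valuation.

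Next, for an arbitrary normalized log function $\log_L$, the difference $\Delta \colonequals \log_L - \log_L^{\mathrm{mod}}$ is scalar-invariant on $L^{\times}$ (both sides satisfy the same scaling relation), so it descends to a Vologodsky function on $X$ with $\Delta(x) = 0$ and branch-independent at $x$. Note that $\Delta$ need not be the integral of a holomorphic form, since $\log_L$ and $\log_L^{\mathrm{mod}}$ may have different curvature forms; consequently Proposition~\ref{valdet} does not apply directly, and we genuinely need a new argument.

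The main obstacle is then to establish that $\Delta^{(1)} \equiv 0$ on $X(\bar{K})$. This is where the good reduction hypothesis enters essentially: in good reduction the Vologodsky canonical paths in the fundamental groupoid of $X$ coincide with Coleman's Frobenius-equivariant canonical paths (cf.\ Remark~\ref{Volthmcom}), and these preserve the $K$-structure across residue disks because the Frobenius lift acts $K$-linearly on de Rham cohomology and its canonical primitives are $K$-valued. Consequently, any Vologodsky function on a proper smooth variety over $K$ with good reduction that is branch-independent at a single $K$-rational point is automatically branch-independent globally, so its $(1)$-coefficient vanishes identically. Applying this to $\Delta$ forces $(\log_L)^{(1)} = (\log_L^{\mathrm{mod}})^{(1)}$, i.e.\ $v_L = v_{\mathcal{L}}$.
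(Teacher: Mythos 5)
Your ``decisive input'' (that Vologodsky paths in good reduction are the Frobenius-invariant Coleman paths and are therefore defined over $K$, so branch independence at one $K$-point propagates) is indeed the key lemma, and it is the same mechanism the paper uses via the Chiarellotto--Le Stum comparison between de Rham and rigid fundamental groupoids. However, there is a real gap in your Step~1: you have not shown that a log function $\log_L^{\mathrm{mod}}$ with the stated local description exists. From the fact that the transition functions $\log(f_{ij})$ are $K$-valued, it does not follow that the \v{C}ech cocycle $\{\log(f_{ij})\}$ can be split by $K$-valued Vologodsky functions $g_i$: a splitting is obtained by the existence argument of \cite[Proposition~4.4]{Bes05}, but nothing in that construction keeps the $g_i$ in $K$ rather than in $K_{\textup{st}}$. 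To establish the $K$-valuedness of the $g_i$ you would need exactly the branch-independence propagation you appeal to in Step~3 --- but applied chart by chart on the \emph{affine} open pieces $U_i$, not on the proper $X$ where you state it. You have therefore quietly used the propagation lemma in a setting (non-proper affine with good compactification) that your Step~3 formulation does not cover, and without noticing it. Moreover you would also have to check that such a splitting can simultaneously be arranged to produce a bona fide log function, i.e.\ that $d\log_L^{\mathrm{mod}}$ lands in $\ocola^1$; this is not automatic once you constrain the $g_i$.

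The paper avoids all of this by applying the Chiarellotto--Le Stum comparison directly on the total space: it passes to the projectivization $\mathcal{Y} = \BP(\mathcal{L})$ of the model, observes that the zero and infinity sections form a normal crossings divisor, and deduces that the de Rham path spaces of $L^\times = Y\setminus\{0,\infty\}$ are identified with the ($K$-rational, Frobenius-invariant) rigid path spaces of the special fibre. The Vologodsky path from $r$ to any point $u$ of $L^\times(K)$ reducing into the open locus $\mathcal{Y}_k\setminus\{0,\infty\}$ is therefore $K$-rational, so $\log_L(u)\in K$ and $\log_L^{(1)}(u)=0$; combined with the scaling relation this gives $\log_L^{(1)} = v_{\mathcal{L}}$ in one step. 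If you want to salvage your route through $X$, a cleaner version is to work directly with $h_i\colonequals \log_L(s_i)$ rather than inventing an auxiliary $\log_L^{\mathrm{mod}}$, and propagate $K$-valuedness of $h_0$ from $x$ across $U_0$, then to each overlapping $U_i$ using $h_i - h_j = \log(f_{ij}) \in K$ --- but you would still need to justify propagation on the affine charts, which is precisely the nonproper boundary case that the paper's invocation of the NCD model makes rigorous.
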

\begin{proof}
Let $\mathcal{Y}$ be the projectivization of $\mathcal{L} \oplus \mathcal{O}_X$ and let
  $Y=\mathcal{Y}_K$. The union of
the $0$ and $\infty$ sections is a divisor with normal crossings. In this
situation, the de Rham fundamental group of $L^{\times}=Y-\{0,\infty\}$
is isomorphic to the rigid fundamental group of $\mathcal{Y}_k -
  \{0,\infty\}$ with its induced Frobenius, where $k$ is the residue field
  of $K$, by~\cite[Proposition 2.4.1]{Chi-leS99}. Similarly, if $y_1,y_2$ are two
  points of $Y(K)$ reducing to  $\overline{y}_1, \overline{y}_2\in \mathcal{Y}_k -
\{0,\infty\}$, then the de Rham path space $P^{\dR}_{y_1,y_2}$ with its Frobenius
is isomorphic to the rigid path space
$P_{\overline{y}_1,\overline{y}_2}^{\rig}$. 
The rigid path spaces have a unique Frobenius invariant path, so it
follows that via the isomorphism with the de Rham path spaces it is sent
to the Vologodsky path (which in general is Frobenius invariant plus has
  some specified behavior with respect to monodromy \cite{Vol03}). Since the Frobenius invariant
path is defined over $K$ already, it follows that if a Vologodsky function
is branch independent at $x_1$ it will also be branch independent at $x_2$.
Applying this to $\log_L$, the assumptions imply that it is branch independent
at $r$ and it follows that the associated valuation is $0$ at every point
which reduces to  $\mathcal{Y}_k - \{0,\infty\}$. This proves the result.
\end{proof}

The behavior of log functions with respect to pull-backs (\cite[Proposition~4.6]{Bes05})  immediately extends as follows:
\begin{prop}\label{pullval}
  Let $f\colon X\to Y$ be a morphism of $K$-varieties. Let $x\in X(K)$ and $y=f(x)\in
  Y(K)$. Let $\L$ be a line bundle on
  $Y$ and let $\M$ be $f^\ast \L$. Let $\log_\L$ be a log function on $\L$
  which is branch independent at $y$ and let $v_\L$ be the associated
  Vologodsky valuation. Then $\log_\M=f^\ast \log_ \L$ is a log function on
  $\M$, branch independent at $x$, and the associated
  valuation satisfies $v_\M= f^\ast v_\L$.
\end{prop}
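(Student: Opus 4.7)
The plan is to split Proposition~\ref{pullval} into its three assertions and dispose of each one by invoking the functoriality already set up in the previous results.

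First I would verify that $\log_\M \colonequals f^\ast \log_\L$ is a log function on $\M$. This is essentially the content of Definition~\ref{D:relatedlogfunctions}(3); concretely, one checks the scaling relation by noting that the induced map $\tilde f\colon \M^\times \to \L^\times$ respects the $\Q_p^\times$-actions in each fiber (since it restricts to a $K$-linear isomorphism on each fiber $\M_x \isom \L_{f(x)}$), while the condition $d\log_\M \in \ocolas^1(\M^\times)$ follows from $d\log_\L \in \ocolas^1(\L^\times)$ via pullback of one-time iterated forms, which is preserved by Theorem~\ref{T:volsum}\eqref{t4} extended to the $\kst$-valued theory. In particular the curvature pulls back: $\Curve(\log_\M) = f^\ast \Curve(\log_\L)$.

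Next I would show that $\log_\M$ is branch independent at $x$. Pick an affine Zariski neighborhood $V$ of $y$ in $Y$ and a section $s\in \L(V)$ that is invertible at $y$. Then $f^{-1}(V)$ is a neighborhood of $x$ in $X$, $f^\ast s$ is a section of $\M$ on $f^{-1}(V)$ invertible at $x$, and by construction of the pullback of log functions we have the identity of Vologodsky forms
\begin{equation*}
  d(f^\ast \log_\L)(f^\ast s) = f^\ast\bigl(d\log_\L(s)\bigr)
\end{equation*}
on $f^{-1}(V)$. By hypothesis $d\log_\L(s)$ is branch independent at $y$, and since $f\colon f^{-1}(V)\to V$ is a basepoint-preserving morphism (sending $x$ to $y$), the functoriality statement in Theorem~\ref{logindepprop}(a) shows that its pullback is branch independent at $x$. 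Hence $\log_\M$ is branch independent at $x$ in the sense of the definition preceding Theorem~\ref{indepmet}.

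Finally I would deduce the equality of valuations. Let $\tilde f\colon \M^\times \to \L^\times$ be the induced map and let $u\in \M_x^\times(K)$. Using that $\log_\M = \tilde f^\ast \log_\L$ as Vologodsky functions on $\M^\times$ (in the $\kst$-valued theory), we have $\log_\M(u) = \log_\L(\tilde f(u))$ as elements of $\kst$. Extracting the coefficient of $\log(p)$, which by Lemma~\ref{pld} commutes with pullback along $\tilde f$, gives
\begin{equation*}
  v_\M(u) = \log_\M(u)^{\ldp} = \log_\L(\tilde f(u))^{\ldp} = v_\L(\tilde f(u)) = (f^\ast v_\L)(u),
\end{equation*}
which is the desired identity. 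The main (quite mild) subtlety to keep track of in the write-up is the distinction between the base morphism $f\colon X\to Y$ and the induced morphism $\tilde f\colon \M^\times \to \L^\times$ on total spaces, since branch independence and the $\ldp$-operator live naturally on the latter; once that bookkeeping is in place, every step reduces to a functoriality already recorded.
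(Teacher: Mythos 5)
Your argument is correct and is exactly the unpacking the paper leaves implicit when it says this proposition ``immediately extends'' from~\cite[Proposition~4.6]{Bes05}: each of your three steps invokes the intended ingredient (Definition~\ref{D:relatedlogfunctions}(3) for the pullback of log functions, Theorem~\ref{logindepprop}(a) for branch independence, and Lemma~\ref{pld} for the $\ldp$-coefficient). The only small slip is in your closing remark: branch independence of a log function is, by definition, a condition on the forms $d\log_\L(s)$ on open subsets of the base $Y$, not on the total space $\L^\times$, so it is only the $\ldp$-extraction in your final step --- not the branch-independence check, which you in fact already ran at the level of the base --- that needs the induced morphism $\tilde f$ on total spaces.
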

The following theorem is the main result of this section. It gives a purely 
$p$-adic analytic construction of the canonical valuation, introduced in
Proposition~\ref{P:good}. 

\begin{thm}\label{goodval}
  Let $A/K$ be an abelian variety and let $\L$ be a
  line bundle on $A$ with a rigidification $r\in \L_0(K)$. 
Let $v_\L$ be a      Vologodsky valuation on $\L$, associated to a normalized log function
    $\log_\L$. Then $v_\L$ is the canonical valuation associated to $(L, r)$.
\end{thm}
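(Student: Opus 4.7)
The plan is to identify $v_\L$ with the canonical valuation $v_\L^{\mathrm{can}}$ characterized by Proposition~\ref{P:good}. Using the decomposition $\L^{\otimes 2}\cong \L^+\otimes\L^-$, together with additivity of Vologodsky valuations (from $\log_{M\otimes N} = \log_M+\log_N$ and $K$-linearity of $F\mapsto F^{(1)}$) and additivity of canonical valuations (Proposition~\ref{P:good}(b)), the problem reduces to the two cases where $\L$ is symmetric or antisymmetric.

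In the symmetric case, Theorem~\ref{T:symgood} supplies a good, hence normalized (Remark~\ref{R:goodsymmnorm}), log function $\log_\L^{\mathrm{good}}$ of the same curvature as $\log_\L$. By Remark~\ref{R:logfunctionuniqueness} the difference $\log_\L-\log_\L^{\mathrm{good}}$ is the integral of a holomorphic $1$-form on $A$; normalization makes this integral vanish at $0$ and be branch independent there, so Proposition~\ref{vholz} forces its $F^{(1)}$-coefficient to vanish, i.e.\ $v_\L$ equals the Vologodsky valuation of $\log_\L^{\mathrm{good}}$. By Lemma~\ref{C:twotest}, $\log_\L^{\mathrm{good}}$ makes the isomorphism $[2]^*\L\cong\L^{\otimes 4}$ of \eqref{m-iso1} an isometry; passing to $\log(p)$-coefficients gives $v_\L\circ[2]=4v_\L$. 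Since $v_\L^{\mathrm{can}}$ satisfies the analogous functional equation and $v_\L(r)=v_\L^{\mathrm{can}}(r)=0$, the difference $f:=v_\L-v_\L^{\mathrm{can}}$ descends to a locally analytic function on $A(\Qpb)$ (Lemma~\ref{locanval}) with $f(0)=0$ and $f\circ[2]=4f$. Iterating this self-similarity as $f(a)=4^{-n}f([2^n]a)$ on the formal group (where the orbit contracts to $0$), combined with local boundedness and propagation by translations using the functoriality of Proposition~\ref{pullval} and Proposition~\ref{P:good}(d), forces $f\equiv 0$.

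The antisymmetric case follows the same template with $[2]^*\L\cong\L^{\otimes 2}$ and the scaling $v_\L\circ[2]=2v_\L$ (Corollary~\ref{C:twotestanti}), but it encounters the main obstacle at the first step: Theorem~\ref{T:antigood} forces good log functions on antisymmetric line bundles to be flat, so if the curvature of $\log_\L$ is nonzero one cannot apply Proposition~\ref{valdet} to reduce directly to a good log function of the same curvature. Instead I would compare $\log_\L$ with the canonical good log function $\log_\L^0$ of Definition~\ref{D:antican} (flat and normalized by Corollary~\ref{C:anticangood}) and prove that the difference $g=\log_\L-\log_\L^0$, viewed as a normalized Vologodsky function on $A$, satisfies $g^{(1)}\equiv 0$. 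The hard part is this vanishing: it requires exploiting that Vologodsky iterated integrals on an abelian variety built from holomorphic forms and from forms representing classes in a complementary subspace to $\Omega^1(A)\subset H^1_{\mathrm{dR}}(A)$ collapse, via the shuffle-product relations of \S\ref{S:higherdelbar} and the structure of the de Rham fundamental group of an abelian variety, into polynomials in abelian logarithms which are themselves branch independent and so contribute nothing to the $F^{(1)}$-coefficient. Once $g^{(1)}\equiv 0$ is established, the argument concludes exactly as in the symmetric case.
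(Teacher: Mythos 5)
Your reduction to the symmetric and antisymmetric cases via $\L^{\otimes 2}\cong\L^+\otimes\L^-$ and the curvature-comparison in the symmetric case are fine, but the argument breaks exactly at the step where you claim ``iterating this self-similarity forces $f\equiv 0$.'' This is the classical dynamical argument for archimedean canonical metrics, and it does not converge $p$-adically: writing $f(a)=4^{-n}f([2^n]a)$ and using local boundedness of $f$ on a compact $A(K')$, you would need $|4^n|_p^{-1}\to 0$, but $|4|_p=1$ for $p\ne 2$ (so there is no contraction at all), and for $p=2$ the factor $|4^{-n}|_2=4^n$ goes the wrong way. The paper even flags this in the introduction: ``the limiting method used to construct canonical norms in the classical theory will not work in the $p$-adic case.'' Moreover, the functional equation $f\circ[2]=4f$ with $f(0)=0$ alone does not force vanishing of a locally analytic function --- any pullback of a quadratic form in abelian logarithms satisfies it.

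The paper takes a different route that avoids this obstacle: it invokes the uniqueness characterization of the canonical valuation from~\cite[Theorems~9.5.4 and 9.5.7]{BG06} (locally bounded/$\Q$-valued plus the isometry property of~\eqref{m-iso1} or~\eqref{m-iso3}), and the real content is proving that the Vologodsky valuation is $\Q$-valued, which you never address. For antisymmetric $\L$ this is done by exhibiting $\L^\times$ as a commutative group scheme $G$ in an extension $0\to\Gm\to G\to A\to 0$ (Serre), observing that $d\log_\L$ is an invariant differential whose integral vanishes on Zarhin's torsion-free open subgroup $G(K)_f$ of~\cite{Zar96}, and then using Zarhin's division trick to push $\Q$-valuedness from $\Gm(K)$ to all of $G(K)$; the Poincar\'e bundle case follows fiberwise and the symmetric/general cases by pullback. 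Also, your handling of curvature-independence in the antisymmetric case --- an appeal to the shuffle relations of \S\ref{S:higherdelbar} and the de Rham fundamental group --- is far too vague to be a proof; the paper's actual observation is much more pedestrian: differences of curvature forms cupping to $0$ are spanned by $\omega\otimes[\omega]$ with $\omega$ holomorphic, which correspond to the functions $(\int\omega)^2$, and Proposition~\ref{vholz} immediately gives $((\int\omega)^2)^{(1)}=0$, so the Vologodsky valuation is independent of the chosen curvature form.
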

\begin{proof}
  By~\cite[Theorem~9.5.4, Theorem~9.5.7]{BG06} it suffices to show the
  following:
  \begin{enumerate}[(i)]
    \item $v_L$ is locally bounded;
    \item the unique 
isomorphism~\eqref{m-iso1} (respectively~\eqref{m-iso3}) of rigidified line
      bundles is an isometry for the valuations induced by $v_L$ if $\L$ is symmetric (respectively
      antisymmetric). 
  \end{enumerate}
 By Proposition~\ref{valdet} the valuation depends only on the choice of curvature form, but for abelian
varieties it is also independent of this. Indeed,
curvature forms cupping to $0$ are generated by forms $\omega\otimes [\omega]$ with $\omega$
holomorphic, and these correspond to functions $(\int \omega)^2$. Since
$(\int \omega)^{\ldp}=0$ by Proposition~\ref{vholz}, we have
$\left((\int \omega)^2\right)^{\ldp}=0$ as well, hence $v_L$ is independent
  of $\log_L$.  
  Therefore we may choose $\log_L$ to be a
  canonical
  log function, which implies (ii).

The valuation $v_L$ is locally analytic by Lemma~\ref{locanval}. Hence it
  suffices to show that $v_L$ is $\Q$-valued. 
Indeed, an analytic function with values in $\Q$ has to be constant, so if
  $v_L$ is $\Q$-valued, then it
is even locally constant.

Suppose first that $\L$ is antisymmetric and choose a good log function $\log_\L$.
According to~\cite[Proposition I.9.2]{Mil08} (which references in
turn~\cite[VII, Section 3]{Ser59}),
$\L^{\times}$ has the structure of a commutative group scheme  $G$, sitting
in a short exact sequence 
\begin{equation*}
  0\to \Gm \to G \to A \to 0\;.
\end{equation*} 
  By the discussion before the statement of~\cite[Proposition I.9.2]{Mil08}, the multiplication is induced by
the isomorphism~\eqref{m-iso2}. Thus, Definition~\ref{D:goodmetricanti} shows that
the form $\omega= d \log_{\L}$ is an invariant differential on $G$. The
identity element $e$ of $G$ corresponds to the rigidification $r\in \L|_0$. As
in~\cite[p. 2744]{Zar96}, we have a canonical open subgroup $G(K)_f \subset
G(K)$ characterized by the fact that $G(K) /G(K)_f$ has no nonzero torsion
elements, and for $g\in G(K)_f$, the integral $\log_{\L}(g)=\int_e^g \omega$ is uniquely determined by
functoriality and the local expansion near $e$ so
  $\log_{\L}^{\ldp}|_{G(K)_f}=0$. According to~\cite[page 2747, step 5]{Zar96},
for every $g\in G(K)$ there exist a nonzero integer $m$ and $g_f\in
G(K)_f$ such that $h=g^m g_f^{-1}\in \Gm(K)$. Since $\log_{\L}^{\ldp}(h) \in \Q$ by \eqref{E:volscale}, it follows that 
\begin{equation*}
  \log_{\L}^{\ldp}(g) = \frac{ \log_{\L}^{\ldp}(h) }{m}\in \Q\,.
\end{equation*}
Next we consider the case where $\L$ is the Poincar\'e bundle $\P$ on $A\times \hat{A}$. In
  this case, the valuation $v_L$ is $\Q$-valued because
fiberwise it induces the valuation associated with an antisymmetric line bundle, for which we
already proved the result.
Now let $\L$ be a symmetric line bundle on $A$. Then, by
Proposition~\ref{phiprop}, the
line bundle $\L^2$ is a pullback of $\P$, so for the pulled back log function we get the pulled-back
valuation, which is rational.
It is now immediate from the definition of the canonical log function that
$v_L$
is $\Q$-valued for general $\L$. This proves (i) and hence the
theorem.
\end{proof}
\begin{rk}
In general, a Vologodsky valuation need not be a
$\Q$-valued. For example, suppose that $v$ is a $\Q$-valuation on
  the trivial bundle, induced by a nontrivial log function $\log$. 
Multiplying  $\log\circ 1$ by a suitable constant, 
 the resulting valuation is not
  $\Q$-valued. 
\end{rk}

\begin{rk}
It is a property of abelian varieties that the Vologodsky valuation depends only on the underlying line
bundle and not on the curvature. In our
application to Quadratic Chabauty (see Section~\ref{sec:qc}), we have a
nontrivial line bundle $\L$ on the Jacobian $J$ of a
curve $C$ pulling back to the
trivial line bundle $\O_C$ on $C$. The canonical valuation on $\L$ is associated with
any log function on $\L$ and to determine it we only need, according to
Proposition~\ref{valdet}, to find a
curvature form cupping to $\ch_1(\L)$. By Proposition~\ref{pullval} the
pulled-back valuation on $\O_C$ is associated with the pulled-back curvature
form. Thus, a completely algebraic de Rham datum, easily computed from the
Chern class of $\L$, determines the local contributions to the height
showing up in our approach to Quadratic Chabauty. We will
describe an explicit procedure for computing the canonical valuation from the
curvature form in future work.
\end{rk}
\begin{rk}\label{R:linearlogp}
  By Section~\ref{sec:BDcomparison}, our Quadratic Chabauty functions
  $\log_p\circ 1$ in Section~\ref{sec:qc} are essentially the same as those
  in~\cite{BD18}. The latter are given in terms of the local height pairing between
  two parametrized divisors with disjoint support
  by~\cite[Theorem~1.2]{BD18}, and hence are integrals of differentials of
  the third kind without poles at the endpoints. Hence one could expect
  that our functions take values in $K \oplus K\log(p)\subset \kst$ (in
  fact, by Theorem~\ref{goodval}, in $K\oplus \Q\log(p)$). 
However, Vologodsky's general theory always takes values in the full $\kst$
and one would expect that $n$-iterated integrals will contain powers of $\log(p)$ up to
$\log(p)^n$. In particular, our functions, which are 2-iterated
  integrals, could see
quadratic terms in $\log(p)$. Heuristically, one could argue that 2-iterated
integrals on proper varieties would only see linear terms in $\log(p)$, roughly because the
first integration is of a form of the second kind and that should be independent of the branch.
It seems plausible that on curves one can turn this into a proof using the full force
of~\cite{Kat-Lit21}. Alternatively, we can give the
following ``pure thought'' argument that works in many situations: We defined the Vologodsky
valuation associated with a log function by taking the derivative with respect to $\log(p)$ and
substituting $\log(p)=0$. However, we could just as well set $\log(p)$ to any other value,
still obtaining a valuation. The only case where these valuations are all the same is when all
functions are valued in linear polynomials in $\log(p)$. Thus, when one has a uniqueness result
about valuations, e.g., Proposition~\ref{P:good} for the canonical valuation
  on line bundles on abelian varieties, the resulting Vologodsky functions
  are valued in $K\oplus K\log(p)$.
\end{rk}

\begin{rk}\label{R:uniform}
Theorem~\ref{goodval}
makes it possible to describe the canonical adelic metric
(see Definition~\ref{D:canadelicmetric}) in a uniform way in terms of
$p$-adic analysis.
We therefore get a uniform construction of the canonical $p$-adic
  height~\eqref{E:hhatbundle}.
Moreover, since the component at a prime above $p$ of the classical
  N\'eron-Tate height pairing is defined in terms of the canonical
  valuation, we may also describe it using $p$-adic analysis. We expect
  that this could be of independent interest, for instance in non-archimedean
  Arakelov theory.
\end{rk}

To give one application, we now explain an alternative proof 
  of Proposition~\ref{P:factor} using 
Vologodsky valuations. 
Similar to Section~\ref{sec:qc}, we write $\log_q$ for the pullback
$\iota^*\log_{\L,q}$, where $\L\in\Pic(J)\setminus \Pic^0(J)$ pulls back to $\O_C$
via $\iota$ and $\log_{\L,q}$ is the canonical $q$-adic log function on
$\L\otimes\Q_q$, valued in $\kst$, where $K=\Q_q$.
First note the following consequence of 
Proposition~\ref{pullval} and Theorem~\ref{goodval}.
\begin{lemma}\label{L:}
The function $\lambda_q$ defined in~\eqref{eq:lambdadef} depends only on
the derivative of the function $\log_{q}(1)$ with respect to $\log(q)$.
\end{lemma}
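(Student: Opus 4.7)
The plan is to identify $\lambda_q(x)$ explicitly as the $\log(q)$-coefficient of the polynomial $\log_q(1(x))\in \kst$; the lemma then follows because this coefficient is recovered from $\partial\log_q(1)/\partial\log(q)$ by specializing $\log(q)$ to $0$, and hence is determined by that derivative.

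First, I would unwind~\eqref{eq:lambdadef} and~\eqref{vq} to rewrite $\lambda_q(x) = (\iota^* v_{\L,q}^{\mathrm{can}})(1(x))$, where $v_{\L,q}^{\mathrm{can}}$ is the canonical valuation on $\L\otimes\Q_q$ with respect to the rigidification $r_b$. Next I would apply Theorem~\ref{goodval} (with $K=\Q_q$) to identify $v_{\L,q}^{\mathrm{can}}$ with the Vologodsky valuation $\log_{\L,q}^{(1)}$ attached to the canonical $\kst$-valued log function $\log_{\L,q}$. Proposition~\ref{pullval}, combined with Lemma~\ref{pld}, then says that the coefficient-of-$\log(q)$ operation commutes with pullback along $\iota$, so that
$$v_q \;=\; \iota^* v_{\L,q}^{\mathrm{can}} \;=\; \bigl(\iota^*\log_{\L,q}\bigr)^{(1)} \;=\; \log_q^{(1)}.$$
Evaluating at $1(x)$ yields $\lambda_q(x) = \log_q^{(1)}(1(x))$, which is precisely the desired $\log(q)$-coefficient of the polynomial $\log_q(1(x))$.

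The main obstacle, and essentially the only nontrivial bookkeeping, is verifying the hypotheses of Theorem~\ref{goodval} and Proposition~\ref{pullval}: namely, that $\log_{\L,q}$ is normalized with respect to $r_b$ and branch-independent at $0\in J(\Q_q)$. Normalization is built into the constructions of Section~\ref{sec:log} via Remarks~\ref{R:goodsymmnorm} and~\ref{R:change_rig2}. For branch-independence at $0$, one observes that the trivial log function on $\O_J$ is manifestly branch-independent at every $K$-rational point; that Proposition~\ref{P:cangood} produces the canonical log function on $\P$ by demanding that it restrict to the trivial log function on the slices $A\times\{0\}$ and $\{0\}\times\hat{A}$; and that the extension to arbitrary line bundles via Definition~\ref{D:arblog} is obtained by pullbacks along morphisms sending $0\in J$ to $(0,0)\in A\times\hat{A}$ together with tensor products, both of which preserve branch-independence at $0$. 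With these hypotheses in place the lemma is a direct assembly of the results developed earlier in this section.
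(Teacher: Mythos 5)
Your proof is correct and follows the same route the paper indicates (the paper simply states the lemma as ``a consequence of Proposition~\ref{pullval} and Theorem~\ref{goodval}'' with no further detail). Your additional care in verifying the hypotheses of those two results, in particular that the canonical $\kst$-valued log functions constructed in Section~\ref{sec:log} are normalized and branch-independent at the identity, is exactly the bookkeeping the paper leaves implicit.
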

It therefore suffices to show the following: For any two points $x,y \in
C(\Q_q)$ reducing to the same component of the special fiber
$\mathcal{C}_q$ (with notation as in Proposition~\ref{P:factor})
the value $\log_{q}(1)(y)-\log_{q}(1)(x)$ is independent of
the chosen branch of the logarithm. We use Equation \eqref{E:logfromitint}
to write this last quantity as a Vologodsky integral,
\begin{equation*}
  \log_{q}(1)(y)-\log_{q}(1)(x) = \sum \int_x^y\left( \omega_i \int \eta_i
  \right) + \int_x^y \gamma \;,
\end{equation*}
with some holomorphic forms $\omega_i$, some forms of the second kind
$\eta_i$ and a meromorphic form $\gamma$.
Note that we may fix the constant of integration of the $\int \eta_i$ as we
like and the integrals themselves are independent of the branch, a
consequence of~\cite{Bes-Zer13}.
By~\cite{Kat-Lit21}, away from the residue discs of the singular points of the
$\eta_i$ and $\gamma$ this is just a Coleman integral, hence is independent
of the choice of the logarithm. Now we just note that we can pick different
representation for $\log_{q}(1)$ by picking different $\eta_i$ in their
cohomology class, thereby moving their singularities. 
This proves Proposition~\ref{P:factor}.

\subsection{Coleman--Gross heights and relation to Colmez's work}\label{S:Colmez}
We now discuss how local Coleman--Gross heights and N\'eron symbols
on curves can be expressed using Vologodsky valuations and we
relate this construction to work of Colmez~\cite{Col98}.
Suppose that $K$ is the completion of a number field at a place $\p$ above
$p$ 
and  that $C$ is a nice curve defined over this number field with an
Abel--Jacobi map $C\to J$ and associated theta divisor $\Theta$.
Let $\chi$ be a $\Q_v$-valued idel\'e class character, where $v$ is a
finite or infinite prime
of $\Q$. 
For now, suppose that  $v=p$, write $\chi_\p =t_\p\circ \log_\p$ 
and suppose that $\log_\p$ is the branch of the logarithm used for
integration.
  Let $z,w\in \Div^0(C_K)$ be
  degree~0 divisors with disjoint support. We assume that $w$ can be written as
  $w=w_1-w_2$, where $w_1,w_2\in \Div^g(C_K)$ are effective and
  non-special; by the proof of Proposition~\ref{P:MThtglobalpb}, this is
  not an essential restriction. 
  As in~\S\ref{subsec:jacs}, we write  
$b_i = \iota(w_i)\in J_K$ and 
$\Theta^-_{b_i}= t_{b_i}^*[-1]^*\Theta = \div(s_i)$, where $s_i$ is a
section of a line bundle $L_i$.
Fix a purely mixed curvature form $\alpha$ on the Poincar\'e bundle $\P$ on
$J_K$ and let $\log_{\L_i}$ be 
the canonical log function on $\L_i$ with respect to $\alpha$.
Let $z = \sum_j (c_j) - \sum_j(d_j)$, where $c_j,d_j \in C_K$. 

  By~\eqref{E:mt_cg_local} and Corollary~\ref{C:pairingspequal}, we can express the 
 local $p$-adic Coleman--Gross height pairing at $\p$ between $z$
 and $w$ with respect to $\alpha$ and $\chi_\p$  as
 \begin{equation}\label{cg:log}
h^{CG}_\p(z,w) =
   t_\p\left(\sum_j\left( \log_{\L_1}(s_1(c_j)) - \log_{\L_1}(s_1(d_j)) 
   -\log_{\L_2}(s_2(c_j)) +
   \log_{\L_2}(s_2(d_j))\right)\right)\,.
 \end{equation}
From this and Theorem~\ref{goodval}, we immediately get the following description of the local component at $\p$
of the $v$-adic height pairing, where $v\ne p$.
Note that any $\R$-valued idel\'e class character $\chi$ satisfies $\chi_\p(\p)=-\log_\R\mathrm{Nm}(\p)$ up to
a global constant, which we assume to be trivial.

\begin{cor}\label{C:cgvol}
  Let $v\ne p$ be a finite or infinite prime of $\Q$ and let 
  $\chi$ be an idel\'e class character with values in $\Q_v$.  Let $z,w
  \in \Div^0(C)$ have disjoint support and suppose that we can write
  $w=w_1-w_2$ as above. Let $[\cdot,\cdot]^v_\p$ denote either the local 
  component at $\p$ of 
  the $v$-adic Coleman--Gross height pairing with respect to $\chi_v$
  when $v\ne \infty$, or the
  local N\'eron symbol at $\p$ when $v=\infty$. Then, with notation as
  above, we have
  \begin{equation}\label{}
    [z,w]^v_\p = 
    \left(\sum_j\left( \log^{(1)}_{\L_1}(s_1(c_j)) - \log^{(1)}_{\L_1}(s_1(d_j)) 
    -\log^{(1)}_{\L_2}(s_2(c_j)) +
    \log^{(1)}_{\L_2}(s_2(d_j))\right)\right)\chi_\p(\p)\,.
  \end{equation}
\end{cor}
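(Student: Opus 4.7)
The plan is to reduce the statement to the classical theory of local height pairings on the Jacobian, and then to invoke Theorem~\ref{goodval} to identify canonical valuations with the $(1)$-components of canonical log functions.

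First, I would use the identity~\eqref{E:mt_cg_local} (applied place by place) to rewrite $[z,w]^v_\p$ as a local Mazur--Tate/N\'eron pairing on $J_K$ between the zero-cycle $\frka = \iota_* z = \sum_j (\iota(c_j)) - \sum_j (\iota(d_j))$ and the divisor $\Theta^-_{b_1} - \Theta^-_{b_2}$, which is algebraically equivalent to zero (since the $\Theta^-_{b_i}$ differ by a translation, yielding an antisymmetric class). This replaces the local height on the curve by the local pairing
\[
[z,w]^v_\p = \langle \frka,\ \Theta^-_{b_1}-\Theta^-_{b_2}\rangle_\p
\]
on the Jacobian, which is the local N\'eron symbol at $\p$ when $v=\infty$ and the local component of the Coleman--Gross / Mazur--Tate pairing when $v$ is a finite prime distinct from $p$.

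Next, I would observe that because $v\neq p$, the continuous local character $\chi_\p \colon K_\p^\times \to \Q_v$ kills $\O_\p^\times$. Indeed, $\O_\p^\times$ has a finite-index pro-$p$ open subgroup, while $\Q_v$ (either $\R$ or $\Q_\ell$ with $\ell\ne p$) is torsion-free and contains no nontrivial pro-$p$ image. Hence $\chi_\p = \chi_\p(\pi_\p)\cdot \ord_\p$, exactly like the characters at places $\q\nmid p$. With this factorization in hand, the classical characterization of the local N\'eron symbol by biadditivity, translation-invariance, compatibility with principal divisors and local boundedness (see Lemma~\ref{L:localMTq} and Remark~\ref{localneron}, whose proofs work verbatim at $\p \mid p$ under this factorization) forces
\[
\langle \frka, \Theta^-_{b_1}-\Theta^-_{b_2}\rangle_\p = \sum_j\bigl(v_{L_1}(s_1(\iota(c_j))) - v_{L_1}(s_1(\iota(d_j))) - v_{L_2}(s_2(\iota(c_j))) + v_{L_2}(s_2(\iota(d_j)))\bigr)\chi_\p(\p),
\]
where $v_{L_i}$ is the canonical valuation on $L_i$ in the sense of Proposition~\ref{P:good}, rigidified compatibly with $\log_{L_i}$. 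This is the direct analogue of~\eqref{pairingaway}.

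Finally, Theorem~\ref{goodval} identifies $v_{L_i}$ with the Vologodsky valuation $\log^{(1)}_{L_i}$ attached to the (variable-branch) canonical log function, and Proposition~\ref{pullval} shows that pullback along $\iota$ commutes with taking the $(1)$-component. Substituting yields the formula in the statement. The main obstacle is the verification in the second step that the local pairing at a place $\p \mid p$, when paired against a $\Q_v$-valued character with $v\neq p$, really is characterized by the same N\'eron symbol axioms as at $\q\nmid p$; once the factorization $\chi_\p = \chi_\p(\pi_\p)\cdot \ord_\p$ is in place, this should follow from a straightforward adaptation of the classical uniqueness argument, but the bookkeeping around rigidifications and the choice of branch of $\log_\p$ (which must not interfere with the classical uniqueness once $\chi_\p(\O_\p^\times)=0$) is the delicate point.
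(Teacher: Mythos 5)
Your proposal is correct and takes essentially the same route as the paper: the paper derives the corollary "immediately" from~\eqref{cg:log} and Theorem~\ref{goodval}, and what you spell out — reduction to the Jacobian via~\eqref{E:mt_cg_local}, the observation that $\chi_\p$ kills $\O_\p^\times$ when $v\ne p$ so the local pairing at $\p$ is the N\'eron symbol expressed in canonical valuations, and the identification $v_{L_i}=\log^{(1)}_{L_i}$ via Theorem~\ref{goodval} together with compatibility of the $(1)$-component with pullback from Proposition~\ref{pullval} — is precisely the chain of facts the paper is implicitly invoking. The only difference is that you unfold the terse "immediately" into its constituent steps, including the uniqueness characterization from Lemma~\ref{L:localMTq} and Remark~\ref{localneron}, which is sound.
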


  In~\cite{Col98} Colmez describes an analytic construction of local height
  pairings on curves that is quite similar to our approach~\footnote{In fact, Colmez gives an elegant adelic description of
  his Green functions and height pairings. For ease of presentation, we have
  chosen to fix one $p$.}. 
  Similar to Vologodsky, Colmez also defines (single) integrals with values in
  $\kst$.
  Before~\cite[Lemme~II.2.16]{Col98}, he defines a 
  local height pairing at $\p$ (with respect to a choice of an isotropic complementary
  subspace to
  $\Omega^1(C) \subset \hdr^1(C)$) 
  with values in $K\oplus \Q\log(p)\subset \kst$ between divisors
  $z,w\in \Div^0(C_K)$  with disjoint support,
 essentially via the same definition~\eqref{eq:cgp} as Coleman--Gross, but using his
integration theory for forms of the third kind. 
As his integrals are
Vologodsky integrals~(see the discussion just before Definition~2.1
in~\cite{Bes17}), this gives
the extended Coleman--Gross height pairing~\eqref{eq:cgp} once
we choose a value for $\log(p)$, so the extended Coleman--Gross height 
  is due to Colmez. 
  Then, at the end of the proof
  of~\cite[Th\'eor\`eme~II.2.18]{Col98} on
page 101, he shows that the derivative with respect to $\log(p)$
of this height pairing (in his notation, a superscript (1)) is, up to a
constant, the local
contribution at $\p$ of the $v$-adic height pairing, for all primes
$v$ such that $\p \nmid v$, including $\infty$. 
This gives a $p$-adic analytic construction of this local contribution.

Colmez's integrals of differentials of the third kind on $C$ are in fact obtained by pulling back symmetric
Green functions on the theta-divisor as in~\S\ref{subsec:relcol}, but
valued in $\kst$. Let $G_\Theta$ be such a symmetric Green function. It
follows from Proposition~II.2.9 and Th\'eor\`eme~II.2.18  of loc. cit.
that,
with notation as above, we have
  \begin{equation}\label{colmezcg}
    [z,w]^v_\p = 
    \left(\sum_j\left( G^{(1)}_\Theta(c_j - w_1) -
G^{(1)}_\Theta(d_j - w_1) -
G^{(1)}_\Theta(c_j - w_2) +
G^{(1)}_\Theta(d_j - w_2) 
    \right)\right)\chi_\p(\p)\,.
  \end{equation}
  In fact,
  Theorem~\ref{T:loggreen} implies that~\eqref{colmezcg}  is equivalent to
  Corollary~\ref{C:cgvol}. This provides an alternative proof
  of~\cite[Theorem~II.2.18]{Col98}.

\bibliography{Besser_Mueller_Srinivasan_revised}

\end{document}